\pgfplotsset{compat=1.10}
\newsavebox{\measure@tikzpicture}
  \def\tikz@width{#1}%
\apptocmd{\thebibliography}{\fontsize{11}{15}\selectfont}{}{}%
\tikzset{
    state/.style={
           rectangle,
           rounded corners,
           draw=black, very thick,
           minimum height=2em,
           inner sep=2pt,
           text centered,
           },
}
\theoremstyle{plain}
\newtheorem{theorem}{Theorem}[section]
\newtheorem{lemma}[theorem]{Lemma}
\newtheorem{proposition}[theorem]{Proposition}
\newtheorem{corollary}[theorem]{Corollary}
\newtheorem{remark}[theorem]{Remark}
\newtheorem{definition}[theorem]{Definition}
\theoremstyle{definition}
\theoremstyle{remark}
\numberwithin{equation}{section}
\newcommand{\be}{\begin{equation}}
	\newcommand{\ee}{\end{equation}}
 \newcommand{\bea}{\begin{equation*}\begin{aligned}}
		\newcommand{\eea}{\end{aligned}\end{equation*}}
\DeclareMathOperator{\diver}{div}
\newcommand{\e}{\varepsilon}
\newcommand{\pa}{\partial}
\theoremstyle{plain}
\newtheorem*{theorem*}{Theorem}
\newtheorem*{corollary*}{Corollary}
\theoremstyle{definition}
\newtheorem*{notation*}{Notation}
\numberwithin{equation}{section}
\numberwithin{figure}{section}
\definecolor{myred}{rgb}{0.9,0,0}
\definecolor{vargreen}{rgb}{0.0, 0.5, 0.0}
\newcommand{\tens}[1]{\mathsf{#1}}
\newcommand{\mylabel}[2]{#2\def\@currentlabel{#2}\label{#1}}
\newcommand{\R}{\mathbb{R}}
\newcommand{\N}{\mathbb{N}}
\newcommand{\abs}[1]{\left\lvert#1\right\rvert}
\newcommand{\norm}[1]{\left\lVert#1\right\rVert}
\renewcommand{\rho}{\varrho}
\renewcommand{\theta}{\vartheta}
\newcommand{\eps}{\varepsilon}
\definecolor{racing}{rgb}{0.7,0.1,0.2}
\definecolor{french}{rgb}{0,0.2,0.7}
\begin{document}
\title{\textsc{Free boundary regularity for a\\ tumor growth model with obstacle}}

\author{
\textsc{Giulia Bevilacqua}\\[7pt]
\small Dipartimento di Matematica\\ 
\small Università di Pisa\\ 
\small Largo Bruno Pontecorvo 5, I–56127 Pisa, Italy\\
\small \href{mailto:giulia.bevilacqua@dm.unipi.it}{giulia.bevilacqua@dm.unipi.it}
\and
\textsc{Matteo Carducci}\\[7pt]
\small  Classe di Scienze, \\
\small Scuola Normale Superiore
\\ 
\small Piazza dei Cavalieri 7, I-56126 Pisa, Italy
\\
\small \href{mailto:matteo.carducci@sns.it}{matteo.carducci@sns.it}
}

\date{}

\maketitle

\begin{abstract}
   \noindent
   We develop an existence and regularity theory for solutions to a geometric free boundary problem motivated by models of tumor growth.
   In this setting, the tumor invades an accessible region $D$, its motion is directed along a constant vector $V$, and it cannot penetrate another region $K$ acting as an obstacle to the spread of the tumor. 
   Due to the non variational structure of the problem, we show existence of viscosity solutions via Perron's method. 
   Subsequently, we prove interior regularity for the free boundary near regular points by means of an improvement of flatness argument. We further analyze the boundary regularity and we prove that the free boundary meets the obstacle as a $C^{1,\alpha}$ graph. 
   A key step in the analysis of the boundary regularity involves the study of a thin obstacle problem with oblique boundary conditions, for which we establish $C^{1,\alpha}$ estimates.
\end{abstract}

\bigskip
\bigskip

\textbf{Mathematics Subject Classification (2020)}: 35R35, 35Q92, 76D27.

\textbf{Keywords}: Free boundary problems, Regularity, Viscosity solutions, Incompressible limit, Tumor growth.

\bigskip
\bigskip

\maketitle

\tableofcontents

\section{Introduction}
In recent decades, the modeling of tissue growth has significantly improved, thanks to the development of new analytical tools and the introduction of efficient numerical methods. One of the main challenges in studying cancer development lies in the presence of various interacting cell types, each requiring specific models to accurately capture their behavior.
In the literature, two main approaches are commonly used to describe tumor growth at the macroscopic level. The first is a partial differential equation (PDE) 
approach, in which cancer cells evolve according to a prescribed evolution equation \cite{greenspan1976growth, byrne1996modelling, ribba2006multiscale, ciarletta2011radial}. The second is a free boundary approach, where tissue dynamics are governed by the motion of its boundary \cite{greenspan1972models, cui2008asymptotic}.
The PDE-based model is generally more tractable from a mathematical viewpoint, indeed it is often known as the mechanical model because it can be derived from fundamental physical principles. However, the free boundary approach tends to provide a more faithful representation of the biological processes occurring in pathological tissues. Moreover, there is a link between these two approaches called {\em incompressible limit}: the pressure in a population-based model becomes stiffer and stiffer to get a free boundary description \cite{perthame2014hele, mellet2017hele, kim2018porous, bubba2020hele}.

\subsection{Porous media equation for tumor growth}

The Porous Medium Equation (PME) has been widely employed in the study of tumor growth.
Let  $x \in \mathbb{R}^d$ and $t \in (0, +\infty)$, the density of cells $n = n(x,t)$ evolves according to the continuity equation
\begin{equation*}
    \label{e:PME}
    \frac{\partial n}{\partial t} + \diver\left(n U\right) = 0,
\end{equation*}
where the velocity field $U= U(x,t)$ 
depends 
on the pressure, reflecting the effects of cell proliferation and apoptosis.
Specifically, if a linear dependence is taken into account, according to Darcy's law \cite{Dar1856}, we have
$$
U = - \nabla p,
$$
where $p = p(x,t)$ is the pressure field. 
To close the system, we need to prescribe a constitutive relation linking the cell density $n$ to the pressure $p$. The minimal physical assumptions on an equation of state are
$$
p = p(n), \qquad p(0) = 0 \qquad \text{and} \qquad \partial_n p > 0.
$$
In the literature, the prototypical constitutive relation is the power-law pressure (see, for instance, \cite{PQTV14, bubba2020hele, david2021free})
\begin{align*}
    p(n) = n^\gamma,
\end{align*}
where $\gamma >1$.
Furthermore, due to this constitutive relation, the pressure $p = p(n)$ evolves according to
\begin{equation*}
    \label{e:equazione_pressione}
    \frac{\partial p}{\partial t} = \abs{\nabla p}^2 + q \Delta p,
\end{equation*}
where
$$
q(n)  = n\partial_n p(n).
$$
Precisely, for the power-law case it results that $q(n) = \gamma p$. 

We remark that the study of Porous Medium Equation in tissue growth has been widely generalized. 
Extensions include source terms
to model cell proliferation \cite{byrne1996modelling}; local and non-local drifts to account for chemical concentration of cells \cite{kim2010degenerate,alexander2014quasi}; coupling with additional parabolic equations to describe nutrient dynamics \cite{perthame2014hele, david2021free}; and multi-species models involving cross-reaction terms between different cell populations \cite{carrillo2018splitting, gwiazda2019two, bubba2020hele}.

\subsection{From population-based models to free boundary ones: the incompressible limit}

Even though the density-based approach to model tissue growth is well developed and supported by a variety of analytical and numerical tools, the geometric free boundary formulation offers a closer description of the biological tumor evolution, as first suggested in \cite{greenspan1972models}. 

A rigorous connection between the two approaches can be established by considering the so-called {\em incompressible limit}: as the pressure law becomes stiffer and stiffer, for instance in the power-law case $p(n) = n^\gamma$ as $\gamma \to +\infty$, the dynamics governed by the Porous Medium Equation converge to those of a free boundary Hele-Shaw-type problem \cite{perthame2014hele, perthame2015incompressible, kim2018porous, bubba2020hele}.
Precisely, letting $p = n^\gamma$, for $\gamma > 1$, and considering the pressure equation
\[
\partial_t p = |\nabla p|^2 + \gamma p\Delta p, 
\]
we divide both sides by $\gamma$ and formally let $\gamma \to +\infty$. Under suitable regularity and compactness assumptions, the sequence of solutions $(n_\gamma, p_\gamma)$ of the PME and of the pressure equation converge to a pair $(n_\infty, p_\infty)$ solving, for all $x \in \mathbb{R}^d$ and $t > 0$,
$$
\left\{
\begin{aligned}
\partial_t n_\infty - \diver (n_\infty \nabla p_\infty) &= 0, \\
p_\infty \Delta p_\infty &= 0,
\end{aligned}
\right.
$$
where the second equation holds in a weak sense and is often referred to as the {\em complementarity relation}.
In addition, the limiting density $n_\infty$ and pressure $p_\infty$ satisfy the condition
$$
0 \leq n_\infty \leq 1 \qquad \hbox{and} \qquad  p_\infty(n_\infty - 1) = 0,
$$
which implies that $p_\infty > 0$ only in regions where the cell density saturates the maximal packing constraint, that is, $n_\infty = 1$. This establishes a precise link between the tumor region $\{n_\infty = 1\}$ and the positivity set of the pressure $\Omega(t) := \{x \in \R^d : p_\infty(x,t) > 0\}$. It can be shown that these two sets coincide almost everywhere \cite{mellet2017hele}.

To fully characterize the evolution of the tumor interface, one must track the motion of the free boundary $\partial \Omega(t)$ over time. In \cite{perthame2014hele}, the following kinematic condition was formally derived
\begin{equation*}
   \label{e:condizione_iniziale}
    U_\infty \cdot \nu_{\partial \Omega(t)} = \abs{\nabla p_\infty}  \quad \text{on } \partial \Omega(t),
\end{equation*}
where $U_\infty$ denotes the normal velocity of the moving boundary and $\nu_{\partial\Omega(t)}$ is the unit normal to $\partial\Omega(t)$.
Physically, this condition expresses that the expansion of the saturated region (the tumor core) is driven by the pressure gradient at the boundary.
In the case when the PME is modified by a growth term $G(p)$, this relation has been rigorously justified by compactness arguments along approximating sequences \cite{kim2018porous}.

A key step in rigorously establishing the incompressible limit is to obtain strong compactness of the pressure $p_\gamma$, which is crucial due to the nonlinear term $|\nabla p_\gamma|^2$ in the equation of the pressure.
For the power-law equation of state, this is achieved in \cite{perthame2014hele} via a comparison principle, which leads to an Aronson-B\'enilan type estimate for the Laplacian of the pressure field \cite{AB79}, namely
$$
\Delta p_\gamma  \geq - \frac{C}{\gamma t},
$$
for some constant $C > 0$.
Uniform bounds on $\Delta p_\gamma$ are sufficient to infer strong compactness of $\nabla p_\gamma$, which allows to pass to the limit in the nonlinear terms. 
For this reason, Aronson-B\'enilan type estimates have received much attention in recent years, and a wide variety of generalizations have been developed. 
Such estimates are known to hold for other nonlinear diffusion equations, such as the filtration equation $\partial_t n = \Delta \phi(n)$ \cite{crandall1982}, and for multi-species models \cite{bubba2020hele}. They have also been extended to PME systems coupled with nutrient diffusion or external drift terms \cite{david2021free, david2024incompressible}.
More recently, in \cite{bevilacqua2022aronson}, a weaker form of the Aronson-B\'enilan estimate valid in $L^p$ spaces for any $p \in [1, \infty]$ has been derived, allowing the application of the comparison principle even in the presence of negative source terms.

We also point out that the incompressible limit procedure allows the transfer information from the Porous Medium Equation to the geometric Hele-Shaw flow. Roughly speaking, one can construct a solution to the PME and establish uniform estimates with respect to the parameter $\gamma$ (in the case where the equation of state is the power-law one). Then, by taking the limit as $\gamma \to +\infty$, the limiting functions are shown, under suitable conditions, to solve the corresponding free boundary problem, see \cite{gil2001convergence, gil2003boundary, kim2003uniqueness}.
A significant contribution in such a direction is \cite{kim2016free}, where the Authors apply the viscosity solutions framework to the PME to derive existence and regularity results for the geometric limiting problem. Precisely, they obtain pointwise information on the evolution of the free boundary and on the propagation speed of the tumor region, see \cite[Section 4]{kim2016free} and \cite[Section 5]{kim2016free} respectively.
We also highlight \cite{kim2018porous}, where two key results are obtained. On one hand, the velocity law on the free boundary $\partial \Omega(t)$,
\[
U_\infty \cdot \nu_{\partial \Omega(t)} = \frac{\abs{\nabla p_\infty}}{1- \min\left(1, n_\infty\right)},
\]
originally conjectured in \cite{perthame2014hele}, is rigorously proved. On the other hand, using again a viscosity solution approach and considering a more general class of initial data, the authors show that both the density $n_\gamma$ and the pressure $p_\gamma$ converge locally uniformly to $(n_\infty, p_\infty)$, so that $p_\infty$ is a solution to the Hele-Shaw problem in the viscosity sense.
Finally, we remark that in several extensions of the Porous Medium Equation, such as the inclusion of source and drift terms \cite{kim2019singular}, modifications to the equation of state \cite{kim2018uniform}, and multi-species interactions \cite{gwiazda2019two}, the incompressible limit procedure still provides a robust framework to characterize the existence and regularity of solutions to the Hele-Shaw flow along converging sequences.

\subsection{Main results}

In this paper, we study the existence and regularity of solutions to a geometric free boundary problem arising from a model of tissue growth. Despite the huge theory on the Porous Media Equation, we do not consider any incompressible limit procedure, but we conjecture that such a limit could be rigorously derived (see the discussion in \cref{subsec:phisicalmotivation} below). Instead, we start directly from an elliptic free boundary problem, 
we investigate the existence of viscosity solutions 
and we analyze the regularity of the associated free boundary. The problem is set in a bounded domain where part of such a box is inaccessible and acts as an obstacle.
Precisely, in the rest of the paper, we always assume that:
\begin{itemize}
    \item[($\mathcal{B}$)] the box $\mathcal{B}\subset\R^d$ is a smooth bounded open set (box). Physically, it is the container where the tumor evolves;
    \item[(K)] the obstacle $K\subset\overline{\mathcal{B}}$ is a compact set with smooth boundary. Physically, it is a non-accessible region by the tumor;
    \item[(D)] we define $D := \mathcal{B}\setminus K$. Physically, it is the accessible region by the tumor;
    \item[(V)] we consider $V\in \R^d$ a constant vector field. Physically, it is main direction along with the tumor changes. 
\end{itemize}

In the above setting, the precise mathematical formulation of the problem we want to study is the following
\begin{equation}\label{e:viscosity-sol-K_D}
\left\{
    \begin{aligned}
         &\Delta u = 0 && \hbox{in }  \Omega_u\cap D := \{u>0\} \cap D,\\
     &\abs{\nabla u}^2 = \nabla u\cdot V&& \hbox{on } \pa \Omega_u\cap D,\\
     &\abs{\nabla u}^2\ge\nabla u\cdot V && \hbox{on } \pa \Omega_u\cap K\cap \mathcal{B},\\
     &u=0&& \hbox{in } K,
    \end{aligned}
     \right.
\end{equation}
and we refer to \cref{fig:geo_setting} for a graphical representation.
\begin{figure}[H]
    \centering
    \begin{tikzpicture}[rotate=90, scale= 1]
\coordinate (O) at (0,0);
\draw[fill=blue, opacity=0.05] (0,0) circle [radius = 20mm];
\draw[thick] (O) circle [very thick, radius=2cm, name path=c];
\draw [thick, color=red, name path=2] plot [smooth] coordinates {(-1.12,1.65) (-0.8,0.6) (-0.8,-0.4) (-1.12,-1.65)};

\draw [thick, color=blue!5, name path=1] plot [smooth] coordinates {(1.12,-1.65) (0.75,-0.6) (0.77,0.4) (1.12,1.65) };

\begin{scope}[transparency group, opacity=0.2]
\draw[draw=none, fill=french] (0,0) -- +(125:2cm) arc (125:235:2cm);
\end{scope}
\tikzfillbetween[of=1 and 2] {color=blue!5};

\draw[thick] (O) circle [very thick, radius=2cm];

\draw [very thick, color=red, name = due] plot [smooth, tension=0.9] coordinates {(-0.964,-1.1) (-0.75,-0.65)(-0.5, -0.35)
(-0.3,-0.2) (0.1,0) (0.5,0.5) (1.1,0.9) (1.29,1.53)};
\draw [very thick, color=french, name = uno] plot [smooth] coordinates {(-1.12,1.65) (-0.8,0.6) (-0.8,-0.4) (-1.12,-1.65)};

\draw[very thick, color= french] (O) circle [very thick, radius=2cm];

\draw[thick, color=cyan, ->] (-0.8,-2.5) -- (0.8,-2.5);
\draw node [color = black]  at (-0.2,1) {$\{u=0\}$};
\draw node [color = red]  at (1.15,0.4) {$\partial \Omega_u$};
\draw node [color = french] at (-1.38,-0.5) {$K$};
\draw node [color = black] at (0.3,-1.1) {$\Omega_u\cap D$};
\draw node[color = french] at (1,2.25) {$\mathcal{B}$};
\draw node[color = cyan] at (-1.25,-2.5) {$V$};
\end{tikzpicture}

    \caption{Graphical representation of the geometrical setting of the problem.}
    \label{fig:geo_setting}

\end{figure}
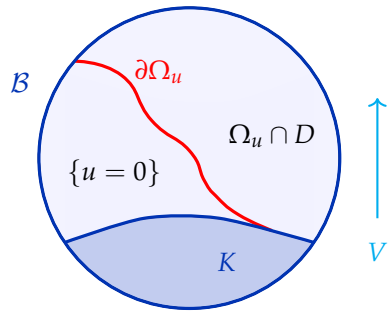

\subsubsection{Physical motivations}\label{subsec:phisicalmotivation}
From a physical point of view, the tumor invades an accessible region $D$, its motion is directed along a preferred constant vector $V$, but it cannot penetrate the entire domain $\mathcal{B}$. Moreover, we assume that tumor invasion occurs only in a localized region of the body, without metastasis, and that the compact set $K$ acts as an obstacle to the spread of the tumor. In addition, \eqref{e:viscosity-sol-K_D} can be obtained by taking formally a traveling wave solution to the Hele-Shaw problem. 

Although a rigorous derivation of the incompressible limit is beyond the scope of this paper, we expect that \eqref{e:viscosity-sol-K_D} can be obtained as 
an incompressible limit of the following Porous Medium-type system
\[
\left\{
\begin{aligned}
    &\frac{\partial n}{\partial t} - \operatorname{div}\left(n \nabla p\right) = 0 && \text{in } D, \\
    &n = 0 && \text{in } K,
\end{aligned}
\right.
\]
and the system must be complemented with appropriate initial conditions (see \cite{kim2010degenerate, david2024incompressible}). 
Assuming the power-law equation of state $p = n^\gamma$, the pressure satisfies
\[
\left\{
\begin{aligned}
    &\frac{\partial p}{\partial t} = \gamma p \Delta p + |\nabla p|^2 &&\text{in } D\\
    &p = 0&&\text{in } K
\end{aligned}
\right.
\]
Formally, dividing by $\gamma > 1$ and taking the limit as $\gamma \to +\infty$
and looking for solutions of the form 
$$
u(x) := p_{\infty}(x+ t V),
$$
one expects that the limit function $u$ solves \eqref{e:viscosity-sol-K_D}.
We conjecture that a rigorous passage to the limit, establishing the connection between the Porous Medium system and the associated free boundary problem in the presence of an obstacle, can be constructed. However, such an analysis lies beyond the scope of the present work.

\subsubsection{Statement of the main results}

In this section, we list the main results of our paper.  The first one is to provide existence of viscosity solutions of \eqref{e:viscosity-sol-K_D} in the sense of \cref{def:def-sol_sezione_esistenza} via Perron's method (see \cite{crandalllions-viscosity,useguide,caffarelli2005geometric}).
\begin{theorem}[Existence of viscosity solutions]
 \label{t:existence-intro} 
    Let $\phi:\partial \mathcal{B}\to\R$ be an assigned non-negative continuous function such that $\phi=0$ on $\partial \mathcal{B} \cap \partial K$.
    Let $\mathcal{A}$ be the family of supersolutions defined as follows
    $$
     \mathcal{A}:=\left\{w\in C^0(\overline{\mathcal{B}},\R^+): \ \Delta w \leq 0 \ \hbox{ in } \Omega_w\cap D,\ \abs{\nabla w}^2\le \nabla w\cdot V  \ \hbox{ on } \partial\Omega_w\cap D,\ \hbox{ and } \ \ w\equiv 0\ \hbox{ in } \ K
    \right\}.
    $$
    Then, the function $u:\overline{\mathcal{B}}\to\R^+$ defined as
    \begin{align*}
        u(x):=\inf\left\{w(x):\ w\in \mathcal{A} , \ w\ge \phi\, \hbox{ on }\,\partial\mathcal{B}\right\}
    \end{align*}
    is well-defined and it is a viscosity solution of \eqref{e:viscosity-sol-K_D}. 
    Moreover, $u$ is locally Lipschitz and it is continuous up to the boundary $\partial\mathcal{B}$, with $u=\phi$ on $\partial \mathcal{B}$.
\end{theorem}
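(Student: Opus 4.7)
I would follow the classical Perron construction in the viscosity framework (cf.~\cite{caffarelli2005geometric,useguide}), adapted to the obstacle $K$ and the one-sided free boundary condition on $\partial\Omega_u\cap K\cap\mathcal{B}$. First one needs $\mathcal{A}\neq\emptyset$: the harmonic extension $w_0$ of $\phi$ into $D$ which vanishes on $\partial K$, extended by zero on $K$, belongs to $\mathcal{A}$, because $\Omega_{w_0}\cap D=D$ makes $\partial\Omega_{w_0}\cap D$ empty and the free boundary inequality vacuous, while $\Delta w_0=0$ in $D$. The trivial lower bound $u\geq 0$ combined with $u\leq w_0$ gives $0\leq u\leq w_0$ in $\overline{\mathcal{B}}$, so $u$ is well-defined and bounded.

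Next I would verify that the lower semicontinuous envelope $u_*$ is a supersolution. This is the soft direction: if a smooth $\varphi$ touches $u_*$ strictly from below at some $x_0$, I pick a minimizing sequence $w_n\in\mathcal{A}$ with $w_n(x_n)\to u_*(x_0)$ and $x_n\to x_0$, and use slightly shifted copies of $\varphi$ as test functions for each $w_n$. Passing to the limit yields $\Delta\varphi(x_0)\le 0$ if $x_0\in\Omega_u\cap D$, and $|\nabla\varphi|^2\le\nabla\varphi\cdot V$ at $x_0$ if $x_0\in\partial\Omega_u\cap D$. The condition $u\equiv 0$ on $K$ is preserved trivially by pointwise infima, so $u_*$ satisfies all four requirements of $\mathcal{A}$ in the viscosity sense.

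The heart of the proof is showing that the upper semicontinuous envelope $u^*$ is a viscosity subsolution, which I would argue by contradiction. Suppose this fails at some $x_0$ with a smooth $\varphi\geq u^*$ touching at $x_0$. Three cases arise: (i) $x_0\in\Omega_u\cap D$ with $\Delta\varphi(x_0)>0$; (ii) $x_0\in\partial\Omega_u\cap D$ with $|\nabla\varphi|^2<\nabla\varphi\cdot V$ at $x_0$; or (iii) $x_0\in\partial\Omega_u\cap K\cap\mathcal{B}$ with the corresponding one-sided condition violated. In each case I would construct a competitor
\[
\tilde u(x):=\begin{cases}\min\{u(x),\,\varphi(x)-\eta\zeta(x)\}&\text{in }B_r(x_0),\\ u(x)&\text{outside,}\end{cases}
\]
where $\zeta$ is a cut-off compactly supported in $B_r(x_0)$ and $\eta>0$ is small; the strict inequality in the violated condition, combined with continuity of the test functions, allows one to choose $\eta,r$ so that $\tilde u\in\mathcal{A}$ while $\tilde u(x_0)<u(x_0)$, contradicting the infimum definition of $u$. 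The main obstacle is case (iii): the perturbation must remain strictly inside $\mathcal{B}\setminus K$ so that $\tilde u\equiv 0$ on $K$ is preserved, and the one-sided nature of the boundary condition on $\partial\Omega_u\cap K$ must be handled carefully. I would resolve this by shrinking $B_r(x_0)$ away from $K$ and invoking a Hopf-type lower bound for the positive region of $u$ to ensure $\varphi-\eta\zeta\geq 0$ near $\partial K$.

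Finally, local Lipschitz regularity of $u$ follows from the harmonicity in $\Omega_u\cap D$ combined with the Bernoulli-type condition $|\nabla u|^2=\nabla u\cdot V$, which forces linear decay $u(x)\lesssim \dist(x,\partial\Omega_u)$ and hence interior gradient bounds by standard Caffarelli-type arguments. Continuity up to $\partial\mathcal{B}$ together with the identification $u=\phi$ there is obtained by squeezing $u$ between the harmonic upper barrier $w_0$ and a lower barrier built by translating admissible competitors from $\mathcal{A}$, exploiting the smoothness of $\phi$ and of $\partial\mathcal{B}$.
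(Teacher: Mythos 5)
Your overall architecture (Perron infimum over $\mathcal{A}$, nonemptiness via the harmonic extension of $\phi$ vanishing on $K$, soft direction for the supersolution property, contradiction with minimality for the subsolution property, barriers at $\partial\mathcal{B}$) matches the paper's, but the key step — the contradiction argument at free boundary points — does not work as you wrote it. Membership in $\mathcal{A}$ requires the competitor to satisfy $\Delta w\le 0$ in $\Omega_w\cap D$ \emph{and} $|\nabla w|^2\le\nabla w\cdot V$ on $\partial\Omega_w\cap D$. When the violated condition is the subsolution free boundary inequality at $x_0\in\partial\Omega_u\cap D$ (or at a contact point on $K$), the contradiction hypothesis only controls $\nabla\varphi(x_0)$, not $\Delta\varphi$, so $\varphi-\eta\zeta$ need not be superharmonic in the region where your minimum selects it; hence $\tilde u\notin\mathcal{A}$ in general. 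Moreover the contradiction cannot be ``$\tilde u(x_0)<u(x_0)$'': at free boundary points $u(x_0)=0$ and every admissible $w$ is nonnegative. (In your case (i) the sign is also reversed: failure of the subsolution property means a test function touching from above with $\Delta\varphi(x_0)<0$.) The paper circumvents both problems: harmonicity of $u$ in $\Omega_u\cap D$ is proved directly as a locally uniform limit of harmonic replacements, and for the free boundary condition the naive bump is replaced by a genuinely harmonic competitor $v_{\sigma,r}$ solving a Dirichlet problem in a bumped domain $\Xi_\sigma$, whose free boundary inequality is verified via boundary Schauder estimates; the contradiction is then that the new admissible function vanishes identically on a small ball $B_{\sigma r}(x_0)$ while $u>0$ somewhere there, since $x_0\in\partial\Omega_u$ (see the proof of \cref{prop:supersol}). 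Your worry in case (iii) is instead moot — a minimum with $u$ vanishes on $K$ automatically — and the proposed fix of ``shrinking $B_r(x_0)$ away from $K$'' is impossible because $x_0\in K$.

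There is a second gap concerning continuity. You establish the equation for the envelopes $u_*$ and $u^*$, but the theorem is about $u$ itself; without a comparison principle (which the paper does not prove, and which is delicate here because $V\cdot\nu$ may degenerate), you never identify $u_*=u^*=u$. Your closing claim that the Bernoulli condition forces $u\lesssim\dist(\cdot,\partial\Omega_u)$ also fails for points whose nearest free boundary point lies on the obstacle, where only the one-sided inequality $|\nabla u|^2\ge\nabla u\cdot V$ holds and gives no gradient bound; the paper handles this case separately by comparing with the harmonic function in $D$ vanishing on the smooth set $K$ (Step 2 of \cref{lemma:w_viscotity_sol_lipschitz}). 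The paper's order of argument fixes both issues at once: first replace each $w\in\mathcal{A}$ by a harmonic element of $\mathcal{A}^+$, prove equi-Lipschitz estimates for these replacements, and deduce that $u$ is locally Lipschitz and a locally uniform limit of elements of $\mathcal{A}^+$; this uniform approximation is also exactly what makes the transfer of touching points in your ``soft'' direction rigorous, since the free boundary conditions are tied to the positivity sets of the approximating functions. As written, your proposal is missing these ingredients, so the two central steps (the admissible competitor and the continuity of $u$) would not go through without substantial repair along the paper's lines.
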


Then, we investigate the interior regularity of the free boundary $\partial \Omega_u \cap D$ for viscosity solutions $u$ of \eqref{e:viscosity-sol-K_D}. 
Precisely, we say that $x_0$ is an {\em interior regular point} if there exists $\nu\in\partial B_1$ such that
$$
u_{x_0, r}(x) := \frac{u(x_0 + rx)}{r} \to c (x \cdot \nu)_{+} \qquad \hbox{where } c := V\cdot \nu,
$$
as $r\to 0^+$ up to a subsequence.
Specifically, around interior regular points
we prove $C^{1,\alpha}$ regularity via an improvement of flatness lemma, in the spirit of \cite{de2011free}. That is, if the solution is sufficiently flat in the direction $\nu$ at a given scale and the non-degeneracy assumption $V\cdot\nu>0$ is satisfied, then the free boundary $\partial \Omega_u \cap D$ is locally a $C^{1,\alpha}$ graph.
To obtain higher regularity, and in particular to show that $\partial \Omega_u \cap D$ is analytic, we perform a hodograph-type transformation (as introduced in \cite{kinderlehrer1977regularity}). The precise statement is given below.

\begin{theorem}[Interior regularity of the free boundary]\label{thm:piatto-implica-regolare-intro}
    Let $u:\mathcal{B}\to\R^+$ be a viscosity solution of \eqref{e:viscosity-sol-K_D} and $x_0\in\partial\Omega_u\cap D$ be an interior regular point with unit normal $\nu$. Suppose that $V\cdot \nu>0$, then:
    \begin{itemize}
        \item the free boundary $\partial\Omega_u$ is a $(d-1)$ analytic graph in a neighborhood of $x_0$;
        \item the function $u$ is analytic in a neighborhood of $x_0$ in $\overline{\Omega}_u$.
    \end{itemize}
\end{theorem}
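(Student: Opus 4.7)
The plan is to follow the two-step scheme classical for one-phase Bernoulli-type problems: first establish $C^{1,\alpha}$ regularity of $\partial\Omega_u$ near $x_0$ via a De Silva--type improvement-of-flatness argument, then promote the regularity to analytic by a Kinderlehrer--Nirenberg partial hodograph transformation. The initial flatness at $x_0$ is supplied by the definition of interior regular point, together with $c:=V\cdot\nu>0$: for every $\e>0$ there exists $r>0$ such that
\begin{equation*}
c(x\cdot\nu-\e)_+\le u_{x_0,r}(x)\le c(x\cdot\nu+\e)_+ \qquad \text{in }B_1.
\end{equation*}

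The first key step is an improvement of flatness: there exist universal $\rho\in(0,1)$ and $\e_0>0$ so that, whenever the previous $\e$-flatness holds with $\e\le\e_0$ and $V\cdot\nu>0$, then $u_{x_0,\rho r}$ is $(\rho\e/2)$-flat in a new direction $\nu'$ with $|\nu-\nu'|\le C\e$. I would prove this by contradiction and compactness: suppose it fails along a sequence $\e_k\to 0$, and consider the rescaled deviations
\begin{equation*}
\wi u_k(x)=\frac{u_k(x)-c(x\cdot\nu_k)_+}{c\,\e_k}\qquad\text{on }\Omega_{u_k}\cap B_1.
\end{equation*}
A partial Harnack inequality adapted to the boundary relation $\abs{\nabla u}^2=\nabla u\cdot V$ yields uniform H\"older estimates for $\wi u_k$ up to the (moving) free boundary, so that along a subsequence $\wi u_k$ converges locally uniformly to a H\"older function $\wi u$ on $B_1\cap\{x\cdot\nu\ge 0\}$. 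Linearizing the interior equation and the free boundary condition around the profile $c(x\cdot\nu)_+$ shows that $\wi u$ solves
\begin{equation*}
\Delta \wi u=0\ \text{in }B_1\cap\{x\cdot\nu>0\},\qquad (2c\nu-V)\cdot\nabla\wi u=0\ \text{on }B_1\cap\{x\cdot\nu=0\}.
\end{equation*}
Since $(2c\nu-V)\cdot\nu=2c-V\cdot\nu=c>0$, this is Laplace's equation with a strictly oblique Neumann condition, for which boundary Schauder theory gives an affine approximation at the origin with universal error. Choosing $\nu'$ as the unit vector coded by that affine approximation contradicts the supposed failure. Iterating at dyadic scales produces a Cauchy sequence of unit normals $\nu_k$ converging at geometric rate, which encodes $\partial\Omega_u\in C^{1,\alpha}$ and $u\in C^{1,\alpha}(\overline{\Omega}_u)$ in a neighborhood of $x_0$, with $\abs{\nabla u}=c$ on the free boundary.

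Once $\partial\Omega_u$ is a $C^{1,\alpha}$ graph with $\partial_\nu u\ge c/2>0$ in a neighborhood of $x_0$, I would apply a partial hodograph transformation. After a rotation we may assume $\nu=e_d$ and $\partial\Omega_u=\{x_d=\vphi(x')\}$ locally; the change of variables $(y',y_d)=(x',u(x))$ is then a $C^{1,\alpha}$ diffeomorphism of a neighborhood of $x_0$ in $\overline\Omega_u$ onto $B'_r\times[0,\delta)$, and the unknown $v(y):=x_d(y)$ satisfies a fully nonlinear second-order uniformly elliptic equation on $\{y_d>0\}$ (obtained by rewriting $\Delta u=0$ via the change of variables) together with a nonlinear oblique boundary condition on $\{y_d=0\}$ obtained from $\abs{\nabla u}^2=\nabla u\cdot V$, which encodes $\vphi$. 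Every coefficient is a rational, hence analytic, function of $\nabla v$ and $D^2 v$, and the obliqueness $c>0$ is preserved. Standard Schauder bootstrap raises the regularity to $C^\infty$, and the Morrey--Nirenberg analyticity theorem for nonlinear elliptic boundary value problems with analytic data then gives $v$ analytic up to $\{y_d=0\}$. Inverting the transformation yields that $u$ is analytic on $\overline\Omega_u$ in a neighborhood of $x_0$ and that $\partial\Omega_u=\{x_d=v(y',0)\}$ is an analytic $(d-1)$-dimensional graph.

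The main obstacle is the improvement of flatness step: one needs a partial Harnack inequality specifically adapted to the nonhomogeneous, non-variational boundary condition $\abs{\nabla u}^2=\nabla u\cdot V$ (as opposed to the usual Bernoulli $\abs{\nabla u}=1$), together with a careful justification that the limiting oblique Neumann datum $(2c\nu-V)\cdot\nabla\wi u=0$ is really inherited by the limit. The assumption $V\cdot\nu>0$ is used in two essential places: it makes the limiting oblique boundary condition strictly transversal, and therefore regularizing, and it guarantees that the hodograph change of variables is a genuine diffeomorphism in the second half of the argument.
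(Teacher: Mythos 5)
Your proposal is correct and follows essentially the same route as the paper: partial Harnack plus compactness for the rescaled deviations, linearization to the oblique problem $\Delta\widetilde u=0$ in the half ball with $\left(2(V\cdot\nu)\nu-V\right)\cdot\nabla\widetilde u=0$ on the flat part, iteration of the improvement of flatness to get $C^{1,\alpha}$, and then a partial hodograph transform $(y',y_d)=(x',u(x))$ combined with oblique Schauder and ADN--Morrey estimates to reach analyticity. The one point you leave implicit --- that the transversality $V\cdot\nu_k\ge\delta$ survives the iteration because the normals change by a geometrically summable amount $O(\eps\eta^{k\gamma})$ --- is precisely how the paper closes its rate-of-convergence lemma, so nothing essential is missing.
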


We point out that \cref{thm:piatto-implica-regolare-intro}
does not provide any information about free boundary points $x_0 \in \partial \Omega_u$ lying on the obstacle $K$. To address boundary regularity at such contact points, we first show that all contact points $x_0 \in \partial \Omega_u \cap K \cap \mathcal{B}$ have as a blow-up limit 
a nontrivial plane, and thus they are regular. Then, we get $C^{1,\alpha}$ regularity of the free boundary around contact points via an improvement of flatness lemma in the spirit of \cite{changlarasavin}. The precise statement is the following.

\begin{theorem}[Boundary regularity for contact points]
\label{th:boundary_reg_contact_points}
    Let $u:\mathcal{B}\to\R^+$ be a viscosity solution of \eqref{e:viscosity-sol-K_D} and $x_0\in\partial\Omega_u\cap K\cap\mathcal{B}$ be a contact point. 
    If $V\cdot \nu_K(x_0)>0$, where $\nu_K(x_0)$ is the outer normal to $\partial K$ in $x_0$, then for some $\alpha \in (0,1)$, 
    $\partial\Omega_u$ is a $C^{1,\alpha}$-graph in a neighborhood of $x_0$. 
\end{theorem}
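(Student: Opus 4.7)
The plan is to follow the two-step strategy sketched just before the statement: first classify blow-ups at $x_0$ to show the point is regular, then run an improvement-of-flatness scheme at the boundary in the spirit of \cite{changlarasavin}, with the linearized problem being the thin obstacle with oblique boundary conditions whose $C^{1,\alpha}$ theory is developed earlier in the paper.

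For the blow-up step, I would exploit the local Lipschitz bound from \cref{t:existence-intro} to pass, up to a subsequence, to a limit $u_0$ of the rescalings $u_{x_0,r}(x) = u(x_0+rx)/r$. Because $\partial K$ is smooth, the rescaled obstacles $(K-x_0)/r$ converge in the local Hausdorff sense to the half-space $H := \{x : x\cdot \nu_K(x_0)\ge 0\}$, so that $u_0$ vanishes on $H$ and satisfies the free boundary conditions of \eqref{e:viscosity-sol-K_D} on $\R^d\setminus H$. A classification argument, combining Alt--Caffarelli-type monotonicity with the hypothesis $V\cdot \nu_K(x_0)>0$, then forces $u_0$ to coincide with a one-plane solution $c\,(x\cdot \nu)_+$ for some direction $\nu$ with $V\cdot \nu>0$; the unilateral condition $|\nabla u|^2\ge \nabla u\cdot V$ on $\partial\Omega_u\cap K\cap \mathcal{B}$, paired with $V\cdot \nu_K(x_0)>0$, rules out degenerate limits and pins down the slope $c = V\cdot \nu$.

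Having this, I would next set up the improvement-of-flatness dichotomy at the contact point. After a local smooth diffeomorphism straightening $\partial K$ near $x_0$ to $\{x_d=0\}$, the problem becomes a free boundary problem in the half-space with smooth lower-order perturbations. Assuming $\varepsilon$-flatness of $u$ in a direction $\nu$ close to $\nu_K(x_0)$ at scale $r$, I would argue by contradiction that flatness improves at scale $\rho r$ for some universal $\rho\in(0,1)$. If not, suitable normalizations $\tilde u_k := (u_k - c\,x\cdot\nu)/\varepsilon_k$ of the counterexamples converge, via a Harnack-chain partial-Harnack argument adapted to the obstacle (again in the spirit of \cite{changlarasavin,de2011free}), to a function $\tilde u$ that solves a thin obstacle problem with an oblique Neumann-type condition, obtained by linearizing $|\nabla u|^2 = \nabla u\cdot V$ around the plane profile. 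The $C^{1,\alpha}$ estimate already proved for this linear problem then yields the quadratic decay needed to contradict the assumed non-improvement, closing the dichotomy.

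Iterating the improvement at dyadic scales, the normals of the flat level sets converge at a Hölder rate, and a standard bookkeeping turns this into $C^{1,\alpha}$ regularity of $\partial\Omega_u$ as a graph over the tangent hyperplane at $x_0$. The main obstacle I anticipate is the compactness step into the linearized problem: one must ensure that the oblique boundary condition on $\{x_d=0\}$ passes to the limit in the correct thin-obstacle form, that the curvature of $\partial K$ and the perturbative nonlinear terms in $|\nabla u|^2$ yield errors of order $o(\varepsilon_k)$ uniformly, and that the non-degeneracy $V\cdot \nu_K>0$ is stable at nearby contact points so that the iteration runs uniformly in a whole neighborhood of $x_0$.
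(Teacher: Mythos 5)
Your overall architecture (blow-up classification at the contact point, straightening of $\partial K$, boundary improvement of flatness linearizing onto the oblique thin obstacle problem, iteration) matches the paper's, but two steps as you describe them contain genuine gaps. First, the classification of blow-ups cannot be run through ``Alt--Caffarelli-type monotonicity'': as stressed in the introduction, \eqref{e:viscosity-sol-K_D} has no variational structure, so Weiss/Alt--Caffarelli monotonicity formulas are not available, and non-degeneracy may genuinely fail because the free boundary condition $|\nabla u|^2=\nabla u\cdot V$ does not bound $|\nabla u|$ from below. The paper instead proves the expansion $u(x)=\beta\, (x-x_0)\cdot e_d+o(|x-x_0|)$ (in the flattened coordinates) by a two-barrier comparison argument in the spirit of \cite[Lemma 11.17]{caffarelli2005geometric}, and then pins down $\beta$ with two further explicit barriers. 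Second, and more importantly, your conclusion that the blow-up is ``forced to coincide with a one-plane solution $c(x\cdot\nu)_+$ with $c=V\cdot\nu$'' is not correct at every contact point: the true statement is a dichotomy (\cref{prop:punti_regolari_al_bordo}) --- either $\beta$ equals the specific value $\omega_{x_0}$ (this happens at branching points), or $\beta>\omega_{x_0}$ and then $u>0$ in a half-ball $B_\rho^+(x_0)$, i.e.\ the free boundary locally collapses onto the obstacle. In the second case your flatness scheme with slope $V\cdot\nu$ simply does not apply; regularity there is instead immediate because $\partial\Omega_u$ locally coincides with the (smooth, flattened) boundary of $K$. Without this dichotomy your argument cannot cover interior contact points, and without a substitute for monotonicity the ``ruling out degenerate limits'' step is unsupported.

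There is also a structural difference in the iteration that your ``standard bookkeeping'' glosses over: at the boundary the improvement of flatness must be performed with the direction $e_d$ and the slope $\omega_0$ held fixed (the obstacle prevents tilting), and the one-sided condition $|\nabla u|^2\ge\nabla u\cdot V$ on $\{x_d=0\}$ breaks the interior partial Harnack competitor on one side, which is why the paper needs the extra barrier of \cref{lemma:useful_non_piace_Giulia} and a case analysis in \cref{lemma:PH_inequality_bordo}. Moreover, the final $C^{1,\alpha}$-graph statement is not obtained by iterating at $x_0$ alone: one must take an arbitrary nearby free boundary point $y$, project it onto the set of branching points, combine the boundary rate of convergence (\cref{lemma:proprieta_blowup_boundary}) at the projection with the interior $\eps$-regularity (\cref{remark:interior}) at $y$, and show along the way (Step 1 of \cref{prop:grafico}, using the graphical property \cref{lemma:rimango_in_parte_positiva}) that near a branching point the thin slice $B_r'(y)$ contains only vanishing points. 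These ingredients are what make the iteration ``run uniformly in a whole neighborhood of $x_0$'', which you correctly flag as the main difficulty but do not resolve.
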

The key step to prove \cref{th:boundary_reg_contact_points} is to study the $C^{1,\alpha}$ regularity for viscosity solutions of the free boundary oblique thin obstacle problem given by
 \begin{equation}
 \label{e:thin_intro}
        \left\{
\begin{aligned}
    & \Delta v = 0 && \hbox{ in } B_{1}^+:= B_1 \cap\{x_d>0\},\\
    & \nabla v\cdot W=0 && \hbox{ on } B_{1}'\cap\Omega_v := B_1 \cap\{x_d=0\} \cap \Omega_v,\\
    & \nabla v\cdot W\le 0 && \hbox{ on } B_{1}',\\
    &v\ge0 && \hbox{ on } B_{1}',\\
\end{aligned}
        \right.
    \end{equation}
where $W$ is an oblique constant vector field, i.e.~$W_{d}:=W\cdot e_d >0$. 
The regularity theory for free boundary thin obstacle problems ($W=e_d$) has been extensively developed over the past decades \cite{caf79,ac04,acs08,gp09,fs18,fr21,sy23,cv24} and we also mention \cite{survey} as a recent survey on the topic. We remark that the $C^{1, \alpha}$ regularity for the corresponding non-local version of \eqref{e:thin_intro} is known \cite{criticaldrift}. However, their method cannot be apply in the local context and our proof is in the spirit of \cite{caf79}. Thus, the precise statement of the $C^{1,\alpha}$ regularity for \eqref{e:thin_intro} is the following.

\begin{theorem}[$C^{1,\alpha}$ regularity for the oblique thin obstacle problem]
    \label{thm:oblthin2}
    For all $\delta >0$, there are constants $C>0$ and $\alpha_0 \in (0,1)$ such that the following holds. Suppose that for some unit vector $W\in\partial B_1$, we have $W \cdot e_d\ge\delta$. 
  Let $v:\overline {B_1^+}\to\R$ be a viscosity solution of the oblique thin obstacle problem \eqref{e:thin_intro}.
     Then, $v\in C^{1,\alpha_0}(\overline{B_{\sfrac{1}{2}}^+})$ and the following estimate
     holds true
     $$
     \|v\|_{C^{1,\alpha_0}(\overline{B_{\sfrac{1}{2}}^+})}\le C\|v\|_{L^\infty(B_1^+)}.
     $$
\end{theorem}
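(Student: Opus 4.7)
My plan is to absorb the oblique boundary vector $W$ into the coefficients of the equation, reducing \eqref{e:thin_intro} to a standard Signorini problem with a pure Neumann condition, at the price of replacing the Laplacian by a constant coefficient uniformly elliptic operator. Setting $c:=(W_1,\dots,W_{d-1})/W_d\in\R^{d-1}$ and introducing
\[
M:=\begin{pmatrix}I_{d-1} & -c\\ 0 & 1\end{pmatrix},\qquad y:=Mx,\qquad \tilde v(y):=v(M^{-1}y),
\]
a direct computation gives $\Delta_x v=\operatorname{tr}(A\,D_y^2\tilde v)$ with $A:=MM^T$ symmetric positive definite and $W\cdot\nabla_x v=W_d\,\partial_{y_d}\tilde v$. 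Since $W\cdot e_d\ge\delta$, the ellipticity constants of $A$ depend only on $\delta$, and since $y_d=x_d$ the hyperplane $\{x_d=0\}$ is preserved. Hence, on a half-ball around the origin in $\{y_d>0\}$, $\tilde v$ is a viscosity solution of the classical Signorini problem $L\tilde v=0$, $\tilde v\ge 0$, $\partial_{y_d}\tilde v\le 0$, $\tilde v\,\partial_{y_d}\tilde v=0$ on the flat boundary, for the constant coefficient operator $L\cdot:=\operatorname{tr}(A\,D^2\cdot)$, and it suffices to prove the $C^{1,\alpha_0}$ estimate for $\tilde v$.

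For this reduced problem I would follow the scheme of Caffarelli \cite{caf79}. \textbf{Step 1:} comparison of $\tilde v$ with the $L$-harmonic extension of its boundary trace, together with $\tilde v\ge 0$ on the flat part of the boundary, yields a Lipschitz bound $\|\nabla\tilde v\|_{L^\infty}\le C\|\tilde v\|_{L^\infty}$ on a slightly smaller half-ball. \textbf{Step 2:} for any tangential direction $e\in\operatorname{span}(e_1,\dots,e_{d-1})$, the function $w:=\partial_e\tilde v$ is $L$-harmonic in the interior, and by differentiating the complementarity condition along $e$ on a penalised approximation one extracts, in the viscosity limit, the sign relation $w\,\partial_{y_d}w\le 0$ on the flat boundary. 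Reflecting $w$ across $\{y_d=0\}$ in a way compatible with $L$, the positive and negative parts $w_\pm$ become disjoint-support subsolutions of a conjugate constant coefficient elliptic operator, so the Alt--Caffarelli--Friedman monotonicity formula (in the constant coefficient version, see \cite{caffarelli2005geometric}) yields the decay that translates into $w\in C^{0,\alpha_0}$ up to the flat boundary for some $\alpha_0\in(0,1)$ depending only on $\delta$ and $d$. \textbf{Step 3:} the bound on all tangential derivatives gives $C^{1,\alpha_0}$ regularity for the trace $\tilde v|_{\{y_d=0\}}$, so classical boundary Schauder estimates for $L$-harmonic functions with $C^{1,\alpha_0}$ Dirichlet data promote this to the full estimate $\tilde v\in C^{1,\alpha_0}$; pulling back through $M$ concludes the proof.

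The main technical obstacle will be Step 2: the sign condition $w\,\partial_{y_d}w\le 0$ must be extracted carefully from the viscosity form of the Signorini conditions for a function $w=\partial_e\tilde v$ that is a priori only Lipschitz, and the ACF monotonicity formula must be validated for the non-Laplacian operator $L$. A convenient way to handle the latter is to perform a further affine change of variables reducing $L$ to the Laplacian; this tilts the flat Neumann hyperplane but preserves the Signorini complementarity, so that after an additional rotation the classical half-space ACF formula for harmonic functions becomes applicable. Once this core step is established, the remaining pieces follow along the standard lines of the Signorini regularity theory.
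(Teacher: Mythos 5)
Your reduction via $y=Mx$ with $M=\begin{pmatrix}I_{d-1}&-c\\0&1\end{pmatrix}$ is correct as far as it goes: $\tilde v$ satisfies $\operatorname{tr}(A D^2\tilde v)=0$ with $A=MM^T$ and the Signorini conditions are expressed through $\partial_{y_d}\tilde v$. But this does \emph{not} eliminate the obliqueness; it merely transfers it from the boundary vector into the operator. Indeed $A=\begin{pmatrix}I+cc^T&-c\\-c^T&1\end{pmatrix}$, so the $L$-conormal direction on $\{y_d=0\}$ is $Ae_d=(-c,1)=\tfrac{1}{W_d}(-W',W_d)=\tfrac{1}{W_d}\widetilde W$, which is precisely the ``adjoint'' vector $\widetilde W$ that appears in the paper and is not parallel to $e_d$ unless $W'=0$. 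Your boundary condition is still $\partial_{y_d}\tilde v$, not the conormal $\partial_{Ae_d}\tilde v$, so what you obtain is again an oblique Signorini problem, now with the tilt hidden in the mismatch between the operator's conormal and the Signorini direction.

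This is not a fixable technical inconvenience: it is a geometric obstruction. Any linear map that sends $\{x_d=0\}$ to $\{y_d=0\}$ must be block upper triangular, and requiring it in addition to preserve the Laplacian forces it to be a block rotation $\mathrm{diag}(O,\pm1)$; such a map sends $W=(W',W_d)$ to $(OW',\pm W_d)$, which is parallel to $e_d$ only if $W'=0$. Hence there is no linear change of variables under which the oblique problem becomes the classical Signorini problem. Consequently the core of your Step 2 fails: the oblique reflection $R$ compatible with $L$ (with $RAR^T=A$, $Re_d=-e_d$, $R$ fixing $\{y_d=0\}$) makes the even extension of $w=\partial_e\tilde v$ $L$-harmonic only when the $L$-conormal derivative vanishes, but what vanishes here is $\partial_{y_d}w$, not $\partial_{Ae_d}w$. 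So $w_\pm$ are not $L$-subharmonic across $\{y_d=0\}$ and the Alt--Caffarelli--Friedman monotonicity formula, even in its constant-coefficient variant, is not applicable. Your suggested fallback (a second affine map reducing $L$ to $\Delta$, then a rotation) runs into the same obstruction: after the second map $z=Ny$ with $NAN^T=I$, the boundary direction is $Ne_d$ while the new normal is $N^{-T}e_d$, and these are parallel iff $A^{-1}e_d\parallel e_d$, which again forces $c=0$.

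The paper avoids ACF entirely and instead follows Caffarelli's original barrier scheme from \cite{caf79}, adapted to the oblique setting. The key idea is exactly the one your computation of $Ae_d$ points at: one approaches $\{x_d=0\}$ along the adjoint direction $\widetilde W$ (for which $\partial_{\widetilde W W}v$ is bounded above by tangential semiconvexity, while $\partial_{WW}v$ is not), defines $\sigma_W(x')=\lim_{t\to0^+}\nabla v_\ell(x'+t\widetilde W)\cdot W$ on the penalized approximations $v_\ell$, proves $\sigma_W\le0$ by penalization, shows $\sigma_d=\sigma_W$ a.e.\ on the contact set, locates balls almost entirely in $\{\sigma_W>-\gamma\}$, and then closes with a barrier/iteration argument together with oblique Schauder estimates. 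If you want to pursue an ACF-type route, you would first need a version of the monotonicity formula for genuinely oblique Signorini conditions, which is not available by a change of variables.
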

We point out that the constant $\alpha$ in \cref{th:boundary_reg_contact_points} can be chosen as any number strictly less than the exponent $\alpha_0$ appearing in \cref{thm:oblthin2} (see \cref{th:eps_reg_bordo}). Moreover, we expect that that the optimal exponent $\alpha$ in \cref{th:boundary_reg_contact_points} coincides with the optimal $C^{1,\alpha}$ regularity that can be established for \eqref{e:thin_intro}, and such a regularity is sharp.

\subsection{Difficulties and strategy of the proof}
Here below, we discuss our main results and sketch the fundamental steps of the proof, emphasizing the principal challenges.
\\
\\
{\it Existence of solutions and interior regularity: free boundary problem is not a limit of PME.}
Differently from \cite{kim2018porous}, we do not derive \eqref{e:viscosity-sol-K_D} as a limiting case of the PME, where the pressure law becomes stiffer and stiffer (e.g., as $\gamma \to +\infty$ in a power-law equation of state).  
As a result, we cannot transfer \emph{a priori} estimates or regularity properties, 
from the approximating solutions to the limiting ones.
Since \eqref{e:viscosity-sol-K_D} is not obtained as a limiting problem, we must adopt a different approach to prove existence of solutions. Moreover, we point out that \eqref{e:viscosity-sol-K_D} does not have a variational structure, as it cannot be derived via the minimization of an energy functional.  
Inspired by the seminal result of \cite{caffarelli1988harnack} on the existence of viscosity solutions for the two-phase problem, we establish \cref{t:existence-intro} in \cref{sec:viscosity_solution_existence} via Perron's method. Specifically, we slide modify the admissible class $\mathcal{A}$ of supersolutions showing that the definition of the function $u$ in \cref{t:existence-intro} is not affected (see \cref{subsec:replacement}). Then, we prove that such a function $u$ is locally Lipschitz and harmonic in $\Omega_u \cap D$ (see \cref{subsec:u_lipschitz_armonica}). Finally, in \cref{subsec:freeboundarycond} and \cref{subsec:u_continua_fino_al_bordo}, we get that $u$ is a solution on the free boundary and it is continuous up to the boundary $\partial \mathcal{B}$.
While the theory of viscosity solutions for one-phase free boundary problems is nowadays well-developed (see \cite{wang2003existence, desilva2015perron, cv24fully}), our setting presents an additional challenge.
In particular, the non-degeneracy condition may fail due to the 
free boundary condition
\[
\abs{\nabla u}^2 = \nabla u \cdot V \qquad \text{on } \partial \Omega_{u} \cap D.
\]
The possible failure of the non-degeneracy condition does not affect the validity of the proof of \cref{t:existence-intro}, while, it plays a crucial role in the proof of \cref{thm:piatto-implica-regolare-intro}. Indeed, after a rescaling at interior free boundary points, \eqref{e:viscosity-sol-K_D} is a prototype for anisotropic one-phase free boundary problems (i.e.~the behavior of the free boundary $\partial \Omega_u$ depends on the direction of the normal vector $\nu$) of the following type
$$
\left\{
\begin{aligned}
    &\Delta w = 0&&\hbox{in }  B_1\cap \Omega_w,\\
    &\abs{\nabla w} = g(\nu)&&\hbox{on }  B_1\cap \partial\Omega_w,\\
\end{aligned}\right.
$$
where $w\ge0$ in $B_1$, $\nu$ is the interior unit normal vector to $\partial \Omega_u$ and $g$ has not a priori a sign. In our setting, we have $g(\nu) = V \cdot \nu$. If $g(\nu)\geq \delta >0$, then existence and regularity results are known \cite{cv24fully}. Thus, the natural condition to show interior regularity 
is to require 
\begin{equation}
    \label{e:non_degenearcy-intro}
    V \cdot \nu \geq \delta >0.
\end{equation}
The proof of the interior $\eps$-regularity theorem in \cref{thm:piatto-implica-regolare-intro} is divided into two steps obtained in \cref{sec:interior}. 
\begin{itemize}
    \item[(i)] {\em $C^{1,\alpha}$ regularity via iteration of the improvement of flatness \cref{lemma:IOF}.} In the spirit of \cite{de2011free}, we prove a partial Harnack inequality in \cref{subsec:partialharnack} which ensures compactness of linearized sequences. Then, in \cref{subsec:IOF}, we show that the limit function $\widetilde u$ solves the linear oblique elliptic problem
\begin{equation}\label{e:obliquo1_intro}
        \left\{
\begin{aligned}
    &\Delta \widetilde u = 0 && \hbox{ in } B_{{1}} \cap \{x \cdot \nu > 0\},\\
    & \nabla \widetilde u \cdot W =0 && \hbox{ on } B_{{1}}\cap \{x \cdot \nu = 0\},\\
\end{aligned}
        \right.
    \end{equation}
    where 
    $$W:=2(\nu \cdot V)\nu - V.$$
Thanks to the $C^{1,\alpha}$ regularity of \eqref{e:obliquo1_intro}, we close the argument in the proof of \cref{lemma:IOF}. Then, we show that the non-degeneracy assumption \eqref{e:non_degenearcy-intro} is preserved throughout the iteration of the improvement of flatness \cref{lemma:IOF}, which concludes the proof of the $\eps$-regularity theorem in \cref{eps-sub}.
\item[(ii)] {\em Higher regularity via hodograph-type transform.} In \cref{subsec:odograha}, we apply a hodograph map as in \cite{kinderlehrer1977regularity}, and we transform the interior free boundary problem into a nonlinear elliptic problem on the half ball $B_1^+$ with a nonlinear oblique boundary condition on $B_1'$. The higher regularity follows by classical Schauder estimates for oblique problems \cite{adn,morrey-multiple-integrals,lieberman2013oblique}.
\end{itemize}

\medskip

\noindent 
    {\it Boundary regularity: presence of the obstacle.} 
    Due to the presence of the non-accessible region $K$, points of the free boundary $\partial \Omega_{u}$ can touch the obstacle $K$ only from one side, which means that only the inequality
   \be\label{eq:bondary-intro}
\abs{\nabla u}^2 \ge \nabla u \cdot V \qquad \text{on } \partial\Omega_{u} \cap K\cap\mathcal{B}.
\ee
   is satisfied.
In \cref{sec:reg_contact_points}, we provide the $C^{1,\alpha}$ regularity of the free boundary $\partial\Omega_u$ near a contact point $x_0\in \partial\Omega_u\cap K\cap\mathcal{B}$. First of all, in \cref{subsec:flatproblem} (for details we refer to \cref{sec:change_coordinates}), we make a change of coordinates to make flat the boundary of the obstacle $K$. After the change of coordinates, the problem reads as
 \begin{equation*}
\left\{\begin{aligned}
&{\rm div}\left(\tens{A}(x) \nabla u\right) =0 &&\Omega_u \cap B_1^+,\\
&\tens{A}(x) \nabla u \cdot \nabla u = \tens{A}(x) \nabla u \cdot {V}(x) &&\partial \Omega_u \cap B_1^+,\\
&\tens{A}(x) \nabla u \cdot \nabla u \geq \tens{A}(x) \nabla u \cdot {V}(x) &&\partial \Omega_u \cap B_1',\\
&u = 0 &&\{x_d \leq 0\},
\end{aligned}
\right.
\end{equation*}
where $\tens{A}$ is a uniform elliptic matrix and 
the vector $V$ is no longer a constant vector, but it depends on $x$. Precisely, in the new coordinates, the obstacle $K$ is $\{x_d\le0\}$.
Then, the proof of \cref{th:boundary_reg_contact_points} proceed as follows.
\begin{itemize}
    \item[(i)] {\em Every contact point is a regular point.} In \cref{subsec:branching_points_are_regular},
    adapting the strategy from \cite[Lemma 11.17]{caffarelli2005geometric}, we prove that at every contact point $x_0$, 
    the function $u$ admits the expansion
\[
u(x) = \beta (x - x_0) \cdot e_d + o(|x - x_0|),
\]
for some $\beta \geq 0$.
However, since the non-degeneracy may fail, this expansion alone does not guarantee non trivial blow-ups. Thus, we need to rule out the degenerate case $\beta = 0$.
Precisely, we show a dichotomy: either $\beta$ is exactly equal to the specific value
\[
\omega_{x_0} := \frac{\tens{A}(x_0) V(x_0)\cdot e_d}{\tens{A}(x_0)e_d\cdot e_d},
\]
or $\beta > \omega_0 > 0$, then the free boundary is locally collapsed on the obstacle near $x_0$, i.e.~there exists a radius $\rho > 0$ such that $u > 0$ in $B_\rho(x_0)$.
We get such a bound on $\beta$ in \cref{prop:punti_regolari_al_bordo} constructing two {\em ad hoc} barriers.

    \item[(ii)] {\em $C^{1,\alpha}$ regularity of $\partial \Omega_u$ around contact points.} We point out that when the free boundary is collapsed on the obstacle, the $C^{1,\alpha}$ regularity is immediate. Thus, in \cref{subsec:iof_branching}, we only study the regularity of the free boundary near branching points, namely points like 
    \[
x_0 \in \partial \Omega_u \cap B_1' \qquad \text{such that} \qquad \forall\, r > 0:\; \partial \Omega_u \cap B_r^+(x_0)\neq \emptyset.
\]
To establish $C^{1,\alpha}$ regularity near such points, we show an improvement of flatness lemma around branching points \cref{lemma:IOF_bordo}, in the spirit of \cite{changlarasavin}. Differently form the interior version \cref{lemma:IOF}, the proof of a partial Harnack inequality is more delicate. Indeed, the competitor used in the interior partial Harnack \cref{lemma:PH_inequality} cannot be employed on one-side due to free boundary condition \eqref{eq:bondary-intro} on the obstacle. We overcome this difficulty by constructing another barrier in \cref{lemma:useful_non_piace_Giulia}. As a consequence of the partial Harnack inequality, the linearized sequences converge to some function $v$ which solves the oblique thin obstacle problem \eqref{e:thin_intro}. The $C^{1,\alpha}$ regularity for \eqref{e:thin_intro} is addressed in \cref{susec:reg_linarized_proble_bordo}, and we postpone its discussion below. With $C^{1,\alpha}$ regularity of the linearized problem, we can conclude the proof of \cref{lemma:IOF_bordo}. 
Finally, in \cref{subsec:final_bordo}, with a geometric projection argument (see also \cref{fig:eps_reg_bordo}), we get the $C^{1,\alpha}$ regularity of $\pa\Omega_u$ around branching points. Precisely, if $y\in\partial\Omega_u$ is a free boundary point near $x_0$, we can consider $y_0$, the projection of $y$ on the set of branching points and apply the boundary improvement of flatness \cref{lemma:IOF_bordo} at $y_0$. As a consequence, $y$ is an interior regular point. Combining the rate of convergence for both the interior regular points \cref{remark:interior} and for branching points \cref{lemma:proprieta_blowup_boundary}, we conclude in \cref{prop:grafico} that $\partial \Omega_u$ is a $C^{1,\alpha}$-graph around $x_0$.
\end{itemize}

\medskip

\noindent 
{\it Regularity of the linearized problem: the oblique thin obstacle problem.} The proof of the regularity of the oblique thin obstacle problem is delicate. First, the non variational structure of \eqref{e:thin_intro} requires the use of viscosity arguments to provide the $C^{1,\alpha}$ regularity. Our approach follows the strategy developed by Caffarelli \cite{caf79} (see also \cite{milakissilvestre-signorini,fullyfernandez}) for the classical thin obstacle problem ($W=e_d$). The main steps of the proof are the following.
\begin{itemize}
    \item[(i)] In \cref{lipandsemiconvest}, similarly as one must prove in the classical thin obstacle problem, we show that the function $v$ is locally Lipschitz and semiconvex in the tangential directions, namely, for every $e=(e',0)\in \mathbb{S}^{d-1}$, we have
    $$\inf_{B_{\sfrac{1}{2}}}\partial_{ee}v \ge - C\|v\|_{L^\infty(B_1)}.$$
This is obtained via viscosity arguments (construction of barriers) and by applying the Bernstein technique (see \cite{cdv20}) to estimate $(\partial_{ee} v)^{-}$, similarly as done in \cite{fullyfernandez,fj21,cc24}. As a consequence, we get that
$$\sup_{B_{\sfrac{1}{2}}^+}\partial_{dd}v \le  C\|v\|_{L^\infty(B_1)} \qquad \hbox{and } \qquad \sup_{B_{\sfrac{1}{2}}^+}\partial_{\widetilde WW}v\le C\|v\|_{L^\infty(B_1)}$$
where we denoted by $\widetilde W:=(-W',W_d)$ the adjoint direction of $W$. 
 \item[(ii)] In \cref{estimatefinal0}, for technical reason (see \cref{lemma:penalized}), we must to introduce a family of auxiliary functions $\{v_\ell\}_{\ell \in (0,1)}$ which solves \eqref{e:thin_intro} with boundary datum on $v+\ell$ on $\partial B_1$.
 We show that, since $\partial_{\widetilde W W} v_\ell$ is bounded, then it is well-defined 
$$\sigma_{W}(x'):=\lim_{t\to0^+}\nabla v_\ell(x'+t\widetilde W)\cdot W.$$
The key point to prove \cref{thm:oblthin2} is to show that the function $\sigma_{W}$ is $C^{0,\alpha}$ uniformly on $\ell$, leading the $C^{1,\alpha}$ regularity of $v_\ell$, by oblique Schauder estimates up to the boundary \cite{lieberman2013oblique}. 
We stress that, to compute the derivative of $v_\ell$ along the direction $W$ on the thin space $\{ x_d = 0 \}$, it is necessary to approach the thin space along an oblique direction. Since $\partial_{W W} v_\ell$ is generally unbounded by above, the direction $W$ itself cannot be used to approach $\{ x_d = 0 \}$. Therefore, we introduce an adjoint direction $\widetilde{W}$, for which we know that $\partial_{\widetilde{W} W} v_\ell$ is bounded by above.

The viscosity condition $\nabla v_\ell\cdot W\le0$ in $B_1'$ suggests that $\sigma_{W}(x')\le0$ on $B_1'$. This is proved by a penalization argument, similarly as done in \cite{milakissilvestre-signorini}.
A fundamental ingredient to get the desired regularity is to ``link'' the normal derivative with the one along $W$. Precisely, in \cref{rem:q.o.}, as a consequence of the semiconvexity of $v_\ell$, we show that, for almost every point $(x',0)$ on the contact set $\Lambda(v_\ell):=\{v_\ell=0\}\cap B_1'$, it holds
$$\sigma_d(x'):=\lim_{t\to0^+}\partial_{d}v_\ell(x'+t\widetilde W)=\sigma_{W}(x').$$
\item[(iii)] In \cref{estimatefinal}, we show that, given a point $x_0\in\Omega_{v_\ell}\cap B_1'$ and $\gamma>0$ small, we can find a small ball in $\{x_d=0\}$ of radius comparable to $\gamma$ which is almost everywhere contained in $S_{\gamma}:=\{\sigma_W(x')>-\gamma\}$ (see \cref{lemmab7}).
This means that the derivative along $W$ is not too negative somewhere near points belonging to $\Omega_{v_\ell}\cap B_1'$ (we remark that $\sigma_W\equiv0$ in $\Omega_{v_\ell}\cap B_1'$, by oblique elliptic regularity). Even though, differently from the proof of the $C^{1,\alpha}$ regularity of the classical thin obstacle problem in \cite{caf79}, where the small ball is entire contained in $S_{\gamma}$ (not just almost everywhere), we can still apply a similar barrier argument for harmonic functions with boundary data defined almost everywhere (see \cref{teclemma-w}).
Combining the above barrier arguments with the semiconcave estimates, we get the $C^{0,\alpha}$ regularity of $\sigma_{W}$ in $\Omega_{v_\ell}$, which concludes the proof of \cref{thm:oblthin2}.

\end{itemize}
Throughout the paper, we do not explicitly indicate the dependence of constants $C$ on the dimension $d$ of the ambient space, nor on the geometric parameters of the problem, such as $\mathcal{B}$, $D$, $K$, and $\abs{V}$. Whenever a different dependence is present, it will be specified.

\section{Existence of viscosity solutions}
\label{sec:viscosity_solution_existence}
The aim of this section is to construct a function $u$, with a given boundary datum, which is a viscosity solution of the problem \eqref{e:viscosity-sol-K_D} in the following sense.

\begin{definition}[Viscosity solutions of \eqref{e:viscosity-sol-K_D}]
\label{def:def-sol_sezione_esistenza}
Let $u: \mathcal{B} \to \R^+$ be a non-negative continuous function. We say that {\em $u$ is a viscosity solution} of the problem \eqref{e:viscosity-sol-K_D} 
if $u\equiv0$ on $K$ and for every $x_0 \in \overline\Omega_u \cap \mathcal{B}$ and $\varphi \in C^{\infty}(\mathcal{B})$, we have
\begin{itemize}
    \item if $x_0 \in \Omega_u\cap D$ and $\varphi$ touches $u$ from below at $x_0$, then $\Delta \varphi(x_0) \leq 0$;
    \item if $x_0 \in \Omega_u\cap D$ and $\varphi$ touches $u$ from above at $x_0$, then $\Delta \varphi(x_0) \geq 0$;
    \item if $x_0 \in \pa \Omega_u \cap D$ and $\varphi$ touches $u$ from below at $x_0$, then $\abs{\nabla \varphi(x_0)}^2\le\nabla \varphi(x_0)\cdot V $;
    \item if $x_0 \in \pa \Omega_u \cap D$ and $\varphi^+:= \max\{\varphi,0\}$ touches $u$ from above at $x_0$, then $\abs{\nabla \varphi(x_0)}^2\ge\nabla \varphi(x_0)\cdot V$.
    \item if $x_0 \in \pa \Omega_u \cap  K\cap\mathcal{B}$ and $\varphi^+:= \max\{\varphi,0\}$ touches $u$ from above at $x_0$, then $\abs{\nabla \varphi(x_0)}^2\ge\nabla \varphi(x_0)\cdot V $.
\end{itemize}
\end{definition}

In order to construct a viscosity solution $u$ according to \cref{def:def-sol_sezione_esistenza}, we introduce the class of admissible functions we take into account.

\begin{definition}[Admissible class $\mathcal{A}$ of supersolutions]
\label{def:a-supersolution}
We define the class of admissible supersolutions $\mathcal{A}$ for the problem \eqref{e:viscosity-sol-K_D} the set
\begin{align*}
    \mathcal{A}:=\left\{w\in C^0(\overline{\mathcal{B}},\R^+): \ \Delta w \leq 0 \ \hbox{ in } \Omega_w\cap D,\ \abs{\nabla w}^2\le \nabla w\cdot V  \ \hbox{ on } \partial\Omega_w\cap D,\ \hbox{ and } \ \ w\equiv 0\ \hbox{ in } \ K
    \right\}
\end{align*} 
where the two inequalities hold true in the viscosity sense.
\end{definition}

The main result of this section is the following theorem, which is exactly \cref{t:existence-intro}.
    \begin{theorem}[Existence of viscosity solutions]
    \label{t:existence} 
    Let $\phi:\partial \mathcal{B}\to\R$ be an assigned non-negative continuous function such that $\phi=0$ on $\partial \mathcal{B} \cap \partial K$.
    Let $\mathcal{A}$ be the set of supersolutions defined in \cref{def:a-supersolution}.
    Then, the function $u:\overline{\mathcal{B}}\to\R^+$ defined as
    \begin{align}
    \label{e:funzione_u_teorema_esistenza}
        u(x):=\inf\left\{w(x):\ w\in \mathcal{A} , \ w\ge \phi \hbox{ on }\partial\mathcal{B}\right\}
    \end{align}
    is well-defined and it is a viscosity solution to \eqref{e:viscosity-sol-K_D} in the sense of \cref{def:def-sol_sezione_esistenza}. Moreover, $u$ is locally Lipschitz and it is continuous up to the boundary $\partial\mathcal{B}$, with $u=\phi$ on $\partial \mathcal{B}$.
\end{theorem}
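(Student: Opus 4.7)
The plan is to apply Perron's method to the non-variational, obstacle-constrained problem \eqref{e:viscosity-sol-K_D}, following the roadmap suggested by the subsections of \cref{sec:viscosity_solution_existence}.

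First, I would verify that $\mathcal{A}$ is non-empty and $u$ is well-defined. A canonical competitor is the function $w_0$ equal to $0$ on $K$ and to the harmonic extension in $D$ with boundary data $\phi$ on $\partial\mathcal{B}\setminus\partial K$ and $0$ on $\partial K$: the compatibility hypothesis $\phi=0$ on $\partial\mathcal{B}\cap\partial K$ guarantees continuity, and the strong maximum principle gives $w_0>0$ in $D$, so $\partial\Omega_{w_0}\cap D=\emptyset$ and the gradient inequality is vacuous. This yields $0\le u\le w_0$, $u\equiv 0$ on $K$, and $u$ is upper semicontinuous as an infimum of continuous functions.

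Next, I would establish a replacement property: if $w\in\mathcal{A}$ and $B_r(x_0)\subset\subset\Omega_w\cap D$, the function $\widetilde w$ obtained by replacing $w|_{B_r(x_0)}$ with its harmonic lift still lies in $\mathcal{A}$ and satisfies $\widetilde w\le w$. Applying this along a minimising sequence $w_k\to u$ forces $u$ to coincide with its own harmonic envelope on every such ball, so $u$ is harmonic in $\Omega_u\cap D$. Local Lipschitz regularity would then follow by combining interior harmonic gradient estimates with the natural upper bound on $|\nabla u|$ near $\partial\Omega_u$ delivered by the free-boundary condition $|\nabla u|^2=\nabla u\cdot V$.

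The decisive step is the verification of the free boundary conditions of \cref{def:def-sol_sezione_esistenza}. The supersolution inequality at $x_0\in\partial\Omega_u\cap D$ is inherited by taking limits in $\mathcal{A}$; for the subsolution inequalities on $\partial\Omega_u\cap D$ and on $\partial\Omega_u\cap K\cap\mathcal{B}$ I would argue by the classical Perron contradiction, constructing, under a hypothetical strict reverse inequality $|\nabla\varphi(x_0)|^2<\nabla\varphi(x_0)\cdot V$, a smooth pushdown $\psi_\varepsilon$ of $\varphi^+$ which still belongs to $\mathcal{A}$, dominates $u$ outside a small ball, and is strictly less than $u$ at some interior point, violating the definition of $u$. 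Continuity up to $\partial\mathcal{B}$ with trace $\phi$ would then follow by sandwiching $u$ between $w_0$ from above and, from below, competitors in $\mathcal{A}$ built from regularised approximations of $\phi$ that vanish on $K$. The main obstacle is the design of the pushdown $\psi_\varepsilon$: it must simultaneously satisfy the anisotropic gradient condition $|\nabla\psi_\varepsilon|^2\le\nabla\psi_\varepsilon\cdot V$, whose sign depends on the outer unit normal $\nu$ to $\Omega_u$ and may degenerate when $V\cdot\nu\le 0$, dominate $u$ outside a small ball, and remain $\equiv 0$ on $K$; near contact points $x_0\in\partial\Omega_u\cap K\cap\mathcal{B}$, where only the one-sided inequality $|\nabla u|^2\ge\nabla u\cdot V$ is available, this forces a delicate construction compatible with the obstacle geometry.
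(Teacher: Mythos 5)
Your roadmap coincides with the paper's (Perron's method, harmonic replacement, equi-Lipschitz estimates, viscosity verification by a glued competitor, boundary barriers), but two of the steps as you describe them would not go through. First, the Lipschitz estimate: you claim the upper bound on $\abs{\nabla u}$ near $\partial\Omega_u$ is ``delivered by the free-boundary condition''. After a barrier/Hopf argument this is indeed how the paper obtains the linear growth $w(x)\le C\,{\rm dist}(x,\partial\Omega_w)$, but only at points whose nearest free boundary point lies in $D$ (Step 1 of \cref{lemma:w_viscotity_sol_lipschitz}). At points of $\partial\Omega_u\cap K$ the only available condition is the one-sided inequality $\abs{\nabla u}^2\ge\nabla u\cdot V$, which gives no upper bound whatsoever; the paper has to treat this case by a separate comparison with the harmonic function in $D$ vanishing on $K$, using the smoothness of $\partial K$ for boundary gradient estimates (Step 2 of \cref{lemma:w_viscotity_sol_lipschitz}). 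Without this piece your Lipschitz bound, and hence the uniform approximation of $u$ by admissible supersolutions that you implicitly use to pass the supersolution inequality to the limit (\cref{corollary:ex0} and Step 1 of \cref{prop:supersol}), is not justified.

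Second, the continuity up to $\partial\mathcal{B}$: bounding $u$ from below by ``competitors in $\mathcal{A}$ built from regularised approximations of $\phi$'' cannot work, because every $w\in\mathcal{A}$ with $w\ge\phi$ on $\partial\mathcal{B}$ dominates $u$ by the very definition \eqref{e:funzione_u_teorema_esistenza}; admissible competitors only ever give upper bounds. The lower bound is the delicate half of \cref{prop:attacca}: one needs a genuine lower barrier $\Theta=\phi(x_0)-\eps-M\Psi$, with $\Psi$ a radial harmonic barrier built on an exterior ball, and a sliding argument $\Theta_t=\Theta-t$ in which touching at points of $\partial\Omega_u\cap D$ is excluded via the already established viscosity supersolution condition together with Hopf's lemma (choosing $M$ large), touching in the interior of $\{u=0\}$ is excluded because $\nabla\Theta_t\neq0$, and touching inside $\Omega_u$ is excluded by the strong maximum principle and the radial monotonicity of $\Psi$. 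None of this is in your plan. By contrast, your stated ``main obstacle'' is less serious than you fear: under the contradiction hypothesis $\abs{\nabla\varphi(x_0)}^2<\nabla\varphi(x_0)\cdot V$ one automatically has $\nabla\varphi(x_0)\cdot V>0$, so no degeneracy $V\cdot\nu\le0$ can occur, and the paper's competitor (a harmonic function on a slightly pushed-in domain, controlled by Schauder estimates and glued to $u$ by a minimum as in \cref{lemma:min2soprasol}) vanishes on $K$ automatically because the modification is local; the same construction covers contact points on $K$.
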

\begin{proof}
    The proof is a combination of the following steps split in four subsections:
    \begin{itemize}
\item {\it replacement of the supersolutions $w \in \mathcal{A}$}, shown in \cref{subsec:replacement};
\item {\it $u$ is locally Lipschitz and harmonic in $\Omega_u \cap D$}, obtained in \cref{subsec:u_lipschitz_armonica};
\item {\it $u$ is a solution on the free boundary}, in particular we prove in \cref{subsec:freeboundarycond} that both $\abs{\nabla u}^2 = \nabla u \cdot V$ on $\partial \Omega_u \cap D,$ and $\abs{\nabla u}^2 \ge \nabla u \cdot V$ on $\partial \Omega_u \cap K\cap\mathcal{B}$ hold true;
\item {\it  $u$ is continuous up to the boundary $\partial \mathcal{B}$}, obtained in \cref{subsec:u_continua_fino_al_bordo}.
\end{itemize}
Thus, combining the above steps, the function $u$ is a viscosity solution to \eqref{e:viscosity-sol-K_D} in the sense of \cref{def:def-sol_sezione_esistenza}, yielding the thesis. 
\end{proof}

The function $u$ in \eqref{e:funzione_u_teorema_esistenza} is called a {\em Perron's solution} of \eqref{e:viscosity-sol-K_D}. Moreover, we notice that all viscosity solutions $u$ of \eqref{e:viscosity-sol-K_D} have the following graphical property.

\begin{lemma}[Graphical property of the free boundary]\label{lemma:graph}
    Let $u:\mathcal{B}\to\R^+$ be a viscosity solution of \eqref{e:viscosity-sol-K_D}. Suppose that $x_0\in \Omega_u\cap D,$ then, for every $t>0$ such that $x_0+tV\in D$, we have that $x_0+tV\in\Omega_u$. Equivalently, if $x_0\in D$ is such that $u(x_0)=0$, then, for every $t>0$ such that $x_0-tV\in D,$ we have that $u(x_0-tV)=0$. In particular, $\partial\Omega_u\cap D$ is a graph.
\end{lemma}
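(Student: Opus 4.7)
The plan is to argue by contradiction and to exploit the viscosity free-boundary condition at the earliest forward obstruction along $V$ starting from $x_0$.

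\textbf{Setup.} Suppose there exist $x_0 \in \Omega_u \cap D$ and $\tau > 0$ such that $x_0 + \tau V \in D$ but $u(x_0 + \tau V) = 0$. Define
\[
t^\ast := \inf\bigl\{\,t \in (0,\tau] : u(x_0 + tV) = 0\,\bigr\}.
\]
By the Lipschitz continuity of $u$ (\cref{t:existence}) and $u(x_0) > 0$, one has $t^\ast > 0$. Set $y := x_0 + t^\ast V$; then $u(y) = 0$ and $y - sV \in \Omega_u$ for every $s \in (0, t^\ast)$. If $y \in D$, then $y \in \partial \Omega_u \cap D$; if instead the segment $[x_0, y]$ first hits the obstacle $K$, then after an analogous reduction $y$ may be taken as a contact point in $\partial\Omega_u \cap K \cap \mathcal{B}$. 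In either situation, the viscosity supersolution condition on $\partial \Omega_u \cap D$ (third bullet of \cref{def:def-sol_sezione_esistenza}) or on $\partial \Omega_u \cap K \cap \mathcal{B}$ (fifth bullet) is available at $y$, and the argument below applies uniformly.

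\textbf{Barrier and contradiction.} The geometric content is that $\Omega_u$ approaches $y$ from the direction $-V$, so the inward normal to $\Omega_u$ at $y$ has non-positive inner product with $V$, which is incompatible with the free-boundary condition $|\nabla u|^2 = \nabla u \cdot V$ (the latter forcing $\nabla u = (V\cdot\nu)\nu$ with $V\cdot\nu > 0$ whenever $\nabla u \neq 0$). To make this rigorous, I would consider the radial test function
\[
\varphi(x) := c\bigl(\rho^2 - |x - p|^2\bigr), \qquad p := y - \rho V/|V|,
\]
for suitably small $c, \rho > 0$. Then $\varphi(y) = 0 = u(y)$ and $\nabla \varphi(y) = -2c\rho\, V/|V|$, so if $\varphi$ touches $u$ from below at $y$, the viscosity subsolution condition yields
\[
4c^2\rho^2 = |\nabla\varphi(y)|^2 \le \nabla\varphi(y)\cdot V = -2c\rho|V|,
\]
which is impossible for $c,\rho > 0$.

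\textbf{Main obstacle.} The delicate step is to verify $\varphi \le u$ in a neighborhood of $y$, since $\Omega_u$ may be thin or cusp-like at $y$ (so that the ball $B_\rho(p)$ need not be contained in $\Omega_u$). I would choose $\rho < t^\ast|V|$ so that $p$ lies on the open segment from $x_0$ to $y$, hence $p \in \Omega_u$; then use the openness of $\Omega_u$ to select a sub-ball $B_r(p) \subset \Omega_u$ with $r < \rho$, and apply Harnack's inequality to the harmonic positive function $u$ to obtain a uniform lower bound $u \ge c_0 > 0$ on $B_{r/2}(p)$. Calibrating $c$ so that $c\rho^2 \le c_0$ and using that $\varphi \le 0$ outside $B_\rho(p)$ (where $u \ge 0$ trivially), one aims to realize $\varphi$ as an admissible touching function from below. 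The careful trade-off between $\rho$, $r$ and the Harnack constant (together with the possible degeneracy of $u$ along the complement of $B_r(p)$) is the technical heart of the argument; an equivalent strategy is a sliding comparison with the canonical traveling-wave profile $(V\cdot(x-y)+\eta)_+$, whose free boundary lies on the wrong side of $y$ relative to the postulated configuration and which can be slid until it contacts $u$, producing the same contradiction via the viscosity condition.
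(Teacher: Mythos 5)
Your overall mechanism is the same as the paper's (follow the direction $V$ until $\Omega_u$ is first exited, then touch $u$ from below at that point with a barrier whose gradient points opposite to $V$, contradicting $|\nabla\varphi|^2\le\nabla\varphi\cdot V$), but the step you yourself flag as the ``technical heart'' is a genuine gap, and the fix you sketch does not close it. Knowing only that the segment $\{y-sV:\ s\in(0,t^*)\}$ lies in $\Omega_u$ does not give any interior tangent ball at $y$: if $\Omega_u$ is a thin spike around the segment, your quadratic $\varphi=c(\rho^2-|x-p|^2)$ is strictly positive throughout $B_\rho(p)$ while $u$ vanishes at points of the annulus $B_\rho(p)\setminus B_{r/2}(p)$, so $\varphi\le u$ fails there no matter how you calibrate $c$; the Harnack bound only controls $B_{r/2}(p)$ and the region outside $B_\rho(p)$, and a comparison-principle argument is unavailable because the annulus is not contained in $\Omega_u$ (its boundary contains pieces of $\partial\Omega_u$ where $u=0<\varphi$). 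The paper's proof is built precisely to avoid this: instead of a segment, it slides a whole ball $B_\rho(x_0+tV)$ that is initially contained in $\Omega_u\cap D$ and stops at the \emph{first} contact time $t_0$ with $\partial\Omega_u\cap D$. At that moment $u>0$ in the open ball $B_\rho(z_0)$, so one has an interior tangent ball at the contact point $y_0$ (which need not lie on your segment, but still satisfies $(z_0-y_0)\cdot V\le0$ by the first-contact property); the barrier is then the harmonic capacity-type function of the annulus, which lies below $u$ by the comparison principle in $B_\rho(z_0)$, and Hopf's lemma identifies $\nabla\psi(y_0)$ as a positive multiple of $z_0-y_0$, giving the contradiction. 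Replacing ``the point where my segment exits'' by ``the first contact point of a slid interior ball'' is exactly the missing idea.

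A second, more local error: your claim that the argument ``applies uniformly'' when the exit point lies on the obstacle is not justified by \cref{def:def-sol_sezione_esistenza}. The fifth bullet there concerns test functions $\varphi^+$ touching $u$ \emph{from above} at points of $\partial\Omega_u\cap K\cap\mathcal{B}$ and gives the reverse inequality $|\nabla\varphi|^2\ge\nabla\varphi\cdot V$; no touching-from-below condition is imposed at contact points on $K$, so your barrier argument cannot be run there. (The paper's proof sidesteps this by taking the first contact with $\partial\Omega_u\cap D$ only.) Your alternative suggestion of sliding the planar profile $(V\cdot(x-y)+\eta)_+$ is not developed and faces the same touching/comparison issues, so it does not repair the argument as written.
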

\begin{proof}
    Take $x_0\in \Omega_u\cap D$ and $\rho>0$ such that $B_\rho(x_0)\subset \Omega_u\cap D$. We need to prove that for all $x \in B_\rho(x_0)$ and for every $t>0$ such that $x+tV\in D$, then $x+tV\in \Omega_u$. 
    Arguing by contradiction, let $t_0>0$ be the smallest $t$ such that $\overline B_\rho(x_0+ t_0 V)\cap \partial \Omega_u \cap D = \{y_0\}$. By defining $z_0 := x_0+ t_0 V$, we have  
    \be\label{eq:utiledopo}
    (z_0-y_0)\cdot V\le0.\ee
    We take the function 
    $$
    \psi: \overline B_\rho(z_0) \to \R^+ \quad \hbox{ such that } 
    \quad \left\{
\begin{aligned}
    \Delta \psi &= 0 &&\text{in } B_\rho(z_0) \setminus \overline B_{\sfrac{\rho}{2}}(z_0),\\
    \psi&= 0
    &&\text{on } \partial B_\rho(z_0),\\
    \psi &= 1 &&\text{in } \overline B_{\sfrac{\rho}{2}}(z_0).
 \end{aligned}
\right.$$ 
Since $u>0$ in $B_\rho(z_0),$ then there is a constant $c>0$ such that $u\ge c\psi$ in $B_{\sfrac{\rho}{2}}(z_0)$. Then, by comparison principle, $u\ge c\psi$ in $B_\rho(z_0)$. Thus, $c\psi$ is a test function touching $u$ from below at $y_0$, implying that 
$$c^2|\nabla \psi(y_0)|^2\le c\nabla\psi(y_0)\cdot V.$$ 
On the other hand, by Hopf lemma, 
$\nabla \psi(y_0)=\beta(z_0-y_0)$, with $\beta>0$. By \eqref{eq:utiledopo}, we end up with
$$c^2\beta^2|z_0-y_0|^2\le c\beta(z_0-y_0)\cdot V\le0,$$
yielding to a contradiction since $\beta>0$.
\end{proof}

\subsection{Replacement of supersolutions}
\label{subsec:replacement}
In this subsection, we show that we can modify the class of admissible supersolutions according to the following proposition.
\begin{proposition}
\label{lemma:wlog} 
Let $w\in\mathcal{A}$ be an admissible supersolution to \eqref{e:viscosity-sol-K_D}, according with \cref{def:a-supersolution}. Then, there exists $\overline w\in\mathcal{A}$, with 
\be\label{eq:wlog0}\overline w\le w\quad\text{in }\overline{\mathcal{B}}\qquad \text{and} \qquad \overline w=w\quad\text{on }\partial\mathcal{B},\ee
such that
\be\label{eq:wlog1}\Delta \overline w=0\quad\text{in }\Omega_{\overline w}\cap D.\ee 
\end{proposition}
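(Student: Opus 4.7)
The plan is to take $\overline{w}$ to be the \emph{harmonic replacement} of $w$ inside its own positivity set. Since $w\equiv 0$ on $K$ and $w$ is continuous, $\Omega_w\subseteq \overline{\mathcal{B}}\setminus K$, so $\Omega_w\cap D = \Omega_w$. I then define $\overline{w}$ as the Perron solution of the Dirichlet problem
\[
\Delta \overline{w}=0\ \text{in}\ \Omega_w, \qquad \overline{w}=w\ \text{on}\ \partial\Omega_w,
\]
and extend $\overline{w}\equiv 0$ on $\overline{\mathcal{B}}\setminus\Omega_w$. The boundary data are $0$ on $\partial\Omega_w\cap\mathcal{B}$ (by continuity of $w$ and the definition of $\Omega_w$) and agree with $\phi$ on $\partial\Omega_w\cap\partial\mathcal{B}$. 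By construction $\overline{w}\equiv 0$ on $K$ and $\Delta\overline{w}=0$ in $\Omega_w\supseteq\Omega_{\overline{w}}$, which already gives \eqref{eq:wlog1}.

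The inequality $0\le\overline{w}\le w$ in $\overline{\mathcal{B}}$, together with $\overline{w}=w$ on $\partial\mathcal{B}$, is immediate from the comparison principle applied in $\Omega_w$: $w$ is a viscosity and hence distributional supersolution of the Laplacian there, $\overline{w}$ is harmonic, and they share the same boundary data. Continuity of $\overline{w}$ comes essentially for free: inside $\Omega_w$ it is harmonic, outside $\overline{\Omega_w}$ it vanishes identically, and at every $x_0\in\partial\Omega_w\cap\mathcal{B}$ the sandwich $0\le\overline{w}\le w$ with $w(x_0)=0$ and $w$ continuous forces $\overline{w}\to 0=\overline{w}(x_0)$; continuity up to $\partial\mathcal{B}$ follows from standard barriers since $\mathcal{B}$ is smooth and $\phi$ is continuous.

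The delicate verification is that $\overline{w}$ still satisfies the free boundary inequality in the viscosity sense on $\partial\Omega_{\overline{w}}\cap D$, which I regard as the main technical point. The key step is the inclusion
\[
\partial\Omega_{\overline{w}}\cap D\ \subseteq\ \partial\Omega_w\cap D,
\]
obtained from the strong maximum principle: if a hypothetical $x_0\in\partial\Omega_{\overline{w}}\cap D$ were to lie in $\Omega_w$, then the non-negative harmonic function $\overline{w}$ would attain its minimum $0$ at an interior point of $\Omega_w$, and hence vanish identically on the connected component of $\Omega_w$ containing $x_0$; but this contradicts $x_0$ being a limit of points of $\Omega_{\overline{w}}$, since that entire component would be an open neighborhood of $x_0$ disjoint from $\Omega_{\overline{w}}$. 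Given this inclusion, if $\varphi$ is a test function that touches $\overline{w}$ from below at such $x_0$, then the same $\varphi$ touches $w$ from below there (we have $\varphi\le\overline{w}\le w$ and all three vanish at $x_0$), so the inequality $|\nabla\varphi(x_0)|^2\le\nabla\varphi(x_0)\cdot V$ is inherited directly from $w\in\mathcal{A}$. This shows $\overline{w}\in\mathcal{A}$ and completes the proof.
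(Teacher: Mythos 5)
Your proposal is correct and follows essentially the same route as the paper: harmonic replacement of $w$ in its positivity set via Perron's method, extension by zero, the squeeze $0\le\overline w\le w$ for continuity at the free boundary, and transferring the viscosity inequality on $\partial\Omega_{\overline w}\cap D$ by noting that a test function touching $\overline w$ from below there also touches $w$. The only difference is cosmetic but in your favor: where the paper simply asserts $\Omega_{\overline w}=\Omega_w$ "by construction" (which can actually fail on components of $\Omega_w\cap D$ whose boundary data vanish identically), you prove the weaker inclusion $\partial\Omega_{\overline w}\cap D\subseteq\partial\Omega_w\cap D$ via the strong maximum principle, which is all that the touching argument requires.
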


By \cref{lemma:wlog}, we can introduce the following subclass of admissible supersolutions to \eqref{e:viscosity-sol-K_D}.

\begin{definition}[Modified admissible class $\mathcal{A}^+$ of supersolutions]
\label{def:a-supersolution-modificata}
We define
\begin{align*}
    \mathcal{A}^+:=\left\{ w \in \mathcal{A}: \ \Delta w  = 0 \ \hbox{ in } \Omega_w\cap D
    \right\},
\end{align*} 
to be the modified admissible class of supersolutions for the problem \eqref{e:viscosity-sol-K_D}.
\end{definition}

As an immediate consequence of \cref{lemma:wlog} is the following corollary.
\begin{corollary}\label{corollario:usosempre}
    Let $u:\overline{\mathcal{B}}\to\R^+$ be the function defined in \eqref{e:funzione_u_teorema_esistenza}. Then
 $$u(x)=\inf\left\{w(x):\ w\in \mathcal{A}^+ , \ w\ge \phi \hbox{ on }\partial\mathcal{B}\right\} .$$
\end{corollary}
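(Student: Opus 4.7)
The plan is to establish the two inequalities between the infimum over $\mathcal{A}$ and the infimum over $\mathcal{A}^+$. Denote temporarily by $\widetilde u(x):=\inf\{w(x):w\in\mathcal{A}^+,\ w\ge\phi\text{ on }\partial\mathcal{B}\}$ the right-hand side of the claimed identity, and by $u$ the function in \eqref{e:funzione_u_teorema_esistenza}.

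The first inequality $u\le\widetilde u$ is immediate from the set inclusion $\mathcal{A}^+\subset\mathcal{A}$ built into \cref{def:a-supersolution-modificata}: every competitor for $\widetilde u$ is also a competitor for $u$, so taking the infimum over the smaller family can only give a larger value.

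For the reverse inequality $\widetilde u\le u$, I would fix an arbitrary $w\in\mathcal{A}$ with $w\ge\phi$ on $\partial\mathcal{B}$ and apply \cref{lemma:wlog} to produce $\overline w\in\mathcal{A}$ satisfying \eqref{eq:wlog0} and \eqref{eq:wlog1}. The relation $\Delta\overline w=0$ in $\Omega_{\overline w}\cap D$ together with $\overline w\in\mathcal{A}$ places $\overline w$ in the modified class $\mathcal{A}^+$, while the boundary identity $\overline w=w$ on $\partial\mathcal{B}$ guarantees that $\overline w\ge\phi$ on $\partial\mathcal{B}$, so $\overline w$ is a legitimate competitor for $\widetilde u$. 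Therefore $\widetilde u(x)\le\overline w(x)\le w(x)$ pointwise in $\overline{\mathcal{B}}$ by \eqref{eq:wlog0}. Taking the infimum over all admissible $w\in\mathcal{A}$ yields $\widetilde u\le u$, completing the proof.

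There is no real obstacle here: the corollary is a bookkeeping consequence of \cref{lemma:wlog}, and the only thing to check is that the replacement $\overline w$ preserves both the boundary datum (to remain admissible against $\phi$) and the supersolution properties of $\mathcal A$ (so as to land in $\mathcal A^+$), both of which are packaged into the statement of \cref{lemma:wlog}.
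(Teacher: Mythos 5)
Your proposal is correct and matches the paper's (unstated, "immediate consequence") argument: the inclusion $\mathcal{A}^+\subset\mathcal{A}$ gives one inequality, and for the other \cref{lemma:wlog} produces, from any competitor $w\in\mathcal{A}$, a replacement $\overline w\in\mathcal{A}^+$ with $\overline w\le w$ and $\overline w=w$ on $\partial\mathcal{B}$, hence still admissible for the boundary datum. Nothing further is needed.
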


The following lemma will be useful throughout this section whenever we define a harmonic function with Dirichlet boundary conditions.

\begin{lemma}[\protect{\cite[Theorem 6.6]{han2011elliptic}}]\label{lemma:classical-perron}
Let $\Omega \subset \R^d$ be a bounded open set and let $\zeta: \partial \Omega \to \R^+$ be a continuous assigned function. Then, there exists $g:\overline \Omega\to\R^+$ such that 
$$g(x):=\inf\left\{v(x): v\in C^0(\overline\Omega,\R^+): \ \Delta v\le 0 \text{ in }\Omega ,\ v\ge \zeta\text{ on }\partial \Omega\right\},$$
which solves
$$\Delta g=0.$$
Moreover, if $x_0 \in \partial \Omega$ is such that 
$\partial\Omega$ is smooth in a neighborhood of $x_0$,
then
$$
\lim_{x\to x_0} g(x) = g(x_0) = \zeta(x_0).
$$
In particular, if $\partial\Omega$ is smooth, then $g\in C^0(\overline\Omega)$ and $ g= \zeta$ on $\partial \Omega$.
\end{lemma}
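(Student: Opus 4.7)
The plan is to execute the classical Perron method in three stages: well-definedness, interior harmonicity via harmonic lifting, and boundary continuity via barriers.

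First, I would verify $g$ is well-defined. The class $\mathcal{F} := \{v \in C^0(\overline\Omega,\R^+): \Delta v \leq 0 \text{ in } \Omega,\ v \geq \zeta \text{ on } \partial\Omega\}$ is non-empty: the constant $M := \max_{\partial\Omega} \zeta$, which is finite by continuity on the compact set $\partial\Omega$, belongs to $\mathcal{F}$. By the maximum principle every $v \in \mathcal{F}$ satisfies $0 \leq \min_{\partial\Omega}\zeta \leq v \leq M$, so $g$ takes values in $[0,M]$.

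Second, I would show $g$ is harmonic in $\Omega$ by the classical lifting argument. Fix $x_0 \in \Omega$ and a ball $B = B_r(x_0) \Subset \Omega$, and pick $v_n \in \mathcal{F}$ with $v_n(x_0) \to g(x_0)$. Replace each $v_n$ by its harmonic lift $\tilde v_n$ in $B$ (solve the Dirichlet problem in $B$ with data $v_n|_{\partial B}$ and keep $v_n$ outside $B$). Since $v_n$ is superharmonic one has $\tilde v_n \leq v_n$ in $B$, and a standard comparison argument shows that $\tilde v_n$ is still a viscosity supersolution on all of $\Omega$, so $\tilde v_n \in \mathcal{F}$. Thus $g(x_0) \leq \tilde v_n(x_0) \leq v_n(x_0) \to g(x_0)$. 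The sequence $\{\tilde v_n\}$ is harmonic and uniformly bounded in $B$, so by interior estimates a subsequence converges locally uniformly to a harmonic $h$ with $h \geq g$ in $B$ and $h(x_0) = g(x_0)$. To upgrade this to $h \equiv g$ in $B$, pick an arbitrary $y \in B$ and a sequence $w_n \in \mathcal{F}$ with $w_n(y) \to g(y)$; form $\min(\tilde v_n, w_n) \in \mathcal{F}$ and lift it in $B$ to obtain $\hat v_n \in \mathcal{F}$. Passing to a subsequential harmonic limit $h'$, one finds $h' \leq h$, $h'(x_0) = h(x_0) = g(x_0)$, and $h'(y) = g(y)$. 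Applying the strong maximum principle to $h - h' \geq 0$ at the interior minimum $x_0$ forces $h \equiv h'$ in $B$, so $h(y) = g(y)$. Arbitrariness of $y$ and $x_0$ yields harmonicity of $g$ in $\Omega$.

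Third, for boundary continuity at a point $x_0 \in \partial\Omega$ where $\partial\Omega$ is smooth, I would construct exterior-ball barriers. Smoothness near $x_0$ provides a ball $B_\rho(y_0)$ with $\overline{B_\rho(y_0)} \cap \overline\Omega = \{x_0\}$, and the function $b(x) := \rho^{2-d} - |x - y_0|^{2-d}$ (logarithmic in $d=2$) is harmonic and strictly positive in $\overline\Omega \setminus \{x_0\}$ with $b(x_0) = 0$. Given $\varepsilon > 0$, by continuity of $\zeta$ choose $\delta > 0$ so that $\zeta \leq \zeta(x_0) + \varepsilon$ on $\partial\Omega \cap B_\delta(x_0)$, and pick $C$ so large that $\zeta(x_0) + \varepsilon + Cb(x) \geq M$ on $\partial\Omega \setminus B_\delta(x_0)$. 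Then $\bar v := \zeta(x_0) + \varepsilon + Cb \in \mathcal{F}$, giving $g(x) \leq \bar v(x)$ and hence $\limsup_{x \to x_0} g(x) \leq \zeta(x_0) + \varepsilon$. A symmetric argument using the subharmonic competitor $\underline v := \zeta(x_0) - \varepsilon - Cb$, compared against any $v \in \mathcal{F}$ via the maximum principle, gives $\liminf_{x \to x_0} g(x) \geq \zeta(x_0) - \varepsilon$. Sending $\varepsilon \to 0$ yields $g(x) \to \zeta(x_0)$, and global continuity in the smooth case follows at once.

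The main obstacle is the lifting step: one must check carefully that the harmonic replacement $\tilde v_n$ is truly admissible — that is, that superharmonicity is preserved across $\partial B$ in the viscosity sense, which uses precisely the inequality $\tilde v_n \leq v_n$ and the maximum principle on $B$ — and then that the strong maximum principle rigorously rules out any gap between $g$ and the candidate harmonic limit $h$ inside $B$. The remaining estimates are technical bookkeeping once the correct competitor $\min(\tilde v_n, w_n)$ is identified.
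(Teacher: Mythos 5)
The paper does not actually prove this lemma: it is quoted from Han--Lin \cite[Theorem 6.6]{han2011elliptic}, and your argument is exactly the classical Perron construction used in the cited source (harmonic lifting for interior harmonicity, exterior-ball barriers for boundary attainment), only phrased with an infimum over supersolutions rather than a supremum over subsolutions, which matches the formulation in the statement. The proof is essentially correct, with two small repairable imprecisions. First, the claim that every $v\in\mathcal{F}$ satisfies $v\le M$ is false (a supersolution may exceed $\max_{\partial\Omega}\zeta$ inside $\Omega$); what you need, and what is true, is only $g\le M$, because the constant $M$ is admissible. Second, and related, the uniform boundedness of the lifted sequence $\tilde v_n$ in $B$ is not automatic as written: you should first truncate, replacing $v_n$ by $\min(v_n,M)\in\mathcal{F}$ (or apply Harnack to the nonnegative harmonic functions $\tilde v_n-\min_{\partial\Omega}\zeta$ on a smaller ball), after which the compactness and the strong-maximum-principle step identifying $g$ with the harmonic limit go through as you describe. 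With these adjustments the argument is complete; at the boundary one should also note that evaluating the upper barrier at $x_0$ and the comparison $v\ge\underline v$ on $\overline\Omega$ give $g(x_0)=\zeta(x_0)$, not just the limit relation.
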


To show \cref{lemma:wlog}, we first provide the next lemma, which will be fundamental in the rest of the section.

\begin{lemma}\label{lemma:min2soprasol} 
Let $\mathcal{A}$ be the class of admissible supersolutions defined in \cref{def:a-supersolution} and let $w_1\in\mathcal{A}$. Let $B \subset \mathcal{B}$ be a smooth bounded open set and let $w_2$ be such that
$$
\begin{aligned}
    w_2 \in C^0(\overline{B}), \quad \Delta w_2 \leq 0  \text{  in  } \Omega_{w_2} \cap D, \quad \abs{\nabla w_2}^2 \leq \nabla w \cdot V \hbox{ on  } \partial \Omega_{w_2} \cap D.
\end{aligned}
$$
Then:
\begin{itemize}
    \item[(i)] if $B=\mathcal{B}$, then $w:=\min\left\{w_1,w_2\right\}\in\mathcal{A}$;
    \item[(ii)] if $\overline B\subset\mathcal{B}$ and $w_2\ge w_1$ on $\partial B$, then $w:=\min\left\{w_1,w_2\right\}\in\mathcal{A}$.
\end{itemize}
\end{lemma}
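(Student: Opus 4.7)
The plan is to check directly that $w$ satisfies each of the defining conditions of the class $\mathcal{A}$: continuity and non-negativity on $\overline{\mathcal{B}}$, the vanishing $w\equiv 0$ on $K$, the viscosity Laplacian inequality inside $\Omega_w\cap D$, and the free boundary supersolution inequality on $\partial\Omega_w\cap D$. In case (i) this is essentially the classical ``minimum of two supersolutions is a supersolution'' statement. In case (ii) I would first implicitly extend $w$ to the whole of $\overline{\mathcal{B}}$ by setting $w:=w_1$ on $\overline{\mathcal{B}}\setminus B$: the hypothesis $w_2\ge w_1$ on $\partial B$ forces $\min\{w_1,w_2\}=w_1$ on $\partial B$, so the two definitions agree continuously across $\partial B$. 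The vanishing on $K$ comes for free, since $0\le w\le w_1$ globally and $w_1\equiv 0$ on $K$.

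For the interior inequality, let $\varphi\in C^\infty(\mathcal{B})$ touch $w$ from below at some $x_0\in\Omega_w\cap D$. Since $w(x_0)>0$, the identity $w(x_0)=\min\{w_1(x_0),w_2(x_0)\}$ forces both $w_1(x_0)$ and $w_2(x_0)$ to be strictly positive, and at least one of them, say $w_i(x_0)$, realizes the minimum. Then $\varphi\le w\le w_i$ in a neighborhood of $x_0$ with equality at $x_0$, so $\varphi$ touches $w_i$ from below at a point of $\Omega_{w_i}\cap D$, and the supersolution property of $w_i$ yields $\Delta\varphi(x_0)\le 0$. In case (ii) the only configuration requiring extra care is $x_0\in\partial B$, but there $\min\{w_1,w_2\}=w_1$ forces the argument to be applied to $w_1$.

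For the free boundary inequality, let $\varphi$ touch $w$ from below at $x_0\in\partial\Omega_w\cap D$, so that $\varphi(x_0)=w(x_0)=0$ and some $w_i(x_0)=0$. A sequence $x_n\to x_0$ with $w(x_n)>0$ must have $w_i(x_n)>0$, giving $x_0\in\partial\Omega_{w_i}$. The chain $\varphi\le w\le w_i$ near $x_0$ then shows that $\varphi$ touches $w_i$ from below at $x_0\in\partial\Omega_{w_i}\cap D$, and the free boundary supersolution property of $w_i$ produces $|\nabla\varphi(x_0)|^2\le\nabla\varphi(x_0)\cdot V$. In case (ii), if $w_2(x_0)=0$ but $w_1(x_0)>0$, then the hypothesis $w_2\ge w_1$ on $\partial B$ forces $x_0\in\tint{B}$, which is exactly the region in which the supersolution conditions for $w_2$ are assumed.

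I expect the only genuine subtlety to be the gluing in case (ii): the single hypothesis $w_2\ge w_1$ on $\partial B$ is doing triple duty, simultaneously ensuring continuity of the extended $w$, preserving the global bound $w\le w_1$, and keeping every vanishing point or free boundary point of $w_2$ safely inside $B$, where the assumed supersolution conditions for $w_2$ are in force. Once this bookkeeping is set up, the viscosity checks reduce, at each point, to the corresponding checks for whichever of $w_1$ or $w_2$ realizes the minimum.
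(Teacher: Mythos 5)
Your proposal is correct and takes essentially the same approach as the paper's: verify each defining condition of $\mathcal{A}$ by checking that a test function touching $w$ from below at $x_0$ also touches whichever of $w_1,w_2$ realizes the minimum, then invoke that function's supersolution property. The paper is somewhat terser: it writes ``without loss of generality $w(x_0)=w_1(x_0)$'' and argues only with $w_1$, leaving implicit both the symmetric case $w(x_0)=w_2(x_0)$ (where one must use the hypotheses on $w_2$, valid only in $B$ in case (ii)) and the extension of $w$ by $w_1$ outside $\overline{B}$. Your write-up makes both of these explicit and correctly observes that $w_2\ge w_1$ on $\partial B$ forces the minimum to be realized by $w_1$ there, so that any vanishing or free boundary point where $w_2$ is needed lies in the interior of $B$; this is the right way to close the small gap in the paper's ``WLOG.''
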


\begin{proof}
We observe that in both cases $w$ is continuous in $\overline{\mathcal{B}}$.  
Next, we show that $\Delta w\le 0$ in $\Omega_{w}\cap D$ and $|\nabla w|^2\le \nabla w\cdot V$ on $\partial \Omega_w\cap D$.
Let $\varphi\in C^\infty(\mathcal{B})$ be a test function such that $\varphi$ touches $w$ from below at $x_0\in \overline \Omega_w\cap D$. Without loss of generality, we can suppose that $w(x_0)=w_1(x_0)$, implying that $\varphi$ touches $w_1$ from below at $x_0$. We have two cases.
\begin{itemize}
    \item If $x_0\in \Omega_{w}\cap D$, then $x_0\in\Omega_{w_1}\cap D$. Since $\varphi$ touches $w_1$ from below at $x_0$, then, since $w_1 \in \mathcal{A}$, $\Delta \varphi(x_0)\le 0$.
    \item If $x_0\in \partial \Omega_{w}\cap D$, then $x_0\in\partial\Omega_{w_1}\cap D$. Indeed, there is a sequence $x_j\in\Omega_{w}\cap D$ such that $x_j\to x_0$, implying that $w_1(x_j)\ge w(x_j)>0$. Since $\varphi$ touches $w_1$ from below at $x_0$ and $w_1 \in \mathcal{A}$, then $|\nabla \varphi(x_0)|^2\le \nabla \varphi(x_0)\cdot V$.
\end{itemize}
Finally, since $w_1\equiv0$ on $K$, then $w\equiv0$ on $K$.
\end{proof}
We are finally in position to show the main result of this subsection.
\begin{proof}[Proof of \cref{lemma:wlog}]
We need to show that if $w\in\mathcal{A}$, then exists $\overline w\in\mathcal{A}$, satisfying  \eqref{eq:wlog0} and \eqref{eq:wlog1}.
We cannot directly apply \cref{lemma:classical-perron} since we do not know a priori the regularity of $\partial \Omega_w \cap D$. Thus, we proceed as follows.

Let $\mathcal{T}:= \Omega_w \cap D$ and let
$$ \mathcal{V}:=\left\{v\in C^0(\overline{\mathcal{T}},\R^+): \ \Delta v\le 0 \text{ in }\mathcal{T},\ v\ge w\text{ on }\partial \mathcal{T}\right\}.$$ 
Take 
$$\overline w(x):=\inf\left\{v(x): v\in\mathcal{V}\right\},$$ 
which is well defined since $w\in\mathcal{V}$.
Since $\mathcal{T}$ is an open bounded set, then, by \cref{lemma:classical-perron}, we have that
\be\label{overlinewisharmonic}\Delta \overline w=0\quad\text{in }\mathcal{T}.\ee 
 To show \eqref{eq:wlog0}, we need to prove that
\be\label{eq:tesilemma1}
\overline w\in C^0(\overline{\mathcal{T}})\quad\text{and}\quad \overline w=w\quad\text{on } \partial \mathcal{T}\ee
Since $\mathcal{B}$ and $K$ are smooth, by \cref{lemma:classical-perron}, it is enough to show that $\overline w$ is continuous up to the boundary $\partial \Omega_w\cap D$. 
Let $x_0\in \partial \Omega_w\cap D$ and take a sequence $x_j\to x_0$.
Since $\overline w\le w$, then $0\le\overline w(x_0)\le w(x_0)=0$. Moreover, $$0\le \lim_{j\to+\infty}\overline w(x_j)\le \lim_{j\to+\infty} w(x_j)=w(x_0)=0.$$ 
Therefore \eqref{eq:tesilemma1} holds true.
With a slight abuse of notation, we denote
\begin{equation}
    \label{e:estensione}
    \overline{w}=\left\{
\begin{aligned}
    &\overline{w} &&\text{in } \overline{\mathcal{T}},\\
    &0 &&\text{in } \overline{\mathcal{B}} \setminus \mathcal{T}.
\end{aligned}\right.
\end{equation}
By \eqref{e:estensione}, we observe that $\overline w \equiv 0$ in $K$. Next, combining \eqref{eq:tesilemma1} and \eqref{e:estensione}, we have \eqref{eq:wlog0}. 
In addition, since $\Omega_{\overline w}\cap D=\mathcal{T}$, then \eqref{overlinewisharmonic} is exactly \eqref{eq:wlog1}. To conclude, 
it is left to show that $\abs{\nabla \overline{w}}^2 \le \nabla \overline w\cdot V$ on $\partial\Omega_{\overline w}\cap D$.
Let $\varphi\in C^\infty(\mathcal{B})$ be a test function such that $\varphi$ touches $\overline w$ at $x_0\in \partial\Omega_{\overline w}\cap D$. By construction of $\overline w$, we have that $\Omega_{\overline w} = \Omega_{w}$, implying that $x_0\in \partial\Omega_{\overline w}\cap D= \partial\Omega_{w}\cap D$.
Since $\overline w\le w$, then $\varphi$ touches $w$ at $x_0$, thus
$$|\nabla \varphi(x_0)|^2\le \nabla \varphi(x_0)\cdot V,$$
which is the desired conclusion and it yields the thesis.
\end{proof}

\subsection{The function \texorpdfstring{$u$}{u} is Lipschitz and harmonic in \texorpdfstring{$\Omega_u\cap D$}{omegau}}
\label{subsec:u_lipschitz_armonica}

In this subsection, we prove the following proposition.
\begin{proposition}
\label{lemma:u_viscotity_sol_lipschitz}
     Let $u:\overline{\mathcal{B}}\to\R^+$ be the function defined in \eqref{e:funzione_u_teorema_esistenza}, then $u$ is locally Lipschitz in $\mathcal{B}$. Precisely, for every compact set $\mathcal{C}\subset\mathcal{B}$, there exists a constant $L = L(\mathcal{C},\|u\|_{L^\infty(\mathcal{B})})>0$ such that
     \begin{equation}
     \label{e:lipschitz_u_proposizione_enunciato}
         |u(x)-u(y)|\le L|x-y|\quad\text{for every}\quad x,y\in \mathcal{C}.
     \end{equation}
    Moreover, 
    \be\label{eq:u-harmonic-in-Omegau}\Delta u=0 \quad\text{in }\Omega_u\cap D.\ee
\end{proposition}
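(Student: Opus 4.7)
My plan is to split the proposition into its two conclusions — harmonicity in $\Omega_u\cap D$ and local Lipschitz continuity — and handle them independently. Throughout I will work with the smaller class $\mathcal{A}^+$ via \cref{corollario:usosempre}, so that every competitor in the infimum defining $u$ is already harmonic in its positivity set intersected with $D$.

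For the harmonicity I will adapt the classical Perron harmonic-replacement technique. Fix $x_0\in\Omega_u\cap D$ and a ball $B=B_r(x_0)\Subset D$ small enough that some competitor $w\in\mathcal{A}^+$ with $w(x_0)$ close to $u(x_0)$ stays strictly positive on $\overline{B}$; this uses that $u$ is the infimum of continuous supersolutions. Let $h$ be the harmonic function in $B$ with $h=u$ on $\partial B$, produced by \cref{lemma:classical-perron}. The inequality $u\ge h$ on $B$ comes directly from the definition of $u$: for any admissible $w$ with $w\ge u\ge h$ on $\partial B$, the maximum principle forces $w\ge h$ on $B$, and taking the infimum yields $u\ge h$. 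For the opposite inequality, I will introduce the modified competitor $\tilde w$ equal to the harmonic extension of $w|_{\partial B}$ inside $B$ and to $w$ outside, and verify via \cref{lemma:min2soprasol}(ii) that $\tilde w\in\mathcal{A}^+$: the strong maximum principle keeps $\tilde w>0$ throughout $B$ (so no new free boundary is created inside $B$), while harmonicity and the free boundary condition are inherited from $w$ outside $B$ and hold trivially inside. Hence $u\le \tilde w$, and passing to a minimizing sequence $w_n$ at $x_0$ together with stability of harmonic functions under uniform convergence on $\partial B$ will give $u\le h$ on $B$. Combining, $u\equiv h$ on $B$, which is \eqref{eq:u-harmonic-in-Omegau}.

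For the local Lipschitz estimate the cleanest route is to prove a uniform bound: every $w\in\mathcal{A}^+$ is $L$-Lipschitz on $\mathcal{C}$ with $L=L(\mathcal{C},\|u\|_{L^\infty(\mathcal{B})})$ independent of $w$. Since a pointwise infimum of $L$-Lipschitz functions is again $L$-Lipschitz, \eqref{e:lipschitz_u_proposizione_enunciato} will follow. For each $w\in\mathcal{A}^+$ the bound combines two ingredients: (i) on $\Omega_w\cap D$ the function $w$ is harmonic, so the classical interior gradient estimate gives $|\nabla w(x)|\le (C/\rho)\|w\|_{L^\infty(\mathcal{B})}$ with $\rho=\dist(x,\partial\Omega_w\cup\partial D)$; (ii) near the free boundary the viscosity supersolution condition $|\nabla w|^2\le\nabla w\cdot V$ combined with Cauchy--Schwarz forces $|\nabla w|\le|V|$ at any touching point, and a comparison with affine barriers of slope $|V|$ aligned with $V$ propagates this into the linear decay $w(x)\le C|V|\dist(x,\{w=0\})$. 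Behavior near $\partial K$ is handled analogously, using that $K$ is smooth and $w\equiv 0$ there. Together (i) and (ii) yield $|\nabla w|\le L$ on $\mathcal{C}$ uniformly in $w$.

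The hard part will be the linear decay in (ii). The non-degeneracy $V\cdot \nu\ge \delta>0$ may fail at some free boundary points — the pathology stressed in the introduction — so a rigid half-space barrier with a generic normal is not admissible. The rescue is the graph property \cref{lemma:graph}, which forces $\partial\Omega_w\cap D$ to be monotone along $V$; this geometric constraint guarantees that any touching of $w$ from above by a barrier must occur in the direction $V/|V|$, the unique admissible direction permitted by the supersolution inequality. Turning this pointwise touching into a quantitative growth estimate uniform over $w\in\mathcal{A}^+$ is where the bulk of the technical work will concentrate.
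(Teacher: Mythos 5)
Your derivation of $u\le h$ in the harmonicity step does not close as stated. First, for $w\in\mathcal{A}^+$ and a ball $B\Subset\Omega_u\cap D$ the ``modified competitor'' $\tilde w$ is just $w$ itself: since $w\ge u>0$ on $\overline B$, we have $B\subset\Omega_w\cap D$, so $w$ is already harmonic in $B$ and the harmonic replacement changes nothing. The real issue is that a minimizing sequence $w_n$ at the single point $x_0$ gives no control of $w_n$ on $\partial B$: $w_n(x_0)\to u(x_0)$ is perfectly compatible with $w_n$ staying far from $u$ on $\partial B$, so ``stability of harmonic functions under uniform convergence on $\partial B$'' cannot be invoked and you do not get $u\le h$ with $h$ the harmonic extension of $u|_{\partial B}$. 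To repair this you need either the classical two-sequence Perron argument (Harnack convergence theorem plus the strong maximum principle, which proves harmonicity of the infimum without identifying its boundary values), or, as the paper does, first establish the equi-Lipschitz bound for all $w\in\mathcal{A}^+$ (\cref{corollary:lipw}), use it together with a covering and \cref{lemma:min2soprasol} to build a single sequence in $\mathcal{A}^+$ converging to $u$ locally uniformly (\cref{corollary:ex0}), and then obtain \eqref{eq:u-harmonic-in-Omegau} as a locally uniform limit of harmonic functions. As written, this step is an unsupported leap.

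On the Lipschitz half, your overall scheme (uniform Lipschitz bound for all competitors via interior gradient estimates plus linear decay near $\{w=0\}$, then pass to the infimum) is the paper's, but two points in your item (ii) are genuinely problematic. The worry about failure of non-degeneracy is a phantom: touching $w$ from below at a point of $\partial\Omega_w\cap D$ with any smooth barrier gives $|\nabla\varphi|\le|V|$ by Cauchy--Schwarz, with no alignment condition and no need for \cref{lemma:graph}; conversely, affine barriers of slope $|V|$ aligned with $V$ satisfy $|\nabla\varphi|^2=\nabla\varphi\cdot V$ exactly, so sliding them yields no contradiction (and a sliding plane may anyway first touch on the lateral boundary of the comparison region rather than on $\partial\Omega_w$), hence they cannot produce the decay $w\le C\,\mathrm{dist}(\cdot,\partial\Omega_w)$. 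The quantitative argument needs Harnack plus an annulus-type barrier whose gradient at the contact point is proportional to $w_r(0)$, as in Step 1 of \cref{lemma:w_viscotity_sol_lipschitz}. Second, the case where the nearest free boundary point lies on $K$ is not ``analogous'': for $w\in\mathcal{A}$ no free boundary condition at all is imposed on $\partial\Omega_w\cap K$ (and for $u$ only the reverse inequality holds there), so the touching argument is unavailable; the paper instead uses that $w$ is subharmonic in $D$ and compares it with the harmonic function in $D$ vanishing on the smooth obstacle, whose boundary Lipschitz regularity gives the linear decay (Step 2 of \cref{lemma:w_viscotity_sol_lipschitz}). Both gaps are fixable, but with the standard arguments just described rather than with the ones you propose.
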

To obtain the Lipschitz inequality \eqref{e:lipschitz_u_proposizione_enunciato}, we use the following lemma, which is a classical result of regularity theory for elliptic PDEs.

\begin{lemma}\label{lemma:classical-result-pde}
   Let $B,\Omega \subset \R^d$ be two bounded open sets and let $f:B\cap\Omega \to \R^+$ be a continuous harmonic function in $B\cap\Omega$. 
   Take a compact set $\mathcal{C}\subset B\cap\overline\Omega$ and suppose that there are constants $C_0>0$ and $0<\eta<\text{dist}(\mathcal{C},\partial B)$
   such that
    \be\label{eq:stima-lip}
    f(x) \leq C_0 {\rm dist}(x,\partial \Omega),
    \ee
    for every $x \in \mathcal{C}$ such that ${\rm dist}(x,\partial \Omega)\le \eta$. Then $f$ is Lipschitz in $\mathcal C$. 
    Precisely,
    \begin{equation}
        \label{e:lipschitz_f}
       \exists \, L = L(d, B,\mathcal{C}, C_0, \norm{f}_{L^\infty\left(B_1\cap \Omega\right)})>0 \quad \hbox{such that } \quad   |f(x)-f(y)|\le L|x-y|\quad\text{for every}\quad x,y\in\mathcal{C}.
    \end{equation}
\end{lemma}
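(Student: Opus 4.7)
The plan is to bootstrap the growth bound $f \le C_0\,\dist(\cdot,\partial\Omega)$ at points of $\mathcal{C}$ into a pointwise gradient bound on a neighborhood of $\mathcal{C}$ inside $B\cap\Omega$, and then to deduce the Lipschitz estimate \eqref{e:lipschitz_f} by integrating $|\nabla f|$ along short segments, treating long segments via the triangle inequality.

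First I would establish a pointwise gradient bound at each $x \in \mathcal{C}$, setting $r(x) := \dist(x,\partial\Omega)$. If $r(x) \ge \eta$, then $B_{\eta/2}(x) \subset B \cap \Omega$ (using $\eta < \dist(\mathcal{C},\partial B)$ together with $r(x) \ge \eta$), so the standard interior gradient estimate for harmonic functions yields $|\nabla f(x)| \le C(d)\eta^{-1}\norm{f}_{L^\infty(B\cap\Omega)}$. If instead $r(x) < \eta$, then $B_{r(x)}(x) \subset B \cap \Omega$, and since $f \ge 0$ is harmonic there the Harnack inequality upgrades the pointwise hypothesis $f(x)\le C_0 r(x)$ to the sup bound $\sup_{B_{r(x)/2}(x)} f \le C(d)\,C_0\, r(x)$; interior gradient estimates on $B_{r(x)/2}(x)$ then produce the scale-invariant bound $|\nabla f(x)| \le C_1(d,C_0)$.

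Next I would extend the gradient bound to an open neighborhood of $\mathcal{C}$, which is crucial because a segment joining two points of $\mathcal{C}$ need not stay in $\mathcal{C}$. The only input used at $x$ in the previous step is the growth inequality $f(x) \le C_0\, r(x)$. For $x \in \mathcal{C}$ with $r(x) < \eta$ and any $z \in B_{r(x)/2}(x)$, the Harnack sup bound combined with $\dist(z,\partial\Omega) \ge r(x)/2$ yields $f(z) \le 2C(d)\,C_0\,\dist(z,\partial\Omega)$; repeating the Harnack-plus-gradient estimate argument at $z$ therefore bounds $|\nabla f|$ uniformly on the tubular set $\mathcal{N} := \bigcup_{x \in \mathcal{C}} B_{r(x)/2}(x)$ by a constant $C_2 = C_2(d, C_0, \eta, \norm{f}_{L^\infty(B\cap\Omega)})$.

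Finally, for $x,y \in \mathcal{C}$ (extending $f$ by $0$ on $\mathcal{C} \cap \partial\Omega$, consistently with the growth hypothesis) I would split cases. If $|x-y| < \tfrac14 \max(r(x), r(y))$, then $[x,y] \subset \mathcal{N}$ and integrating $|\nabla f|$ along the segment gives $|f(x)-f(y)| \le C_2\, |x-y|$. Otherwise $\max(r(x),r(y)) \le 4|x-y|$; in this case each of $f(x), f(y)$ is controlled either by $C_0\, r(\cdot) \le 4C_0|x-y|$ (if the relevant distance is below $\eta$) or by $\norm{f}_{L^\infty(B\cap\Omega)}$ coupled with $|x-y| \ge \eta/4$, giving $f(\cdot) \le 4\eta^{-1} \norm{f}_{L^\infty(B\cap\Omega)}\, |x-y|$. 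Combining these cases produces \eqref{e:lipschitz_f} with $L$ depending exactly on the stated quantities. The hard part will be the neighborhood-extension step: the hypothesis controls $f$ by the distance to $\partial\Omega$ only on $\mathcal{C}$, and it is the Harnack-based propagation of this growth bound to a genuine open neighborhood that makes integrating $|\nabla f|$ along segments possible — without it, segments through $B \cap \Omega \setminus \mathcal{C}$ could not be handled.
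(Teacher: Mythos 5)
Your scheme is close in spirit to the paper's (distance-scaled interior gradient estimates plus the growth hypothesis, with far-apart points handled by boundedness), and the Harnack step you use to propagate \eqref{eq:stima-lip} from a point $x\in\mathcal{C}$ to the ball $B_{r(x)/2}(x)$ is a clean device. However, there is a genuine gap: you calibrate everything to $r(x)=\dist(x,\partial\Omega)$ and never take $\partial B$ into account. The tubular set $\mathcal{N}=\bigcup_{x\in\mathcal{C}}B_{r(x)/2}(x)$ need not be contained in $B\cap\Omega$ at all (take $\Omega$ a huge ball and $B$ small: then $r(x)$ is enormous and $B_{r(x)/2}(x)\supset B$), $f$ is not even defined on $\mathcal{N}\setminus B$, and on $\mathcal{N}\cap B$ near $\partial B$ there is no interior gradient estimate with a constant $C_2(d,C_0,\eta,\norm{f}_{L^\infty})$ — indeed $|\nabla f|$ may blow up at $\partial B$, and your claimed constant cannot avoid depending on $\dist(\mathcal{C},\partial B)$. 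Consequently your Case 1 breaks down: the dichotomy ``$|x-y|<\tfrac14\max(r(x),r(y))$'' versus ``$\max(r(x),r(y))\le 4|x-y|$'' puts into the first case pairs $x,y\in\mathcal{C}$ whose mutual distance is large compared with $\dist(\mathcal{C},\partial B)$ whenever $\dist(\cdot,\partial\Omega)\gg\dist(\cdot,\partial B)$; for such pairs the segment $[x,y]$ can approach, or even leave, $B$ (if $B$ is not convex), so integrating $|\nabla f|$ along it is not justified, and these pairs are not caught by your Case 2 either (there $\max(r(x),r(y))\le 4|x-y|$ fails).

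The missing ingredient is exactly the localization with which the paper starts: dispose first of all pairs with $|x-y|\ge \eta\,\eta_0/(2\Lambda)$, where $\eta_0=\dist(\mathcal{C},\partial B)$ and $\Lambda=\diam B$, using only $f\ge 0$ and $\norm{f}_{L^\infty}$; for the remaining short pairs, work with balls whose radius is capped by a fixed fraction of $\eta_0$ (the paper uses $B_{d_x\eta_0/\Lambda}(x)$ when $d_x\ge\eta$ and $B_{d_x}(x)\subset B_{\eta_0}(x)$ when $d_x\le\eta$), so that every ball and every segment stays inside $B\cap\Omega$ and the resulting Lipschitz constant legitimately depends on $B$ and $\mathcal{C}$ as allowed by \eqref{e:lipschitz_f}. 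With radii capped in this way your Harnack-plus-gradient argument goes through (and at one point is even tidier than the paper's Case 3, which invokes \eqref{eq:stima-lip} at the maximum point $z\in\overline B_{d_x}(x)$ rather than only at the center); as written, though, the neighborhood-extension step and Case 1 are not correct.
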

\begin{proof}
    Let $x, y \in \mathcal{C}$ and $x_0, y_0$ be the projection on $\partial \Omega$ of $x, y$ respectively. Precisely
    $$
    d_x:=\abs{x- x_0} = {\rm dist}(x, \partial \Omega) \qquad \hbox{ and }\qquad d_y:=\abs{y-y_0} = {\rm dist}(y, \partial \Omega).
    $$
    We set $\eta_0 := \text{dist}(\mathcal{C},\partial B)$ and $\Lambda:=\text{diam}(B)$.
    Without loss of generality, we can suppose that $d_y\le d_x$ and $|x-y|\le \frac{\eta \eta_0}{2\Lambda}$. Indeed, for $\abs{x-y} \geq \frac{\eta \eta_0}{2\Lambda}$, \eqref{e:lipschitz_f} is a consequence of the boundedness of the function $f$ in $B \cap \Omega$. In the other case, we need to analyze three cases.
\begin{itemize}
    \item \textit{Case 1. $d_x \geq {\eta}$.} In this case 
    $$|x-y|\le \frac{\eta\eta_0}{2 \Lambda}\le \frac{d_x\eta_0}{2 \Lambda},$$ 
    namely $y\in B_{\frac{d_x\eta_0}{2 \Lambda}}(x)$.
    Since $ d_x\le \Lambda$, then $\frac{d_x\eta_0}{\Lambda}\le \eta_0$ and thus $B_{\frac{d_x\eta_0}{\Lambda}}(x)\subset B$. Moreover, $B_{\frac{d_x\eta_0}{\Lambda}}(x)\subset B_{d_x}(x)\subset\Omega$, then $f$ is harmonic in $B_{\frac{d_x\eta_0}{\Lambda}}(x)$. By classical gradient estimates for harmonic functions, we have that
$$
    \norm{\nabla f}_{L^\infty\Big(B_{\frac{d_x \eta_0}{2\Lambda}}(x)\Big)}\leq\frac{C \Lambda}{d_x\eta_0} \norm{f}_{L^\infty\Big(B_{\frac{d_x \eta_0}{\Lambda}}(x)
   \Big)} \le \frac{C \Lambda}{\eta \eta_0}.
$$ 
By the fundamental theorem of calculus, we obtain
$$
\abs{f(x)- f(y)} \leq \norm{\nabla f}_{L^\infty\left(B_{\frac{d_x \eta_0}{2\Lambda}}(x)\right)}\abs{x-y}\le \frac{C \Lambda}{\eta\eta_0}|x-y|.
$$
    \item \textit{Case 2. $d_x\le\eta$ and $|x-y|\ge \sfrac{d_x}{2} \geq \sfrac{d_y}{2}$.}
    By \eqref{eq:stima-lip}, we have that
$$\abs{f(x)-f(y)}\le \abs{f(x)}+\abs{f(y)}\le C_0d_x+C_0d_y\le 4C_0|x-y|.$$
\item \textit{Case 3. $d_x \leq \eta$ and $\abs{x-y}\leq \sfrac{d_x}{2}$.} In this case $y\in B_{\sfrac{d_x}{2}}(x)$.
Since $B_{d_x}(x) \subset B_{\eta_0}(x)\subset B$, then $f$ is harmonic in $B_{d_x}(x) \subset\Omega$. By classical gradient estimates for harmonic functions, we have that
$$
    \norm{\nabla f}_{L^\infty\left(B_{\sfrac{d_x}{2}}(x)\right)}\leq C \frac{\norm{ f}_{L^\infty\left(B_{d_x}(x)\right)}}{d_x}.
$$
Let $z\in\overline B_{d_x}(x)$ be such that $f(z)=\norm{ f}_{L^\infty\left(B_{d_x}(x)\right)}$. Then, by \eqref{eq:stima-lip}, we get
$$
\norm{\nabla f}_{L^\infty\left(B_{\sfrac{d_x}{2}}(x)\right)}\leq C \frac{f(z)}{d_x} \leq CC_0 \frac{{\rm dist}(z, \partial \Omega)}{d_x} \leq CC_0 \frac{\abs{z-x_0}}{d_x} \leq 2CC_0.
$$
By the fundamental theorem of calculus, we have
$$
\abs{f(x)- f(y)} \leq \norm{\nabla f}_{L^\infty\left(B_{\sfrac{d_x}{2}}(x)\right)}\abs{x-y}\le 2CC_0|x-y|.
$$
\end{itemize}
By choosing $L:=\max\left\{\frac{C\Lambda}{\eta\eta_0},4C_0, 2CC_0\right\}$, the thesis holds true.
\end{proof}

In the following, we are going to prove that in our setting \eqref{eq:stima-lip} holds true. 

\begin{lemma}
\label{lemma:w_viscotity_sol_lipschitz}
     Let $w\in\mathcal{A}^+$ be an modified admissible supersolution to \eqref{e:viscosity-sol-K_D}, according with \cref{def:a-supersolution-modificata}. 
     Then, for every compact set $\mathcal{C}\subset \mathcal{B}\cap \overline\Omega_w$, there exists a constant $C_0=C_0\left(\mathcal{C},\|w\|_{L^\infty(\mathcal{B})}\right)>0$  such that
     \be\label{eq:stima-parto} 
     w(x)\le C_0\,\text{dist}\left(x,\partial \Omega_w\right)
     \ee 
     for every $x \in \mathcal{C}$ such that ${\rm dist}(x,\partial \Omega_w)\le \frac{\text{dist}(\mathcal{C},\partial \mathcal{B})}{2}$.
\end{lemma}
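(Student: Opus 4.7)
The plan is to fix $x\in\mathcal{C}\cap\Omega_w$ (the bound is trivial on $\partial\Omega_w$), set $r:=\text{dist}(x,\partial\Omega_w)$, and choose $y_0\in\partial\Omega_w\cap\partial B_r(x)$ realizing the distance; since $r\le\tfrac12\text{dist}(\mathcal{C},\partial\mathcal{B})$, one has $B_r(x)\subset\mathcal{B}$. Because $w(x)>0$ and $w\equiv0$ on $K$, the ball $B_r(x)$ does not meet $K$, so $B_r(x)\subset\Omega_w\cap D$ and $w$ is harmonic on $B_r(x)$ by \cref{def:a-supersolution-modificata}. The argument then splits into two cases according to whether $y_0\in\partial\Omega_w\cap D$ or $y_0\in\partial K\cap\mathcal{B}$.

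In the interior case $y_0\in\partial\Omega_w\cap D$ I would argue by contradiction, assuming $w(x)\ge M r$ for some large $M$ to be chosen. Harnack's inequality on the harmonic function $w$ in $B_r(x)$ yields $w\ge c_d M r$ on $\overline{B_{r/2}(x)}$ for a dimensional $c_d>0$. I would then compare $w$ with the explicit radial harmonic barrier $\phi$ in the annulus $B_r(x)\setminus\overline{B_{r/2}(x)}$ satisfying $\phi=c_d M r$ on the inner sphere and $\phi=0$ on the outer one; the maximum principle gives $\phi\le w$ in the annulus, and a direct computation yields $|\nabla\phi(y_0)|=\tilde c_d M$ with $\nabla\phi(y_0)$ parallel to $x-y_0$, for some dimensional $\tilde c_d>0$. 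Extending $\phi$ as the same smooth radial function past $y_0$ (where it becomes negative, hence stays below $w\ge 0$) makes $\phi$ a legitimate test function touching $w$ from below at $y_0\in\partial\Omega_w\cap D$. The viscosity free boundary condition in \cref{def:a-supersolution} then gives
$$\tilde c_d^2 M^2=|\nabla\phi(y_0)|^2\le\nabla\phi(y_0)\cdot V\le\tilde c_d M\,|V|,$$
so $M\le|V|/\tilde c_d$; choosing $M$ strictly larger produces the desired contradiction, whence $w(x)\le(|V|/\tilde c_d)\,r$.

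In the obstacle case $y_0\in\partial K\cap\mathcal{B}$ no free boundary condition is available at $y_0$, so I would replace the local barrier by a global comparison. Let $\phi_0:\overline D\to\R^+$ be the harmonic function in $D$ with $\phi_0=\|w\|_{L^\infty(\mathcal{B})}$ on $\partial\mathcal{B}$ and $\phi_0=0$ on $\partial K$, provided by \cref{lemma:classical-perron}. Since $\partial K$ is smooth, standard boundary Schauder/Hopf estimates yield a constant $C_1=C_1(\mathcal{C},\|w\|_{L^\infty(\mathcal{B})})$ such that $\phi_0(z)\le C_1\,\text{dist}(z,\partial K)$ for every $z\in D$ at distance at least $\tfrac12\text{dist}(\mathcal{C},\partial\mathcal{B})$ from $\partial\mathcal{B}$. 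The maximum principle applied in $\Omega_w\cap D$ (where $w\le\phi_0$ on each boundary component: on $\partial K$ both vanish, on $\partial\mathcal{B}\cap\overline{\Omega_w}$ one has $w\le\|w\|_\infty=\phi_0$, and on the free boundary $\partial\Omega_w\cap D$ one has $w=0\le\phi_0$) gives $w\le\phi_0$ in $\Omega_w\cap D$. Since $y_0\in\partial K$, one has $\text{dist}(x,\partial K)\le|x-y_0|=r$, hence $w(x)\le\phi_0(x)\le C_1 r$.

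Setting $C_0:=\max\{|V|/\tilde c_d+1,C_1\}$ yields \eqref{eq:stima-parto}. The hard part is the obstacle case: since the free boundary condition in \cref{def:a-supersolution-modificata} is imposed only on $\partial\Omega_w\cap D$, the local contradiction argument of the interior case cannot be applied verbatim when $y_0\in\partial K$. I would bypass this by exploiting the smoothness of $K$ to recover the required linear decay purely from the boundary regularity of harmonic functions vanishing on a smooth hypersurface, which is exactly what the global comparison with $\phi_0$ provides.
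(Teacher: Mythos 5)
Your proposal is correct and follows essentially the same strategy as the paper's proof: in the interior case, Harnack plus a radial harmonic barrier in the annulus touching $w$ from below at the nearest point of $\partial\Omega_w\cap D$, then the viscosity condition $|\nabla\varphi|^2\le\nabla\varphi\cdot V$ forces the linear bound; in the obstacle case, comparison with a harmonic function in $D$ vanishing on the smooth $\partial K$, whose boundary Lipschitz estimate away from $\partial\mathcal{B}$ gives $w(x)\le C\,\mathrm{dist}(x,\partial K)\le C\,r$. The only (harmless) deviation is that the paper compares with the harmonic replacement $\widetilde w$ with datum $w$ on $\partial D$ after checking that $w$ is subharmonic in all of $D$, whereas you compare only in $\Omega_w\cap D$ with a majorant $\phi_0$ whose datum is discontinuous at $\partial\mathcal{B}\cap\partial K$ — a minor technicality (note $w=0$ there, so the comparison still goes through, or simply use the paper's continuous datum).
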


\begin{proof} 
First of all we notice that
$$
\partial \Omega_w\cap\mathcal{B} = \left(\partial \Omega_w \cap D \right) \cup \left(\partial \Omega_w \cap K\right).
$$
Thus, we need to show \eqref{eq:stima-parto} in two cases: either the projection of $x\in\mathcal{C}$ on $\partial\Omega_w$ belongs to $D$ or it belongs to $K$.
We divide the proof into two steps.\\
\\
{\it Step 1. Projection on $\partial\Omega_w \cap D$.}
We aim to prove that there exists a constant $C_0>0$ such that 
\be\label{tesi-lipschitz}
w(x_0)\le C_0\text{dist}(x_0,\partial\Omega_w)\ee
for every $x_0 \in \mathcal{C}$
 such that 
 \be\label{eq:distaze-uguali}\text{dist}(x_0,\partial\Omega_w) = \text{dist}(x_0,\partial\Omega_w\cap D)\le \frac{\text{dist}(\mathcal{C},\partial \mathcal{B})}{2}.\ee
 Let us fix $x_0\in\mathcal{C}$ such that \eqref{eq:distaze-uguali} holds true, and call $r:=\text{dist}(x_0,\partial\Omega_w)$.
 We consider
 $$w_r(x):=\frac{w(x_0+rx)}{r},$$ 
 and we observe that $w_r$ is harmonic in $B_1$.
 By Harnack inequality, there exists a dimensional constant $c>0$ such that 
\bea w_r(x)&\ge c w_r(0)\quad\text{for every}\quad x\in \overline B_{\sfrac{1}{2}}.\eea
Let us define $$
    \chi: \overline B_1 \to \R^+ \quad \hbox{ such that } 
    \quad \left\{
\begin{aligned}
    \Delta \chi &= 0 &&\text{in } B_1 \setminus \overline B_{\sfrac{1}{2}},\\
    \chi&= 0  
    &&\text{on } \partial B_1,\\
    \chi &= 1 &&\text{in } \overline B_{\sfrac{1}{2}}.
 \end{aligned}
\right.$$ 
Since $w_r\ge c w_r(0)\chi$ in $\partial B_1\cup \overline B_{\sfrac{1}{2}}$, then, by the maximum principle 
\be\label{eq:vtoucheswr}w_r(x)\geq c w_r(0)\chi(x) := v(x) \quad\text{for every}\quad x\in \overline B_1.\ee
Let $z_0\in\partial\Omega_w\cap D$ be such that 
$$\text{dist}(x_0,\partial\Omega_w)=|x_0-z_0| = r.$$ 
Let us call $y_0:=\frac{z_0-x_0}{r}\in\partial B_1$.
By \eqref{eq:vtoucheswr} and $v(y_0)=w_r(y_0)=0$, we have that $v$ is a test function touching the supersolution $w_r$ from below at $y_0\in\partial\Omega_{w_r}\cap D$. Thus, since $\chi$ is a radial function, we have that
    $$cw_r(0)\|\nabla \chi(y_0)\|_{L^\infty(\partial B_1)}=cw_r(0)|\nabla \chi(y_0)|=|\nabla v(y_0)|\le \frac{\nabla v(y_0)}{|\nabla v(y_0)|}\cdot V\le |V|.$$ 
    Then, $w_r(0)\le C_0$, which is exactly \eqref{tesi-lipschitz}.
\\
\\
\textit{Step 2. Projection on $\partial\Omega_w \cap K$.}
We prove that there exists a constant $C_0>0$ such that 
\be\label{tesi-lipschitz2}w(x_0)\le C_0\text{dist}(x_0,\partial\Omega_w)\ee 
for every $x_0\in\mathcal{C}$ such that
\be\label{eq:dist-su-k}\text{dist}(x_0,\partial\Omega_w)=\text{dist}(x_0,\partial\Omega_w\cap K)\le \frac{\text{dist}(\mathcal{C},\partial \mathcal{B})}{2}.\ee
Let us fix $x_0\in\partial\Omega_w\cap K$ such that \eqref{eq:dist-su-k} holds true, and call $r:=\text{dist}(x_0,\partial\Omega_w)$.
Let us consider $\widetilde w:\overline{\mathcal{B}}\to\R^+$ defined as $$
 \left\{
\begin{aligned}
\Delta \widetilde w &= 0 &&\text{in } D,\\ 
\widetilde w &= w &&\text{on } \partial D,\\
\widetilde w &= 0
    &&\text{in } K,
 \end{aligned}
\right.$$ 
which is well-defined by \cref{lemma:classical-perron}.
We notice that $\widetilde w$ is Lipschitz in 
$$\mathcal{D}:=\left\{x \in \mathcal{B}: {\rm dist}(x, \partial \mathcal{B})> \frac{{\rm dist}(\mathcal{C}, \partial \mathcal{B})}{2}\right\},$$
with the following estimate
\bea
\norm{\nabla \widetilde{w}}_{L^\infty(\mathcal{D})}\leq C \norm{\widetilde{w}}_{L^\infty(\mathcal{B})} \leq C \norm{w}_{L^\infty(\mathcal{B})}\le C,
\eea by classical results up to the boundary for harmonic functions.
Moreover $B_r(x_0) \subset \mathcal{D}$. Indeed, let $y\in B_r(x)$ and $y_0\in\partial\mathcal{B}$ be such that $|y-y_0|=\text{dist}(y,\partial\mathcal{B})$, then we have
$$
\begin{aligned}
    \text{dist}(y,\partial\mathcal{B})&=|y-y_0|\ge |x_0-y_0|-|x_0-y|\ge \text{dist}(x_0,\partial\mathcal{B})-r\ge \text{dist}(\mathcal{C},\partial\mathcal{B})-\frac{\text{dist}(\mathcal{C},\partial\mathcal{B})}2 =\frac{\text{dist}(\mathcal{C},\partial\mathcal{B})}2.
\end{aligned}$$
Then $$\|\widetilde w\|_{L^\infty(B_r(x_0))}\le\|\widetilde w\|_{L^\infty(\mathcal{D})}\le C.$$
Let $z_0\in\partial\Omega_w\cap K$ be such that 
$$\text{dist}(x_0,\partial\Omega_w)=|x_0-z_0| = r,$$
then
$$\widetilde w(x_0)=\widetilde w(x_0)-\widetilde w(z_0)\le C|x_0-z_0|=C\text{dist}(x_0,\partial\Omega_w).$$ 
We observe that $w$ is subharmonic in $D$, indeed, if $\varphi\in C^\infty(D)$ touches $w$ in a point $z\in D$, either $z\in\Omega_w$ and thus $\Delta \varphi(z)\ge0$ by the harmonicity of $w$, or $z\in\{w=0\}$ and thus $\Delta \varphi(z)\ge0$, since $\varphi$ has a minimum point at $0$.
Then, since $w = \widetilde{w}$ on $\partial D$, by the maximum principle, we have that  $w\le\widetilde w$. Thus,
$$w(x_0)\le \widetilde w(x_0)\le C\text{dist}(x_0,K)\le C\text{dist}(x_0,\partial\Omega_w\cap K),$$ having used in the last inequality the fact that $\partial\Omega_w\cap K\subset K$. This is exactly \eqref{tesi-lipschitz2}. 
\end{proof}

Thus, we get the Lipschitz regularity for any replacement $w \in \mathcal{A}^+$ obtained in \cref{lemma:wlog} and for the function $u$ defined in \eqref{e:funzione_u_teorema_esistenza} in the following corollary.

\begin{corollary}\label{corollary:lipw}
    Let $w\in\mathcal{A}^+$ be a modified admissible supersolution to \eqref{e:viscosity-sol-K_D}, according with \cref{def:a-supersolution-modificata}. Let $u:\overline{\mathcal{B}}\to\R^+$ be the function defined in \eqref{e:funzione_u_teorema_esistenza}.
     Then, for every compact set $\mathcal{C}\subset \mathcal{B}$, there exists a constant $L=L\left(\mathcal{C},\|w\|_{L^\infty(\mathcal{B})}\right)>0$  such that
     \begin{equation}
         \label{e:w_lipschitz_enunciato}
         |w(x)-w(y)|\le L|x-y|\quad\text{for every}\quad x,y\in\mathcal{C}
     \end{equation}
     and
     \begin{equation}
         \label{e:u_lipschitz_enunciato}
         |u(x)-u(y)|\le L|x-y|\quad\text{for every}\quad x,y\in\mathcal{C}.
     \end{equation}
\end{corollary}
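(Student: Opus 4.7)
The plan is to establish \eqref{e:w_lipschitz_enunciato} first by combining the linear growth estimate of \cref{lemma:w_viscotity_sol_lipschitz} with the interior regularity result \cref{lemma:classical-result-pde}, and then to transfer the Lipschitz bound to $u$ via its infimum characterization, using the truncation machinery of \cref{lemma:min2soprasol} and \cref{lemma:wlog} to keep the $L^\infty$ norms of the competitors uniformly controlled.

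For \eqref{e:w_lipschitz_enunciato}, I would first note that $w\equiv 0$ on $K$ forces $\Omega_w\subset D$, so by \cref{def:a-supersolution-modificata} one has $\Delta w=0$ throughout $\Omega_w$. Fix an open set $B$ with $\mathcal{C}\subset B$ and $\overline B\subset\mathcal{B}$. \cref{lemma:w_viscotity_sol_lipschitz} supplies a constant $C_0=C_0(\mathcal{C},\|w\|_{L^\infty(\mathcal{B})})$ such that $w(x)\le C_0\,\dist(x,\partial\Omega_w)$ for every $x\in\mathcal{C}$ with $\dist(x,\partial\Omega_w)\le\dist(\mathcal{C},\partial\mathcal{B})/2$. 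Applying \cref{lemma:classical-result-pde} with $\Omega=\Omega_w$, $f=w$, and the pair $(B,\mathcal{C})$ then directly yields \eqref{e:w_lipschitz_enunciato} with $L=L(\mathcal{C},\|w\|_{L^\infty(\mathcal{B})})$.

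For \eqref{e:u_lipschitz_enunciato}, the strategy is to exploit \cref{corollario:usosempre}, which rewrites
$$u(x)=\inf\{v(x):\ v\in\mathcal{A}^+,\ v\ge\phi\text{ on }\partial\mathcal{B}\}.$$
Given $x,y\in\mathcal{C}$ and $\eps>0$, I would pick a competitor $v\in\mathcal{A}^+$ with $v\ge\phi$ on $\partial\mathcal{B}$ and $v(x)\le u(x)+\eps$. After a harmless enlargement of $w$ so that $w\ge\phi$ on $\partial\mathcal{B}$ (which replaces $\|w\|_{L^\infty(\mathcal{B})}$ at most by a controlled multiple), the truncation $\min(v,w)$ belongs to $\mathcal{A}$ by \cref{lemma:min2soprasol}\hi, satisfies $\min(v,w)\ge\phi$ on $\partial\mathcal{B}$, and is bounded in $L^\infty$ by $\|w\|_{L^\infty(\mathcal{B})}$; its harmonic refinement $\overline v\in\mathcal{A}^+$ from \cref{lemma:wlog} inherits all these properties with $\overline v\le\min(v,w)$. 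The first step therefore delivers Lipschitz continuity of $\overline v$ on $\mathcal{C}$ with the uniform constant $L=L(\mathcal{C},\|w\|_{L^\infty(\mathcal{B})})$, so
$$u(y)\le\overline v(y)\le\overline v(x)+L|x-y|\le v(x)+L|x-y|\le u(x)+\eps+L|x-y|.$$
Letting $\eps\to 0$ and swapping $x,y$ gives \eqref{e:u_lipschitz_enunciato}.

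The delicate point is precisely the uniformity of the Lipschitz constant along the minimizing family for $u$: without the truncation $v\mapsto\min(v,w)$ the $L^\infty$ norms of the competitors could blow up, destroying the control provided by \cref{lemma:w_viscotity_sol_lipschitz}. The replacement scheme $v\mapsto\min(v,w)\mapsto\overline v$ preserves both admissibility and the boundary datum on $\partial\mathcal{B}$, and the pointwise bound $\overline v\le w$ is exactly what makes the $w$-estimate apply with the same constant $L$ appearing in \eqref{e:w_lipschitz_enunciato}.
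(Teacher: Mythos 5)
Your treatment of \eqref{e:w_lipschitz_enunciato} is exactly the paper's (combine \cref{lemma:w_viscotity_sol_lipschitz} with \cref{lemma:classical-result-pde}), and the skeleton of your argument for \eqref{e:u_lipschitz_enunciato} — near-optimal competitors from \cref{corollario:usosempre} plus the equi-Lipschitz bound — is also the paper's route; the paper simply applies \eqref{e:w_lipschitz_enunciato} to the competitors $w_{k,y}$ and concludes. The extra truncation step you insert to keep the competitors' $L^\infty$ norms under control addresses a real (and glossed-over) uniformity issue, but as written it contains a genuine gap: the ``harmless enlargement of $w$ so that $w\ge\phi$ on $\partial\mathcal{B}$'' is not available. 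Adding a positive constant to $w$ destroys the constraint $w\equiv 0$ in $K$, and multiplying by $\lambda>1$ does not preserve membership in $\mathcal{A}$: if $\varphi$ touches $\lambda w$ from below at a point of $\partial\Omega_w\cap D$, then $\lambda^{-1}\varphi$ touches $w$ there, and the supersolution condition only yields $\abs{\nabla\varphi}^2\le\lambda\,\nabla\varphi\cdot V$, which is strictly weaker than the required $\abs{\nabla\varphi}^2\le\nabla\varphi\cdot V$. The free boundary inequality caps the slope of a supersolution by $V\cdot\nu$, so it is not positively homogeneous, and taking a maximum with another supersolution is likewise not admissible. Hence there is no controlled way to turn an arbitrary $w\in\mathcal{A}^+$ into a competitor dominating $\phi$ on $\partial\mathcal{B}$.

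The step can be repaired without changing your architecture: either drop the truncation altogether and argue as the paper does, applying \eqref{e:w_lipschitz_enunciato} directly to the replacements $w_{k,y}\in\mathcal{A}^+$; or, if you want the sup norms of the competitors uniformly bounded (your legitimate concern), truncate with a \emph{fixed admissible} competitor instead of the given $w$ — for instance the function $\Sigma_2$ of \cref{prop:attacca} (harmonic in $D$, equal to $\phi$ on $\partial\mathcal{B}$, vanishing on $K$, built via \cref{lemma:classical-perron}), which lies in $\mathcal{A}$, satisfies $\Sigma_2\ge\phi$ on $\partial\mathcal{B}$ by construction, and has $\|\Sigma_2\|_{L^\infty(\mathcal{B})}\le\|\phi\|_{L^\infty(\partial\mathcal{B})}$. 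Then $\min\{v,\Sigma_2\}\in\mathcal{A}$ by \cref{lemma:min2soprasol} and its replacement via \cref{lemma:wlog} plays the role of your $\overline v$. The only change is that the resulting Lipschitz constant for $u$ depends on $\mathcal{C}$ and $\|\phi\|_{L^\infty}$ (equivalently $\|u\|_{L^\infty(\mathcal{B})}$) rather than on $\|w\|_{L^\infty(\mathcal{B})}$, which is in fact the dependence stated in \cref{lemma:u_viscotity_sol_lipschitz}.
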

\begin{proof}
    On one hand, as a direct consequence of \cref{lemma:classical-result-pde} and \cref{lemma:w_viscotity_sol_lipschitz}, we get \eqref{e:w_lipschitz_enunciato} for any replacement $w \in \mathcal{A}^+$.
   On the other hand, to prove \eqref{e:u_lipschitz_enunciato}, we observe that for every $k\in\N$ and for every $y\in\mathcal{C}$, there exists a replacement $w_{k,y}\in\mathcal{A}^+$ such that
         $$w_{k,y}(y)\le u(y)+\frac{1}{k},$$
as a consequence of \cref{corollario:usosempre}.         
Then, applying \eqref{e:w_lipschitz_enunciato}, for every $x,y\in\mathcal{C}$, we have
         $$u(x)-u(y)\le w_{k,y}(x)-w_{k,y}(y)+\frac{1}k\le L|x-y|+\frac{1}{k},$$
         letting $k\to+\infty$, the thesis yield.
\end{proof}

Since the replacements $w \in \mathcal{A}^+$ and the function $u$ are equi-Lipschitz,
then we can locally approximate $u$ as uniform limit of $\{w_k\}\subset\mathcal{A}^+$.

\begin{lemma}\label{corollary:ex0}
     Let $u:\overline{\mathcal{B}}\to\R^+$ be the function defined in \eqref{e:funzione_u_teorema_esistenza}. For every compact subset $\mathcal{C}\subset \mathcal{B}$, there is a sequence $\{w_k\}\subset\mathcal{A}^+$ such that $w_k$ converges to $u$ uniformly in $\mathcal{C}$ as $k\to+\infty$.
     \end{lemma}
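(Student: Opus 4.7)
The plan is to exploit the infimum characterization of $u$ given by \cref{corollario:usosempre}, together with the uniform Lipschitz bound of \cref{corollary:lipw}, by building the approximating sequence as finite minima that are then put through the harmonic replacement of \cref{lemma:wlog}. The whole construction will be controlled by a single ``dominating'' supersolution, whose only role is to keep the $L^\infty$ norm of the iterates uniformly bounded so that compactness can be invoked.

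\textbf{Step 1: a dominator in $\mathcal{A}^+$.} I first fix $w_0\in \mathcal{A}^+$ that will keep all iterates uniformly bounded. If $\phi\equiv 0$ then $u\equiv 0$ trivially, so assume $\phi\not\equiv 0$ and let $w_0$ be the harmonic function in $D$ with boundary data $\phi$ on $\partial \mathcal{B}$ and $0$ on $\partial K$, extended by $0$ on $K$ (well-defined by \cref{lemma:classical-perron}). The strong maximum principle gives $w_0>0$ on each connected component of $D$ whose boundary meets $\{\phi>0\}$, and on the remaining components $w_0\equiv 0$; in either case $\partial\Omega_{w_0}\cap D=\emptyset$, so the free boundary condition is vacuous and $w_0\in \mathcal{A}^+$, with $w_0=\phi$ on $\partial \mathcal{B}$ and $\|w_0\|_{L^\infty(\mathcal{B})}<+\infty$.

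\textbf{Step 2: finite minima and replacement.} Let $\{x_n\}_{n\in\N}\subset \mathcal{C}$ be a countable dense subset. By \cref{corollario:usosempre}, for every $k\in\N$ and every $1\le n\le k$ I pick $w_{n,k}\in \mathcal{A}^+$ with $w_{n,k}\ge \phi$ on $\partial \mathcal{B}$ and
$$w_{n,k}(x_n)\le u(x_n)+\tfrac{1}{k}.$$
Setting $v_k:=\min\{w_0,w_{1,k},\dots,w_{k,k}\}$ and iterating \cref{lemma:min2soprasol}(i), I obtain $v_k\in \mathcal{A}$. Then \cref{lemma:wlog} provides a replacement $\overline v_k\in \mathcal{A}^+$ with $\overline v_k\le v_k$ on $\overline{\mathcal{B}}$ and $\overline v_k=v_k$ on $\partial \mathcal{B}$. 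Since each $w_{n,k}$ and $w_0$ dominate $\phi$ on $\partial \mathcal{B}$, so does $\overline v_k$, which therefore is admissible in \cref{corollario:usosempre}, giving $u\le \overline v_k$ on $\overline{\mathcal{B}}$.

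\textbf{Step 3: equi-Lipschitz estimate and passage to the limit.} For every $n$ and every $k\ge n$,
$$u(x_n)\le \overline v_k(x_n)\le v_k(x_n)\le w_{n,k}(x_n)\le u(x_n)+\tfrac{1}{k},$$
so $\overline v_k(x_n)\to u(x_n)$ as $k\to+\infty$. Since $\overline v_k\le w_0$, we also obtain the uniform $L^\infty$ bound $\|\overline v_k\|_{L^\infty(\mathcal{B})}\le \|w_0\|_{L^\infty(\mathcal{B})}$. Applying \cref{corollary:lipw} to each $\overline v_k\in \mathcal{A}^+$ yields equi-Lipschitz continuity on $\mathcal{C}$ with a common constant $L=L(\mathcal{C},\|w_0\|_{L^\infty(\mathcal{B})})$. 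By Arzel\`a–Ascoli, a subsequence of $\{\overline v_k\}$ converges uniformly on $\mathcal{C}$ to a continuous limit $v_\infty$; the identity $v_\infty(x_n)=u(x_n)$ on the dense set $\{x_n\}$, combined with the continuity of $u$ from \cref{corollary:lipw}, forces $v_\infty\equiv u$ on $\mathcal{C}$. A standard subsequence argument (every subsequence admits a further subsequence converging uniformly to $u$) then upgrades this to uniform convergence of the whole sequence $\overline v_k\to u$ on $\mathcal{C}$, so that $w_k:=\overline v_k$ is the required sequence in $\mathcal{A}^+$.

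The main (mild) obstacle is Step 1: producing one element of $\mathcal{A}^+$ that dominates all iterates and hence forces a uniform $L^\infty$ bound on the minima and on their replacements. This bound is precisely what activates the equi-Lipschitz estimate of \cref{corollary:lipw}, and thus the Arzel\`a–Ascoli compactness; everything else is a standard Perron-infimum manipulation relying on the closure of $\mathcal{A}$ under minima (\cref{lemma:min2soprasol}) and the harmonic-replacement procedure (\cref{lemma:wlog}).
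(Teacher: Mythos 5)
Your proof is correct, and it takes a somewhat different route from the paper's. Two differences are worth noting. \emph{(a)} You explicitly build a dominating supersolution $w_0\in\mathcal{A}^+$ (the harmonic extension of $\phi$ across $D$) and include it in the minimum $v_k=\min\{w_0,w_{1,k},\dots,w_{k,k}\}$, so that $\overline v_k\le w_0$ and the $L^\infty$ bound needed to get a \emph{uniform} Lipschitz constant in \cref{corollary:lipw} is visibly secured. The paper instead applies the Lipschitz estimate with a single constant $L$ ``for every $w\in\mathcal{A}^+$'' without spelling out why $L$ can be taken uniform across the family (it cannot without an a priori $L^\infty$ bound, since $L$ in \cref{corollary:lipw} depends on $\|w\|_{L^\infty}$); the implicit device is precisely the truncation by $w_0$ that you make explicit, so your Step 1 actually closes a small gap left informal in the paper. \emph{(b)} The paper works with a finite $\eta$-net of $\mathcal{C}$ at each scale and concludes directly from the equi-Lipschitz estimate, with no compactness extraction; you instead use a countable dense subset $\{x_n\}$, Arzel\`a--Ascoli, and the standard ``every subsequence has a further subsequence converging to the same limit'' upgrade. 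Both are valid; the paper's is shorter and avoids compactness entirely, while yours is more mechanical but, as in (a), more careful about the hypotheses of the Lipschitz corollary. One minor point to keep in mind: you should also record that $\overline v_k=v_k\ge\phi$ on $\partial\mathcal{B}$ follows from \eqref{eq:wlog0}, which you do; without that, the inequality $u\le\overline v_k$ from \cref{corollario:usosempre} would not be available.
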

     
     \begin{proof}
         For every $k\in\N$ and for every $x_0\in\mathcal{C}$, by \cref{corollario:usosempre}, we can find $w_{k,x_{0}}\in\mathcal{A}^+$ such that
         $$ w_{k,x_0}(x_0)\le u(x_0)+\frac{1}{k}.$$
         Fixed $k_0\in\N$, by \eqref{e:w_lipschitz_enunciato},
         we have that, if $|x-y|\le \eta:=\frac{1}{ k_0 L}$, then
         $$|w(x)-w(y)|\le L|x-y|\le L\eta= \frac{1}{k_0},$$
         for every $w\in\mathcal{A}^+$. 
Moreover, by \eqref{e:u_lipschitz_enunciato},
we also have that if $|x-y|\le \eta$, then
         $$|u(x)-u(y)|\le L|x-y|\le L\eta= \frac{1}{k_0},$$
         Since $\mathcal{C}$ is a compact set, there are $x_1, \dots, x_m \in \mathcal{C}$ such that
         $$\mathcal{C}\subset \bigcup_{j=1}^m B_\eta(x_j).$$ 
        Let $w_{k_0}:=\min\{w_{{k_0},x_1},\ldots,w_{{k_0},x_m}\}\in\mathcal{A}$, by \cref{lemma:min2soprasol}.
        By \cref{lemma:wlog}, we can assume $w_{k_0}\in\mathcal{A}^+$.
         Thus, for every $x\in\mathcal{C}$, there exists $j= 1,\ldots, m$, such that $|x-x_j|\le \eta$. Then
         $$u(x)\le w_{k_0}(x)\le w_{k_0,x_j}(x)\le w_{k_0,x_j}(x_j)+\frac{1}{k_0}\le u(x_j)+\frac1{2 k_0}\le u(x)+\frac{1}{3k_0}.$$ 
         By the arbitrariness of $k_0 \in \N$, we get that as $k\to+\infty$, $w_k$ converges to $u$ uniformly in $\mathcal{C}$, concluding the proof.
         \end{proof}
         
         We are finally in position to prove the main result of this section.
         
     \begin{proof}[Proof of \cref{lemma:u_viscotity_sol_lipschitz}]
         By \eqref{e:u_lipschitz_enunciato}, the function $u$ is locally Lipschitz. 
         To show that $\Delta u=0$ in $\Omega_u\cap D$, we observe that if $x_0\in\Omega_u\cap D$, then there exists $\rho >0$ such that $u>0$ in $ \overline B_\rho(x_0)\subset\Omega_u\cap D$. By \cref{corollary:ex0}, there exists a sequence $\{w_k\}\subset\mathcal{A}^+$, such that $w_k\to u$ uniformly in $\overline B_\rho(x_0)$. Since $w_k\ge u>0$ in $ B_\rho(x_0)\subset D$, then $u$ is a uniform limit of harmonic function in $B_\rho(x_0)$, implying that $u$ is harmonic in $B_\rho(x_0)$.
     \end{proof}
     
     \begin{remark}
     \label{rem:lipschitz_tutte_viscose}
        We point out that our proof of the locally Lipschitz continuity \eqref{e:lipschitz_u_proposizione_enunciato} holds true for any viscosity solutions of \eqref{e:viscosity-sol-K_D}, not only for the Perron's solution (see the proof of \cref{lemma:w_viscotity_sol_lipschitz}). 
     \end{remark}

\subsection{The function \texorpdfstring{$u$}{u} is a solution on the free boundary}\label{subsec:freeboundarycond}

In order to show that the function $u$ in \eqref{e:funzione_u_teorema_esistenza} is a solution to \eqref{e:viscosity-sol-K_D}, we need to ensure that both
$\abs{\nabla u}^2 = \nabla u \cdot V$ on $\partial \Omega_u \cap D,$ and $\abs{\nabla u}^2 \ge \nabla u \cdot V$ on $\partial \Omega_u \cap K\cap\mathcal{B}$ hold true. We provide the following proposition.

\begin{proposition}
\label{prop:supersol}
    Let $u:\overline{\mathcal{B}}\to\R^+$ be the function defined in \eqref{e:funzione_u_teorema_esistenza}, then $u$ satisfies the conditions 
    \be\label{e:freeboundarycond}
\abs{\nabla u}^2 = \nabla u \cdot V \qquad \hbox{ on } \partial \Omega_u \cap D,
\ee and \be\label{e:freeboundarycondboundary}
\abs{\nabla u}^2 \ge \nabla u \cdot V \qquad \hbox{ on } \partial \Omega_u \cap K\cap\mathcal{B},
\ee in the sense of \cref{def:def-sol_sezione_esistenza}.
\end{proposition}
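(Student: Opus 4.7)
The plan is to establish the two conditions in \cref{prop:supersol} by splitting the equality in \eqref{e:freeboundarycond} into its ``$\le$'' and ``$\ge$'' halves, and handling the ``$\ge$'' direction jointly with \eqref{e:freeboundarycondboundary}. The ``$\le$'' half will be obtained by passing to the limit on the approximating supersolutions from \cref{corollary:ex0}, while the ``$\ge$'' half will follow from a contradiction argument based on a local replacement producing an element of $\mathcal{A}$ strictly smaller than $u$ at some point.

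For the ``$\le$'' direction, fix a test function $\varphi\in C^\infty(\mathcal{B})$ touching $u$ from below at $x_0\in \partial\Omega_u\cap D$ and replace $\varphi$ by $\varphi_\varepsilon(x):=\varphi(x)-\varepsilon|x-x_0|^2$, so that the contact becomes strict and one can arrange $\Delta\varphi_\varepsilon>0$ on a small ball around $x_0$. By \cref{corollary:ex0} there is a sequence $\{w_k\}\subset \mathcal{A}^+$ with $w_k\to u$ uniformly near $x_0$; the minimum points $x_k$ of $w_k-\varphi_\varepsilon$ then converge to $x_0$. Since $w_k$ is harmonic in $\Omega_{w_k}\cap D$ by \cref{def:a-supersolution-modificata}, the condition $\Delta\varphi_\varepsilon(x_k)>0$ rules out $x_k\in\Omega_{w_k}$ for $k$ large, so $x_k\in\partial\Omega_{w_k}\cap D$. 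The viscosity supersolution property of $w_k$ yields $|\nabla\varphi_\varepsilon(x_k)|^2\le \nabla\varphi_\varepsilon(x_k)\cdot V$, and taking $k\to\infty$ and then $\varepsilon\to 0$ delivers the desired inequality at $x_0$.

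For the ``$\ge$'' direction, simultaneously on $\partial\Omega_u\cap D$ and on $\partial\Omega_u\cap K\cap\mathcal{B}$, assume by contradiction that $\varphi^+$ touches $u$ from above at some $x_0\in\partial\Omega_u\cap\mathcal{B}$ with $|\nabla\varphi(x_0)|^2 < \nabla\varphi(x_0)\cdot V$ strictly. I would construct, on a small ball $B_r(x_0)$, a competitor $v$ satisfying $v\le u$ on $\partial B_r(x_0)$, $v\equiv 0$ on $K\cap B_r(x_0)$, $\Delta v=0$ in $\Omega_v$, $|\nabla v|^2\le \nabla v\cdot V$ on $\partial\Omega_v\cap D$, and $v(y_*)<u(y_*)$ at some $y_*\in B_r(x_0)$. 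The natural construction is to translate the nodal set of $\varphi$ slightly in the direction opened by the strict inequality and then solve the Dirichlet problem for the Laplacian on the resulting positivity set using \cref{lemma:classical-perron}, with boundary data $u$ on $\partial B_r(x_0)\setminus K$ and $0$ on the rest. Gluing $\tilde w:=\min(u,v)$ on $\overline{B_r(x_0)}$ and $\tilde w:=u$ outside produces, via \cref{lemma:min2soprasol}(ii), an element of $\mathcal{A}$ with $\tilde w(y_*)<u(y_*)$, contradicting the definition of $u$ in \eqref{e:funzione_u_teorema_esistenza}.

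The main obstacle is the construction of the competitor $v$ at contact points $x_0\in\partial\Omega_u\cap K\cap\mathcal{B}$: there $v$ must vanish on $K$ and simultaneously satisfy the free boundary condition on the portion of $\partial\Omega_v$ lying strictly inside $D$, while still being strictly smaller than $u$ somewhere. Using the smoothness of $\partial K$ from assumption (K), one can flatten $\partial K$ in a small ball and perturb only the free boundary lying in $D$, exploiting that the strict inequality $|\nabla\varphi|^2 < \nabla\varphi\cdot V$ is stable under small modifications and propagates to any smooth perturbation of the nodal set of $\varphi$. A minor subtlety in the ``$\le$'' step is to ensure that the minimizers $x_k$ truly belong to $\partial\Omega_{w_k}\cap D$ and not to its interior, which is precisely what the tuning of $\varepsilon$ enforces via $\Delta\varphi_\varepsilon>0$.
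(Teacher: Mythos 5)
Your overall strategy coincides with the paper's: the ``$\le$'' half on $\partial\Omega_u\cap D$ is obtained by transferring touching points to the approximating supersolutions of \cref{corollary:ex0}, and the ``$\ge$'' half (treated simultaneously on $\partial\Omega_u\cap D$ and on $\partial\Omega_u\cap K\cap\mathcal{B}$) by building a local competitor, gluing it with $u$ via \cref{lemma:min2soprasol}, and contradicting the infimum definition \eqref{e:funzione_u_teorema_esistenza}. However, two steps as written would fail. First, in the ``$\le$'' direction the perturbation $\varphi_\eps=\varphi-\eps|x-x_0|^2$ makes the contact strict but \emph{decreases} the Laplacian, so it does not give $\Delta\varphi_\eps>0$; without that sign you cannot exclude that the touching points $x_k$ of $w_k$ lie in $\Omega_{w_k}\cap D$, where no gradient information is available. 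One needs an extra convex correction with large Laplacian and small gradient (the paper uses $-\eps\,(x-x_0)\cdot e_d+\tfrac1\eps\,((x-x_0)\cdot e_d)^2$), after which the argument goes through.

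Second, and more seriously, the competitor in the ``$\ge$'' direction is under-specified and its boundary inequality is stated backwards. To apply \cref{lemma:min2soprasol}(ii) and keep $\min(u,v)$ continuous you need $v\ge u$ on $\partial B_r(x_0)$, not $v\le u$; with your Dirichlet data ($u$ on the sphere, $0$ on a uniformly translated nodal set) the translated zero set can cut through points of the sphere where $u>0$, so the glued function is not even continuous and the lemma does not apply. The paper avoids this with a cutoff-modulated domain $\Xi_\sigma=\{x\cdot\nu>-\sigma+2\sigma\chi(x)\}\cap B_1$: the nodal surface is pushed \emph{inward} only near the center (forcing the competitor to vanish on $B_{\sigma r}(x_0)$, which is what produces a point $y_*$ with $w(y_*)=0<u(y_*)$, using $x_0\in\partial\Omega_u$) and \emph{outward} near $\partial B_1$, where the datum is taken to be $\varphi_r^+\ge u_r$, guaranteeing both continuity and the correct ordering on the sphere. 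Moreover, ``stability of the strict inequality under small perturbations'' is not automatic: the competitor is a harmonic function on a new domain, and with datum $u$ (which may be far below $\varphi^+$) its gradient on the free surface need not be close to $\nabla\varphi(x_0)$; the paper's quantitative mechanism is a Schauder estimate showing $v_{\sigma,r}$ is $C^{1,\beta}$-close to the linear profile $\alpha(x\cdot\nu+\sigma)^+$, from which \eqref{e:condizione_v_sta_in_A} follows. Finally, the flattening of $\partial K$ you invoke at contact points is unnecessary: since the class $\mathcal{A}$ imposes no condition on $\partial\Omega_w\cap K$ and $\min(u,v)\equiv 0$ on $K$ automatically, the Step 2 construction applies verbatim at $x_0\in\partial\Omega_u\cap K\cap\mathcal{B}$; also note that to use $u$ itself as $w_1$ in the gluing one must first record that $u\in\mathcal{A}$ (harmonicity from \cref{lemma:u_viscotity_sol_lipschitz} together with the ``$\le$'' step).
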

\begin{proof}
First we prove \eqref{e:freeboundarycond} by showing that both the upper and the lower bound hold true for all $x \in \partial \Omega_u \cap D$. Finally, we provide \eqref{e:freeboundarycondboundary} for all $x \in \partial \Omega_u \cap K\cap\mathcal{B}$.
   We divide the proof in some steps.\\
\\
    {\it Step 1. On $\partial \Omega_u \cap D$, $u$ solves $\abs{\nabla u}^2 \leq\nabla u\cdot V$.} 
    Let us suppose by contradiction that there is a test function $\varphi\in C^\infty(\mathcal{B})$ such that $\varphi$ touches $u$ from below at $x_0\in\partial\Omega_u\cap D$ and
    $$|\nabla \varphi(x_0)|^2>\nabla \varphi(x_0)\cdot V.$$ 
    By continuity, there exists $\rho >0$ such that \be\label{eq:supersol-cond}|\nabla \varphi(x)|^2>\nabla \varphi(x)\cdot V\qquad\text{for every}\quad x\in B_\rho(x_0).\ee 
    Without loss of generality, we suppose that $\varphi$ touches $u$ strictly from below at $x_0$. Indeed, if this is not the case, we consider as test function $\varphi - |x-x_0|^2$. 
    Moreover, by taking as test function 
    $$\varphi- \eps (x-x_0)\cdot e_d + \frac{1}{\eps} ((x-x_0)\cdot e_d)^2,$$ 
    for some $\eps>0$ small enough, we can also assume that 
    \begin{equation}
        \label{e:Delta_phi_0_grande}
        \Delta \varphi(x)>0 \qquad \hbox{ for every }\quad x \in B_\rho(x_0).
    \end{equation}
    
    By \cref{corollary:ex0}, there is a sequence $\{w_k\}\subset\mathcal{A}^+$ such that $w_k \to u$ uniformly on $\overline B_\rho(x_0)$.
    In particular, since $\varphi$ is touching $u$ strictly from below, there exists $\sigma= \sigma(\rho)>0$ such that $w_k>\varphi+\sigma$ on $\partial B_\rho(x_0)$ for all $k\in \N$. 
 Thus, for all $k \in \N$, we can find $c_k\in \R$ such that $\varphi+ c_k$ touches $w_k$ from below in a point $x_k\in \overline B_\rho(x_0)$. Precisely, since $w_k \to u$, there exists $k_0\in \N$ such that for all $k \geq k_0$, we have $c_k\in (-\sfrac{\sigma}{2}, \sfrac{\sigma}{2})$, implying that $x_k \in B_\rho(x_0)$.
 Summarizing, we have found a test function $\varphi$ such that for all $k \geq k_0$, $\varphi$ touches $w_k$  from below at $x_k\in B_\rho(x_0)$, with \eqref{eq:supersol-cond} and \eqref{e:Delta_phi_0_grande}. Since $w_k\in\mathcal{A}^+$, this gets the contradiction.
    \\
    \\ 
    \textit{\it Step 2. On $\partial \Omega_u \cap D$, $u$ solves $\abs{\nabla u}^2 \geq\nabla u\cdot V$.} 
Let us suppose by contradiction that there is a function $\varphi\in C^\infty(\mathcal{B})$ such that $\varphi^+:=\max\{\varphi,0\}$ touches $u$ from above at $x_0\in\partial\Omega_u\cap D$ and 
$$|\nabla \varphi(x_0)|^2<\nabla\varphi(x_0)\cdot V.$$
Similarly as in Step 1, we can suppose that $\varphi^+$ touches $u$ strictly from above at $x_0$.
By continuity, there exists $\rho>0$ such that 
\be\label{eq:subsol-cond}|\nabla \varphi(x)|^2< \nabla \varphi(x)\cdot V\qquad\text{for every}\quad x\in B_\rho(x_0).\ee 
We set 
$$\alpha:=|\nabla \varphi(x_0)|>0\qquad\text{and}\qquad \nu:=\frac{\nabla \varphi(x_0)}{|\nabla \varphi(x_0)|}.$$ 
Moreover, for all $r\in(0,\text{dist}(x_0,\partial D))$, we introduce
$$
u_r(x):=\frac{u(x_0+rx)}{r} \qquad \hbox{ and }\qquad \varphi_r(x):=\frac{\varphi(x_0+rx)}{r}.
$$
Since $\varphi^+$ touches $u$ from above at $x_0$, $\varphi_r^+$ touches $u_r$ from above at $0$ for all $r\in(0,\text{dist}(x_0,\partial D))$.
Since $\varphi$ is a smooth function, for all $x \in B_\rho(x_0)$, we have that 
$$\varphi(x)=\alpha (x-x_0)\cdot \nu+O\left(|x-x_0|^2\right).$$ 
Thus, for every $\sigma>0$ there exists $r_0(\sigma)>0$ such that 
\begin{align}
\label{e:phi_r+_bordo_B1}
\varphi_r^+\equiv0 \, \qquad \hbox{in }\,\{x\cdot\nu<-\sigma\}\cap \partial B_1\qquad \hbox{for every}\; r\in(0,r_0(\sigma)).
\end{align}
Indeed, for every $x\in \{x\cdot\nu<-\sigma\}\cap \partial B_1$, we have
$$\varphi_r(x)=\alpha x\cdot\nu+\frac{O\left(r^2|x|^2\right)}{r}\le -\alpha\sigma+O(r)\le 0$$
providing $r\leq r_0(\sigma)$. In addition, it holds  
\begin{equation}
\label{e:phi_0_definzione_piano}
\norm{\varphi_r^+ - \alpha (x\cdot\nu)^+}_{L^\infty\left(\overline{B}_1\right)}=: \norm{\varphi_r^+ - \varphi_0^+}_{L^\infty\left(\overline{B}_1\right)} \leq C r.
\end{equation}
 Let $\chi$ be a cutoff function such that $\chi\equiv1$ in $B_{\sfrac{1}{4}}$ and $\chi\equiv0$ in $B_{\sfrac{1}{2}}^c$.
For all $\sigma >0$ and for all $r \in (0, r_0(\sigma))$, let $v_{\sigma,r}$ be the function defined as 
 \be\label{eq:competitor-v} \begin{cases}
        \Delta v_{\sigma,r}=0&\text{in }\{x\cdot\nu>-\sigma+2\sigma\chi(x)\}\cap B_1=:\Xi_\sigma, \\
        v_{\sigma,r}=0&\text{in } \{x\cdot\nu\le -\sigma+2\sigma\chi(x)\}\cap B_1,\\
        v_{\sigma,r}(x)=\varphi_r^+(x)&\text{on } \partial B_1,
    \end{cases}\ee 
    which is continuous up to $\partial B_1$ by \eqref{e:phi_r+_bordo_B1}

   For all $\sigma >0$, by classical elliptic Schauder estimates up to the boundary (see \cite{gilbarg1977elliptic}) applied to the harmonic function $v_{\sigma,r}-\varphi_0(x+\sigma\nu)$ in $\Xi_\sigma$, where $\varphi_0$ is defined in \eqref{e:phi_0_definzione_piano}, we have that
    \begin{align*}
        \| v_{\sigma,r}-\varphi_0(\cdot+\sigma\nu)\|_{C^{1,\beta}\left(\Xi_\sigma\right)}&\le C\|v_{\sigma,r}-\varphi_0(\cdot+\sigma\nu)\|_{L^\infty(\partial \Xi_\sigma)}\\
        &= C \norm{\varphi_0(\cdot+\sigma\nu)}_{L^\infty(\partial \Xi_\sigma \cap B_1)} + C\norm{\varphi_r^+-\varphi_0(\cdot+\sigma\nu)}_{L^\infty(\partial \Xi_\sigma\cap \partial B_1)}\\
        &\leq C \norm{\varphi_0(\cdot+\sigma\nu)}_{L^\infty\left(\{\abs{x\cdot \nu}\leq \sigma \}\right)} + C (r+ \sigma)\\
        &\leq C (r+ \sigma),
    \end{align*}
    for some constant $C>0$, having used \eqref{e:phi_0_definzione_piano}.
    By the smallness of the above $C^{1,\beta}$-norm combined with \eqref{eq:subsol-cond}, it follows that 
    \begin{equation}
        \label{e:condizione_v_sta_in_A} |\nabla v_{\sigma,r}(x)|^2<\nabla v_{\sigma,r}(x)\cdot V\quad\text{for every}\quad x\in\partial\Xi_\sigma\cap B_1 = \partial \Omega_v \cap B_1,
    \end{equation}
    for $\sigma$ and $r$ small enough. 
    
    To conclude, we consider the function 
    \be\label{eq:def-w}w(x):=
    \begin{cases}
    \min\left\{ u(x),rv_{\sigma,r}\left(\frac{x-x_0}{r}\right)\right\}&\text{in } \overline B_{r}(x_0),\\
    u(x)&\text{in } \mathcal{B}\setminus  B_{r}(x_0).
    \end{cases}\ee
    Combining \eqref{eq:u-harmonic-in-Omegau} and Step 1, we get that $u\in\mathcal{A}$. Since $v_{\sigma, r}$ satisfies \eqref{eq:competitor-v} and \eqref{e:condizione_v_sta_in_A}, then $w\in \mathcal{A}$, by \cref{lemma:min2soprasol}. Moreover $ w\equiv0$ in $B_{\sigma r}(x_0)$, since $B_\sigma\subset\{x\cdot\nu\le -\sigma+2\sigma\phi(x)\},$ for $\sigma\le\sfrac{1}{4}$.
    Since $x_0\in\partial\Omega_u$, then there is $z\in B_{\sigma r}(x_0)$ such that $u(z)>0= w(z)$, which is a contradiction with the definition of $u$.\\
    \\
  {\it Step 3. On $\partial \Omega_u \cap K\cap\mathcal{B}$, $u$ solves $\abs{\nabla u}^2 \geq\nabla u\cdot V$.} To prove \eqref{e:freeboundarycondboundary}, we notice that we can repeat the same arguments as in Step 2, providing that $x_0 \in\partial\Omega_u\cap K\cap \mathcal{B}$ and concluding again that the function $w$ defined in \eqref{eq:def-w} belongs to $\mathcal{A}$.
\end{proof}

\subsection{The function \texorpdfstring{$u$}{u} is continuous up to the boundary}
\label{subsec:u_continua_fino_al_bordo}

In the following proposition, we prove that the function $u$ defined in \eqref{e:funzione_u_teorema_esistenza} is continuous up to the boundary $\partial \mathcal{B}$.

\begin{proposition}\label{prop:attacca}
Let $\phi:\partial \mathcal{B}\to\R^+$ be an assigned continuous boundary datum such that $\phi = 0$ on $\partial \mathcal{B} \cap \partial K$. Let $u:\overline {\mathcal{B}}\to\R^+$ be the function defined in \eqref{e:funzione_u_teorema_esistenza}. Then $$u\in C^0(\overline {\mathcal{B}})\qquad\text{and}\qquad u=\phi\quad\text{on }\partial\mathcal{B}.$$ 
            \end{proposition}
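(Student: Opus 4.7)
The plan is to apply the classical Perron barrier method at each $x_0 \in \partial \mathcal B$: construct a super-barrier in $\mathcal A^+$ to bound $u$ from above and a sub-barrier to bound $u$ from below. Two free observations first: as the pointwise infimum of continuous functions (via \cref{corollario:usosempre}) $u$ is upper semicontinuous on $\overline{\mathcal B}$, and since every admissible $w$ satisfies $w \geq \phi$ on $\partial \mathcal B$, continuity forces $u(x_0) \geq \phi(x_0)$ trivially. For the super-barrier I would take $W$ to be the harmonic extension in $D$ produced by \cref{lemma:classical-perron} with data $\phi$ on $\partial \mathcal B$ and $0$ on $\partial K$ (these are compatible at $\partial \mathcal B \cap \partial K$ by assumption), extended by $0$ in $K$. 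Smoothness of $\partial \mathcal B$ and $\partial K$ together with the compatibility give $W \in C^0(\overline{\mathcal B})$ with $W|_{\partial \mathcal B} = \phi$; moreover, unless $\phi \equiv 0$ (a trivial case), the strong maximum principle yields $W > 0$ in $D$, so $\partial \Omega_W \cap D = \emptyset$ and the free boundary condition is vacuously satisfied. Hence $W \in \mathcal A^+$, whence $u \leq W$ and $\limsup_{y \to x_0} u(y) \leq W(x_0) = \phi(x_0)$, producing $u(x_0) = \phi(x_0)$.

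It remains to prove $\liminf_{y \to x_0} u(y) \geq \phi(x_0)$. This is automatic if $\phi(x_0) = 0$; otherwise the compatibility forces $x_0 \in \partial \mathcal B \setminus K$. Exploiting the smoothness of $\partial \mathcal B$, I would pick an exterior tangent ball $B_R(y_0) \subset \R^d \setminus \overline{\mathcal B}$ at $x_0$ and introduce the Newtonian sub-barrier $\beta(x) := R^{2-d} - |x - y_0|^{2-d}$ (with the logarithmic version when $d = 2$): harmonic outside $y_0$, vanishing at $x_0$, and strictly positive in $\mathcal B$ with $|\nabla \beta|$ bounded below on bounded subsets of $\R^d \setminus \{y_0\}$. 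For small $\varepsilon > 0$, set
\begin{equation*}
\underline w(x) := \big(\phi(x_0) - \varepsilon - C \beta(x)\big)^+,
\end{equation*}
choosing $r > 0$ so that $B_r(x_0) \cap K = \emptyset$ and $\phi \geq \phi(x_0) - \varepsilon$ on $\partial \mathcal B \cap B_r(x_0)$, and then $C > 0$ large enough that $\{\underline w > 0\} \subset B_r(x_0)$ and, crucially, $|\nabla \underline w|^2 > \nabla \underline w \cdot V$ holds pointwise everywhere in $\{\underline w > 0\}$. The last condition is feasible since on its support $\nabla \underline w = -C \nabla \beta$, so $C|\nabla \beta|$ can be made strictly larger than $|V|$.

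The core claim is then that $\underline w \leq w$ on $\overline{B_r(x_0) \cap \mathcal B}$ for every $w \in \mathcal A^+$ with $w \geq \phi$ on $\partial \mathcal B$; combined with \cref{corollario:usosempre} this gives $\underline w \leq u$ there, whence $\liminf_{y \to x_0} u(y) \geq \underline w(x_0) = \phi(x_0) - \varepsilon$, and sending $\varepsilon \to 0$ concludes the proof. Suppose, for a contradiction, that $\underline w - w$ attains a positive maximum $c > 0$ at a point $z$. The boundary inequalities $\underline w = 0 \leq w$ on $\partial B_r(x_0) \cap \overline{\mathcal B}$ and $\underline w \leq \phi \leq w$ on $\partial \mathcal B \cap \overline{B_r(x_0)}$ exclude $z$ from the topological boundary, so $z$ is interior. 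On the connected component $V_0$ of $\{w > 0\} \cap \{\underline w > 0\}$ containing $z$, both functions are harmonic, so $\underline w - w \equiv c$ on $V_0$ by the strong maximum principle; a short inspection shows $\overline{V_0} \subset B_r(x_0) \cap \mathcal B$ and that every boundary point $z' \in \partial V_0$ satisfies $w(z') = 0$ and $\underline w(z') = c > 0$, hence $z' \in \partial \Omega_w \cap D$. At any such $z'$ the smooth function $\varphi(x) := \underline w(x) - c$ touches $w$ from below (since the global bound $\underline w - w \leq c$ gives $\varphi \leq w$, with equality at $z'$), so the viscosity supersolution condition forces $|\nabla \underline w(z')|^2 \leq \nabla \underline w(z') \cdot V$, in direct contradiction with the strict inequality designed into $\underline w$. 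The main obstacle is precisely setting up this free-boundary contradiction: the strict inequality $|\nabla \underline w|^2 > \nabla \underline w \cdot V$ must hold throughout the positivity set of $\underline w$ (not merely on $\partial \{\underline w > 0\}$), since the touching point $z'$ produced by the strong maximum principle can lie anywhere on the free boundary, and this is exactly what forces the choice of $C$ large enough that $C|\nabla \beta| > |V|$ uniformly on the support.
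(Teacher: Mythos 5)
Your proposal is correct in substance but follows a genuinely different route from the paper. For the upper bound the paper does not rely on boundary continuity of the harmonic replacement at $x_0$: it takes the annulus barrier $\Psi$ built on the exterior ball, forms $\Sigma_1=\Phi(x_0)+\varepsilon+M\Psi$, and makes it admissible by taking $\min\{\Sigma_1,\Sigma_2\}$ (via \cref{lemma:min2soprasol}), so that $u\le \Phi(x_0)+\varepsilon+M\Psi$ with $\Psi(x_0)=0$; you instead use $u\le W$ directly, which forces you to know that $W=\phi$ continuously on all of $\partial\mathcal{B}$, including the corner points $\partial\mathcal{B}\cap\partial K$ where \cref{lemma:classical-perron} does not literally apply — this is fixable by comparing $W$ in $D$ with the harmonic extension of $\phi$ in the smooth domain $\mathcal{B}$ (the paper implicitly needs the same fact for $\Sigma_2\in\mathcal{A}$, so this is a shared gloss rather than a defect of your route). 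For the lower bound the two arguments diverge more: the paper slides the global barrier $\Theta=\Phi(x_0)-\varepsilon-M\Psi$ under $u$ itself, exploiting that $u$ is already known to satisfy the free boundary condition (\cref{prop:supersol}) together with Hopf and the strong maximum principle; you instead compare a local Newtonian sub-barrier with \emph{every} competitor $w\in\mathcal{A}^+$, using only the viscosity supersolution property in the definition of $\mathcal{A}$ and then \cref{corollario:usosempre}. Your version is more self-contained (it does not invoke \cref{prop:supersol}) and localized near $x_0$, at the price of redoing the touching argument for each $w$. One case is missing in your contradiction argument: the positive maximum point $z$ of $\underline w-w$ may satisfy $w(z)=0$, in which case it does not lie in $\{w>0\}\cap\{\underline w>0\}$ and the component $V_0$ is not defined; but then $\varphi=\underline w-c$ touches $w$ from below directly at $z$, and either $z$ is interior to $\{w=0\}$ (forcing $\nabla\underline w(z)=0$, impossible since $|\nabla\underline w|=C|\nabla\beta|>0$ on the positivity set) or $z\in\partial\Omega_w\cap D$ (contradicting your strict inequality), so the omission is repaired with exactly the tools you already set up.
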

            
		\begin{proof}
		  By \cref{lemma:u_viscotity_sol_lipschitz}, the function $u$ is locally Lipschitz in $\mathcal{B}$ and, by definition, $u\equiv0$ in $K$.
          Thus to get the thesis, we need to show that
    \be\label{tesiprop}
          \lim_{x\to x_0}u(x)=u(x_0)=\phi(x_0) \qquad \hbox{ for every }\quad x_0\in\partial D \setminus\left(\partial K \cap \mathcal{B}\right). \ee 
            Let us fix $x_0\in\partial D \setminus\left(\partial K \cap \mathcal{B}\right) \subset \partial \mathcal{B}$.
            By the regularity assumptions on $\mathcal{B}$, there exists an external ball at $x_0$ with respect to $\mathcal{B}$. 
            Precisely, there are a radius $\rho>0$ and a point $y_0 \in \mathcal{B}^c$ such that
            $$B_\rho(y_0)\cap\overline{\mathcal{B}}=\{x_0\}.$$ 
            Moreover, since $\mathcal{B}$ is bounded, we can find a radius $R>0$ such that $\overline{\mathcal{B}}\subset B_R(y_0)$.
            We extend the assigned boundary datum as follows 
 $$
    \Phi: \partial\mathcal{B}\cup K \to \R^+ \quad \hbox{ such that } \quad \Phi(x):=\left\{
\begin{aligned}
     \phi(x)
   &&\text{on }\partial \mathcal{B},\\
     0 &&\text{in } K,\\
 \end{aligned}
\right.$$ which is well defined since $\phi =0$ on $\partial \mathcal{B} \cap \partial K$.
           Next, we consider the function 
            $$
    \Psi: \overline B_R(y_0) \to \R^+ \quad \hbox{ such that } \quad \left\{
\begin{aligned}
    \Delta \Psi &= 0 &&\text{in } B_R(y_0) \setminus \overline B_{\rho}(y_0),\\
    \Psi &= 1 
    &&\text{on } \partial B_R(y_0),\\
    \Psi &= 0 &&\text{in } \overline B_{\rho}(y_0).
 \end{aligned}
\right.$$          
            Since the extension $\Phi$ is continuous, for every $\eps>0$ there exists $\delta>0$ such that 
            $$|\Phi(x)-\Phi(x_0)|\le \eps\qquad\text{for every}\quad x\in \left(\partial \mathcal{B}\cup K\right)\cap B_\delta(x_0).$$
            Moreover, since in $\overline{\mathcal{B}}\setminus B_{\delta}(x_0)$ the function $\Psi$ is bounded from below, we can find a constant $M=M(\eps)>0$ large enough such that
    \be\label{stimaphi}
    |\Phi(x)-\Phi(x_0)|\le \eps+M \Psi \quad\text{for every}\quad x\in \partial \mathcal{B}\cup K.\ee 
            By \eqref{stimaphi}, we have 
            \begin{equation}
                \label{e:sigma_1}
                \Phi\le \Sigma_1:= \Phi(x_0)+\eps+M\Psi \quad\text{on }\partial\mathcal{B}\cup K.
            \end{equation}
            Next, we define
            \begin{equation}
                \label{e:sigma2}
                 \Sigma_2: \overline{\mathcal{B}} \to \R^+ \quad \hbox{ such that } \quad \left\{
\begin{aligned}
     \Delta \Sigma_2 &= 0 &&\text{in } D,\\
    \Sigma_2 &= \Phi  
    &&\text{on } \partial \mathcal{B},\\
    \Sigma_2 &= 0 &&\text{in } K.
 \end{aligned}
\right.
            \end{equation} 
Let $\Sigma:= \min\{\Sigma_1, \Sigma_2\}:\overline{\mathcal{B}}\to\R^+$.
  Then, by \eqref{e:sigma_1} and \eqref{e:sigma2}, $\Phi\le \Sigma$ on $\partial\mathcal{B}\cup K$. Moreover, since $\Sigma_2\in\mathcal{A},$ by \cref{lemma:min2soprasol}, $\Sigma\in\mathcal{A}$.
            Therefore, since $u$ is an infimum \eqref{e:funzione_u_teorema_esistenza}, we have 
            \be\label{stimattacca1}
            u\le \Sigma\le \Sigma_1=\Phi(x_0)+\eps+M\Psi\quad\text{in } \overline {\mathcal{B}}.\ee 
            On the other hand, by definition of the function $u$ \eqref{e:funzione_u_teorema_esistenza}, $u\ge\phi$ on $\partial\mathcal{B}$. 
            Moreover, on $K$, $u\equiv0\equiv\Phi$, then $u\ge\Phi$ on $\partial\mathcal{B}\cup K$. 
            Therefore, by \eqref{stimaphi}, we have 
            \be\label{eq:boundutile}
            \Phi(x_0)-\eps-M\Psi\le \Phi(x) \le u(x)\quad\text{for all } x \in \partial \mathcal{B}\cup K.\ee
            We claim that 
            \be\label{unastimafinale}
            \Phi(x_0)-\eps-M\Psi=:\Theta(x)\le u(x)\quad\text{for every}\quad x \in \overline {\mathcal{B}} .
            \ee
            Suppose by contradiction that there exists $y\in \overline{\mathcal{B}}$ such that $\Theta(y)> u(y)$. 
            We define for all $t>0$ the function $\Theta_t:= \Theta -t$. Since $u\geq 0$, for $t$ large enough, $\Theta_t(x) < u(x)$ for all $x \in \overline{\mathcal{B}}$. Thus, there exists $t_0>0$ and $z\in \overline{\mathcal{B}}$ such that $\Theta_{t_0}$ touches $u$ from below at $z$.
            By \eqref{eq:boundutile}, $z\not \in \partial \mathcal{B}\cup K$, then $z\in D$.
            To get the contradiction, we consider the following cases.
            \begin{itemize}
                \item If $z\in D\setminus \overline \Omega_u$, then $u \equiv 0$ in a neighborhood of $z$ and $|\nabla\Theta_{t_0}(z)|=0$. Since $0=|\nabla\Theta_{t_0}(z)| = M\abs{\nabla \Psi(z)}\neq 0$, we get the contradiction. 
                \item If $z\in\partial\Omega_u\cap D$, we have obtained a test function $\Theta_{t_0}$ which is touching $u$ from below at $z$. By \cref{prop:supersol}, we have
                $$
               M \abs{\nabla \Psi(z)}= \abs{\nabla \Theta_{t_0}(z)} \leq \frac{\nabla \Theta_{t_0}(z)}{\abs{\nabla \Theta_{t_0}(z)}}\cdot V\leq \abs{V},
                $$
                which is contradiction as a consequence of the  Hopf Lemma choosing the constant $M$ large enough.
                \item If $z\in \Omega_u\cap D$, then, by the strong maximum principle, $\Theta_{t_0}\equiv u$ in the connected component of $\Omega_u\cap D$ where $z$ belongs to.
                Since $\Theta_{t_0}$ is a radial decreasing function by the definition of $\Psi$, for every $x \in B_{|z|}(y_0)$ we have
                $$\Theta_{t_0}(x)\ge\Theta_{t_0}(z)=u(z)>0,$$
                implying that $u>0$ in $B_{|z|}(y_0)\cap \overline D$. 
                In particular, $\Theta_{t_0}(x_0)=u(x_0)$, getting the contradiction, since 
                $$\Theta_{t_0}(x_0)=\Phi(x_0)-\eps-t_0<\Phi(x_0)\le u(x_0).$$
            \end{itemize}
            This proves the claim \eqref{unastimafinale}.
            Then \eqref{stimattacca1} and \eqref{unastimafinale} give 
            $$ |u(x)- \Phi(x_0)|\le \eps+M\Psi \quad\text{for every}\quad x\in \overline {\mathcal{B}}.$$ 
           Passing to the limit on both sides and using the fact that $\Psi (x_0) = 0$, we have 
           $$\lim_{x\to x_0}|u(x)-\Phi(x_0)|\le \eps,$$
          implying that
          $\lim_{x\to x_0}u(x)=\Phi(x_0)=\phi(x_0)$.
           To conclude, we need to show that $\phi(x_0) = u(x_0)$. By definition $u(x_0) \geq \phi(x_0)$. On the other hand, by \cref{lemma:classical-perron}, we can take the function 
           $$
    v: \overline {\mathcal{B}}\to \R^+ \quad \hbox{ such that } \quad
    \left\{
\begin{aligned}
    \Delta v &= 0 &&\text{in } D,\\
    v &= \phi
    &&\text{on } \partial \mathcal{B},\\
    v&= 0 &&\text{in } K.
 \end{aligned}
 \right.$$
Then, $v\in\mathcal{A}$ and thus $u(x_0)\le v(x_0)=\phi(x_0)$, which yields the thesis.
        \end{proof}	

\section{Interior regularity of the free boundary}\label{sec:interior}
In this section, we study the regularity of viscosity solution of $u: \mathcal{B} \to \R^+$ of the problem \eqref{e:viscosity-sol-K_D} around a point $x_0\in\partial\Omega_u\cap D$, proving \cref{thm:piatto-implica-regolare-intro}.

After a rescaling,
the problem \eqref{e:viscosity-sol-K_D} can be set in $B_1$. Precisely, we consider viscosity solutions of 
\be\label{eq:viscosity-sol-B1}\left\{
    \begin{aligned}
         \Delta u &= 0 && \hbox{in }  \Omega_u\cap B_1,\\
     \abs{\nabla u}^2 &= \nabla u\cdot V&& \hbox{on } \pa \Omega_u\cap B_1.
    \end{aligned}
     \right.\ee
    In the following, we consider the rescalings
$$
u_{x_0, r}:= \frac{u(x_0+ r x)}{r} \qquad \hbox{and} \qquad u_{r}:= \frac{u(r x)}{r},
$$
    and  we prove the following regularity result.
     
\begin{theorem}[Flatness implies regularity]\label{thm:piatto-implica-regolare}
    Let $u:B_1\to\R^+$ be a viscosity solution of \eqref{eq:viscosity-sol-B1}.
    For all $\delta>0$, there is a constant $\overline \e>0$ such that the following holds.
  Let $\nu$ be a unit vector such that  
  $$c:=V\cdot \nu\ge 2\delta$$
  and 
$u$ is $\overline \eps$-flat along the direction $\nu$, namely, for every $x\in B_1$
        \begin{equation}
            \label{e:eps-piatto}
            \left(x \cdot c \, \nu - \overline \eps\right)^+\le u(x)\le  \left(x \cdot  c \, \nu+ \overline\eps\right)^+.
        \end{equation}
Then, $\partial\Omega_u$ is analytic in $B_{\sfrac{1}{2}}$.
\end{theorem}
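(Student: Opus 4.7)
The plan is to implement the two-step scheme outlined in the introduction: first establish $C^{1,\alpha}$ regularity of $\partial\Omega_u$ near the origin via an iterated improvement-of-flatness argument in the style of De Silva, then upgrade to analyticity through a partial hodograph-Legendre transformation together with Schauder and analytic regularity theory for nonlinear oblique boundary value problems.

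The heart of the first step is an improvement-of-flatness lemma: if $u$ is $\e$-flat along $\nu$ with $V\cdot\nu\ge\delta$, then for some universal $\rho\in(0,\sfrac{1}{2})$ and $\alpha\in(0,1)$ depending only on $\delta$, the rescaling $u_\rho$ is $\rho^{1+\alpha}\e$-flat along some new unit vector $\nu'$ with $|\nu'-\nu|\le C\e$. I would prove this by compactness: assuming it fails along a sequence $u_k$ of $\e_k$-flat solutions with $\e_k\to 0$, the linearized graphs
\[
\widetilde{u}_k(x):=\frac{u_k(x)-c(x\cdot\nu)}{\e_k},\qquad c=V\cdot\nu,
\]
satisfy a partial Harnack inequality in the flat setting (to be proved in \cref{subsec:partialharnack}), which yields uniform H\"older control and hence compactness on compact subsets of $\{x\cdot\nu>0\}\cap B_1$. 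Any limit $\widetilde{u}$ is harmonic in $\{x\cdot\nu>0\}$, and linearizing the free boundary condition $|\nabla u|^2=\nabla u\cdot V$ around the plane $c(x\cdot\nu)_+$ produces the oblique condition $\nabla\widetilde{u}\cdot W=0$ on $\{x\cdot\nu=0\}$, where $W=2(V\cdot\nu)\nu-V$. Crucially, $W\cdot\nu=V\cdot\nu=c\ge 2\delta$, so the limit problem \eqref{e:obliquo1_intro} is uniformly oblique; classical $C^{1,\alpha}$ theory for linear oblique problems yields a tangent plane approximation of $\widetilde{u}$ near the origin with error $O(\rho^{1+\alpha})$, which upon unwinding the rescaling contradicts the failure of the flatness improvement.

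With the improvement-of-flatness lemma in hand, I would iterate it. At each step the updated normal $\nu_{k+1}$ satisfies $|\nu_{k+1}-\nu_k|\le C\rho^{k\alpha}\overline{\e}$, so summing over $k$ the total rotation is bounded by $C\overline{\e}/(1-\rho^\alpha)$. Choosing $\overline{\e}$ small enough, this bound is less than $\delta/|V|$, which guarantees that the non-degeneracy $V\cdot\nu_k\ge\delta$ is preserved along the iteration. Consequently the scheme runs at every free boundary point in $B_{\sfrac{1}{2}}$, producing a unique normal direction with a $C^{1,\alpha}$ modulus of rotation, so $\partial\Omega_u\cap B_{\sfrac{1}{2}}$ is a $C^{1,\alpha}$ graph and $u\in C^{1,\alpha}(\overline{\Omega}_u\cap B_{\sfrac{1}{2}})$. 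For the upgrade to analyticity, after a rotation sending $\nu$ to $e_d$ I would apply a partial hodograph map \`a la Kinderlehrer-Nirenberg \cite{kinderlehrer1977regularity}, flattening $\partial\Omega_u$ onto $\{y_d=0\}$ and transforming the harmonic equation into a nonlinear elliptic equation on $B_1^+$ for a new unknown $v$, with the free boundary condition becoming a fully nonlinear oblique condition on $\{y_d=0\}$. Uniform obliqueness is inherited from $V\cdot\nu>0$. Since $v\in C^{1,\alpha}$ by the first step, classical Schauder estimates for nonlinear oblique boundary value problems (see \cite{adn,morrey-multiple-integrals,lieberman2013oblique}) bootstrap $v$ to $C^{k,\alpha}$ for every $k$, and the corresponding analytic regularity theorem yields analyticity of $v$, hence of both $\partial\Omega_u$ and $u$ in $B_{\sfrac{1}{2}}$.

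The main obstacle I anticipate is the proof of the partial Harnack inequality for \eqref{eq:viscosity-sol-B1} uniform in the direction $\nu$ provided $V\cdot\nu\ge 2\delta$: the free boundary condition $|\nabla u|^2=\nabla u\cdot V$ is not strictly monotone in $|\nabla u|$ unless $V\cdot\nu$ stays bounded away from zero, which is precisely why the non-degeneracy assumption and the careful propagation of the rotation bound during the iteration are essential. Without this uniform non-degeneracy the linearization step breaks down and the oblique limit problem degenerates, so the whole compactness argument collapses; keeping track of how close $\nu_k$ remains to $\nu$ at every scale is the technical core of the argument.
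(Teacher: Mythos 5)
Your proposal follows the same two-step scheme as the paper's proof: an iterated improvement-of-flatness argument via compactness and a partial Harnack inequality, with the linearized problem being the oblique boundary value problem for $W=2(V\cdot\nu)\nu-V$, followed by a partial hodograph-Legendre transform and Schauder/analytic regularity for nonlinear oblique problems. The key technical points you highlight — preservation of the non-degeneracy $V\cdot\nu_k\ge\delta$ along the iteration by summing the rotations $|\nu_{k+1}-\nu_k|\lesssim\bar\eps\,\rho^{k\alpha}$ and choosing $\bar\eps$ small, and the uniform obliqueness $W\cdot\nu=V\cdot\nu\ge\delta$ — are exactly the ones the paper tracks in Lemmas \ref{lemma:IOF}, \ref{lemma:proprieta_blowup} and Theorem \ref{thm:odografa_Cinfty}.
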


    The proof is divided into two parts:
    \begin{itemize}
        \item in \cref{susec:piatto_implica_C2alpha}, if $u$ is flat along a direction $\nu$ in $B_1$, with $V\cdot\nu\ge2\delta,$ then both $u$ and $\pa \Omega_u$ are $C^{1,\alpha}$ regular in $B_{\sfrac{1}{2}}$, for some $\alpha \in (0,1)$ through an improvement of flatness lemma;
        \item in \cref{subsec:odograha}, through an hodograph-type transform, we provide the higher order regularity.
    \end{itemize}
    Moreover, we point out that \cref{thm:piatto-implica-regolare-intro} implies \cref{thm:piatto-implica-regolare}, indeed if $x_0\in\partial\Omega_u\cap D$ is a regular point, then for some $r>0$, the rescaling $u_{x_0,r}$ is $\overline \eps$-flat along the direction $\nu$, i.e.~\eqref{e:eps-piatto} holds true.

\subsection{Flatness implies \texorpdfstring{$C^{1, \alpha}$}{C1alpha}}
\label{susec:piatto_implica_C2alpha}
The main theorem of this section is an $\eps$-regularity theorem for the solutions of \eqref{eq:viscosity-sol-B1}. It is stated as follows.

\begin{theorem}[$\varepsilon$-regularity theorem]
\label{th:eps_reg_C2}
Let $u:B_1\to\R^+$ be a viscosity solution of \eqref{eq:viscosity-sol-B1}. For all $\delta>0$, there is a constant $\overline \e>0$ such that the following holds.
  Let $\nu\in\partial B_1$ be a unit vector such that 
  $$V\cdot \nu\ge \delta$$
  and $u$ is $\overline \eps$-flat along the direction $\nu$, namely, for every $x\in B_1$
    $$\left(x \cdot c \, \nu - \overline \eps\right)^+\le u(x)\le  \left(x \cdot  c \, \nu+ \overline\eps\right)^+.
    $$ 
Then, $\partial\Omega_u$ is $C^{1,\alpha}$ regular in $B_{\sfrac{1}{2}}$, for some $\alpha \in (0,1)$
\end{theorem}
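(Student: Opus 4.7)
The plan is to establish the $\varepsilon$-regularity via a De Silva-type iteration of an improvement of flatness lemma, in the spirit of \cite{de2011free} and adapted to the anisotropic free boundary condition $|\nabla u|^{2}=\nabla u\cdot V$. Since after rescaling any viscosity solution that is $\bar\varepsilon$-flat in direction $\nu$ with $V\cdot\nu\ge\delta$ falls into the setting of the improvement lemma, the theorem will follow from iterating that lemma at every scale, provided the non-degeneracy $V\cdot\nu_k\ge\delta/2$ is preserved along the iteration.

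First I would prove a partial Harnack inequality: if $u$ is sufficiently flat between two planes $(x\cdot c\nu-\varepsilon)^{+}$ and $(x\cdot c\nu+\varepsilon)^{+}$ with $c=V\cdot\nu$, then in $B_{\rho_{0}}$ one can improve one of the two bounds by $c_{0}\varepsilon$. The proof is by sliding an explicit radial barrier of the form $\eta(x)=a|x-x_{0}|^{-\gamma}-b$ (well-inside $B_{1}$) touching $u$ from below or above; harmonicity inside $\Omega_u$ and the free-boundary condition $|\nabla u|^{2}=\nabla u\cdot V$ at the contact point, combined with $V\cdot\nu\ge\delta$, force a definite gain. Iterating this Harnack step as in \cite{de2011free}, I obtain that the rescaled graphs
\[
\widetilde u_{\varepsilon}(x)=\frac{u(x)-c\,x\cdot\nu}{\varepsilon}\qquad\text{on}\;\Omega_u\cap B_1
\]
form an equicontinuous family on compact sets of $B_{1/2}\cap\{x\cdot\nu\ge -\bar\varepsilon/c\}$.

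Next I would prove the improvement of flatness lemma: there exist constants $\rho\in(0,1)$ and $\bar\varepsilon_{0}>0$ (depending only on $\delta$) such that if $u$ is $\varepsilon$-flat with direction $\nu$ and $V\cdot\nu\ge\delta$, with $\varepsilon\le\bar\varepsilon_{0}$, then there is a unit vector $\nu'$ with $|\nu-\nu'|\le C\varepsilon$ such that $u$ is $\rho\varepsilon/2$-flat in $B_{\rho}$ with direction $\nu'$, and $V\cdot\nu'\ge\delta-C\varepsilon\ge\delta/2$. The argument is by compactness and contradiction: if the lemma fails, a sequence of solutions $u_{k}$, $\varepsilon_{k}\to 0$, with $\nu_{k}\to\nu$ and $c_{k}=V\cdot\nu_{k}\to c\ge\delta$, yields via the partial Harnack step a locally uniform limit $\widetilde u$ of the normalized graphs $\widetilde u_{\varepsilon_{k}}$ defined on $B_{1}\cap\{x\cdot\nu>0\}$. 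Passing the viscosity equation to the limit in the usual way, $\widetilde u$ solves the linearized oblique elliptic problem
\[
\left\{\begin{aligned}
\Delta\widetilde u &=0 &&\text{in }B_{1}\cap\{x\cdot\nu>0\},\\
\nabla\widetilde u\cdot W&=0 &&\text{on }B_{1}\cap\{x\cdot\nu=0\},
\end{aligned}\right.
\]
with $W=2(\nu\cdot V)\nu-V$, obtained by linearizing $|\nabla u|^{2}=\nabla u\cdot V$ around the plane $c\,x\cdot\nu$. Since $W\cdot\nu=c\ge\delta>0$, the boundary condition is genuinely oblique, and classical Schauder/Lieberman theory for oblique problems gives $C^{1,\alpha}$ regularity and a pointwise Taylor expansion
\[
\widetilde u(x)=\widetilde u(0)+\xi\cdot x+o(|x|)
\]
with $\xi$ bounded. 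Choosing $\nu'$ to be the normal to the corresponding improved plane yields the desired contradiction against the failure assumption, concluding the improvement lemma.

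Finally, the iteration proceeds as follows. Starting from $u$ that is $\bar\varepsilon$-flat with $V\cdot\nu_{0}\ge\delta$, I apply the improvement of flatness lemma at $0$ to get $\nu_{1}$ with $|\nu_{1}-\nu_{0}|\le C\bar\varepsilon$ and rescale $u$ by $\rho$. Since $V\cdot\nu_{1}\ge\delta-C\bar\varepsilon\ge\delta/2$ once $\bar\varepsilon$ is small, the hypotheses are preserved, and one can iterate at every scale $\rho^{k}$, obtaining a geometric decay of flatness and a Cauchy sequence of normals $\nu_{k}$. A standard telescoping argument yields $|\nu_{k}-\nu_{\infty}|\le C\rho^{k\alpha}$ with $\alpha=\log_{\rho}(1/2)$, hence $C^{1,\alpha}$ regularity of $\partial\Omega_{u}$ at $0$. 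Doing this at every free boundary point in $B_{1/2}$ (the flatness hypothesis propagates by continuity) gives the theorem.

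The main obstacle is the compactness/linearization step. First, the free boundary condition is only a one-sided viscosity equality in general and the non-degeneracy may fail if $V\cdot\nu$ approaches $0$; the assumption $V\cdot\nu\ge\delta$ is precisely what allows the partial Harnack barrier argument and the convergence of the linearized equation to the oblique problem with the crucial oblique constant $W\cdot\nu=c>0$. Second, one must verify that the limit of rescaled graphs satisfies the boundary condition $\nabla\widetilde u\cdot W=0$ in the viscosity sense and apply oblique Schauder theory; this is the heart of the proof and the place where the anisotropy of the original free boundary problem forces the switch from normal-derivative to oblique-derivative boundary conditions in the linearization.
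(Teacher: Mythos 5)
Your proposal is correct and follows essentially the same route as the paper: a partial Harnack inequality via a sliding radial barrier exploiting $V\cdot\nu\ge\delta$, compactness of the normalized graphs, a contradiction/compactness improvement of flatness whose linearization is the oblique problem with $W=2(\nu\cdot V)\nu-V$ handled by Lieberman's Schauder theory, and an iteration in which the non-degeneracy $V\cdot\nu_k$ is preserved by a telescoping estimate on $|\nu_{k+1}-\nu_k|\le C\eps_k$. The only differences are cosmetic (geometric decay factor and scale constants), so no further comment is needed.
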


The proof of this theorem is a consequence of an iteration of the improvement of flatness lemma shown in \cref{subsec:IOF}.

\subsubsection{Partial Harnack inequality}
\label{subsec:partialharnack}
The first fundamental ingredient to show \cref{th:eps_reg_C2} is the partial Harnack inequality which can be stated as follows.

\begin{lemma}[Partial Harnack inequality]
    \label{lemma:PH_inequality}
  Let $u:B_1\to\R^+$ be a viscosity solution of \eqref{eq:viscosity-sol-B1} and suppose that $0\in\overline \Omega_u$. For all $\delta>0$, there are constants $\overline \e>0$, $\rho>0$ and $\sigma>0$ such that the following holds.
  Let $\nu\in\partial B_1$ be a unit vector such that 
        \be\label{eq:proprieta-partial-Harnack}
    c:=V\cdot\nu\geq \delta.
    \ee
    If there are two real numbers $a\le b$ with $\e:=b-a\leq \overline{\e}$ such that for every $x\in B_1$
\begin{equation*}
\label{eq:hypothesis}
\left(x \cdot c \nu+ a\right)^{+} \leq u(x)\leq  \left(x \cdot c\nu + b\right)^{+},
\end{equation*}    
    then there exist two reals numbers ${a}'\le {b}'$, with $a \leq {a}' < {b}' \leq b$ and
    $$
    |{b}'-{a}'|\leq (1-\sigma)\abs{b-a},
    $$
     such that for every $x \in B_\rho$, it yields
    \begin{align}
    \label{eq:tesi_PH}
     \left(x \cdot c \nu +  a'\right)^{+} \leq u(x)\leq  \left(x \cdot c\nu+  b'\right)^{+}.
    \end{align}
\end{lemma}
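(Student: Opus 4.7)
The plan is to follow the dichotomy-and-barrier strategy of De~Silva~\cite{de2011free}, adapted to the nonlinear free boundary condition $|\nabla u|^2 = \nabla u \cdot V$. Since $c = V\cdot\nu\geq\delta$ and $\varepsilon$ will be chosen small, the point $\bar x := \tfrac{1}{5}\nu$ lies in the positivity set with a definite margin: on $B_{1/10}(\bar x)$ the function $h(x) := u(x) - (x\cdot c\nu + a)$ is nonnegative, harmonic, and bounded above by $\varepsilon$. Applying Harnack's inequality to $h$ or to $\varepsilon - h$ yields the dichotomy: there is a universal $c_0>0$ such that either $h\ge c_0\varepsilon$ (Case A) or $h\le(1-c_0)\varepsilon$ (Case B) throughout $B_{1/20}(\bar x)$.

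Each case is closed by transporting the improved inequality to a small ball $B_\rho$ around the origin via a radial barrier. For Case A I would construct a strict subsolution
\[
w(x) := \bigl(x\cdot c\nu + a + \tau\,\psi(x)\bigr)^{+}, \qquad \tau \sim \varepsilon,
\]
with $\psi$ the standard radial harmonic function $\psi(x) = |x-z_0|^{-\gamma} - r_0^{-\gamma}$ supported in an annulus $A = \{r_1 < |x-z_0| < r_0\}$ whose center $z_0$ lies on the axis $\mathbb{R}\nu$ and is chosen so that $A$ contains the origin and its inner sphere is contained in the Harnack ball $B_{1/20}(\bar x)$. Harmonicity of $\psi$ gives $\Delta w = 0$ in $\{w>0\}$. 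On $\partial\{w>0\}$, the linearization
\[
|\nabla w|^2 - \nabla w\cdot V \;=\; \tau\,\nabla\psi\cdot W + O(\tau^2),\qquad W := 2c\nu - V = 2(V\cdot\nu)\nu - V,
\]
together with $W\cdot\nu = c \geq \delta$, ensures that for $\gamma$ large enough (so that $\nabla\psi$ points nearly radially inward toward $z_0$ with a uniform magnitude) the leading term is bounded below by a quantitative positive constant and dominates the $O(\tau^2)$ error; hence $w$ is a strict subsolution of \eqref{eq:viscosity-sol-B1}.

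I would then verify $u \geq w$ on $\partial A$: on the inner sphere this uses the Harnack dichotomy $h\geq c_0\varepsilon$ provided $\tau\max\psi \leq c_0\varepsilon$, while on the outer sphere $\psi\equiv 0$ and the original flatness hypothesis gives $u \geq (x\cdot c\nu+a)^{+} = w$. A standard viscosity comparison (sliding $w$ from below until it first touches $u$, then using the FB condition together with the harmonicity inside the positive set) yields $u\geq w$ in $A$, and restricting to $B_\rho\subset\{\psi\geq\psi_0>0\}$ gives \eqref{eq:tesi_PH} with $a' = a + \tau\psi_0$ and a universal improvement factor $\sigma > 0$. Case B is symmetric, using the strict supersolution $(x\cdot c\nu + b - \tau\psi)^{+}$ to push $b$ down by $\sigma\varepsilon$.

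The main obstacle is the barrier in Step~3: the parameters $z_0,r_0,r_1,\gamma,\tau$ must be chosen simultaneously so that the annulus wraps the origin, its inner sphere lies in the Harnack ball, the oblique term $\nabla\psi\cdot W$ admits a quantitative lower bound on $\partial\{w>0\}$ depending only on $\delta$, and the quadratic error $O(\tau^2)$ is absorbed via $\tau\lesssim\varepsilon\leq\bar\varepsilon$. The non-degeneracy $V\cdot\nu\geq\delta$ enters precisely here, because the identity $W\cdot\nu = c\geq\delta$ is what keeps the linearized oblique direction $W$ uniformly transverse to the approximating plane $\{x\cdot\nu=0\}$ and allows the barrier to be a genuine one-sided perturbation.
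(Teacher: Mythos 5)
Your strategy is essentially the paper's: a Harnack dichotomy at the interior point $\nu/5$, then a radial barrier centered on the $\nu$-axis whose free boundary condition linearizes to $\tau\,\nabla\psi\cdot W+O(\tau^2)$ with $W=2(V\cdot\nu)\nu-V$, closed by a sliding/comparison argument in the viscosity sense. However, there is a genuine gap precisely at the step you flag as the main obstacle: the uniform lower bound on $\nabla\psi\cdot W$ at candidate touching points. Your justification (take $\gamma$ large so that $\nabla\psi$ points ``nearly radially inward with uniform magnitude'', plus $W\cdot\nu=c\ge\delta$) does not give this: the direction of $\nabla\psi$ is exactly radial toward $z_0$ for every $\gamma$, and at a point $x$ of the annulus lying near the hyperplane $\{x\cdot\nu=0\}$ one has $(z_0-x)\cdot W=c\,|z_0|-x\cdot W\approx c\,|z_0|+x^{\perp}\cdot V$, where $x^{\perp}$ is the component of $x$ orthogonal to $\nu$. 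If the annulus is thick enough that $|x^{\perp}|$ can be of order one, and $V$ is nearly tangential (so $|V^{\perp}|\gg\delta$ while $c\ge\delta$), this quantity can be negative, and the barrier argument breaks at such points. What saves the argument in the paper is a confinement step you omit: any touching point lies on $\partial\Omega_u\subset\{x\cdot c\nu\le 2\rho\}$ (since $u>0$ above that slab by the lower flatness bound), and the outer radius of the annulus is taken only $O(\rho)$ larger than $|z_0|$ (there, $R=1/5+3\rho$), so the slab--annulus intersection lies within distance $O(\sqrt{\rho})$ of the origin; then $(z_0-x)\cdot W\ge \delta/5-C\sqrt{\rho}>0$ once $\rho$ is small depending on $\delta$ and $|V|$. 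This is also exactly where the radius $\rho$ in the conclusion comes from; your requirements (``annulus wraps the origin, inner sphere inside the Harnack ball'') do not force it.

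A secondary inconsistency: you call $\psi$ harmonic and claim $\Delta w=0$ in $\{w>0\}$, while simultaneously taking ``$\gamma$ large''. For the sliding argument you in fact want strict subharmonicity (the paper uses $|x-x_0|^{-d}$, so $\Delta\psi>0$) to exclude touching at interior points of $\Omega_u$; with an exactly harmonic barrier such touching is not ruled out by the viscosity definition alone and needs an extra strong-maximum-principle discussion. Finally, the easy regime in which $a$ is bounded away from $0$ (so $u$ is harmonic in a neighborhood of $B_\rho$ and the interior Harnack inequality suffices, Step 1 of the paper's proof) should be recorded separately, though this is routine.
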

\begin{proof} 
We divide the proof into two steps.\\
\\
{\it Step 1. $a \geq 2 \rho$.} 
In this cases the function $u$ is harmonic in $B_1 \cap \{x \cdot c\nu > -2 \rho\}$. To get the thesis is sufficient to apply the standard internal Harnack inequality.\\
\\
{\it Step 2. $\abs{a}\leq 2 \rho$.}
We can choose
$x_0 := \sfrac{\nu}{5}$ and $R := \sfrac{1}{5} + 3 \rho$, where the radius $\rho$ will be determined later. Hence, we consider the following function 
\begin{equation*}
    \psi: \R^d \to \R \quad \hbox{ such that } \quad \left\{
\begin{aligned}
    &\psi(x) = 1
    &&\text{in } \overline B_{\sfrac{1}{50}}(x_0),\\
    &\psi(x) = 0 &&\text{in } \R^d \setminus B_{R}(x_0),\\
    &\psi(x) = C 
    \left(\frac{1}{\abs{x- x_0}^{d}} -\frac{1}{R^{d}}\right)&&\text{in } B_{R}(x_0) \setminus \overline B_{\sfrac{1}{50}}(x_0),
 \end{aligned}
\right.
\end{equation*}
with $C$ a positive constant such that $\psi$ is continuous function.  
Hence, $\psi$ is non a vanishing function inside the ball $B_{R}(x_0)$
and the following holds.
\begin{itemize}
    \item In $B_{R}(x_0) \setminus \overline{B}_{\sfrac{1}{50}}(x_0)$, $\psi$ is subharmonic, i.e.~   \be\label{eq:laplacianw}
    \Delta \psi(x) >0.
    \ee
    \item 
    The function $\psi$ on $B_R(x_0)\setminus \overline B_{\sfrac{1}{50}}(x_0)\cap \{x\cdot c\nu\le 2\rho\}$ satisfies 
\be\label{eq:stima_gradientew}
\nabla\psi \cdot W\ge c_1 ,
\ee
where 
\begin{align}
    \label{e:W_def}
    W:=2(V\cdot \nu)\nu-V
\end{align}
and $c_1>0$. Indeed, in $B_R(x_0)\setminus \overline B_{\sfrac{1}{50}}(x_0)\cap \{x\cdot c\nu\le 2\rho\}$, by \eqref{e:W_def}, \eqref{eq:proprieta-partial-Harnack} and since $x_0 = \sfrac{\nu}{5}$, we get 
    \begin{align*}
        \nabla \psi\cdot W&=Cd\abs{x-x_0}^{-d-2}(x_0-x)\cdot \left(2(V\cdot \nu)\nu-V\right)\\
        &=Cd\abs{x-x_0}^{-d-2}\left(\frac15V\cdot \nu-x\cdot \left(2(V\cdot \nu)\nu-V\right)\right)
        \\
        &\ge Cd\abs{x-x_0}^{-d-2}\left(\frac\delta 5-C_\rho\right)\\
        &\geq c_1,
    \end{align*}
    where $c_1$ is a positive constant obtained by the fact that $C_\rho\to0^+$ as $\rho\to 0^+$. Indeed, since $x$ belongs to a ball with radius comparable to $\sqrt{\rho}$, it is sufficient to choose $\rho>0$ such that $C_\rho\le \sfrac{\delta}{10}$.
\end{itemize}
We set $P(x) := x \cdot c\nu+a$ and let us suppose that $u(x_0) \geq P(x_0) + \frac{\eps}{2}$.  
In the ball $B_{\sfrac{1}{25}}(x_0)$, the function $u-P$ is harmonic. Thus, by applying the Harnack inequality, we get
\begin{equation}
    \label{eq:Harnack}
    u(x) - P(x) \geq c_{\mathcal{H}} \eps \qquad \hbox{in } B_{\sfrac{1}{50}}(x_0),
\end{equation}
where $c_{\mathcal{H}}>0$ is the Harnack dimensional constant. Taking the family of competitors $\{v_t\}_{t \in [0,1]}$ given by
\begin{equation*}
    \label{eq:v_t}
    v_t(x) = P(x) + c_{\mathcal{H}} \eps \psi(x) -c_{\mathcal{H}} \eps + c_{\mathcal{H}} \eps t,
\end{equation*}
we want to show that $u(x) \geq v_t(x)$ for all $t \in [0,1]$ and for all $x \in B_1$. 
Using the definition of $v_t$ and \eqref{eq:Harnack}, the inequality $u(x) \geq v_t(x)$ holds true for all $t \in [0,1]$, for all $x\in B_1 \setminus B_{R}(x_0)$ and for all $x\in \overline{B}_{\sfrac{1}{50}}(x_0)$. 
Then, by contradiction, let us suppose that there exists $t \in [0,1]$ such that $v_t$ touches $u$ from below in $\overline{x}\in B_R(x_0) \setminus \overline{B}_{\sfrac{1}{50}}(x_0)$. Then, necessarily $\overline{x} \in \pa \Omega_u$ since by \eqref{eq:laplacianw}
\begin{equation*}
    \label{eq:Deltavt}
    \Delta v_t = \Delta P(x) + c_{\mathcal{H}} \eps \Delta \psi(x)>0.
\end{equation*}
Concerning what happens on $\pa \Omega_u$, we first observe that
$$
\nabla v_t=\nabla P(x) + c_{\mathcal{H}} \eps \nabla \psi(x) = c\nu+c_{\mathcal{H}}\eps \nabla \psi.
$$
Thus, we get
\begin{align*}
\nonumber
    \abs{\nabla v_t}^2  - \nabla v_t \cdot V &= \abs{c \nu + c_{\mathcal{H}} \eps \nabla \psi}^2 -   c\nu\cdot V - \eps c_{\mathcal{H}} \nabla \psi \cdot V\\
   \nonumber
    &= c_{\mathcal{H}} \eps \nabla \psi \cdot \left(2 c \nu - V \right)+ o(\eps)\\
     \label{eq:boundary_cond_vt}
    &= c_{\mathcal{H}} \eps \nabla \psi \cdot W+ o(\eps).
\end{align*}
Since $\partial\Omega_u\subset B_R(x_0)\setminus \overline B_{\sfrac{1}{50}}(x_0)\cap \{x\cdot c\nu\le 2\rho\}$, by \eqref{eq:stima_gradientew}, for $\eps$ small enough we get
\begin{align*}
    \abs{\nabla v_t}^2  - \nabla v_t \cdot V >0.
\end{align*}
This is a contradiction, namely there is no $\overline{x}$ touching point from below. Then, for all $x \in B_1$ and taking $t = 1$, we get
$$
u(x) \geq v_1(x) \qquad \Longrightarrow\qquad u(x) \geq P(x) + c_{\mathcal{H}} \eps \psi(x).
$$
Since $\psi$ is strictly positive in $B_{\rho}$, we get
$$
u(x) \geq P(x) + c_d \eps,
$$
which is exactly the left inequality taking ${a}' := a + c_d \eps$ in \eqref{eq:tesi_PH} and $c_d$ is a dimensional constant. In the case $u(x_0)\le P(x_0)+\frac\eps 2$, applying a similar argument as the one above, we obtain the upper bound in \eqref{eq:tesi_PH}.
\end{proof}

In order to show the improvement of flatness lemma, a compactness result is required.

\begin{lemma}
    \label{lemma:compattezza}
 For all $j \in \N$, let $u_j:B_1\to\R^+$ be a viscosity solutions of \eqref{eq:viscosity-sol-B1}. For all $\delta>0$, there exists a constant $\overline \e>0$ such that the following holds.
 Let $\nu_j\in\partial B_1$ be a sequence of unit vectors and $\eps_j\in(0,\overline\eps]$ such that 
    \bea
    \lim_{j\to+\infty}\eps_j=0,\qquad c_j:=V\cdot\nu_j\geq \delta,\eea
and $u_j$ are $\eps_j$-flat along the directions $\nu_j$, namely for every $x\in B_1$
    $$\left(x \cdot c_j \, \nu - \eps_j\right)^+\le u_j(x)\le  \left(x \cdot  c_j \, \nu+ \eps_j\right)^+.
    $$
   Then, there exists a H\"older continuous function $\widetilde{u}: B_{\sfrac{1}{2}} \cap \{x\cdot \nu \geq 0\} \to \R$ such that
    \begin{align}
    \label{e:utilde_j}
        \widetilde{u}_j : \left\{
    \begin{aligned}
         & \overline{\Omega}_{u_j}\cap B_1\to \R\\
         &x\mapsto  \widetilde{u}_j (x):= \frac{u_j(x) -x\cdot c_j \nu_j}{\eps_j},
    \end{aligned}
    \right.
    \end{align}
    and for all $\eta>0$
    $$
    \widetilde{u}_j\to 
    \widetilde{u}\qquad\text{uniformly in}\quad B_{\sfrac{1}{2}}\cap \{x \cdot \nu \geq \eta\},
    $$
    where $\nu_j \to \nu \in \pa B_1$.
    Moreover, the sequence of graphs of $\widetilde u_j $ converges in the Hausdorff distance to $\widetilde u$, namely
    $$
    \mathcal{G}_j = \left\{(x, \widetilde{u}_j(x)): x \in B_{\sfrac{1}{2}} \cap \overline{\Omega}_{u_j} \right\} \qquad \xrightarrow{H} \qquad \mathcal{G}= \left\{(x, \widetilde{u}(x)): x \in B_{\sfrac{1}{2}} \cap \{x \cdot \nu \geq 0\} \right\}.
    $$
\end{lemma}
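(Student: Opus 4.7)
The plan is to iterate the partial Harnack inequality \cref{lemma:PH_inequality} to produce a H\"older estimate for the normalized deviations $\widetilde u_j$ that is uniform in $j$, then to apply Arzel\`a--Ascoli with a diagonal extraction to obtain the locally uniform convergence; the same estimate, valid up to the one-phase free boundaries $\partial\Omega_{u_j}$, will then yield the Hausdorff convergence of the graphs.

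First I would set up the iteration. For any $x_0\in\overline{\Omega}_{u_j}\cap B_{1/2}$, the rescaling $u_j^{x_0,r}(x):=u_j(x_0+rx)/r$ is still a viscosity solution of \eqref{eq:viscosity-sol-B1} with $0\in\overline{\Omega}_{u_j^{x_0,r}}$, and it inherits the flatness hypothesis in direction $\nu_j$ with slope $c_j\ge\delta$ and flatness parameter $\eps_j/r$ (the affine shift coming from $x_0$ being absorbed in the constants $a,b$ appearing in \cref{lemma:PH_inequality}). Applying \cref{lemma:PH_inequality} iteratively at dyadic scales $\rho^k$ --- which is permitted as long as the rescaled flatness $(1-\sigma)^k\eps_j/\rho^k$ stays below $\overline\eps$, i.e.~for $k\le k_*(\eps_j):=\log(\overline\eps/\eps_j)/\log((1-\sigma)/\rho)$ --- yields
$$
\operatorname{osc}_{B_{\rho^k}(x_0)\cap\overline{\Omega}_{u_j}} \widetilde u_j\;\le\;(1-\sigma)^k.
$$
Since $\eps_j\to 0$ forces $k_*(\eps_j)\to+\infty$, this produces, for each fixed $r>0$ and $j$ large enough, the uniform oscillation bound $\operatorname{osc}_{B_r(x_0)\cap\overline{\Omega}_{u_j}} \widetilde u_j\le C r^\alpha$ with $\alpha:=\log(1-\sigma)^{-1}/\log\rho^{-1}\in(0,1)$. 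Combined with the trivial bound $|\widetilde u_j|\le 1$ coming from the flatness hypothesis, this provides the required uniform H\"older equicontinuity, both in the interior and up to $\partial\Omega_{u_j}$.

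Next, a standard Arzel\`a--Ascoli argument along a compact exhaustion of $B_{1/2}\cap\{x\cdot\nu>0\}$ provides, after extracting a subsequence, a locally uniform limit $\widetilde u\in C^{0,\alpha}_{\operatorname{loc}}(B_{1/2}\cap\{x\cdot\nu>0\})$; since the H\"older oscillation bound also holds at base points on $\partial\Omega_{u_j}$, $\widetilde u$ extends H\"older continuously up to $B_{1/2}\cap\{x\cdot\nu\ge 0\}$. Passing to further subsequences, $\nu_j\to\nu$ and $c_j\to c:=V\cdot\nu$. For the Hausdorff convergence, the one-sided inclusion of $\mathcal{G}$ into the Kuratowski lower limit of $\mathcal{G}_j$ follows from the uniform convergence on compact subsets of $\{x\cdot\nu>0\}$ together with H\"older continuity up to $\{x\cdot\nu=0\}$ and a standard approximation argument. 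The opposite inclusion is more delicate: if $(x_j,\widetilde u_j(x_j))\to(y,s)$, the flatness bound forces $y\cdot\nu\ge 0$ and $|s|\le 1$; if $y\cdot\nu>0$ uniform convergence gives $s=\widetilde u(y)$, while for $y\cdot\nu=0$ I would introduce a comparison point $z_j\to y$ with $z_j\cdot\nu_j\gg \rho^{k_*(\eps_j)}$, so that the H\"older oscillation bound $|\widetilde u_j(x_j)-\widetilde u_j(z_j)|\le C|x_j-z_j|^\alpha\to 0$ together with $\widetilde u_j(z_j)\to\widetilde u(y)$ again forces $s=\widetilde u(y)$.

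The main obstacle will be the quantitative set-up of the iteration up to the free boundary: one must carefully track the affine constants $a_k,b_k$ through the successive rescalings and verify that \cref{lemma:PH_inequality} can be applied at base points $x_0\in\partial\Omega_{u_j}$ for a number of scales $k_*(\eps_j)$ that diverges as $\eps_j\to 0$. This uniform-in-$j$ quantitative control at the free boundary is precisely what allows the H\"older extension of $\widetilde u$ to $\{x\cdot\nu=0\}$ and, in turn, the Hausdorff convergence of the graphs.
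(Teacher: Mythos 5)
Your proposal is correct and follows essentially the same route as the paper: iterate the partial Harnack inequality (\cref{lemma:PH_inequality}) at dyadic scales down to the critical scale comparable to $\eps_j$, deduce a uniform H\"older-type oscillation bound for $\widetilde u_j$ at base points of $\overline\Omega_{u_j}$ valid above that scale, and conclude by Ascoli--Arzel\`a, with the graph convergence following from the same estimate. The only difference is cosmetic: you sketch the Hausdorff convergence of the graphs directly, whereas the paper delegates it to \cite[Lemma 7.14]{velichkov2023regularity}.
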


\begin{proof}
 We divide the proof in two steps.\\
    \\
{\em Step 1. H\"older type estimate.} Let $\rho>0$, $\overline{\eps}>0$ and $\sigma>0$ be the constants in \cref{lemma:PH_inequality}. We fix $j\in\N$.
We claim that, for every $x_0\in B_{\sfrac{1}{2}}\cap\overline\Omega_{u_j}$
\begin{equation}
    \label{eq:stima_holder}
    \abs{\widetilde{u}_j(x) - \widetilde{u}_j(x_0)}\le C|x-x_0|^\gamma\qquad\text{for every}\qquad x\in B_{\sfrac{1}{2}}(x_0)\setminus B_{\sfrac{\eps_j}{\overline \eps}}(x_0),
\end{equation}
for some constant $C>0$ and $\gamma\in(0,1)$.
Let $m\geq 0$ such that
$$
\frac{1}{2}\rho^{m+1} < \frac{\eps_j}{\overline{\eps}} \leq \frac{1}{2}\rho^{m}.
$$
Then, for all $x_0 \in B_{\sfrac{1}{2}} \cap \overline\Omega_{u_j}$, we can iteratively apply \cref{lemma:PH_inequality}, getting that for all $k = 0,1,\dots, m$, there are
$$
-\eps_j = a_0 \leq a_1 \leq  \dots \leq a_k \leq \dots \leq a_m < b_m \leq \dots \leq b_k \leq \dots \leq b_1 \leq b_0 = \eps_j
$$
such that 
$$
\abs{b_k - a_k} \leq (1- \sigma)^{k} \abs{b_0 - a_0},
$$
and for every $x\in B_{\rho^k}(x_0)\cap\overline \Omega_{u_j}$
$$
\left(x \cdot c_j \nu_j+ a_k\right)^{+} \leq u_j(x)\leq  \left(x \cdot c_j\nu_j +  b_k\right)^{+}.
$$
Then, for every $x\in B_{\rho^k}(x_0)\cap\overline \Omega_{u_j}$, we have $$|u_j(x)-x\cdot c_j\nu_j-a_k|\le \abs{b_k-a_k}\le(1-\sigma)^k|b_0-a_0|=2(1-\sigma)^k\eps_j.$$
Defining $\widetilde{u}_j$ as in \eqref{e:utilde_j}, we get that, for every $k=0,1,\dots,m$ and for every $x\in B_{\rho^k}(x_0)\cap\overline \Omega_{u_j}$
$$
\abs{\widetilde{u}_j(x) - \widetilde{u}_j(x_0)} \leq 4 (1- \sigma)^k,
$$
which by interpolation ensures the H\"older estimate for $\widetilde{u}_j$ concluding the first step of the proof.\\
    \\
{\em Step 2. Convergence of $\widetilde u_j$.} By the H\"older estimate \eqref{eq:stima_holder} and Ascoli-Arzel\`{a} theorem, we obtain the uniform convergence of $\widetilde u_j$ to $\widetilde u$ in any compact subsets of $B_{\sfrac12}\cap \{x\cdot \nu\ge 0\}$. For what concerns the convergence of the graphs in the Hausdorff distance, the proof is exactly the same as in \cite[Lemma 7.14, (ii)]{velichkov2023regularity} and it is a consequence of the uniform convergence obtained in Step 1.
\end{proof}

\subsubsection{Improvement of flatness}\label{subsec:IOF}

Before proving the improvement of flatness lemma, we need to show a regularity result for a linearized problem stated in the following lemma.

\begin{lemma}[Regularity for the linearized problem]
    \label{lemma:regolaita_prob_lin} 
    For all $\delta >0$, there exists a constant $C>0$ 
    such that the following holds. Let $\nu \in \pa B_1$ be a unit vector, with $$c:= V\cdot \nu\ge \delta$$ 
    and let us define the vector $W$ as 
    $$
W:=2(\nu \cdot V) \nu - V.
    $$
  Consider a viscosity solution $w$ of the following linear elliptic problem
    \begin{equation}\label{linearizzato_interno}
        \left\{
\begin{aligned}
    &\Delta w = 0 && \hbox{ in } B_{\sfrac{1}{2}} \cap \{x \cdot \nu > 0\},\\
    & \nabla w \cdot W =0 && \hbox{ on } B_{\sfrac{1}{2}}\cap \{x \cdot \nu = 0\},\\
\end{aligned}
        \right.
    \end{equation}
    with
    $$
    \norm{w}_{L^\infty(B_{1})} \leq 1\qquad\text{and}\qquad w(0)=0.
    $$
     Then, for every $x\in B_{\sfrac12}\cap \{x\cdot\nu\geq 0\}$
     \begin{equation}
         \label{eq:stima_prob_linearizzato_interno}
         \abs{{w}(x) - \nabla {w}(0) \cdot x} \leq  C |x|^2.
     \end{equation}
\end{lemma}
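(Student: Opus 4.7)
The plan is to reduce the problem to a standard oblique derivative problem on a half-ball with constant coefficients, apply classical Schauder theory up to the flat boundary with uniform obliqueness, and then read off the quadratic Taylor remainder at the origin. All constants are tracked so that the final $C$ depends only on $\delta$.

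First, after a rotation we may assume $\nu = e_d$, so that the problem \eqref{linearizzato_interno} reads
\begin{equation*}
    \Delta w = 0 \ \text{ in } B_{\sfrac{1}{2}}^+, \qquad \nabla w \cdot W = 0 \ \text{ on } B_{\sfrac{1}{2}}',
\end{equation*}
with $W = 2c\,e_d - V$ and
\begin{equation*}
    W\cdot e_d = 2c - V\cdot e_d = c = V\cdot \nu \ge \delta.
\end{equation*}
Since $|W|\le 2|V|+|V|$ is bounded in terms of the data, the obliqueness ratio $W\cdot e_d /|W|$ is bounded below by a constant depending only on $\delta$. Thus the boundary operator is uniformly oblique on the flat portion $B_{\sfrac{1}{2}}'$, with obliqueness constant depending only on $\delta$.

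Second, since \eqref{linearizzato_interno} is a linear constant-coefficient problem (Laplace equation paired with a linear constant-coefficient oblique derivative condition) on a smooth domain with smooth (in fact flat) boundary, viscosity solutions coincide with classical solutions by standard comparison, and the classical Schauder estimates for oblique boundary value problems up to the boundary apply. Specifically, using the theory developed in \cite{adn,lieberman2013oblique}, for every $k\in\N$ and $\alpha\in(0,1)$ there is a constant $C=C(k,\alpha,\delta)>0$ such that
\begin{equation*}
    \|w\|_{C^{k,\alpha}(\overline{B_{\sfrac{1}{4}}^+})}\le C \|w\|_{L^\infty(B_{\sfrac{1}{2}}^+)}\le C,
\end{equation*}
using the normalization $\|w\|_{L^\infty(B_1)}\le 1$. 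In particular, $w$ is $C^2$ up to the flat boundary in $\overline{B_{\sfrac{1}{4}}^+}$ with
\begin{equation*}
    \|D^2 w\|_{L^\infty(\overline{B_{\sfrac{1}{4}}^+})}\le C(\delta).
\end{equation*}

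Third, since $0\in B_{\sfrac{1}{4}}'$ lies on the flat portion of the boundary and $w\in C^2(\overline{B_{\sfrac{1}{4}}^+})$, I apply Taylor's theorem with integral remainder to write, for every $x\in B_{\sfrac{1}{4}}\cap\{x\cdot\nu\ge 0\}$,
\begin{equation*}
    w(x) - w(0) - \nabla w(0)\cdot x = \int_0^1 (1-s)\,D^2 w(sx)\,x\cdot x \,\di s,
\end{equation*}
so that $|w(x) - \nabla w(0)\cdot x|\le \tfrac{1}{2}\|D^2 w\|_{L^\infty}|x|^2 \le C(\delta)|x|^2$, using $w(0)=0$. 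For $x\in B_{\sfrac{1}{2}}\cap\{x\cdot\nu\ge0\}$ with $|x|\ge \sfrac{1}{4}$, the estimate holds trivially by $\|w\|_{L^\infty(B_1)}\le 1$ and $|\nabla w(0)|\le C(\delta)$ at the cost of enlarging $C$. This yields \eqref{eq:stima_prob_linearizzato_interno}.

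The only delicate point is tracking dependencies: the $C^2$ Schauder constant must depend only on $\delta$. This is guaranteed because the obliqueness ratio $W\cdot e_d/|W|\ge c_\delta>0$ and $|W|\le 3|V|$ are controlled solely by $\delta$ (and the fixed data $|V|$, which is absorbed per the convention in the paper). Hence the proof reduces to a citation of the uniform-obliqueness Schauder theory, plus a Taylor expansion; there is no substantial conceptual obstacle.
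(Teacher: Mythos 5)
Your proposal is correct and follows essentially the same route as the paper: reduce to a constant-coefficient oblique derivative problem on the flat half-ball (noting $W\cdot\nu=V\cdot\nu=c\ge\delta$ gives uniform obliqueness), invoke classical Schauder theory up to the boundary (the paper cites Lieberman's $C^{1,\alpha}$ estimate and then Gilbarg--Trudinger to reach the quadratic expansion, while you go directly to $C^{2,\alpha}$ and Taylor's theorem), and handle the region $|x|$ bounded away from $0$ trivially via the $L^\infty$ bound.
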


\begin{proof}
The PDE elliptic problem \eqref{linearizzato_interno} has an oblique boundary condition, since $W\cdot\nu=V\cdot\nu\ge\delta$.
Then, for some $\alpha \in (0,1)$, the $C^{2,\alpha}$ estimate up to the boundary $\{x\cdot\nu=0\}$ follows by classical results on regularity theory of PDE. We refer to \cite[Theorem 4.5]{lieberman2013oblique} for the $C^{1,\alpha}$ estimate, for some $\alpha\in(0,1)$; then, to get \eqref{eq:stima_prob_linearizzato_interno}, we apply \cite[Lemma 6.27]{gilbarg1977elliptic} and we conclude.
\end{proof}

Finally, we are in position to prove the main result of this section: the improvement of flatness lemma.

\begin{lemma}[Improvement of flatness]
\label{lemma:IOF}
Let $u:B_1\to\R^+$ be a viscosity solution of \eqref{eq:viscosity-sol-B1} and suppose that $0\in\partial \Omega_u$.  
For all $\delta>0$ and $\gamma\in(0,1)$, there are constants $\overline \e>0$, $C>0$ and $\eta\in(0,1)$ such that the following holds.
  Let $\nu\in\partial B_1$ be a unit vector such that 
    \be
        \label{eq:proprieta-improvement}
    c:=V\cdot\nu\geq \delta.\ee
Suppose that, for some $\eps\in(0,\overline\eps]$, $u$ is $\eps$-flat along the direction $\nu$, namely for every $x\in B_1$
    $$\left(x \cdot c \, \nu - \eps\right)^+\le u(x)\le  \left(x \cdot  c \, \nu+ \eps\right)^+.
    $$
    Then, for some unit vector $\nu'\in\partial B_1$, $u_\eta$ is $\eps \eta^{\gamma}$-flat along $\nu'$ in $B_{1}$, namely for every $x\in B_1$
    \begin{equation*}
        \label{e:flatness_scala_eta}
        \left(x \cdot c' \, \nu' - \eps\eta^{\gamma}\right)^+\le u_\eta(x)\le  \left(x \cdot c' \, \nu'+ \eps\eta^{\gamma}\right)^+,
    \end{equation*}
    where \be \label{e:tesi1}c' := V \cdot \nu'\qquad\hbox{ and }\qquad \abs{ \nu -  \nu'}\leq C \eps.\ee
    
\end{lemma}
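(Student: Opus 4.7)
The plan is to argue by contradiction, following the compactness–linearization scheme of De Silva. Assuming the lemma fails for some $\delta > 0$ and $\gamma \in (0,1)$, one extracts sequences $\eps_j \to 0^+$, unit vectors $\nu_j$ with $c_j := V \cdot \nu_j \geq \delta$, and viscosity solutions $u_j$ of \eqref{eq:viscosity-sol-B1} with $0 \in \partial \Omega_{u_j}$, each $\eps_j$-flat along $\nu_j$, such that no choice of $\nu'$ and $C$ delivers the improved flatness at a scale $\eta$ (to be fixed later, depending only on $\gamma$).

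By \cref{lemma:compattezza}, after a further subsequence one has $\nu_j \to \nu$, $c := V \cdot \nu \geq \delta$, and the normalized functions $\widetilde u_j$ from \eqref{e:utilde_j} converge locally uniformly on $B_{\sfrac{1}{2}} \cap \{x \cdot \nu > 0\}$ to a H\"older continuous $\widetilde u$, with Hausdorff convergence of graphs. The central step is to show that $\widetilde u$ is a viscosity solution of the linearized oblique problem \eqref{linearizzato_interno} with $W := 2(\nu \cdot V)\nu - V$. Interior harmonicity is inherited from the $u_j$ by uniform convergence. For the oblique condition, given a smooth $\varphi$ touching $\widetilde u$ strictly from above at $x_0 \in \{x \cdot \nu = 0\}$, one uses the Hausdorff convergence of graphs to arrange that, for $j$ large, the competitor
\[
\Phi_j(x) := x \cdot c_j \nu_j + \eps_j \varphi(x),
\]
shifted by an infinitesimal constant, is such that $\Phi_j^+$ touches $u_j$ from above at a free boundary point $x_j \to x_0$. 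Applying the viscosity free boundary condition at $x_j$ and Taylor-expanding,
\[
0 \leq |\nabla \Phi_j(x_j)|^2 - \nabla \Phi_j(x_j) \cdot V = \bigl(c_j^2 - c_j \, \nu_j \cdot V\bigr) + \eps_j \, \nabla \varphi(x_j) \cdot \bigl(2 c_j \nu_j - V\bigr) + O(\eps_j^2);
\]
since $c_j = V \cdot \nu_j$, the leading order vanishes, and dividing by $\eps_j$ and letting $j \to \infty$ yields $W \cdot \nabla \varphi(x_0) \geq 0$. The symmetric argument with touching from below gives the reverse inequality, so $\nabla \widetilde u \cdot W = 0$ in the viscosity sense.

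Since $\|\widetilde u\|_\infty \leq 1$ and $\widetilde u(0) = 0$, \cref{lemma:regolaita_prob_lin} supplies $p := \nabla \widetilde u(0)$ (with $W \cdot p = 0$) and a constant $C_0$ such that $|\widetilde u(x) - p \cdot x| \leq C_0 |x|^2$ on the half-ball. Setting
\[
\nu'_j := \frac{\nu_j + \eps_j \, p / c_j}{|\nu_j + \eps_j \, p / c_j|}, \qquad c'_j := V \cdot \nu'_j,
\]
the orthogonality $W \cdot p = 0$ produces the cancellation $c'_j \nu'_j = c_j \nu_j + \eps_j p + O(\eps_j^2)$. Combining this with the quadratic estimate for $\widetilde u$, the uniform convergence $\widetilde u_j \to \widetilde u$, and the original $\eps_j$-flatness of $u_j$, one checks on $B_\eta$ that
\[
\bigl| u_j(y) - y \cdot c'_j \nu'_j \bigr| \leq C_0 \, \eps_j \, \eta^2 + o(\eps_j).
\]
Rescaling to $B_1$ and fixing $\eta$ so small that $2 C_0 \eta \leq \eta^\gamma$ (possible because $\gamma < 1$) yields the improved flatness of $u_{j,\eta}$ along $\nu'_j$ for $j$ large; the bound $|\nu_j - \nu'_j| \leq C \eps_j$ is built into the definition of $\nu'_j$. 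This contradicts the failure assumption.

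The main obstacle is the passage to the limit in the nonlinear free boundary condition that produces the oblique condition for $\widetilde u$: one must guarantee that $\Phi_j$ touches $u_j$ at a genuine free boundary point $x_j$ (so that the oblique inequality, rather than the harmonic one, applies) and control uniformly in $j$ the quadratic remainder of the Taylor expansion. The Hausdorff convergence of graphs provided by \cref{lemma:compattezza} handles the first point, while the algebraic cancellation $c_j^2 = c_j \, V \cdot \nu_j$, built into the flatness hypothesis through $c_j = V \cdot \nu_j$, is precisely what extracts the linearized oblique vector $W$.
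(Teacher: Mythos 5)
Your proposal is correct and follows essentially the same compactness–linearization strategy as the paper: extract a contradiction subsequence, use the partial Harnack compactness of \cref{lemma:compattezza} to pass to a H\"older limit $\widetilde u$ solving the oblique linear problem with $W = 2(\nu\cdot V)\nu - V$, invoke \cref{lemma:regolaita_prob_lin} for the quadratic Taylor expansion at the origin, and define $\nu'_j$ by normalizing $c_j\nu_j + \eps_j\nabla\widetilde u(0)$, exactly as in \eqref{e:nuova_piattezza}. One small slip: the remainder in your cancellation $c'_j\nu'_j = c_j\nu_j + \eps_j p + O(\eps_j^2)$ should be $o(\eps_j)$, not $O(\eps_j^2)$, because $W\cdot p = 0$ only yields $W_j\cdot p = o(1)$ at no prescribed rate along the extracted subsequence; this is what the paper's computation \eqref{e:controllo_oscillazioni} obtains, and $o(\eps_j)$ is all the conclusion needs.
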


\begin{proof}
    We show such a result by compactness. For all $j \in \N$, let $u_j$ be a sequence of solutions of \eqref{eq:viscosity-sol-B1} which are $\eps_j$-flat along the directions $\nu_j\in\pa B_1$ in $B_1$, namely
    $$\left(x \cdot c_j \, \nu_j - \eps_j\right)^+\le u_j(x)\le  \left(x \cdot  c_j \, \nu_j+ \eps_j\right)^+$$
    where \be\label{eq:proprieta}\lim_{j\to+\infty}\eps_j=0,\qquad c_j:=V\cdot\nu_j\geq \delta.\ee
    We consider the following functions
    \begin{equation}
        \label{eq:u_j}
      \widetilde{u}_j:
      \left\{
      \begin{aligned}
          \overline\Omega_{u_j}&\cap B_1\to\R\\
          x&\mapsto\widetilde{u}_j(x) = \frac{u_j(x) - c_j \nu_j \cdot x }{\eps_j}.
      \end{aligned}
      \right. 
    \end{equation}
    By compactness \cref{lemma:compattezza}, $\widetilde{u}_j\to \widetilde{u}$ uniformly on every compact subset of $B_{\sfrac{1}{2}} \cap \{x \cdot \nu > 0\}$, where the limit function 
    $$
    \widetilde{u}:B_{\sfrac{1}{2}} \cap \{x \cdot \nu \ge 0\}\to\R
    $$ 
    is H\"older continuous on $B_{\sfrac{1}{2}} \cap \{x \cdot \nu > 0\}$ and solves 
    $$
    \Delta \widetilde{u}= 0 \qquad \hbox{in } B_{\sfrac{1}{2}}\cap \{x \cdot \nu > 0\}.
    $$
    To get the boundary condition satisfied by $\widetilde u$, let $\varphi\in C^\infty(B_1\cap\{x\cdot\nu\ge0\})$ be a test function touching $\widetilde u$ from below at $x_0\in B_{\sfrac{1}{2}}\cap\{x\cdot\nu=0\}$.
    By \cref{lemma:compattezza}, as in Step 1 of \cref{prop:supersol}, we can suppose that there exists a sequence $x_j\in\overline\Omega_{u_j}\cap B_1$ and $\sigma_j\in\R$ such that $\varphi+\sigma_j$ touches $\widetilde u_j$ from below at $x_j$ and $\Delta \varphi>0$ in a neighborhood of $x_0$. By definition of $\widetilde u_j$, we have that, $x\cdot c_j\nu_j+\varphi\eps_j+\eps_j\sigma_j$ touches $u_j$ from below at $x_j$. Moreover, since $\Delta \varphi>0$ in a neighborhood of $x_0$, then $x_j\in\partial\Omega_{u_j}\cap B_1$.
    Then, 
    \begin{align}\label{eq:bordo}
        \abs{\nabla \left(\eps_j \varphi(x_j) + x\cdot c_j \nu_j  \right)}^2 &\le \eps_j \nabla \varphi(x_j) \cdot V + c_j \nu_j \cdot V .
        \end{align}
        The left hand side simplifies into 
        \begin{align*}
            \eps_j^2 \abs{\nabla \varphi(x_j)}^2 + c_j^2 \abs{\nu_j}^2  + 2 \eps_j   c_j\nabla \varphi(x_j) \cdot \nu_j,
        \end{align*}
        where using \eqref{eq:proprieta}, \eqref{eq:bordo} reads as 
        \begin{align*}
            \eps_j^2 \abs{\nabla \varphi(x_j)}^2   + 2 \eps_j   c_j\nabla(x_j) \varphi \cdot \nu_j \le \eps_j \nabla \varphi(x_j) \cdot V 
        \end{align*}
   Taking the leading order $\eps_j$, we obtain
   \begin{align*}
        &2 \eps_j   c_j\nabla \varphi(x_j) \cdot \nu_j - \eps_j \nabla \varphi(x_j) \cdot V + o(\eps_j)\le 0,
        \end{align*}
        which can be rewritten 
        as
        \begin{align*}
              \eps_jW_j \cdot \nabla \varphi(x_j) + o(\eps_j)\le 0,
        \end{align*}
        where $$W_j := 2 c_j\nu_j - V=2 (\nu_j \cdot V)\nu_j - V.$$
    Thus, we can pass to the limit: using the fact that $\nu_j \in \partial B_1$, there exists a direction $\nu$ with $\abs{\nu} = 1$ such that $\nu_j \to \nu$, then we have
    $$\nabla\varphi(x_0)\cdot W\le0,$$
    where $$W:=2(\nu \cdot V)\nu - V.$$ 
Analogously, if $\varphi\in C^\infty(B_1)$ is a test function touching $\widetilde u$ from above at $x_0\in B_{\sfrac{1}{2}}\cap\{x\cdot\nu=0\}$, we can repeat the same argument as above, getting that $$\nabla \varphi(x_0)\cdot W\ge0.$$
This implies that $\widetilde{u}$ solves the following oblique elliptic PDE problem
    \begin{equation*}
        \label{eq:prob_linearizzato}
        \left\{
\begin{aligned}
    &\Delta \widetilde{u} = 0 && \hbox{ in } B_{\sfrac{1}{2}} \cap \{x \cdot \nu \geq 0\}\\
    & \nabla \widetilde{u} \cdot W =0 && \hbox{ on } B_{\sfrac{1}{2}}\cap \{x \cdot \nu = 0\},\\
\end{aligned}
        \right.
    \end{equation*}
    with $$\norm{\widetilde{u}}_{L^\infty(B_{\sfrac{1}{2}})} \leq 1\qquad\text{and}\qquad \widetilde u(0)=0.$$
    Thanks to \cref{lemma:regolaita_prob_lin}, we can write the Taylor expansion of the solution, namely for some positive small constant $\eta$ and for all $ x \in B_{\eta}\cap \{x \cdot \nu \geq 0\}$ we have
    $$
    \begin{aligned}
        \abs{\widetilde{u}(x) - \nabla \widetilde{u}(0) \cdot x } &\leq  C \abs{x}^{2}\leq C\eta^{2}\leq \frac{\eta^{1+\gamma}}{4}.
    \end{aligned}
    $$
    Thus, along the sequences, for every $x\in B_\eta\cap\overline\Omega_{u_j}$
     $$
    \begin{aligned}
        \abs{\widetilde{u}_j(x)- \nabla \widetilde{u}(0) \cdot x } &\leq  \frac{\eta^{1+ \gamma}}{2},
    \end{aligned}
    $$
    which can be rewritten substituting the expression of $\widetilde{u}_j$ defined in \eqref{eq:u_j}, thus for all $x \in B_{\eta} \cap \overline\Omega_{u_j}$
    \begin{equation}
        \label{eq:finale1}
        \abs{u_j (x) - c_j \nu_j \cdot x - \eps_j \nabla \widetilde{u}(0) \cdot x} \leq \eps_j \frac{\eta^{1+\gamma}}{2}.
    \end{equation}
    We call
    \begin{equation}
        \label{e:nuova_piattezza}
        \nu'_j := \frac{c_j \nu_j + \eps_j \nabla \widetilde{u}(0) }{\abs{c_j \nu_j+ \eps_j \nabla \widetilde{u}(0)}}.
    \end{equation}
   Then, by \eqref{e:nuova_piattezza}, we observe that
        \begin{equation}
        \label{eq:nu_j'-nu_j}
            \begin{aligned}
            \nu_j'-\nu_j&=\frac{c_j\nu_j+\eps_j\nabla\widetilde u(0)-\nu_j\abs{c_j\nu_j+\eps_j\nabla\widetilde u(0)}}{\abs{c_j\nu_j+\eps_j\nabla\widetilde u(0)}}\\
            &=\eps_j\frac{\nabla \widetilde u(0)-(\nabla \widetilde u(0)\cdot\nu_j)\nu_j}{\abs{c_j\nu_j+\eps_j\nabla\widetilde u(0)}}+o(\eps_j)\\
            &=\frac{\eps_j}{c_j}\left(\nabla \widetilde u(0)-(\nabla \widetilde u(0)\cdot\nu_j)\nu_j\right)+o(\eps_j).
        \end{aligned}
        \end{equation}
   Hence, by \eqref{eq:proprieta} and \cref{lemma:regolaita_prob_lin}, we have that
   \begin{equation*}
   \label{e:cond_normali}
       |\nu_j'-\nu_j|\le C\eps_j,
   \end{equation*} 
   which is exactly the first estimate in \eqref{e:tesi1}.
    Then, by \eqref{eq:proprieta} and \eqref{eq:nu_j'-nu_j}, we obtain
    \begin{equation}
        \label{e:controllo_oscillazioni}
\begin{aligned}
    \abs{c_j'\nu_j' - c_j \nu_j - \eps_j \nabla \widetilde{u}(0)} &= \abs{c_j \nu_j + \eps_j \nabla \widetilde{u}(0)}\abs{\frac{c_j'}{\abs{c_j \nu_j + \eps_j \nabla \widetilde{u}(0)}}-1}\\
    &= \abs{c'_j - \abs{c_j \nu_j +\eps_j \nabla \widetilde{u}(0)}}\\
    & =\abs{V \cdot \nu'_j - V \cdot \nu_j - \eps_j \nabla \widetilde{u}(0) \cdot \nu_j} + o (\eps_j)\\
    &= \frac{1}{c_j}\abs{\eps_j V\cdot \nabla \widetilde{u}(0) - 2\eps_j c_j \nabla \widetilde{u}(0) \cdot \nu_j}+ o (\eps_j)\\
    &=  \frac{1}{c_j}\abs{\eps_j \nabla \widetilde{u}(0) \cdot W_j}+ o (\eps_j)\\
    &= o (\eps_j).
\end{aligned}
 \end{equation}
Then, using \eqref{eq:finale1} and \eqref{e:controllo_oscillazioni}, we have for all
$x \in B_{\eta}\cap \overline\Omega_{u_j}$
    \begin{equation*}
        \abs{u_j (x) -x\cdot c_j' \nu_j' } \leq \eps_j \frac{\eta^{1+\gamma}}{2}+o(\eps_j)\le\eps_j \eta^{1+\gamma},
    \end{equation*}
which concludes the proof.
\end{proof}

\subsubsection{Proof of \texorpdfstring{$\eps$}{eps}-regularity theorem}\label{eps-sub}

The following lemma gives the thesis of \cref{th:eps_reg_C2}.

\begin{lemma}[Uniqueness and rate of convergence to the blow-up limit]
    \label{lemma:proprieta_blowup}
   Let $u:B_1\to\R^+$ be a viscosity solution of \eqref{eq:viscosity-sol-B1}.
   For all $\delta>0$ and $\gamma\in(0,1)$ there are constants $\eps_0>0$ and $C>0$ such that the following holds.
  Let $\nu\in\pa B_1$ be a unit vector 
  such that $$c:=V\cdot\nu\geq 2\delta,$$
and $u$ is $\eps_0$-flat along $\nu$-direction, namely for every $x\in B_1$
    $$\left(x \cdot c \, \nu - \eps_0\right)^+\le u(x)\le  \left(x \cdot  c \, \nu+ \eps_0\right)^+.
    $$
    Then, for every $x_0\in \partial\Omega_u\cap B_{\sfrac{1}{2}}$, there is a unit vector $\nu_{x_0}\in\pa B_1$ such that, for every $r\in (0,\sfrac{1}{2})$,
    \begin{equation}
        \label{e:tesi_iterazione}
        \norm{u(x_0+x)-x\cdot c_{x_0}\nu_{x_0}}_{L^\infty(B_r\cap\overline\Omega_u)}\le C\eps_0 r^{1+\gamma},
    \end{equation}
where $c_{x_0} := V \cdot \nu_{x_0}\ge\delta$.
\end{lemma}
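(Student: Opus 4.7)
The plan is to iterate the improvement of flatness \cref{lemma:IOF} centered at $x_0$, generating a sequence of directions that converges geometrically to $\nu_{x_0}$. As a preliminary setup, since $u(x_0)=0$, the $\eps_0$-flatness of $u$ forces $|x_0\cdot c\nu| \le \eps_0$, so the rescaled function
\begin{equation*}
v^{(0)}(x) := 2\, u\bigl(x_0 + x/2\bigr), \qquad x \in B_1,
\end{equation*}
is still a viscosity solution of \eqref{eq:viscosity-sol-B1} (by scale invariance), satisfies $0 \in \partial\Omega_{v^{(0)}}$, and is $4\eps_0$-flat along $\nu$ in $B_1$. Choosing $\eps_0 \le \overline\eps/4$, with $\overline\eps$ the constant from \cref{lemma:IOF}, puts us in position to apply the improvement of flatness iteratively.

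Next, I would construct by induction on $k \ge 0$ a sequence of unit vectors $\{\nu_k\} \subset \partial B_1$ (with $\nu_0 = \nu$) and a sequence of viscosity solutions $v^{(k+1)}(x) := v^{(k)}(\eta x)/\eta$, where $\eta \in (0,1)$ is the constant from \cref{lemma:IOF}, in such a way that each $v^{(k)}$ is $4\eps_0\eta^{k\gamma}$-flat along $\nu_k$ in $B_1$ and $|\nu_{k+1}-\nu_k| \le 4C\eps_0\eta^{k\gamma}$. The main obstacle is to preserve the non-degeneracy hypothesis $c_k := V\cdot\nu_k \ge \delta$ through the iteration, which is precisely what allows \cref{lemma:IOF} to be reapplied at every step. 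To control it I would use the telescoping bound
\begin{equation*}
|\nu_k - \nu_0| \le \sum_{j=0}^{k-1} 4C\eps_0\eta^{j\gamma} \le \frac{4C\eps_0}{1-\eta^\gamma},
\end{equation*}
which combined with $c_0 \ge 2\delta$ gives $c_k \ge 2\delta - 4C|V|\eps_0/(1-\eta^\gamma) \ge \delta$ provided $\eps_0$ is chosen sufficiently small (depending on $\delta$, $\gamma$, $|V|$, $\eta$, $C$). This quantitative smallness of $\eps_0$ is exactly what the statement asks for.

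The same geometric estimate shows that $\{\nu_k\}$ is Cauchy, with limit $\nu_{x_0}$ satisfying $V\cdot\nu_{x_0} \ge \delta$ and $|\nu_k - \nu_{x_0}| \le C'\eps_0\eta^{k\gamma}$. Finally, to pass from the dyadic scales $\eta^k/2$ to a general $r\in(0,\sfrac{1}{2})$, I would unwind the definition of $v^{(k)}$ to translate its flatness into
\begin{equation*}
|u(x_0+y) - y\cdot c_k\nu_k| \le 2\eps_0\eta^{k(1+\gamma)} \qquad \text{for every } y \in B_{\eta^k/2} \cap \overline\Omega_u,
\end{equation*}
and then replace $\nu_k$ by $\nu_{x_0}$ at the cost of an additional error $2|V| C'\eps_0\eta^{k\gamma}|y|$. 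Given $r \in (0,\sfrac{1}{2})$, I would pick the unique $k \ge 0$ with $\eta^{k+1}/2 < r \le \eta^k/2$: both error terms are then bounded by a constant multiple of $\eps_0 r^{1+\gamma}$, depending only on $\eta$, $\gamma$, $|V|$, which proves \eqref{e:tesi_iterazione}.
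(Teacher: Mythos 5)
Your proposal is correct and follows essentially the same route as the paper: rescale at $x_0$ to a $4\eps_0$-flat solution, iterate the improvement of flatness \cref{lemma:IOF} at scales $\eta^k$, preserve the non-degeneracy $V\cdot\nu_k\ge\delta$ by summing the geometric series $\sum C\eps_0\eta^{k\gamma}$ (the paper packages this as a dyadic bound $\delta/2^{k+1}$, but it is the same smallness condition on $\eps_0$), and interpolate between dyadic scales to get \eqref{e:tesi_iterazione}. The only cosmetic point is that the bound $|x_0\cdot c\nu|\le\eps_0$ uses $x_0\in\partial\Omega_u$ (not merely $u(x_0)=0$) to rule out $x_0\cdot c\nu<-\eps_0$ via the upper flatness barrier, which is exactly how the paper's rescaling step is justified.
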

\begin{proof}
Let $\overline{\eps}$ be as in \cref{lemma:IOF} and let $\eps_0\le\frac{\overline \eps}{4}$ to be chosen later.
Since $u$ is $\eps_0$-flat along $\nu$ in $B_1$, then for all $x_0 \in \pa \Omega_u \cap B_{\sfrac{1}{2}}$, $u_{x_0,\sfrac{1}{2}}$ is $ \eps$-flat along $\nu$ in $B_1$, where $\eps:=4\eps_0$, namely for all $x \in B_1$
    \begin{equation*}
        \label{e:casoj=0}
        \left(x \cdot c \nu -  {\eps} \right)_{+} \leq u_{x_0,\sfrac{1}{2}}(x)\leq \left(x \cdot c \nu +  {\eps} \right)_{+}.
    \end{equation*}
    From now on, we fix $x_0 \in \pa \Omega_u \cap B_{\sfrac{1}{2}}$.
  We set $\nu_0:=\nu$ and $c_0:=c$.
  Let $j \in\N$.  We assume that $u_{x_0,\sfrac{\eta^j}{2}}$ is $ \eps \eta^{j\gamma}$-flat along the direction $\nu_j$, namely, for every $x\in B_1$
   $$
   \left(x \cdot c_j \nu_j  -  {\eps} \eta^{j\gamma} \right)^{+} \leq u_{x_0,\sfrac{\eta^j}{2}}(x)\leq \left(x \cdot c_j \nu_j +  {\eps} \eta^{j\gamma} \right)^{+}
    $$ 
    and $c_j:=V\cdot\nu_j\ge \delta$, like in \eqref{eq:proprieta-improvement}.
    Then, by the improvement of flatness \cref{lemma:IOF}, there exist $\nu_{j+1} \in \pa B_1$ such that $u_{x_0,\sfrac{\eta^{j+1}}{2}}$ is $ \eps \eta^{(j+1) \gamma}$-flat along the direction $\nu_{j+1}$, i.e.~for all $x \in B_1$
    \begin{equation}
        \label{e:improvement_scala_1}
        \left(x \cdot c_{j+1} \nu_{j+1}  -  {\eps} \eta^{(j+1) \gamma}\right)^{+} \leq u_{x_0,\sfrac{\eta^{j+1}}{2} }(x)\leq \left(x \cdot c_{j+1} \nu_{j+1}  + {\eps} \eta^{(j+1) \gamma} \right)^{+},
    \end{equation}
    where $c_{j+1} := V\cdot \nu_{j+1} $. Moreover, 
    \begin{equation}
        \label{e:stima_nu_p}
         \abs{\nu_{j+1}-\nu_{j}}\le C  \eps\eta^{j\gamma}.
    \end{equation} 
    In particular, if $c_j\ge \delta$ for every $j\in\N$, then \eqref{e:improvement_scala_1} holds for every $j\in\N$, namely
    \be\label{eq:stima1}
    \norm{u_{x_0,\sfrac{\eta^{j}}{2} }(x) - x \cdot c_{j} \nu_{j}}_{L^\infty(B_1 \cap \overline\Omega_u)} \leq  {\eps} \eta^{j \gamma}.
    \ee
    Thus, as $j\to +\infty$, by \eqref{e:stima_nu_p}, there exists $\nu_{x_0} \in \pa B_1$ such that $\nu_{j}\to \nu_{x_0}$.
    Then \eqref{eq:stima1} can be rewritten as
    $$
    \norm{u_{x_0,\sfrac{\eta^{j}}{2}}(x) - x \cdot c_{x_0} \nu_{x_0}}_{L^\infty(B_1 \cap \overline\Omega_u)} \leq C {\eps} \eta^{j\gamma}.
    $$ 
    where $c_{x_0}:=V\cdot \nu_{x_0}$. By interpolation, we obtain
    $$
    \norm{u_{x_0,\sfrac{r}{2} }(x) - x \cdot c_{x_0} \nu_{x_0} }_{L^\infty(B_1 \cap \Omega_u)} \leq C {\eps} r^{\gamma}.
    $$
    for every $r\in(0,\sfrac{1}{2})$, which is \eqref{e:tesi_iterazione}.

To conclude the proof, what it is left to show is that $c_j\ge\delta$ for every $j\in\N$. 
Without loss of generality, we can choose the constant $\eta$ in \cref{lemma:IOF} and $\eps_0$ small enough such that $$
    \eta^{\gamma}\le \frac12\qquad\text{and}\qquad C\eps \abs{V}\le \frac\delta 2.
$$
Then, by \eqref{e:stima_nu_p}, it holds
\begin{equation}
    \label{e:controllo_eta}
    |\nu_{j+1}-\nu_{j}|\abs{V}\le C |V|\eps\eta^{j\gamma}\le \frac{\delta}{2^{j+1}}.
    \end{equation}
Finally, by \eqref{e:controllo_eta}$$
\begin{aligned}
    c_{j}:=V\cdot \nu_j&= V\cdot \nu_{0}+\sum_{k=0}^{j-1} V\cdot (\nu_{k+1}-\nu_{k})\\
    &\ge 2\delta-\sum_{k=0}^j\abs{V}\abs{\nu_{k+1}-\nu_k}\\
    &\ge \delta,
\end{aligned}
$$
concluding the proof.
\end{proof}

\subsection{\texorpdfstring{$C^{1, \alpha}$}{C1alpha} implies 
\texorpdfstring{$C^{\infty}$}{Cinfty}}
\label{subsec:odograha}
In this section, we provide $C^{\infty}$ estimates and analiticity for the free boundary $\pa \Omega_u$.

\begin{theorem}
\label{thm:odografa_Cinfty}
    Let $\delta>0$ and $u$ as in \cref{th:eps_reg_C2}. Suppose that 
    $x_0\in\partial\Omega_u \cap B_{\sfrac{1}{2}}$ and 
    $\nabla u(x_0)=c\nu$, for some unit vector $\nu\in\pa B_1$ such that $c:=V\cdot \nu\ge \delta$.
    Then, there exists $\rho = \rho(\delta, x_0)>0$ such that $\pa \Omega_u$ is $C^\infty$ regular and analytic in $B_\rho(x_0)$.
\end{theorem}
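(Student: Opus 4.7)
The plan is to apply a partial hodograph transform in the spirit of Kinderlehrer--Nirenberg, which straightens the free boundary and converts \eqref{eq:viscosity-sol-B1} into a fully nonlinear elliptic boundary value problem with a nonlinear oblique boundary condition on a fixed half-ball. Higher regularity and analyticity then follow from classical Schauder-type estimates together with the analyticity theorem for solutions of analytic nonlinear elliptic problems.

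More precisely, after a rigid motion we may assume $x_0=0$ and $\nu=e_d$, so that $V\cdot e_d=c\ge\delta>0$. By \cref{th:eps_reg_C2} together with \cref{lemma:proprieta_blowup}, $\partial\Omega_u$ is a $C^{1,\alpha}$-graph in a neighborhood of $0$ and $u\in C^{1,\alpha}(\overline{\Omega}_u\cap B_\rho)$ with $\nabla u(0)=c\,e_d$. In particular $\partial_d u\ge c/2$ in a smaller ball, so the map
$$
T(x)=(x',u(x))
$$
is a $C^{1,\alpha}$-diffeomorphism from a neighborhood of $0$ in $\overline{\Omega}_u$ onto some half-ball $B^+_{\rho_0}$, sending $\partial\Omega_u$ onto the flat piece $\{y_d=0\}$. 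Let $v\in C^{1,\alpha}(\overline{B^+_{\rho_0}})$ be defined by the identity $u(y',v(y',y_d))=y_d$. Differentiating this identity and using $\Delta u=0$ gives, after straightforward computations, the fully nonlinear elliptic equation
$$
v_d^{\,2}\,\Delta' v \;-\; 2\,v_d\,\nabla' v\cdot\nabla' v_d \;+\;(1+|\nabla' v|^2)\,v_{dd}\;=\;0 \qquad\text{in }B^+_{\rho_0},
$$
while the free boundary condition $|\nabla u|^2=\nabla u\cdot V$, rewritten via $\partial_d u=1/v_d$ and $\partial_i u=-v_i/v_d$ for $i<d$, becomes the nonlinear boundary condition
$$
G(\nabla v):=1+|\nabla' v|^2 \;-\; v_d\bigl(V_d-\nabla' v\cdot V'\bigr)\;=\;0 \qquad\text{on }\{y_d=0\}\cap B_{\rho_0}.
$$
At the origin, $\nabla' v(0)=0$ and $v_d(0)=1/c$, hence $\partial_{p_d}G(\nabla v(0))=-c\le-\delta<0$: the boundary condition is \emph{oblique} in a neighborhood of $0$, and the interior equation is uniformly elliptic at $\nabla v(0)$ by a direct check of its linearization.

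Now the argument becomes standard. Since $v\in C^{1,\alpha}$, we linearize around $v$: the coefficients of both the interior equation and the boundary condition, evaluated at $\nabla v$, are $C^{0,\alpha}$ functions of $y$, the interior operator is uniformly elliptic, and the boundary operator is uniformly oblique. Schauder estimates for linear oblique boundary value problems \cite{adn,lieberman2013oblique} upgrade $v$ to $C^{2,\alpha}$ in a slightly smaller half-ball. A standard bootstrap (differentiating the equation and applying Schauder at each step) then yields $v\in C^\infty$. Finally, since the coefficients of the interior equation and of $G$ depend analytically (in fact rationally) on $\nabla v$ and the domain has analytic boundary, the Morrey--Nirenberg analyticity theorem \cite{morrey-multiple-integrals} gives that $v$ is real analytic up to $\{y_d=0\}$ in a neighborhood of $0$. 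Since $\partial\Omega_u$ in the original coordinates is the graph $x_d=v(x',0)$, this proves analyticity of the free boundary and of $u$ itself (being $u(x',x_d)=y_d$ expressed through the analytic inverse of $T$) in a neighborhood of $x_0$.

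The genuinely new content lies in verifying that, for our specific free boundary condition $|\nabla u|^2=\nabla u\cdot V$, the transformed boundary operator $G$ has a nonzero derivative with respect to $p_d$ precisely under the non-degeneracy assumption $V\cdot\nu\ge\delta$; this is the step that uses the hypothesis and makes the entire oblique machinery applicable. Everything else is a direct application of well-established regularity theory for nonlinear oblique problems and therefore will not require any additional argument beyond the computations sketched above.
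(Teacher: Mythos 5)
Your proposal is correct and follows essentially the same route as the paper: a partial hodograph transform $T(x)=(x',u(x))$ straightening $\partial\Omega_u$, leading to a quasilinear elliptic equation for the inverse-graph function with a nonlinear boundary condition whose obliqueness at the origin is exactly the non-degeneracy $V\cdot\nu\ge\delta$ (you compute $\partial_{p_d}G=-c$, the paper computes $-V_d^3$ for the un-cleared version — equivalent since $v_d(0)=1/c\neq 0$), followed by Lieberman's oblique Schauder theory for $C^{2,\alpha}$ and ADN/Morrey for $C^\infty$ and analyticity. The only differences are cosmetic (you clear denominators by multiplying by powers of $v_d$, the paper keeps the rational form), so no further comment is needed.
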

\begin{proof}
For simplicity, we can suppose that $\nu=e_d$ and $x_0 = 0$. 
Let $\rho>0$ and we introduce the map
$$
\Phi:
\left\{
\begin{aligned}
    \overline \Omega_u\cap B_{\rho} &\to \R^d\cap \{y_d\ge0\},\\
     (x', x_d) &\mapsto \Phi(x', x_d) := (x', u(x)),
\end{aligned}
\right.
$$
where $x' = (x_1, \dots, x_{d-1})\in \R^{d-1}$. Since, by hypothesis, $\abs{\nabla u (0)}= \abs{V\cdot e_d} = \abs{ V_d}\ge\delta$,
by choosing a radius $\rho = \rho(\delta)>0$ small enough, $\Phi$ is invertible. Thus, there exists 
 $$
\Phi^{-1}:
\left\{
\begin{aligned}
    \mathcal{D}:=\Phi\left(\overline \Omega_u\cap B_{\rho}\right) &\to \overline \Omega_u\cap B_{\rho},\\
    (y', y_d) &\mapsto \Phi^{-1}(y', y_d) := (y', w(y)).
\end{aligned}
\right. 
$$ 
We notice that $w$ inherits the same regularity as $u$, thus $w \in C^{1, \alpha}(\mathcal{D})$ by \cref{th:eps_reg_C2}. Concerning points on the free boundary, we have
$$
x \in \pa \Omega_u \cap B_\rho\qquad \iff \qquad \Phi(x', x_d) = (x',0)  \qquad \iff \qquad \Phi^{-1}(y',0) = (y', w(y',0)).
$$
Thus, studying the regularity on the free boundary is equivalent to perform a regularity result on the function $w$. Since $x_d = w(x', u(x))$, computing derivatives with respect both $d$ and $i < d$, we get for all $x \in \Omega_u \cap B_\rho$
\begin{align}
   \label{e:der_d}
    \partial_{d} w \, \partial_{d} u =1,\\
    \label{e:der_i}
    \partial_{i} w +\partial_{d}\,w\partial_{i}u=0,
\end{align}
where the two functions are computed in $w=w(x', u(x))$ and $u = u(x)$, respectively.
Performing again derivatives of \eqref{e:der_d} with respect to $d$ and $i < d$ and \eqref{e:der_i} with respect to $j<d$, we obtain
\begin{align*}
    \partial_{dd}w\,(\partial_{d}u)^2+\partial_{dd}u\,\partial_{d}w=0,\\
    \partial_{id }w\,\partial_{d}u+\partial_{dd}w\,\partial_{i}u\,\partial_{d}u+\partial_{d}w\,\partial_{id}u=0,\\
    \partial_{ij}w+\partial_{di}w\,\partial_{j}u+\partial_{jd}w\,\partial_{i}u+\partial_{dd}w\,\partial_{i}u\,\partial_{j}u+\partial_{ij}u\,\partial_{d}w=0.
\end{align*}
Thus, solving the above system with respect to the derivatives of $u$, we have
\begin{align}
\label{e:der1}
     &\partial_{d} u =\frac{1}{\partial_{d} w},\\
     \nonumber
     &\partial_{i}u=-\frac{\partial_{i} w}{\partial_{d}\,w},\\
     \nonumber
     &\partial_{dd}u=- \frac{\partial_{dd}w}{(\partial_{d}w)^3},\\
     \nonumber
     &\partial_{id}u=-\frac{\partial_{id }w}{(\partial_{d} w)^2}+ \frac{\partial_{dd}w\,\partial_{i} w}{(\partial_{d}w)^3},\\
    \nonumber
     &\partial_{ij}u= -\frac{\partial_{ij}w}{\partial_{d}w}
     +\frac{\partial_{j} w\partial_{id}w}{(\partial_{d}w)^2}
     +\frac{\partial_{jd}w\partial_{i} w}{(\partial_{d}w)^2}
     -\frac{\partial_{dd}w\,\partial_{i} w\partial_{j} w}{(\partial_{d}w)^3}.
\end{align}
Hence, if $w=w(x', u(x))$, $u = u(x)$ and $x \in \Omega_u \cap B_\rho$, we obtain
$$
\begin{aligned}
   \Delta u =  - \frac{\partial_{dd}w}{(\partial_{d}w)^3} + \sum_{i= 1}^{d-1}
   \left(
   -\frac{\partial_{ii}w}{\partial_{d}w}
     +2\frac{\partial_{i} w\partial_{id}w}{(\partial_{d}w)^2}
     -\frac{\partial_{dd}w\,(\partial_{i} w)^2}{(\partial_{d}w)^3}\right).
\end{aligned}
$$ 
Concerning the boundary condition, for $x \in \pa \Omega_u \cap B_\rho$, we get
$$
\abs{\nabla u}^2 - \nabla u \cdot V = \frac{1}{(\pa_d w)^2}\left(1+ \sum_{i=1}^{d-1} (\pa_i w)^2\right) -\frac{1}{\pa_d w}\left(- \sum_{i}^{d-1} \pa_i w\cdot V_i + V_d\right),
$$
where $w=w(x', u(x))$ and $u = u(x)$.
Thus, the function $w$ solves the elliptic PDE problem
\begin{equation}
    \label{e:sistema_odografa}
    \left\{
    \begin{aligned}
&- \frac{\partial_{dd}w}{(\partial_{d}w)^3}\left(1 + \sum_{i= 1}^{d-1} (\partial_{i} w)^2\right)
+ \frac{2}{(\partial_{d}w)^2}\sum_{i= 1}^{d-1} \partial_{i} w\partial_{id}w - \frac{1}{\partial_{d}w} \sum_{i=1}^{d-1}\partial_{ii}w =0
   &&&\hbox{in } \mathcal{D}\cap \{y_d>0\},\\
     &\frac{1}{(\pa_d w)^2}\left(1+ \sum_{i=1}^{d-1} (\pa_i w)^2\right) -\frac{1}{\pa_d w}\left( V_d - \sum_{i=1}^{d-1} \pa_i w\cdot V_i\right)=0 &&&\hbox{on } \mathcal{D}\cap \{y_d=0\}.
    \end{aligned}
    \right.
\end{equation}

We observe that there exists $\sigma >0$ such that, in $\mathcal{D}\cap B_\sigma$, \eqref{e:sistema_odografa} is a quasilinear elliptic PDE problem with oblique boundary condition
$$B(\zeta):=\frac{1}{\zeta_d^2}\left(1+ \sum_{i=1}^{d-1} \zeta_i\right) -\frac{1}{\zeta_d}\left( V_d - \sum_{i=1}^{d-1} \zeta_i V_i\right).$$
Indeed, by \eqref{e:der1} and $\nabla u(0) = V_de_d$, we have 
$$\frac{\partial B(\zeta)}{\partial \zeta_d}\Bigr\rvert_{\zeta =\nabla w(0)}=
\frac{\partial }{\partial \zeta_d}\left(\frac{1}{\zeta_d^2}\left(1+ \sum_{i=1}^{d-1} \zeta_i\right) -\frac{1}{\zeta_d}\left( V_d - \sum_{i=1}^{d-1} \zeta_i V_i\right)\right)\Bigr\rvert_{\zeta =\nabla w(0)}=-V_d^3\not=0,$$
implying, by continuity, that for all $y_0\in \{y_d=0\}\cap B_\sigma$ it holds
$$\frac{\partial B(\zeta)}{\partial \zeta_d}\Bigr\rvert_{\zeta =\nabla w(y_0)}\not=0,$$ choosing $\sigma$ small enough. Thus, we can apply \cite[Proposition 11.21]{lieberman2013oblique}, getting that $w\in C^{2,\alpha}(\mathcal{D}\cap B_{\sfrac{\sigma}{2}})$.
Finally, by \cite[Theorem 11.1]{adn} and \cite[Section 6.7]{morrey-multiple-integrals}, the function $w \in C^\infty(\mathcal{D}\cap B_{\sfrac{\sigma}{4}})$ and it is analytic there, concluding that the same regularity results hold for $\pa \Omega_u$.
\end{proof}

\begin{proof}[Proof of  \cref{thm:piatto-implica-regolare}]
    Let $x_0\in\partial\Omega_u\cap B_{\sfrac{1}{2}}$ and 
    $u$ be $\overline \eps$-flat along the direction $\nu \in\partial B_1$. 
    By \cref{th:eps_reg_C2} and \cref{lemma:proprieta_blowup}, we have
    $\nabla u(x_0)=c_{x_0}\nu_{x_0}$, where $c_{x_0}:=V\cdot\nu_{x_0}\ge \delta$. 
    Then, by \cref{thm:odografa_Cinfty}, there exists $\rho=\rho(x_0)>0$ such that $\partial\Omega_u\cap B_{\rho}(x_0)$ is of class $C^{\infty}$ and analytic. Thanks to a covering argument, we obtain the same regularity result for $\partial\Omega_u\cap B_{\sfrac{1}{2}}$, yielding the thesis.
\end{proof}

\section{Boundary regularity at contact points}
\label{sec:reg_contact_points}

In this section, we study regularity of viscosity solutions $u:\mathcal{B}\to \R^+$ of the problem \eqref{e:viscosity-sol-K_D} around points like $x_0 \in \partial \Omega_u \cap K \cap \mathcal{B}$, proving \cref{th:boundary_reg_contact_points}. Precisely, we consider points of the following type.
\begin{definition}
\label{def:contact_branching}
    We say that $x_0$ is a {\em contact point} if 
    $$
    x_0\in\partial\Omega_u\cap K\cap\mathcal{B}.
   $$
We say that a contact point $x_0\in\partial\Omega_u\cap K\cap\mathcal{B}$ is a {\em branching point} if for every $r>0$ 
    $$\partial\Omega_u\cap D\cap B_r(x_0) \not=\emptyset.$$
    Finally, we say that a contact point $x_0$ is {\em a regular point} if there exists $\beta>0$ such that
    $$u_{x_0,r}(x):=\frac{u(x_0+rx)}{r}\to\beta (x\cdot \nu_K(x_0))^+,$$ 
    uniformly on every compact set of $\R^d$, where $\nu_K(x_0)$ is the outer normal to $\partial K$.
\end{definition}
 
The main result of this section is stated as follows.
\begin{theorem}
\label{th:eps_reg_bordo}
	Let $u:\mathcal{B}\to \R^+$ be a viscosity solution of \eqref{e:viscosity-sol-K_D} and let $x_0 \in \partial \Omega_u \cap K \cap \mathcal{B}$ be a contact point. 
    Let $\delta >0$ and suppose that $V\cdot\nu_K(x_0) \geq 4\delta >0$, where $\nu_K(x_0)$ is the outer normal to $\partial K$. Then there exists $\rho = \rho(x_0,\delta)>0$ such that $\partial \Omega_u\cap B_\rho(x_0)$ is a $C^{1, \alpha}$ graph for every $\alpha\in (0,\alpha_0)$,
    where $\alpha_0$ is defined in \cref{thm:oblthin2}.
\end{theorem}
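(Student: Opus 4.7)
The plan is to localize near $x_0$ and reduce to a flat-obstacle setting. Applying the change of coordinates outlined in \cref{subsec:flatproblem} (and detailed in \cref{sec:change_coordinates}), I may assume $x_0 = 0$, that $K$ coincides with $\{x_d \leq 0\}$ in a neighborhood of $0$, and that the PDE becomes a divergence-form equation with a uniformly elliptic matrix $\tens{A}$, while the vector field $V$ is replaced by an $x$-dependent vector $V(x)$ with $\tens{A}(0)V(0)\cdot e_d \geq 2\delta > 0$. All the quantities entering the free boundary conditions are continuous in the new coordinates, so the flatness of the obstacle comes at no cost in the analysis.

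The first substantive step is to show that every contact point is a regular point in the sense of \cref{def:contact_branching}. Adapting the argument of \cite[Lemma 11.17]{caffarelli2005geometric}, I establish the expansion $u(x) = \beta\, x_d + o(|x|)$ with $\beta \geq 0$. The delicate point is that the one-sided condition \eqref{eq:bondary-intro} on the obstacle does not enforce non-degeneracy, so $\beta$ could a priori vanish. To exclude this, I construct two barriers producing a dichotomy: either $\beta$ equals the admissible value
$$\omega_{0} := \frac{\tens{A}(0)V(0)\cdot e_d}{\tens{A}(0)e_d\cdot e_d},$$
which is strictly positive by the sign hypothesis, or $\beta > \omega_0$ and the free boundary collapses onto the obstacle, i.e.\ $u > 0$ in $B_\rho^+$ for some $\rho > 0$. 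In either case $\beta > 0$, hence $0$ is regular and the blow-up is the nontrivial half-plane solution $\beta (x\cdot e_d)^+$.

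If the collapsed alternative holds, $\partial\Omega_u$ locally coincides with $\{x_d = 0\}$ and the $C^{1,\alpha}$ conclusion is trivial. Otherwise $0$ is a branching point, and I run the boundary improvement of flatness scheme \cref{lemma:IOF_bordo} in the spirit of \cite{changlarasavin}. This requires two ingredients. The first is a partial Harnack inequality at branching points: since the interior competitor of \cref{lemma:PH_inequality} can be slid only on the non-obstacle side, one has to replace the other side by the dedicated barrier \cref{lemma:useful_non_piace_Giulia}, which exploits the obliqueness $V(0)\cdot e_d \geq 2\delta$ to keep the sign in \eqref{eq:bondary-intro}. The second is the identification of the linearization: after rescaling, the sequence $\widetilde u_j$ defined as in \eqref{e:utilde_j} converges to a viscosity solution of the oblique thin obstacle problem \eqref{e:thin_intro}, with oblique direction $W$ determined by $V(0)$ and $\tens{A}(0)$ and satisfying $W \cdot e_d \geq c\delta > 0$. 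The $C^{1,\alpha_0}$ estimate \cref{thm:oblthin2} then closes the induction step and, iterating, gives a uniform rate of convergence to a blow-up half-plane at every branching point (\cref{lemma:proprieta_blowup_boundary}).

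To upgrade this pointwise information to a $C^{1,\alpha}$ graph description of $\partial\Omega_u\cap B_\rho$, I use the projection argument sketched in \cref{fig:eps_reg_bordo}. Fix any $y\in\partial\Omega_u\cap B_\rho$; if $y$ lies on the thin space, \cref{lemma:proprieta_blowup_boundary} directly provides a modulus. Otherwise $y$ is an interior free boundary point, and I project it onto the closed set of branching points to obtain $y_0$ with $|y-y_0|=\operatorname{dist}(y,\{x_d=0\}\cap\partial\Omega_u)$. Applying the boundary improvement of flatness at $y_0$ at the scale $|y-y_0|$ makes $y$ an interior $\varepsilon$-regular point in the sense of \cref{th:eps_reg_C2}, so \cref{remark:interior} furnishes a $C^{1,\alpha}$ rate of convergence of the blow-up at $y$. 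Matching the normals $\nu_{y}$ coming from the interior rate with the normals $\nu_{y_0}$ coming from the boundary rate and combining the two Hölder estimates (for any $\alpha<\alpha_0$) yields the $C^{1,\alpha}$ graph property and concludes the proof. The main obstacle I anticipate is the boundary partial Harnack step: the free boundary inequality \eqref{eq:bondary-intro} is strictly weaker on the obstacle than in the interior, so the barrier must be tuned so that the error it produces on the contact side is absorbed only thanks to $V\cdot\nu_K(x_0)\geq 4\delta$; this is also where the factor $4\delta$ in the hypothesis (rather than merely $\delta$) is used, as it must survive the small loss incurred at each iteration of \cref{lemma:IOF_bordo}.
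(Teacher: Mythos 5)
Your proposal follows the same route as the paper: flatten the obstacle by the change of coordinates of \cref{sec:change_coordinates}, prove via the two barriers that every contact point is regular with the dichotomy of \cref{prop:punti_regolari_al_bordo} (either $\beta=\omega_0$ or the free boundary collapses onto the obstacle), run the boundary improvement of flatness of \cref{lemma:IOF_bordo} at branching points using the modified partial Harnack barrier and the $C^{1,\alpha_0}$ theory for the oblique thin obstacle problem, and finally upgrade to a graph via the projection argument of \cref{prop:grafico}. So the architecture and all the key lemmas coincide with the paper's.

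The one place where your write-up has a genuine gap is the last step. You project $y\in\partial\Omega_u\cap B_\rho^+$ ``onto the closed set of branching points'' but then set $|y-y_0|=\operatorname{dist}(y,\{x_d=0\}\cap\partial\Omega_u)$; these are not the same set, and the two readings fail for different reasons. If $y_0$ is merely the nearest contact point, it may be an \emph{interior} contact point (in the terminology of \cref{rem:caratterizzazione_punti}), at which the rate of convergence \cref{lemma:proprieta_blowup_boundary} is not available, since there the blow-up slope may satisfy $\beta>\omega_{y_0}$. If instead $y_0$ is the nearest branching point, as in the paper, then the thin ball $B_r'(y)$ with $r=|y-y_0|$ could a priori contain interior contact points, and in that case one cannot extend $u$ by zero across $B_r'(y)$ and treat $y$ as a free boundary point of the purely interior problem, which is what allows the application of \cref{remark:interior} at scale $r$. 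Closing this requires the paper's Step~1 of \cref{prop:grafico}: one follows the integral curve of $\tens{A}V$ backwards from $y$ (the graphical property \cref{lemma:rimango_in_parte_positiva}), lands on a point $z\in B_r'(y)$ with $u\equiv 0$ along the curve, and invokes the dichotomy of \cref{prop:punti_regolari_al_bordo} to conclude that $z$, and hence all of $B_r'(y)$, consists of vanishing points only. Without this argument the matching of the boundary rate at $y_0$ with the interior rate at $y$, and thus the $C^{1,\alpha}$ graph estimate $|\widetilde\omega_{y,\nu_y}\nu_y-\omega_0 e_d|\le C|y|^{\alpha}$, is not justified.
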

\begin{proof}
    The proof is divided into four steps split into four subsections:
\begin{itemize}
    \item in \cref{subsec:flatproblem}, we change coordinates to make flat the boundary of $K$;
    \item in \cref{subsec:branching_points_are_regular}, we show that all contact points $x_0 \in \partial \Omega_u \cap B_1'$ are regular points;
    \item in \cref{subsec:iof_branching}, we provide an improvement of flatness lemma for branching points;
    \item in \cref{subsec:final_bordo}, we prove that, around contact points, the free boundary is a $C^{1,\alpha}$-graph. 
\end{itemize}
    Moreover, we point out that \cref{th:boundary_reg_contact_points} is an immediate consequence of \cref{th:eps_reg_bordo}.
\end{proof}

\noindent In the following, we consider the rescalings
$$
u_{x_0, r}:= \frac{u(x_0+ r x)}{r} \qquad \hbox{and} \qquad u_{r}:= \frac{u(r x)}{r}.
$$
Moreover, for all sets $E\subset\R^d$, we define
$$
E^+:= E\cap \{x_d >0\}, \qquad \hbox{ and } \qquad E':= E \cap \{x_d =0\}.
$$

\subsection{Setting and properties of the flat problem}\label{subsec:flatproblem}

According to the change of coordinates in \cref{sec:change_coordinates} and to align the notation as in the previous sections, we will consider Lipschitz viscosity solutions $u:B_1\to\R^+$ of
    \begin{equation}
\label{e:u_viscosity_sol_riscalato}
\left\{\begin{aligned}
&\mathcal{L}u:={\rm div}\left(\tens{A}(x) \nabla u\right) =0 &&\Omega_u \cap B_1^+,\\
&\tens{A}(x) \nabla u \cdot \nabla u = \tens{A}(x) \nabla u \cdot {V}(x) &&\partial \Omega_u \cap B_1^+,\\
&\tens{A}(x) \nabla u \cdot \nabla u \geq \tens{A}(x) \nabla u \cdot {V}(x) &&\partial \Omega_u \cap B_1',\\
&u = 0 &&\{x_d \leq 0\},
\end{aligned}
\right.
\end{equation}
for some smooth
symmetric uniform elliptic matrix $\tens{A}:B_1\to\R^d\times \R^d$ with $\tens{A}(0)= \tens{I}$ and for some smooth 
vector $ V:B_1\to\R^d$.
  We point out 
    that $u$ is a viscosity solution of \eqref{e:u_viscosity_sol_riscalato}
according to a slight modification of  
\cref{def:def-sol_sezione_esistenza}: precisely
$\mathcal{B}= B_1$, $K= \{x_d \leq 0\}$ and $D= \{x_d >0\}$.
Defining for every $x\in B_1$ 
    \begin{equation}
        \label{e:bound_omega_x}
        \omega_x:= \frac{\tens{A}(x)V(x)\cdot e_d}{\tens{A}(x)e_d\cdot e_d},
    \end{equation} 
    we have that
    \begin{equation*}
        \left\{
    \begin{aligned}
        \tens{A}(x) V(x)\cdot e_d&\ge 2\delta,\\
    \omega_x&\ge2\delta,
    \end{aligned}
    \right. \qquad  \hbox{for every } x \in B_1.
    \end{equation*}  
Moreover, by \eqref{e:widetildeV-diff}, we also suppose that 
    \be\label{eq:diff-v-v0}\| \tens{A}(x)-\tens{I}\|_{L^\infty(B_1)} \leq \Pi_0\qquad\text{and}\qquad\| \tens{A}(x)V(x) - V(0)\|_{L^\infty(B_1)} \leq \Pi_0,\ee    
    for some $\Pi_0>0$ small enough.
By \cref{lemma:graph}, \eqref{eq:oss-utile} and \cref{lemma:proprieta_blowup}, the following results hold true.
\begin{lemma}
\label{lemma:rimango_in_parte_positiva}
    Let $u:B_1\to\R^+$ be a viscosity solution of \eqref{e:u_viscosity_sol_riscalato}. Suppose that $x_0\in B_1^+\cap\Omega_u,$ and $\gamma:\R\to\R^d$ is such that 
    $$\left\{
    \begin{aligned}
         \gamma'(t)&= \tens{A}(\gamma(t))V(\gamma(t)),\\
        \gamma(0)&=x_0.
    \end{aligned}
    \right.$$
    Then $\gamma(t)\in\Omega_u$ for every $t>0$ such that $\gamma(t)\in B_1^+$. Equivalently, if $x_0\in B_1^+$ is such that $u(x_0)=0$, then $u(\gamma(t))=0$ for every $t<0$ such that $\gamma(t)\in D.$ In particular, $\partial\Omega_u\cap B_1$ is a graph.
\end{lemma}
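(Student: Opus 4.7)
The plan is to adapt the proof of \cref{lemma:graph} to the divergence-form setting: straight-line translations by $V$ are replaced by the flow $\Phi_t$ of $\tens{A}V$, and the classical Hopf boundary-point lemma by its $\mathcal{L}$-analogue. Argue by contradiction: assume $x_0\in B_1^+\cap\Omega_u$, that some $T>0$ satisfies $\gamma([0,T])\subset B_1^+$, yet $\gamma(T)\notin\Omega_u$. Choose $\rho>0$ small enough that $\overline{B_\rho(x_0)}\subset\Omega_u$ and that the transported closed sets $E_t:=\Phi_t(\overline{B_\rho(x_0)})$ remain in $B_1^+$ for $t\in[0,T]$. Set $\tau:=\inf\{t\in(0,T]\,:\,E_t\cap\partial\Omega_u\neq\emptyset\}$, which is strictly positive by the openness of $\Omega_u$; continuity of the flow together with the initial inclusion $E_0\subset\Omega_u$ then yields $E_t\subset\Omega_u$ for every $t<\tau$, and by continuity some contact point $y_0\in E_\tau\cap\partial\Omega_u\cap B_1^+$ exists.

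Next, observe that $E_\tau^\circ\subset\Omega_u$ (and in particular $y_0\in\partial E_\tau$): indeed, for $z=\Phi_\tau(w)$ with $w\in B_\rho(x_0)^\circ$, continuity of $\Phi$ gives $\Phi_\epsilon(w)\in B_\rho(x_0)$ for small $\epsilon>0$, and therefore $z=\Phi_{\tau-\epsilon}(\Phi_\epsilon(w))\in E_{\tau-\epsilon}\subset\Omega_u$, so $u(z)>0$. Since $\Phi_\tau$ is a $C^\infty$ diffeomorphism, $E_\tau$ is a $C^\infty$ domain and hence satisfies the interior ball condition at $y_0$: there exist $r>0$ and $q_0\in E_\tau^\circ$ with $\overline{B_r(q_0)}\subset E_\tau$ and $\overline{B_r(q_0)}\cap\partial E_\tau=\{y_0\}$. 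By the inclusion just established, $\overline{B_r(q_0)}\setminus\{y_0\}\subset\Omega_u$, and in particular $u>0$ on $\overline{B_{r/2}(q_0)}$.

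Let $\psi$ solve $\mathcal{L}\psi=0$ in $B_r(q_0)\setminus\overline{B_{r/2}(q_0)}$ with $\psi\equiv1$ on $\partial B_{r/2}(q_0)$ and $\psi\equiv0$ on $\partial B_r(q_0)$. Pick $c>0$ with $u\ge c\psi$ on $\overline{B_{r/2}(q_0)}$; by the comparison principle for $\mathcal{L}$ the inequality extends to $\overline{B_r(q_0)}$. Hence $c\psi$ touches $u$ from below at $y_0\in\partial\Omega_u\cap B_1^+$; the free-boundary viscosity condition in \eqref{e:u_viscosity_sol_riscalato}, combined with the Hopf lemma for $\mathcal{L}$ (which gives $\nabla\psi(y_0)=-\beta\nu$ for some $\beta>0$, with $\nu$ the outer unit normal to $\Omega_u$ at $y_0$, equivalently to $E_\tau$), reduces to
$$
c\beta\,\tens{A}(y_0)\nu\cdot\nu\;\le\;-\,\nu\cdot\tens{A}(y_0)V(y_0),
$$
whose left-hand side is strictly positive by uniform ellipticity and the symmetry of $\tens{A}$. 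The final and most delicate step---analogous to \eqref{eq:utiledopo} in \cref{lemma:graph} but requiring care because $\Phi_\tau$ does not preserve balls---is to show $\tens{A}(y_0)V(y_0)\cdot\nu\ge 0$. Setting $x_*:=\Phi_\tau^{-1}(y_0)\in\partial B_\rho(x_0)$ and $\gamma_*(t):=\Phi_t(x_*)$, one has $\gamma_*'(\tau)=\tens{A}(y_0)V(y_0)$, and $\gamma_*(t)\in\partial E_t\subset E_t\subset\Omega_u$ for every $t\in[0,\tau)$ while $\gamma_*(\tau)=y_0\in\partial\Omega_u$; therefore $\gamma_*'(\tau)$ cannot point strictly into $\Omega_u$ at $y_0$, i.e.~$\tens{A}(y_0)V(y_0)\cdot\nu\ge 0$, which contradicts the displayed inequality. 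The equivalent backward statement follows by contraposition (forward invariance of $\Omega_u$ is equivalent to backward invariance of $\{u=0\}$), and the graph property is immediate: along each integral curve of $\tens{A}V$ inside $B_1$, $u$ transitions at most once from zero to strictly positive values, so the curve meets $\partial\Omega_u$ in at most one point.
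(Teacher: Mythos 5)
Your proof takes a genuinely different route from the paper. The paper does not re-prove anything in the flattened coordinates: it simply applies the straight-line Lemma \ref{lemma:graph} to the original solution $u$ of \eqref{e:viscosity-sol-K_D} and pushes the conclusion forward through the flattening diffeomorphism $\Theta$, observing in \eqref{eq:oss-utile} that the image of the segment $t\mapsto x_0+tV$ under $\Theta$ is precisely a solution of $\gamma'=\tens{A}(\gamma)V(\gamma)$. You instead re-derive the result from scratch in the divergence-form coordinates, replacing translations by the flow $\Phi_t$ of $\tens{A}V$, the first-touching ball by the first-touching transported set $E_\tau$, and the classical Hopf lemma by its $\mathcal{L}$-version. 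The paper's route is shorter and avoids the extra geometry; yours is self-contained but demands care precisely because $\Phi_\tau$ distorts the ball.

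That care is where the gap lies. You assert that since $\gamma_*(t)\in\Omega_u$ for $t\in[0,\tau)$ and $\gamma_*(\tau)=y_0\in\partial\Omega_u$, the velocity $\gamma_*'(\tau)=\tens{A}(y_0)V(y_0)$ ``cannot point strictly into $\Omega_u$ at $y_0$,'' and conclude $\tens{A}(y_0)V(y_0)\cdot\nu\ge 0$, where $\nu$ is the outer normal to the interior ball $B_r(q_0)\subset E_\tau$. This implication is false as stated: $\Omega_u$ need not be smooth at $y_0$, and since $B_r(q_0)$ is only part of $\Omega_u$ near $y_0$, the curve $\gamma_*$ may approach $y_0$ from a region of $\Omega_u$ outside $B_r(q_0)$, in which case $\gamma_*'(\tau)\cdot\nu$ can be strictly negative even though $\gamma_*(t)\in\Omega_u$ for all $t<\tau$. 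For instance, with $\Omega_u=\{y>0\}\cup\{x>0\}$ near the origin in $\R^2$, $y_0=0$, interior ball along the bisector so $\nu=-(1,1)/\sqrt 2$, and $\gamma_*(t)=\eps(1-t)(-2,1)$, one has $\gamma_*(t)\in\Omega_u$ for $t<1$, $\gamma_*(1)=0\in\partial\Omega_u$, yet $\gamma_*'(1)\cdot\nu<0$. The additional information $\gamma_*(t)\in\partial E_t$ does not by itself rescue the step either, since for $t<\tau$ the set $\partial E_t$ can enter $B_r(q_0)$.

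The correct observation is a different one and uses the minimality of $\tau$ seen from $y_0$'s side: for $t\in(0,\tau)$ one has $y_0\notin E_t$ because $E_t\cap\partial\Omega_u=\emptyset$. Since $B_r(q_0)\subset E_\tau$ and $E_{\tau-s}=\Phi_{-s}(E_\tau)$, if $\Phi_s(y_0)\in B_r(q_0)$ for some small $s>0$, then $y_0=\Phi_{-s}\bigl(\Phi_s(y_0)\bigr)\in\Phi_{-s}\bigl(B_r(q_0)\bigr)\subset E_{\tau-s}$, a contradiction. Hence $h(s):=\abs{\Phi_s(y_0)-q_0}^2-r^2$ satisfies $h(0)=0$ and $h(s)\ge 0$ for small $s>0$, so
\begin{equation*}
0\le h'(0^+)=2\bigl(y_0-q_0\bigr)\cdot\tens{A}(y_0)V(y_0)=2r\,\nu\cdot\tens{A}(y_0)V(y_0),
\end{equation*}
which is the inequality you need. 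With this substitution, the contradiction with the Hopf--barrier estimate is obtained exactly as you wrote, and the rest of your argument (construction of $E_t$, interior ball, $\mathcal{L}$-comparison, Hopf lemma, backward statement and graph property) is sound.
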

\begin{lemma}\label{remark:interior}
    Let $u:B_1\to\R^+$ be a viscosity solution of
    \be\label{interior}
\left\{
\begin{aligned}
&{\rm div}\left(\tens{A}(x) \nabla u\right) =0 &&\Omega_u \cap B_1,\\
&\tens{A}(x) \nabla u \cdot \nabla u = \tens{A}(x) \nabla u \cdot {V}(x) &&\partial \Omega_u \cap B_1. 
\end{aligned}
\right.
\ee
for some smooth
symmetric uniform elliptic matrix $\tens{A}:B_1\to\R^d\times \R^d$ with $\tens{A}(0) = \tens{I}$ and for some smooth 
vector $ V:B_1\to\R^d$ satisfying \eqref{eq:diff-v-v0}.
For all $\delta>0$ and $\gamma\in(0,1)$, there are constants $\e_0>0$ and $C>0$ such that the following holds. 
  Let $\nu\in\pa B_1$ be a unit vector
  such that for every $x \in B_1$ we have
$$\widetilde \omega_{x,\nu}:=\frac{\tens{A}(x)V(x)\cdot \nu}{\tens{A}(x)\nu\cdot \nu}\ge2\delta,$$ 
and $u$ is $\eps$-flat along $\nu$-direction, with $\eps\in(0,\eps_0]$, namely for every $x\in B_1$
    $$\left(x \cdot \widetilde \omega_{0,\nu}\, \nu - \eps\right)^+\le u(x)\le  \left(x \cdot  \widetilde\omega_{0,\nu} \, \nu+ \eps\right)^+.
    $$
    Then, for every $x_0\in \partial\Omega_u\cap B_{\sfrac{1}{2}}$, there is a unit vector $\nu_{x_0}\in\pa B_1$ such that, for every $r\in (0,\sfrac{1}{2})$,
    \begin{equation*}
        \norm{u(x_0+x)-x\cdot \widetilde\omega_{x_0,\nu_{x_0}}\nu_{x_0}}_{L^\infty(B_r\cap\overline\Omega_u)}\le C\eps r^{1+\gamma}.
    \end{equation*}
\end{lemma}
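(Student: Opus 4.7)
My plan is to adapt the three-step strategy of Section 3 (partial Harnack $\to$ improvement of flatness $\to$ iteration) to the variable-coefficient problem \eqref{interior}. The smallness condition \eqref{eq:diff-v-v0} makes the coefficients close to constant, so that upon rescaling around any interior free boundary point $x_0\in B_{\sfrac{1}{2}}$, the equation becomes a small perturbation of \eqref{eq:viscosity-sol-B1}, and the compactness argument will produce, after a linear change of variables freezing $\tens{A}(x_0)$ to the identity, the same linearized oblique problem \eqref{linearizzato_interno}. In particular, the statement is essentially \cref{lemma:proprieta_blowup} in a variable-coefficient setting, and each step below is the natural perturbation of its Section 3 counterpart.

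For the partial Harnack inequality, I would repeat the proof of \cref{lemma:PH_inequality} using the radial barrier $\psi$ centered at $\nu/5$ and the family of competitors $v_t = P + c_\mathcal{H}\eps\psi - c_\mathcal{H}\eps + c_\mathcal{H}\eps t$ with $P(x):= x\cdot\widetilde\omega_{0,\nu}\nu + a$. Away from the free boundary, the operator $\mathcal{L}$ produces a strictly positive contribution $c_\mathcal{H}\eps\,\mathcal{L}\psi$ dominating the lower-order errors for $\Pi_0$ small, thanks to uniform ellipticity of $\tens{A}$ and the explicit form of $\psi$. On the free boundary, expanding $\tens{A}(x)\nabla v_t\cdot\nabla v_t - \tens{A}(x)\nabla v_t\cdot V(x)$ at leading order in $\eps$ yields $c_\mathcal{H}\eps\,\tens{A}(x)\nabla\psi\cdot W(x) + o(\eps)$, where $W(x):= 2\widetilde\omega_{x,\nu}\nu - V(x)$. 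The hypothesis $\widetilde\omega_{x,\nu}\geq 2\delta$ combined with \eqref{eq:diff-v-v0} guarantees $\tens{A}(x)\nabla\psi\cdot W(x)\geq c_1>0$ on the relevant annular region, closing the barrier argument exactly as in \cref{lemma:PH_inequality}.

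With partial Harnack in hand, the compactness statement of \cref{lemma:compattezza} carries over, yielding a H\"older limit $\widetilde u$ of the linearizations $(u_j - x\cdot\widetilde\omega_{0,\nu}\nu)/\eps_j$. Since $\tens{A}(x_0+r_jx)\to\tens{A}(x_0)$ and $V(x_0+r_jx)\to V(x_0)$ uniformly as $r_j\to 0$, passing to the limit in the viscosity formulation as in the proof of \cref{lemma:IOF} shows that $\widetilde u$ solves the constant-coefficient oblique problem $\mathrm{div}(\tens{A}(x_0)\nabla\widetilde u)=0$ in $B_{\sfrac{1}{2}}\cap\{x\cdot\nu>0\}$ with $\tens{A}(x_0)\nabla\widetilde u\cdot W(x_0)=0$ on $B_{\sfrac{1}{2}}\cap\{x\cdot\nu=0\}$. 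After the linear change of variables $y=\tens{A}(x_0)^{\sfrac{1}{2}}x$, this reduces to \eqref{linearizzato_interno}, so \cref{lemma:regolaita_prob_lin} provides the $C^{2,\alpha}$ expansion at the origin, producing the improved flatness along a new direction $\nu'$ with $|\nu'-\nu|\leq C\eps$ and $\widetilde\omega_{0,\nu'}$ close to $\widetilde\omega_{0,\nu}$. Iterating at $x_0$ as in \cref{lemma:proprieta_blowup} yields the claimed rate of convergence, with limit direction $\nu_{x_0}$.

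The main obstacle, exactly as in \cref{lemma:proprieta_blowup}, is preserving the non-degeneracy condition $\widetilde\omega_{x_0,\nu_j}\geq\delta$ throughout the iteration, since the hypothesis controls $\widetilde\omega$ only with the original direction $\nu$. Because $\widetilde\omega_{x,\nu}$ is Lipschitz in $\nu$ on a neighborhood of the initial direction and the flatness improvement gives $|\nu_{j+1}-\nu_j|\leq C\eps\eta^{j\gamma}$, the telescoping sum $\sum_j|\nu_{j+1}-\nu_j|$ is bounded by a geometric series of ratio $\eta^\gamma<1$. Choosing $\eta$ and $\eps_0$ small enough so that the total drift of $\widetilde\omega_{x_0,\nu_j}$ from $\widetilde\omega_{x_0,\nu}\geq 2\delta$ is at most $\delta$ closes the iteration and guarantees $\widetilde\omega_{x_0,\nu_{x_0}}\geq\delta$ for the limit direction, concluding the proof.
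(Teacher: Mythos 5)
Your proposal is correct and follows essentially the same route the paper intends: the paper gives no separate proof of this lemma, asserting it follows from \cref{lemma:proprieta_blowup} (partial Harnack, improvement of flatness via the linearized oblique problem, iteration preserving $\widetilde\omega\ge\delta$) adapted to variable coefficients using the smallness \eqref{eq:diff-v-v0} and the coefficient-freezing linear change of variables later made explicit in Step 2 of \cref{lemma:proprieta_blowup_boundary}. Your sketch reconstructs precisely that adaptation, at the same level of detail as the paper's own treatment.
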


\subsection{Contact points are regular points}
\label{subsec:branching_points_are_regular}
The main result of this subsection is proving that every contact point $x_0\in\partial\Omega_u\cap B_1'$ of the problem \eqref{e:u_viscosity_sol_riscalato} is a regular point. This is contained in the following proposition.

\begin{proposition}
\label{prop:punti_regolari_al_bordo}
    Let $u$ be a viscosity solution to \eqref{e:u_viscosity_sol_riscalato} and $x_0\in\partial\Omega_u \cap B_1'$ be a contact point. Let $\omega_{x_0}>0$ be as in \eqref{e:bound_omega_x}. Then, $x_0$ is a regular point. More precisely, as $r\to0^+$:
    \begin{enumerate}
        \item\label{1}
         either $u_{x_0,r} \to \omega_{x_0}x_d$,
    \item or $u_{x_0,r}\to \beta x_d$ for some $\beta> \omega_{x_0}$ and $u>0$ in $ B_\rho^+(x_0)$, for some $\rho>0$.
    \end{enumerate}
    In particular, if $x_0$ is a branching point, then only \ref{1} can occur.
\end{proposition}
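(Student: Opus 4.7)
First I would extract blow-up limits using the uniform Lipschitz regularity of $u$ (from \cref{rem:lipschitz_tutte_viscose} applied after the change of coordinates of \cref{subsec:flatproblem}): the rescalings $u_{x_0,r}$ are locally equi-Lipschitz, so along any $r_k\to 0^+$ a subsequence converges locally uniformly to a Lipschitz limit $u_0$, with Hausdorff convergence of the positivity sets. Passing to the limit in the viscosity formulation with coefficients frozen at $x_0$, the limit $u_0$ is a global viscosity solution of the constant-coefficient version of \eqref{e:u_viscosity_sol_riscalato} with matrix $\tens{A}(x_0)$, vector $V(x_0)$ and flat obstacle $\{x_d\le 0\}$; by passing \cref{lemma:rimango_in_parte_positiva} to the limit, $\partial\Omega_{u_0}$ is a graph along the constant direction $\tens{A}(x_0)V(x_0)$, whose $e_d$-component is strictly positive by hypothesis.

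\textbf{Classification of the blow-up.} Adapting the strategy of \cite[Lemma 11.17]{caffarelli2005geometric}, I would show that $u_0(x)=\beta\,x_d^+$ for some $\beta\ge 0$. After the linear change of variables diagonalizing $\tens{A}(x_0)$ to the identity, $u_0$ is harmonic in its positivity set contained in $\{x_d>0\}$, vanishes on $\{x_d\le 0\}$, has linear growth, and its free boundary is graphical along a direction with positive $e_d$-component. A sliding-plane argument based on linear test functions and the one-sided FB condition on $\{x_d=0\}$ rules out interior free boundary points in $\{x_d>0\}$ at the blow-up level, forcing $u_0(x)=\beta\,x_d^+$. This yields the expansion $u(x)=\beta(x-x_0)\cdot e_d+o(|x-x_0|)$ along the subsequence.

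\textbf{Two ad hoc barriers.} I would then compare $u$ against two barriers near $x_0$ to control $\beta$. \emph{Upper barrier ($\beta\ge\omega_{x_0}$).} If by contradiction $\beta<\omega_{x_0}$, pick $\alpha\in(\beta,\omega_{x_0})$ and consider a harmonic competitor
\[
v(x):=\alpha\,(x-x_0)\cdot e_d+\gamma\, h(x-x_0),
\]
where $h$ is a tangentially opening harmonic correction with $h(0)=0$ and $\gamma>0$ small. Exploiting that $u\equiv 0$ on the full half-space $\{x_d\le 0\}$ (so the tangential error in the expansion is absorbed against the obstacle), $v^+$ touches $u$ from above at $x_0\in\partial\Omega_u\cap K$ in a full neighborhood; the strict inequality $\tens{A}\nabla v\cdot\nabla v<\tens{A}\nabla v\cdot V$ at $x_0$, which follows from $\alpha<\omega_{x_0}$ and smallness of $\gamma$, clashes with the viscosity supersolution test there, giving $\alpha\ge\omega_{x_0}$, a contradiction. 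Hence $\beta\ge\omega_{x_0}$, and in particular $\beta>0$. \emph{Lower barrier ($\beta>\omega_{x_0}\Rightarrow$ collapse).} If $\beta>\omega_{x_0}$, pick $\alpha\in(\omega_{x_0},\beta)$ and slide the family
\[
v_t(x):=\alpha\bigl((x-x_0)\cdot e_d-t\bigr)^+,\qquad t\in[0,\rho],
\]
from $t=\rho$ down to $t=0$. For $t=\rho$ one has $v_\rho\le 0\le u$ in $B_\rho^+(x_0)$; by the expansion and $\alpha<\beta$, the strict inequality $v_t<u$ persists on $\partial B_\rho^+(x_0)\cap\{x_d>0\}$ for $t$ bounded away from $0$. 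Let $t^*$ be the smallest $t$ for which $v_s\le u$ on $B_\rho^+(x_0)$ for all $s\in[t,\rho]$. If $t^*>0$, the first touching point produces (via the strong maximum principle if the touching is interior) a free boundary point $y^*\in\partial\Omega_u\cap D$ on $\{x_d=t^*\}$ at which the viscosity subsolution condition for $u$ yields $\alpha\le\omega_{y^*}$, contradicting $\alpha>\omega_{x_0}$ by continuity of $\omega$ once $\rho$ is small. Hence $t^*=0$ and $u(x)\ge\alpha((x-x_0)\cdot e_d)^+$ in $B_\rho^+(x_0)$, so that $u>0$ in $B_\rho^+(x_0)$, i.e., the free boundary collapses on $K$.

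\textbf{Main obstacle and conclusion.} The hardest step is the classification $u_0=\beta\,x_d^+$: without a variational/monotonicity framework, one must carefully adapt \cite[Lemma 11.17]{caffarelli2005geometric}, leveraging both the graphical structure of $\partial\Omega_{u_0}$ and the one-sided FB condition on the flat obstacle. The upper barrier is also delicate, as it depends crucially on $u\equiv 0$ holding on the whole half-space $\{x_d\le 0\}$ (not merely at $x_0$) in order to tame the weak $o(|x-x_0|)$ tangential error. Once the expansion and the two barriers are in place, the dichotomy follows; uniqueness of $\beta$, and therefore convergence of the whole family $u_{x_0,r}\to\beta\,x_d$, comes from the observation that collapse versus branching is a property of $u$ itself, independent of the blow-up subsequence.
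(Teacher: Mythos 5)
Your overall architecture (first an expansion $u=\beta x_d+o(|x-x_0|)$, then two barriers giving $\beta\ge\omega_{x_0}$ and the collapse alternative for $\beta>\omega_{x_0}$) matches the paper, but there are two genuine gaps. The first is the classification step. You reduce the expansion to the statement that every blow-up limit $u_0$ of $u$ at $x_0$ equals $\beta x_d^+$, and justify this by asserting that a ``sliding-plane argument rules out interior free boundary points of $u_0$ in $\{x_d>0\}$''. This is the crux and it is not an adaptation of \cite[Lemma 11.17]{caffarelli2005geometric}: that argument produces a first-order expansion of the \emph{given} solution at a boundary point by comparison with harmonic replacements and Hopf's lemma; it does not classify global solutions. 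Classifying global Lipschitz viscosity solutions of the frozen problem — with possibly degenerate limits (non-degeneracy may fail, and the lower bound $\beta\ge\omega_{x_0}>0$ is only obtained \emph{after} this step in your scheme), a one-sided free boundary condition on the thin space, and no variational or monotonicity structure — is a nontrivial Liouville-type task for which you give no argument. The paper avoids it entirely by running the Caffarelli--Salsa scheme directly on $u$ (\cref{lemma:caffarellisalsa_upper} and \cref{lemma:caffarellisalsa_lower}), using the barrier $\Psi$ of \eqref{eq:Psi_solita_funzione_armonica}, harmonic replacements and Hopf's lemma, which yields $u=\beta x_d+o(|x|)$ along the full limit $r\to0^+$; the blow-up statement is then immediate, with no subsequence or compactness issues.

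The second gap is in the lower barrier. With the truncated plane $v_t(x)=\alpha((x-x_0)\cdot e_d-t)^+$ the comparison on the spherical part of $\partial B_\rho^+(x_0)$ degenerates as $t\to0^+$: the expansion only gives $u\ge\beta x_d-\eps(\rho)\rho$ there, so $u-v_t\ge(\beta-\alpha)x_d+\alpha t-\eps(\rho)\rho$, which can be negative in the equatorial band $x_d\lesssim\eps(\rho)\rho$ once $t\lesssim\eps(\rho)\rho$. Hence the sliding may stop at some $t^*>0$ with the touching occurring on $\partial B_\rho(x_0)$, where no viscosity information is available, and you only conclude $u\ge\alpha((x-x_0)\cdot e_d-t^*)^+$, i.e.\ positivity above the level $\{x_d=t^*\}$ — not $u>0$ in $B_\rho^+(x_0)$, which is precisely alternative (2) and what distinguishes collapse from branching. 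This is why \cref{lemma:upper_bound_beta} replaces the truncated plane by the $\mathcal{L}$-harmonic barrier with cut-off boundary datum $\chi(x_d)(\beta x_d-\eta)$: it lies below $u_\rho$ on all of $\partial B_1^+$, including the equatorial band and the thin space, it is strictly positive in the open half ball, and by Schauder its gradient stays uniformly close to $\beta e_d$, so the strict subsolution inequality holds at \emph{every} point of $B_1^+$; the sliding then closes with $t_0=0$ and gives $u_\rho\ge v>0$ in the whole half ball. By contrast, your upper barrier is essentially the paper's \cref{lemma:bound_lower-for-beta} in a less explicit form (the role of your correction $h$ is played there by the datum $\eta\chi$ together with a sliding and the strong maximum principle), and your closing remark on uniqueness of $\beta$ is fine once the dichotomy is actually established.
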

Before proving \cref{prop:punti_regolari_al_bordo}, we show the following remark.

\begin{remark}
\label{rem:caratterizzazione_punti}
    By \cref{prop:punti_regolari_al_bordo}, in $B_1'$ there are three types of points.
    \begin{itemize}
        \item {\em Branching points} are points defined in \cref{def:contact_branching} and satisfying $u_{x_0,r} \to \omega_{x_0}x_d$.
        \item {\em Interior contact points} are points where the free boundary locally coincides with $\{x_d=0\}$. In this case $u_{x_0,r} \to \beta x_d$ and $\beta \ge \omega_{x_0}$. Moreover, they form an open set in the topology of $\{x_d=0\}$.
        \item {\em Vanishing points} are points belonging to the interior set of $\{u=0\}$. In particular $u_{x_0,r} \to 0$.
    \end{itemize}
\end{remark}
In the following, without loss of generality, we can assume that $x_0=0$.
We provide the proof of \cref{prop:punti_regolari_al_bordo} into two steps. First, in \cref{subsec:espansione_u}, we show that, near $0\in\partial\Omega_u$, a viscosity solution of \eqref{e:u_viscosity_sol_riscalato} has an expansion of type $u(x)=\beta x_d+o(|x|)$ for some $\beta \ge0$. Then, in \cref{subsec:bound_beta}, we show that $\beta \geq \omega_{0}$ and if $\beta > \omega_{0}$, then $u>0$ in a smaller ball $B_\rho^+$.

\subsubsection{Expansion of \texorpdfstring{$u$}{u}}
\label{subsec:espansione_u}
First, in the following two lemmas, we show that $u(x)=\beta x_d+o(|x|)$ for some $\beta \ge0$.
Such a proof is similar to \cite[Lemma 11.17]{caffarelli2005geometric}.

\begin{lemma}\label{lemma:caffarellisalsa_upper}
    Let $u:B_1\to\R^+$ be a viscosity solution to \eqref{e:u_viscosity_sol_riscalato}. Then, there exists $\beta\ge0$ such that 
    $$u(x)\leq\beta x_d+o(\abs{x})$$
   as $x\to 0$, with $x \in B_1^+$.
\end{lemma}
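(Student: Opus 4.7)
The plan is to deduce the one-sided expansion from the sole fact that $u$ is locally Lipschitz in $B_1$ (a property inherited from \cref{rem:lipschitz_tutte_viscose} through the change of coordinates discussed at the beginning of \cref{subsec:flatproblem}) together with the identity $u \equiv 0$ on $\{x_d \le 0\}$. These two ingredients already yield the crude one-sided control $u(x) \le L\, x_d$ on some half-ball $B_r^+$: indeed, for every $x = (x', x_d) \in B_r^+$ the point $(x',0)$ belongs to $\{x_d \le 0\}$ and the Lipschitz inequality along the vertical segment joining them gives
\begin{equation*}
u(x', x_d) \;=\; u(x', x_d) - u(x', 0) \;\le\; L\, x_d .
\end{equation*}
In particular, the quotient $u(x)/x_d$ is bounded above by $L$ near the origin, and is non-negative because $u \ge 0$ in $B_1^+$.

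With this bound in hand, I would set
\begin{equation*}
\beta \;:=\; \limsup_{\substack{x \to 0 \\ x \in B_1^+}} \frac{u(x)}{x_d},
\end{equation*}
so that $0 \le \beta \le L < +\infty$ by the previous paragraph. I take a limsup rather than a limit because at this stage the quotient $u(x)/x_d$ may a priori depend on the direction along which $x \to 0$; the fact that the true limit exists and agrees with $\beta$ will be established only a posteriori, by combining this lemma with the matching lower bound and with the dichotomy of \cref{prop:punti_regolari_al_bordo}.

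The passage from this definition of $\beta$ to the announced expansion is then purely elementary. For any $\varepsilon > 0$, the very definition of $\beta$ provides some $\delta(\varepsilon) > 0$ such that $u(x)/x_d \le \beta + \varepsilon$ on $B_{\delta(\varepsilon)}^+$, and multiplying by $x_d$ gives
\begin{equation*}
u(x) - \beta x_d \;\le\; \varepsilon\, x_d \;\le\; \varepsilon\, |x| \qquad \text{on } B_{\delta(\varepsilon)}^+.
\end{equation*}
Hence $\limsup_{x\to 0} (u(x) - \beta x_d)/|x| \le \varepsilon$, and letting $\varepsilon \to 0^+$ yields the expansion $u(x) \le \beta x_d + o(|x|)$.

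I do not anticipate any substantive difficulty in this upper bound: once Lipschitz regularity is invoked, everything reduces to the definition of a limsup. The genuine content of the expansion lies in the companion lower bound and, ultimately, in showing that $\beta$ must coincide with $\omega_0$ (or else the free boundary collapses onto $\{x_d=0\}$); this is where the free-boundary conditions in \eqref{e:u_viscosity_sol_riscalato} will actually enter.
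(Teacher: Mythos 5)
Your argument does prove the statement as literally written, and it is genuinely different from the paper's route: the paper constructs an $\mathcal{L}$-harmonic barrier $\Psi$ on an exterior ball touching the origin, shows $\Psi(x)=c_d x_d+o(|x|)$ by the Hopf lemma, and produces $\beta=c_d\widetilde\beta$ through the decreasing sequence $\beta_k:=\inf\{m\ge 0:\ u\le m\Psi\ \text{in } B_{2^{-k}}^+\}$, whereas you use only the Lipschitz bound $u\le L x_d$ (legitimately available via \cref{rem:lipschitz_tutte_viscose} and the flattening) and the definition of a limsup. As a proof of the isolated upper bound this is fine — indeed even the trivial witness $\beta=L$ would satisfy the displayed inequality, which shows that the one-sided statement by itself carries almost no content.

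The caveat, which you partly acknowledge but underestimate, is that the witness $\beta$ is not free: it must be the same constant for which the matching lower bound of \cref{lemma:caffarellisalsa_lower} holds, and the paper's proof of that lemma closes by contradicting exactly the minimality built into $\widetilde\beta$ (from $u(x_k)<\beta\, x_k\cdot e_d-\eta r_k$ it deduces $u\le(\widetilde\beta-\widetilde c)\Psi+o(|x|)$ near $0$, which is impossible since the $\beta_k$ decrease to $\widetilde\beta$). With your choice $\beta:=\limsup_{x\to0}u(x)/x_d$ that closing step no longer works as written: an inequality of the form $u\le(\beta-c)x_d+o(|x|)$ does not contradict a limsup that may a priori be realized only along tangential sequences, where $o(|x|)/x_d$ is uncontrolled and the quotient $u/x_d$ is constrained only by $L$ (at this stage nothing rules out nearby interior contact points with larger slopes pushing the tangential limsup above the normal slope). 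So if your definition of $\beta$ is adopted, either you must additionally prove that the limsup is attained non-tangentially (i.e.\ coincides with the slope along $e_d$), or the proof of the companion lower bound has to be redone; the paper's barrier characterization is precisely what makes the next lemma go through. In short: correct for this statement, but the choice of $\beta$ shifts, and somewhat obstructs, the real work carried out downstream.
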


\begin{proof}
Let $y_0:=-\sfrac{e_d}{2}$ and let $\Psi$ be the function defined as follows
\begin{equation}
\label{eq:Psi_solita_funzione_armonica}
    \Psi: \overline B_1(y_0) \to \R^+ \quad \hbox{ such that } \quad \left\{
\begin{aligned}
   \mathcal{L}\psi &= 0 &&\text{in } B_1(y_0) \setminus B_{\sfrac{1}{2}}(y_0),\\
    \Psi &= 1
    &&\text{on } \partial B_1(y_0),\\
    \Psi &= 0 &&\text{in } \overline B_{\sfrac{1}{2}}(y_0).
 \end{aligned}
\right.
\end{equation}
As a consequence of Hopf lemma, around $0$ for all $x \in B^+_1(y_0)$, the function $\Psi$ reads as
\begin{equation}
    \label{e:equazione_hopf_Psi}
    \Psi(x)=c_dx_d+o(\abs{x}),
\end{equation}
where $c_d>0$ is a dimensional constant.
Let us introduce the sequence
$$\beta_k:=\inf\left\{m\ge0:\ u(x)\le m\Psi(x)\; \text{  in }B_{2^{-k}}^+\right\}.$$
We observe that $\beta_{1} < +\infty$. Indeed, since $u =0$ in $\{x_d = 0\}$ and $\Psi = 1$ in $\pa B_1(y_0)$, we get that
$$
u\le m\Psi  \quad \hbox{ in }\partial B_1^+(y_0)^.
$$
for some $m>0$ large enough. By the fact that in $B_1^+(y_0)$, $u$ is $\mathcal{L}$-subharmonic and $\Psi$ is $\mathcal{L}$-harmonic, by comparison, we obtain that  
$$
u\le m\Psi  \quad \hbox{ in }B_1^+(y_0),
$$
implying that $\beta_1<+\infty$.
Finally, since $\beta_k$ is a decreasing sequence and $\beta_1<+\infty$, the limit exists, namely
$$\lim_{k\to+\infty}\beta_k :=\widetilde\beta\in[0,+\infty).$$
Applying \eqref{e:equazione_hopf_Psi} and using the definition of $\widetilde{\beta}$, we end up with
\begin{equation*}
    \label{e:stima_u_step1}
    u(x)\le \widetilde \beta \Psi(x)+o(\abs{x})= c_d \widetilde \beta x_d+o(\abs{x})=: \beta x_d+o(\abs{x}).
\end{equation*}
as $x\to 0$ with $x\in B_1^+(y_0)$, which is exactly the claimed lower bound.
\end{proof}

\begin{lemma}\label{lemma:caffarellisalsa_lower}
    Let $u:B_1\to\R^+$ be a viscosity solution to \eqref{e:u_viscosity_sol_riscalato} and take $\beta\ge0$ as in \cref{lemma:caffarellisalsa_upper}. Then
    $$u(x)\ge \beta x_d+o(\abs{x})$$
   as $x\to 0$, with $x \in B_1^+$.
\end{lemma}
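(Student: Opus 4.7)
The plan is a contradiction argument via blowup, combined with the strong minimum principle and a barrier estimate, designed to produce an upper bound for $u$ near the origin that is strictly better than $\beta\,x_d$, contradicting the minimality in the definition of $\beta$ from the proof of \cref{lemma:caffarellisalsa_upper}.

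Suppose by contradiction there exist $\delta_0 > 0$ and a sequence $x_k \in B_1^+$ with $x_k \to 0$ and $u(x_k) \le \beta (x_k)_d - \delta_0 \abs{x_k}$. Set $r_k := \abs{x_k}$ and consider the rescalings $u_{r_k}(x) := u(r_k x)/r_k$. By the Lipschitz estimate of \cref{rem:lipschitz_tutte_viscose}, the family $\{u_{r_k}\}$ is uniformly Lipschitz on compact subsets of $\overline{\R^d_+}$, so up to subsequence $u_{r_k} \to u_0$ locally uniformly, with $u_0 \geq 0$ on $\overline{\R^d_+}$ and $u_0 \equiv 0$ on $\{x_d \le 0\}$. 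Passing the upper bound of \cref{lemma:caffarellisalsa_upper} to the limit yields $u_0(x) \le \beta x_d$ on $\overline{\R^d_+}$. Setting $y^* := \lim_{k} x_k/\abs{x_k} \in \partial B_1$ (along the subsequence), the contradiction assumption passes to $u_0(y^*) \le \beta y^*_d - \delta_0$, which forces $y^*_d > 0$.

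Define $w(x) := \beta x_d - u_0(x)$. Since $u_0$ is a nonnegative viscosity solution of the limiting constant-coefficient problem obtained from $\tens{A}(r_k x) \to \tens{I}$, it is in particular $\Delta$-subharmonic in $\{x_d > 0\}$ in the distributional sense. Hence $w$ is nonnegative, continuous, superharmonic in $\{x_d > 0\}$, vanishes on $\{x_d = 0\}$, and satisfies $w(y^*) \ge \delta_0 > 0$. The strong minimum principle yields $w > 0$ throughout $\{x_d > 0\}$. A standard barrier comparison in the half-ball $B_R^+$ with $R > \abs{y^*}$, using positivity of $w$ on a small ball around $y^*$ (interior to $B_R^+$) together with $w \equiv 0$ on $\{x_d = 0\}$ and $w \ge 0$ on $\partial B_R \cap \{x_d > 0\}$, then upgrades this to a quantitative linear lower bound $w(x) \ge c'\, x_d$ on some half-ball $B_\sigma^+ \subset B_R^+$, so that $u_0(x) \le (\beta - c')\, x_d$ in $B_\sigma^+$.

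Finally, by uniform convergence $u_{r_k} \to u_0$ on $\overline{B_\sigma^+}$, for $k$ sufficiently large we have $u(r_k x) \le (\beta - c'/2)\, r_k x_d$ for every $x \in \overline{B_\sigma^+}$, that is, $u(y) \le (\beta - c'/2)\, y_d$ in $\overline{B_{\sigma r_k}^+}$. Combined with the Hopf expansion $\Psi(y) = c_d y_d + o(\abs{y})$ from \eqref{e:equazione_hopf_Psi}, this translates into
$$u(y) \le (\widetilde\beta - c'')\, \Psi(y) \qquad \text{in a small half-ball around } 0,$$
for some $c'' > 0$, where $\widetilde\beta = \beta/c_d$. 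This contradicts the fact that the sequence $\beta_k$ from the proof of \cref{lemma:caffarellisalsa_upper} decreases precisely to $\widetilde\beta$, since it would force $\beta_k \le \widetilde\beta - c''$ for $k$ large. The main technical obstacle I anticipate is the quantitative linear lower bound $w \ge c' x_d$: one has to choose the barrier carefully in $B_R^+$ using only the pointwise information $w(y^*) \ge \delta_0$ together with the vanishing of $w$ on $\{x_d = 0\}$, while ensuring that the resulting constant $c'$ depends only on $\delta_0$ and $y^*_d$, so that the comparison can be preserved under the final uniform-convergence transfer back to $u$.
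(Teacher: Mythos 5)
Your blow-up reduction and the analysis of the limit function (strong minimum principle for $w:=\beta x_d-u_0$, plus a Hopf-type barrier giving $w\ge c'x_d$ in $B_\sigma^+$) are sound, but the step you pass over as routine — transferring $u_0\le(\beta-c')x_d$ back to $u_{r_k}$ — is a genuine gap. Locally uniform convergence only gives $u_{r_k}(x)\le u_0(x)+\eps_k\le(\beta-c')x_d+\eps_k$ with $\eps_k\to0$, and this \emph{additive} error is not dominated by $\tfrac{c'}{2}x_d$ in the strip $\{0<x_d\le 2\eps_k/c'\}$; neither the equi-Lipschitz bound $u_{r_k}\le Lx_d$ (the constant $L$ may well exceed $\beta$) nor any soft argument upgrades it. This matters because the contradiction with the minimality of $\widetilde\beta$ in \cref{lemma:caffarellisalsa_upper} requires an inequality of the form $u\le m'\Psi$ with $m'<\widetilde\beta$ on a \emph{full} half-ball $B^+_{2^{-m}}$, tangential points included (one also needs there the geometric fact $\Psi(y)\ge (c_d-o(1))\,y_d$, available since the tangent ball $B_{\sfrac12}(y_0)$ lies in $\{y_d\le0\}$); losing control of $u_{r_k}$ near $\{x_d=0\}$ is therefore fatal, not cosmetic. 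Incidentally, the obstacle you flagged as the hard one (the quantitative bound $w\ge c'x_d$ for the limit) is in fact the unproblematic part.

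The paper's proof is organized precisely so as to avoid this transfer: it never compares at the level of the blow-up limit, but works with $w_k(x)=\beta x_d-u(r_kx)/r_k$ at each fixed scale, comparing it in $B_1^+$ with harmonic barriers $h_k=h_{1,k}+h_{2,k}$ whose boundary data are $\sfrac{\eta}{2}$ on a spherical cap around $x_k/r_k$, $-\sigma_k$ on the rest of $(\partial B_1)^+$ (with $\sigma_k\to0$ coming from \cref{lemma:caffarellisalsa_upper}), and $0$ on $B_1'$; Hopf's lemma applied to $h_{1,k}$ and $h_{2,k}$, whose linear coefficients satisfy $c_{1,k}\to0$ and $c_{2,k}\to c_\infty>0$, then yields the improved linear bound for $u(r_k\cdot)/r_k$ itself, valid in the whole half-ball, which is what can be played against the definition of $\widetilde\beta$. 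Your argument can be repaired along the same lines: run your barrier comparison at scale $r_k$, using uniform convergence only on a small ball around $y^*$ (compactly contained in $\{x_d>0\}$, where it is legitimate) to guarantee $w_k\ge \delta_0/2$ there, $w_k=0$ on the flat part, and $w_k\ge-\sigma_k$ on the sphere. As written, however, the ``transfer by uniform convergence'' step does not hold.
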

\begin{proof}
Suppose by contradiction that there exist $\eta>0$ and a sequence of points $\{x_k\}\subset B_1^+(y_0)$ with $\abs{x_k}=:r_k\to0^+$ such that
\begin{equation}
    \label{e:ipotesi_assurda}
    u(x_k)<\beta \, x_k \cdot e_d-\eta r_k.
\end{equation}
Let us fix $k\in\N$ and let us consider a ball $B_{\eps r_k}(x_k)$ with $\eps$ for the moment arbitrary. For all $x\in B_{\eps r_k}(x_k)$, using the fact that $u$ is $L$-Lipschitz and \eqref{e:ipotesi_assurda}, we have 
$$
\begin{aligned}
    u(x)&\le u(x_k)+L\abs{x-x_k}\\
    &< \beta x_k\cdot e_d-\eta r_k+L\abs{x-x_k}\\
    &\le \beta x_d+\beta\abs{x-x_k}-\eta r_k+L\abs{x-x_k}\\
    &\le \beta x_d-\eta r_k+2L\abs{x-x_k}.
\end{aligned}
$$
Then, by choosing $\eps>0$ small enough we get
\be\label{e:stimaeta}u(x)< \beta x_d -\frac \eta 2r_k,\ee
for every $x\in B_{\eps r_k}(x_k)$. For all $x \in \overline{B_1^+}$, let 
\begin{equation}
    \label{e:def_wk}
    w_k(x):= - \left(\frac{u(r_k x)}{r_k} - \beta x_d\right).
\end{equation}
We observe that, applying \cref{lemma:caffarellisalsa_upper}, we have that for some $\sigma_k\to0^+$ as $k\to+\infty$ and for every $x\in (\partial B_1)^+$
\begin{equation}
    \label{e:stima_w_k_sigma_k}
    w_k(x)\ge -\sigma_k.
\end{equation}
Hence, as a consequence of \eqref{e:stimaeta}, \eqref{e:def_wk} and \eqref{e:stima_w_k_sigma_k}, we get
$$
\left\{
\begin{aligned}
    &w_k(x)\ge \frac\eta2 &&\text{on } (\partial B_1)^+\cap B_\eps\left(\sfrac{x_k}{r_k} \right),\\
    &w_k(x)\ge -\sigma_k&&\text{on } (\partial B_1)^+\setminus B_\eps\left(\sfrac{x_k}{r_k} \right),\\
    &w_k(x)=0&&\text{on } B_1'. 
\end{aligned}
\right.
$$
For every $k \in \N$, let us consider functions $h_k$ defined as 
$$
\left\{
\begin{aligned}
 &\mathcal{L}h_k(x)=0&&\text{in } B_1^+,\\
    &h_k(x)=\frac\eta2 &&\text{on } (\partial B_1)^+\cap B_\eps\left(\sfrac{x_k}{r_k} \right),\\
    &h_k(x)= -\sigma_k&&\text{on } (\partial B_1)^+\setminus B_\eps\left(\sfrac{x_k}{r_k} \right),\\
    &h_k(x) = 0&&\text{on } B_1'.
\end{aligned}
\right.
$$
We observe that $h_k=h_{1,k}+ h_{2,k}$, where
$$
\left\{
\begin{aligned}
 &\mathcal{L} h_{1,k}(x)=0&&\text{in } B_1^+,\\
    &h_{1,k}(x)=0 &&\text{on } (\partial B_1)^+\cap B_\eps\left(\sfrac{x_k}{r_k} \right),\\
    &h_{1,k}(x)= -\sigma_k&&\text{on } (\partial B_1)^+\setminus B_\eps\left(\sfrac{x_k}{r_k} \right),\\
    &h_{1,k}(x) = 0&&\text{on } B_1'.
\end{aligned}
\right. \qquad \qquad 
\left\{
\begin{aligned}
 &\mathcal{L} h_{2,k}(x)=0&&\text{in } B_1^+,\\
    &h_{2,k}(x)=\frac\eta2 &&\text{on } (\partial B_1)^+\cap B_\eps\left(\sfrac{x_k}{r_k} \right),\\
    &h_{2,k}(x)= 0 &&\text{on } (\partial B_1)^+\setminus B_\eps\left(\sfrac{x_k}{r_k} \right),\\
    &h_{2,k}(x) = 0&&\text{on } B_1'.
\end{aligned}
\right.
$$
Since, for every $k\in \N$, $w_k$ is $\mathcal{L}$-superharmonic and $h_k$ is $\mathcal{L}$-harmonic, by comparison principle in $B_1^+$, we have that 
$$w_k\ge h_k=h_{1,k}+h_{2,k}.$$
By Hopf Lemma, there are constants $c_{1,k},c_{2,k}>0$ such that
$$
h_{1,k}= -c_{1,k}x_d+ o(\abs{x}) \qquad \hbox{ and } \qquad h_{2,k}= c_{2,k}x_d+o(\abs{x})$$ as $x\to 0$ with $ x\in B_1^+$.
Moreover, since 
$$
\frac{x_k}{r_k} \to x_{\infty}\in (\partial B_1)^+ \qquad \hbox{ and }\qquad  \sigma_k\to0^+,
$$
then $c_{1,k}\to0$ and $c_{2,k}\to c_{\infty}>0$. Therefore, for $k$ large enough, we have
$$w_k\ge \frac{c_{\infty}}{2}x_d+o(\abs{x})$$
as $x\to 0$ with $x\in B_1^+$. 
Thus, replacing the function $w_k$ defined in \eqref{e:def_wk} and defining $\widetilde c:=\sfrac{c_\infty}{2c_d}$, we have
$$\frac{u(r_kx)}{r_k}\le \beta x_d-\frac{c_\infty}2x_d=(\widetilde \beta-\widetilde c)\Psi(x)+o(|x|)$$ 
as $x\to0$ with $x\in B_1^+,$ where $\Psi$ is defined in \eqref{eq:Psi_solita_funzione_armonica}.
This yields to a contradiction by the definition of $\widetilde \beta$ provided in \cref{lemma:caffarellisalsa_upper}.
\end{proof}

Combining \cref{lemma:caffarellisalsa_upper} and \cref{lemma:caffarellisalsa_lower}, the following corollary holds true.
\begin{corollary}
\label{cor:valeugualglianza_blow_up_u}
    Let $u:B_1\to\R^+$ be a viscosity solution to \eqref{e:u_viscosity_sol_riscalato}. Then, there exists $\beta\ge0$ such that 
    $$u(x)=\beta x_d+o(\abs{x})$$
   as $x\to 0$, with $x \in B_1^+$.
\end{corollary}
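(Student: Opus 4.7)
The plan is simply to concatenate the two estimates already proven in \cref{lemma:caffarellisalsa_upper} and \cref{lemma:caffarellisalsa_lower}. First I would invoke \cref{lemma:caffarellisalsa_upper}, which produces a nonnegative constant $\beta \geq 0$ (defined as $\beta = c_d \widetilde\beta$, where $\widetilde\beta$ is the limit of the decreasing sequence of admissible multiples of the Hopf barrier $\Psi$ from \eqref{eq:Psi_solita_funzione_armonica}) such that
$$
u(x) \leq \beta x_d + o(|x|) \qquad \text{as } x \to 0, \ x \in B_1^+.
$$
Then, crucially, \cref{lemma:caffarellisalsa_lower} is stated for \emph{the same} $\beta$ provided by \cref{lemma:caffarellisalsa_upper}; applying it gives the matching lower bound
$$
u(x) \geq \beta x_d + o(|x|) \qquad \text{as } x \to 0, \ x \in B_1^+.
$$

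Combining these two one-sided estimates yields $|u(x) - \beta x_d| = o(|x|)$ as $x \to 0$ in $B_1^+$, which is exactly the claim of the corollary. Since $u \equiv 0$ on $\{x_d = 0\}$ by \eqref{e:u_viscosity_sol_riscalato}, the expansion is also trivially valid when approaching $0$ along $B_1'$, and hence along the whole $\overline{B_1^+}$. There is no real obstacle here: the work has already been done in the two preceding lemmas, and the role of the corollary is merely to record the combined expansion in a single clean statement that can be invoked in the subsequent analysis of contact points (in particular in the proof of \cref{prop:punti_regolari_al_bordo}, where one needs to rule out the degenerate case $\beta = 0$ by comparing $\beta$ with the critical value $\omega_{0}$ defined in \eqref{e:bound_omega_x}).
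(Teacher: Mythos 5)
Your proposal is correct and follows exactly the paper's route: the corollary is obtained precisely by combining \cref{lemma:caffarellisalsa_upper} (which produces $\beta\ge 0$ and the upper bound) with \cref{lemma:caffarellisalsa_lower} (which gives the matching lower bound for the same $\beta$). Nothing further is needed.
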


\subsubsection{Bounds for \texorpdfstring{$\beta$}{beta}}
\label{subsec:bound_beta}
Once \cref{cor:valeugualglianza_blow_up_u} holds true, to ensure that contact points are regular, we need to know the exact value of $\beta$.
In the following lemma, we prove that $\beta\ge \omega_{0}$, where $\omega_{0}$ is defined in \eqref{e:bound_omega_x}. Moreover, we remark that, throughout this subsection, we do not assume that $\tens{A}(0) = \tens{I}$ since such an argument is valid for all $x_0\in\partial\Omega_u\cap B_1'$ and without loss of generality, we are considering $x_0 = 0$.

\begin{lemma}\label{lemma:bound_lower-for-beta}
     Let $u:B_1\to\R^+$ be a viscosity solution to \eqref{e:u_viscosity_sol_riscalato} and $0\in\partial\Omega_u$. Let $\omega_0>0$ be as in \eqref{e:bound_omega_x}. 
     Suppose that, for some $\beta\ge0$, we have \begin{equation}
         \label{e:ipotesi_espansione}
         u(x)=\beta x_d+o(\abs{x})
     \end{equation}
     as $x\to 0$, with $x \in B_1^+$.
   Then 
   $\beta\geq\omega_{0}.$
\end{lemma}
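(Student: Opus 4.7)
The plan is to argue by a blow-up at $0$, combined with the viscosity boundary inequality on the obstacle. Using the expansion from \cref{cor:valeugualglianza_blow_up_u}, I observe that the rescalings $u_r(x):=u(rx)/r$ satisfy
$$u_r(x)=\beta x_d+\frac{o(r|x|)}{r}\longrightarrow\beta x_d^+=:u_0(x)$$
locally uniformly on $\R^d$ as $r\to 0^+$. Each $u_r$ is a viscosity solution of a problem of the same form as \eqref{e:u_viscosity_sol_riscalato} with coefficients $\tens{A}_r(x):=\tens{A}(rx)$ and $V_r(x):=V(rx)$, and these converge uniformly on compacta to the constants $\tens{A}(0)$ and $V(0)$. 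By the standard stability of viscosity solutions under uniform convergence of both solutions and coefficients, $u_0$ is a viscosity solution of the constant-coefficient version of \eqref{e:u_viscosity_sol_riscalato}.

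Assume first that $\beta>0$. Then $\Omega_{u_0}=\{x_d>0\}$, so $\partial\Omega_{u_0}=\{x_d=0\}$ is entirely contained in the obstacle $\{x_d\leq 0\}$. Taking the smooth test function $\varphi(x):=\beta x_d$, one has $\varphi^+\equiv u_0$ on $B_1$, so in particular $\varphi^+$ touches $u_0$ from above at the contact point $0$. The viscosity condition on the obstacle (the analogue of the last bullet of \cref{def:def-sol_sezione_esistenza}) applied to the limit problem yields
$$\beta^2\,\tens{A}(0)e_d\cdot e_d\;\geq\;\beta\,\tens{A}(0)V(0)\cdot e_d,$$
and dividing by $\beta>0$ gives exactly $\beta\geq\omega_0$ by the definition \eqref{e:bound_omega_x}. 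Hence the assertion is established whenever $\beta>0$.

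The main obstacle is ruling out the degenerate case $\beta=0$, in which the blow-up limit vanishes identically and the viscosity inequality above is vacuous, even though $0\in\partial\Omega_u$. To rule it out, I would construct an \emph{ad hoc} lower barrier exploiting the hypothesis $\omega_0\geq 2\delta>0$. Concretely, for any fixed $\lambda\in(0,\omega_0)$, let $\psi_\lambda$ be the $\mathcal{L}$-harmonic function in a half-ball $B_\rho^+$ that vanishes on $B_\rho'$ and equals $\lambda x_d$ on $(\partial B_\rho)^+$. Using the smallness \eqref{eq:diff-v-v0} of $\tens{A}-\tens{I}$ and of $\tens{A}V-V(0)$, one checks that $\psi_\lambda$ is a classical strict subsolution of \eqref{e:u_viscosity_sol_riscalato} on $\{\psi_\lambda=0\}\cap\overline B_\rho^+$ as soon as $\rho$ is small, since $\lambda<\omega_0$ yields $\tens{A}\nabla\psi_\lambda\cdot\nabla\psi_\lambda<\tens{A}\nabla\psi_\lambda\cdot V$ there. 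Since $0\in\partial\Omega_u$ provides points $y_k\in\Omega_u$ arbitrarily close to $0$, a sliding/comparison argument (translating $\psi_\lambda$ along the flow direction of \cref{lemma:rimango_in_parte_positiva}, rescaling at shrinking balls centered at such $y_k$, and using the Lipschitz continuity of $u$) should force $\psi_\lambda\leq u$ on suitable small half-balls, giving $\partial_d u\geq\lambda$ at $0$ in a viscosity/limit sense and hence $\beta\geq\lambda$ for every $\lambda<\omega_0$. The delicate point here, and the real technical heart of the proof, is arranging the comparison despite the fact that $u$ may be arbitrarily small on $(\partial B_\rho)^+$ when $\beta=0$, which prevents a direct comparison on a fixed half-ball and forces one to slide the barrier carefully along the $\tens{A}V$-flow.
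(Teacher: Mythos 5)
Your case $\beta>0$ is essentially a blow-up version of the paper's argument and can be made to work, although the appeal to ``standard stability of viscosity solutions'' is not free of charge here: to transfer the obstacle-side inequality to the limit $u_0=\beta x_d^+$ you must show that the touching point of the (vertically adjusted) test function with $u_r$ can be taken on $\partial\Omega_{u_r}$ rather than inside $\Omega_{u_r}$ or inside the interior of $\{u_r=0\}$, and the usual fix of making the test function strictly $\mathcal{L}$-superharmonic is incompatible with preserving touching from above; one instead has to run a comparison/strong maximum principle argument at a fixed small scale, which is exactly what the paper does directly with an $\mathcal{L}$-harmonic barrier, bypassing the blow-up limit altogether.

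The genuine gap is the case $\beta=0$, which you correctly identify as the heart of the matter but do not resolve. Your plan is to prove a nondegeneracy-type lower bound by sliding a positive subsolution $\psi_\lambda\approx\lambda x_d$ from \emph{below}. This cannot be anchored: a comparison from below needs $\psi_\lambda\le u$ on the whole boundary of the comparison domain, and when $\beta=0$ the expansion \eqref{e:ipotesi_espansione} gives no quantitative lower bound on $u$ anywhere near $0$ (the problem has no general nondegeneracy, as the paper stresses), so there is no room to fit a barrier of linear growth under $u$; sliding along the $\tens{A}V$-flow, as in \cref{lemma:rimango_in_parte_positiva}, only yields qualitative information about the positivity set, not lower bounds on $u$. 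The paper's proof avoids ruling out $\beta=0$ separately: it assumes $\beta<\omega_0$ (which includes $\beta=0$), picks an intermediate slope $\overline\beta\in(\beta,\omega_0)$ with $\overline\beta>0$, builds an $\mathcal{L}$-harmonic barrier $v$ with datum $\overline\beta x_d+\eta\chi$ (so that, by Schauder estimates, $\nabla v(0)\approx\overline\beta e_d$ satisfies the strict inequality $\tens{A}(0)\nabla v(0)\cdot\nabla v(0)<\tens{A}(0)\nabla v(0)\cdot V(0)$), and slides $v+t$ from \emph{above} onto $u_\rho$; this only uses the upper bound $u_\rho\le\beta x_d+\eta\le\overline\beta x_d+\eta$ on $(\partial B_1)^+$, available from \eqref{e:ipotesi_espansione} even when $\beta=0$. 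The strong maximum principle forces the touching to occur at the contact point $0\in\partial\Omega_{u_\rho}\cap B_1'$, where the obstacle-side viscosity condition of \eqref{e:u_viscosity_sol_riscalato} (the ``$\ge$'' inequality for test functions touching from above) is violated by $v$, giving the contradiction and hence $\beta\ge\omega_0$ in all cases. This ``compare from above with a slope strictly between $\beta$ and $\omega_0$'' device is the idea missing from your proposal.
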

\begin{proof} 
As a consequence of \eqref{e:ipotesi_espansione}, we have that, as $r \to 0^+$,
     $u_{r} \to \beta x_d^+$
 uniformly in every compact subsets of $\R^d$. 
Suppose by contradiction that $\beta <\omega_{0}$. Thus, we can find $\overline \beta>0$, such that $\beta<\overline\beta<\omega_{0}$,  namely
    \be
    \label{eq:contrad-for-beta}
    \tens{A}(0)(\overline{\beta} e_d)\cdot (\overline{\beta} e_d)<\tens{A}(0)(\overline{\beta} e_d)\cdot V(0),
    \ee 
    since $\overline\beta> 0$.
 Let $\chi:\partial B_1^+\to \R^+$ be a smooth cut off function defined as 
 $$
\chi:=\left\{
\begin{aligned}
 &0&&\text{on } B_{\sfrac{1}{4}}',\\
    &1&&\text{on } B_{\sfrac{1}{2}}'\cup \left(\partial B_1\right)^+.
\end{aligned}
\right.$$
    For every $\eta>0$, let $v$ be the function given by
    $$\left\{
\begin{aligned}
 &\mathcal{L} v=0&&\text{in } B_1^+,\\
    &v=\overline\beta x_d+\eta\chi(x) &&\text{on } \partial B_1^+,\\
\end{aligned}
\right.$$
By classical Schauder estimates (see e.g. \cite{gilbarg1977elliptic}), for some constant $C>0$, we have that 
\begin{equation}
    \label{e:stima_Schauder}
    \|\nabla v-\overline\beta e_d\|_{L^\infty(B_{\sfrac{1}{2}}^+)}\le C\eta.
\end{equation}
Thus, combining \eqref{e:stima_Schauder} with the hypothesis on $\overline\beta$ \eqref{eq:contrad-for-beta}, we can choose $\eta>0$ small enough such that
\be\label{eq:stima1000}\tens{A}(0)\nabla v(0)\cdot \nabla v(0)<\tens{A}(0)\nabla v(0)\cdot V(0).\ee 
Since $u_r$ converges to $\beta x_d^+$ as $r\to 0^+$, there exists a small radius $\rho = \rho (\eta) >0$ such that for every $x\in(\partial B_1)^+$
$$u_\rho(x)\le \beta x_d+\eta\le\overline \beta x_d+\eta$$
and $u_\rho$ is a viscosity solution of \eqref{e:u_viscosity_sol_riscalato}
 with matrix $\tens{A}_\rho(x):=\tens{A}(\rho x)$ and vector $V_\rho(x):=V(\rho x)$.
Take the family of functions
$$v_t(x):=v+t.$$ 
We observe that for $t>0$ large enough, we have $v_t>u_\rho$ on $\overline{B_{1}^+}$. By decreasing $t$, we can find a value $t_0 \geq 0$ and a touching point $z\in \overline {B_{1}^+}$ such that $v_{t_0}$ touches $u_\rho$ from above at $z$. Precisely, we notice that $t_0\ge0$ since $u_\rho(0)=v_0(0) = v(0)=0$. 
We aim to prove that $t_0=0$. 
Suppose by contradiction that $t_0>0$. 
Since for every $x\in B_1'$ 
$$u_\rho(x)=0\le v(x)<v_{t_0}(x)$$ 
and since for every $x\in( \partial B_1)^+$
$$u_\rho(x)\le \overline\beta x_d+\eta= v(x)<v_{t_0}(x),$$ 
we conclude that $z\in B_{1}^+$. 
More precisely, $z\in\Omega_{u_\rho}\cap B_{1}^+$ since $v_{t_0}>0$ in $B_1^+$.
Then, by the strong maximum principle 
$$v_{t_0}\equiv u_\rho\quad\text{in }\overline{B_1^+},$$
yielding to a contradiction since
$v_{t_0}(0)=t_{0}\not=0=u(0)$. 
Therefore $t_0=0$. 
Hence, we have built a function $v$ which touches $u_\rho$ from above at $0$. Moreover, by \eqref{eq:stima1000}, such a function $v$ satisfies $$\tens{A}_\rho(0)\nabla v(0)\cdot\nabla v(0)<\tens{A}_\rho(0)\nabla v(0)\cdot V_\rho(0),$$ which contradicts the property of being a viscosity solution for $u_\rho$.
    \end{proof}
    
\begin{lemma}
\label{lemma:upper_bound_beta}
     Let $u:B_1\to\R^+$ be a viscosity solution to \eqref{e:u_viscosity_sol_riscalato} and $0\in\partial\Omega_u$. Let $\omega_0>0$ be as in \eqref{e:bound_omega_x}. 
     Suppose that, for some $\beta\ge0$, we have
    \be\label{hyp:lemma}u(x)=\beta x_d+o(\abs{x})\ee
   as $x\to 0$ with $x \in B_1^+$.
   Then,
   \begin{itemize}
   \item either $\beta\leq \omega_{0}$,
   \item or $\beta>\omega_{0}$ and $u>0$ in $ B_\rho^+$, for some $\rho>0$.
   \end{itemize}
\end{lemma}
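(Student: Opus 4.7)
The plan is to argue by contradiction: assume $\beta > \omega_{0}$ and that the zero set of $u$ intersects every half-ball $B_\rho^+$, and construct a barrier from below that contradicts the interior free boundary equality. By closedness of $\{u=0\}$, we obtain a sequence $y_k\in\partial\Omega_u\cap B_1^+$ with $y_k\to 0$; combining $u(y_k)=0$ with the assumed expansion forces $(y_k)_d/|y_k|\to 0$, so these interior free boundary points approach $0$ tangentially to the obstacle. Fixing $\overline{\beta}\in(\omega_0,\beta)$, the definition of $\omega_0$ yields the strict inequality
\begin{equation*}
\tens{A}(0)(\overline\beta e_d)\cdot(\overline\beta e_d)>\tens{A}(0)(\overline\beta e_d)\cdot V(0),
\end{equation*}
which persists in a neighborhood of $0$ by continuity of $\tens{A}$ and $V$.

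The construction of the barrier mirrors that of \cref{lemma:bound_lower-for-beta} but is oriented to be pushed \textit{up}: for a smooth cutoff $\chi:\partial B_1^+\to\R^+$ with $\chi(0)=0$ and $\chi>0$ outside a neighborhood of $0$, and a small parameter $\eta>0$, let $v$ solve $\mathcal{L}_\rho v=0$ in $B_1^+$ with boundary data $v=\overline\beta x_d-\eta\chi$ on $\partial B_1^+$, where $\mathcal{L}_\rho$ uses the rescaled coefficients $\tens{A}_\rho(x)=\tens{A}(\rho x)$. Then $v(0)=0$ and Schauder estimates give $\nabla v(0)=\overline\beta e_d+O(\eta)$. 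Rescaling $u_{\rho_k}(x):=u(\rho_k x)/\rho_k$ with $\rho_k=|y_k|\to 0^+$, \cref{cor:valeugualglianza_blow_up_u} yields $u_{\rho_k}\to \beta x_d^+$ uniformly on $\overline{B_1}$, and since $\overline\beta<\beta$ we have $v<u_{\rho_k}$ strictly on $\partial B_1^+\setminus\{0\}$ for $\rho_k$ small enough.

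Slide the family $v_t:=v+t$ upward and let $t_0\ge 0$ be the first time $v_{t_0}$ touches $u_{\rho_k}$ from below at some $z\in\overline{B_1^+}$. The strict boundary inequality keeps the touching point off $\partial B_1^+\setminus\{0\}$. If $z\in\Omega_{u_{\rho_k}}$, the strong maximum principle applied to the $\mathcal{L}_{\rho_k}$-harmonic functions $v_{t_0}$ and $u_{\rho_k}$ forces $v_{t_0}\equiv u_{\rho_k}$ in the connected component containing $z$, contradicting the boundary data on $\partial B_1^+$. Hence $z\in\partial\Omega_{u_{\rho_k}}\cap\overline{B_1^+}$. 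In the case $z\in\partial\Omega_{u_{\rho_k}}\cap B_1^+$ (an \textit{interior} free boundary point), the viscosity equality at $z$ together with $\nabla v_{t_0}(z)\to\overline\beta e_d$ and $\tens{A}_{\rho_k}(z)\to \tens{A}(0)$, $V_{\rho_k}(z)\to V(0)$ as $\eta,\rho_k\to 0$ contradicts the strict inequality above, and completes the argument.

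The main obstacle is excluding the alternative $z=0$: on the obstacle $B_1'$ only the weaker one-sided condition $\tens{A}\nabla v\cdot\nabla v\ge\tens{A}\nabla v\cdot V$ is available, and it is compatible with $\overline\beta>\omega_0$. To rule this out one uses the existence of the interior free boundary points $\xi_k=y_k/\rho_k\in B_1^+\cap\partial\Omega_{u_{\rho_k}}$ supplied by the contradiction hypothesis: a quantitative comparison of $v_{t_0}$ with $u_{\rho_k}$ near these interior free boundary points, together with a careful calibration of the cutoff $\chi$ and of $\eta$, forces the first contact to happen at one of the $\xi_k$ rather than at the origin. This interior touching then produces the contradiction and closes the dichotomy.
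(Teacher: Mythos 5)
There is a genuine gap, and it sits exactly where you flag it: excluding a first contact on the obstacle. Your sliding family satisfies $v(0)=0=u_{\rho_k}(0)$ (and your boundary datum $\overline\beta x_d-\eta\chi$ is $\le 0=u_{\rho_k}$ on all of $B_1'$), so contact at the origin, or elsewhere on $\{x_d=0\}$, is not an exceptional scenario but the generic one; and at such points \cref{def:def-sol_sezione_esistenza} provides \emph{no} viscosity information for test functions touching from below (the obstacle condition is only the one-sided inequality for touching from above, which, as you note, is compatible with $\overline\beta>\omega_0$). The final paragraph of your proposal — that the interior free boundary points $\xi_k=y_k/\rho_k$ together with a ``careful calibration of the cutoff $\chi$ and of $\eta$'' force the first contact to happen at one of the $\xi_k$ — is precisely the heart of the lemma and is asserted, not proved. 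It is not clear it can be proved in this form: the existence of interior free boundary points accumulating at $0$ does not prevent the graphs of $v_{t_0}$ and $u_{\rho_k}$ from meeting first at the origin, where the argument dies. (Your strict inequality $\tens{A}(0)(\overline\beta e_d)\cdot(\overline\beta e_d)>\tens{A}(0)(\overline\beta e_d)\cdot V(0)$ and the idea that touching from below at an \emph{interior} free boundary point yields a contradiction are correct; the problem is only that you never secure such a touching point.)

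The paper's proof avoids this difficulty by a different design of the barrier and by proving the positivity statement directly, with no contradiction hypothesis about free boundary points near $0$. One prescribes the Dirichlet datum $\chi(x_d)\,(\beta x_d-\eta)$ on $\partial B_1^+$, where the cutoff vanishes for $x_d\le\sfrac{\eta}{\beta}$: thus $v=0$ on $B_1'$ and on a full spherical strip near it, while on the rest of the sphere $u_\rho\ge\beta x_d-\eta\ge v$ because $u_\rho\to\beta x_d^+$. Sliding $v_t=v-t$ up from below, for $t_0>0$ one has the \emph{strict} inequality $v_{t_0}<u_\rho$ on all of $\partial B_1^+$ (including the flat part), so the first contact point must lie in the open half-ball, where the strict violation of the free boundary condition (plus the strong maximum principle for contact in $\Omega_{u_\rho}$ or in the interior of $\{u_\rho=0\}$) gives the contradiction; hence $t_0=0$ and $u_\rho\ge v>0$ in $B_1^+$. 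If you want to keep your scheme, you need to replace the calibration claim by an argument of this type (or another mechanism that pins the contact point away from $\{x_d=0\}$); as written, the proposal does not prove the second alternative of the dichotomy.
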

    \begin{proof} 
As a consequence of \eqref{hyp:lemma}, we have that, as $r \to 0^+$, $u_{r} \to \beta x_d^+$ uniformly in every compact subsets of $\R^d$.
 Suppose that 
 $\beta > \omega_{0}$. Since $\omega_{0}>0$, then
 \be\label{eq:beta-contr}\tens{A}(0)({\beta} e_d)\cdot ({\beta} e_d)<\tens{A}(0)({\beta} e_d)\cdot V(0).\ee
 We want to prove that $u>0$ in $B_\rho^+$, for some $\rho>0$. 
 For every $\eta >0$, take a cut off function 
 $\chi\in C^\infty([0,1])$ defined as 
 $$\chi(t):=\left\{
\begin{aligned}
 &0&&\text{for } t\in[0,\sfrac{\eta}{\beta}],\\
    &1&&\text{for } t\in[\sfrac{2\eta}{\beta},1],\\
\end{aligned}
\right.$$
   Let $v$ be the function given by
    \bea\label{def-v-barriera}\left\{
\begin{aligned}
 &\mathcal{L} v=0&&\text{in } B_1^+,\\
    &v=\chi(x_d)(\beta x_d-\eta) &&\text{on } \partial B_1^+,\\
\end{aligned}
\right.\eea
By classical Schauder estimates (see e.g. \cite{gilbarg1977elliptic}), for some constant $C>0$, we have that 
\be\label{eq:stima-schauder2}\|\nabla v-\beta e_d\|_{L^\infty(B_{1}^+)}\le C\eta.\ee
Hence, as a consequence of \eqref{eq:stima-schauder2} and the hypothesis on $\beta$ \eqref{eq:beta-contr}, we can choose $\eta>0$ and $\rho>0$ small enough such that for every $x\in B_1^+$
\be\label{e:stima_gradientr_Vrho}\tens{A}_\rho( x)\nabla v(x)\cdot \nabla v(x)<\tens{A}_\rho(x)\nabla v(x)\cdot V_\rho( x), \ee
where $\tens{A}_\rho(x):=\tens{A}(\rho x)$ and $V_\rho(x):=V(\rho x)$.
 Since $u_r \to \beta x_d^+$ as $r \to 0^+$, up to decreasing $\rho=\rho(\eta)$, it holds for every $x\in(\partial B_1)^+$ that
 $$u_\rho(x)\ge \beta x_d-\eta,$$ 
 and $u_\rho$ is a viscosity solution of \eqref{e:u_viscosity_sol_riscalato}
 with matrix $\tens{A}_\rho(x)$ and vector $V_\rho(x)$.
 Let us consider the family of functions
    $$v_t(x):=v-t.$$ 
    We observe that $v_t<u_\rho$ on $\overline{B_{1}^+}$ for some $t>0$ large enough. By decreasing $t$, we can find a value $t_0\ge0$ and a touching point $z\in \overline {B_{1}^+}$ such that $v_{t_0}$ touches $u_\rho$ from below at $z$.
    Precisely, we notice that $t_0\ge0$, since $u_\rho(0)=v_0(0)= v(0)=0$.
    We aim to show that $t_{0} = 0$. Let us suppose by contradiction that $t_0>0$. 
    Since for every $x\in\partial B_1^+\cap\{x_d\le \sfrac{\eta}{\beta}\}$,
    $$u_\rho(x)\ge 0=v(x)>v_{t_0}(x)$$ 
    and for every $x\in \partial B_1\cap\{x_d>\sfrac{\eta}{\beta}\}$
    $$u_\rho(x)\ge \beta x_d-\eta\ge \chi(x_d)(\beta x_d-\eta)=v(x)>v_{t_0}(x),$$ 
    then $z\in B_{1}^+$. Thus, the following three cases can appear.
    \begin{itemize}
        \item If $z\in \partial\Omega_{u_\rho}\cap B_{1}^+$, then we have built a test function $v_{t_0}$ which touches $u_\rho$ from below at $z$. Moreover, such a function satisfies 
        $\tens{A}_\rho(z)\nabla v_{t_0}(z)\cdot \nabla v_{t_0}(z)>\tens{A}_\rho(z)\nabla v_{t_0}(z)\cdot V_\rho(z)$, by \eqref{e:stima_gradientr_Vrho}, which yields to a contradiction since $u_\rho$ is a viscosity solution. 
        \item If $z\in\Omega_{u_\rho}\cap B_{1}^+$, then by the strong maximum principle, $u_\rho\equiv v_{t_0}$ in the connected component of $\Omega_{u_\rho}$ containing $z$. Thus, there exists a point $y\in \partial\Omega_{u_\rho} \cap B_{1}^+$ such that $v_{t_0}$ touches $u_\rho$ from below at $y$; getting the contradiction as in the previous case.
        \item If $z\in B_1^+\setminus\overline\Omega_u$, i.e.~$z$ belongs in the interior of $\{u=0\}$, then $v_{t_0} = 0$ in a neighborhood of $z$. 
        Thus, by the maximum principle, $v_{t_0}$ is identically $0$ in a neighborhood of $z$, which is a contradiction.  
    \end{itemize}
   Therefore $t_0=0$. Hence, we obtain that $u_\rho\ge v>0$ in $B_1^+$, which is exactly the thesis.
\end{proof}

We are finally in position to prove the main result of this section, namely \cref{prop:punti_regolari_al_bordo}, which shows that all contact points $x_0 \in \partial \Omega_u \cap \{x_d = 0\}$ are regular points.

\begin{proof}[Proof of \cref{prop:punti_regolari_al_bordo}]
Since $u$ is a viscosity solution to \eqref{e:u_viscosity_sol_riscalato}, as a consequence of \cref{cor:valeugualglianza_blow_up_u}, there exists $\beta \geq 0$ such that
$$
u(x)=\beta x_d+o(|x|).
$$
Combining \cref{lemma:bound_lower-for-beta} and \cref{lemma:upper_bound_beta} we get exactly
the dichotomy of the proposition, namely the thesis.
\end{proof}

\subsection{Improvement of flatness for branching points}\label{subsec:iof_branching}
In this subsection we prove the  following improvement of flatness lemma for branching points of viscosity solutions of \eqref{e:u_viscosity_sol_riscalato}.

\begin{lemma}[Improvement of flatness]
\label{lemma:IOF_bordo}

Let $u:B_1\to\R^+$ be a viscosity solution of \eqref{e:u_viscosity_sol_riscalato} and suppose that $0\in\partial \Omega_u$ is a branching point. 
Let $\alpha\in(0, \alpha_0)$, where $ \alpha_0$ is defined in \cref{thm:oblthin2}.
For all $\delta>0$ and $\alpha\in(0,\alpha_0)$, there are constants $\overline \e>0$, $C>0$ and $\eta\in(0,1)$ such that the following holds.
  Suppose that, for all $x\in B_1$, 
  \be\label{nuova4}
  \tens{A}(x)e_d\cdot V(x)\ge \delta 
  \ee  
and $u$ is $\eps$-flat along the direction $e_d$ for some $\eps\in(0,\overline\eps]$, namely, for every $x\in B_1$
    $$
    \left(\omega_0x_d - \eps\right)^+\le u(x)\le  \left(\omega_0x_d+ \eps\right)^+,
    $$ 
    where $\omega_0$ is defined in \eqref{e:bound_omega_x}.
    Moreover, we also suppose that
    \begin{equation}
    \label{e:ipotesi_PH_tensoreA}
        \| \tens{A}(x)- \tens{I}\|_{L^\infty(B_1)}\le \eps^{\sfrac{\alpha_0}{\alpha}}, \qquad \hbox{and} \qquad \tens{A}(0) = \tens{I},
    \end{equation}
    and
    \begin{equation}
    \label{e:ipotesi_PH_vettoreV}
        \| V(x)- V(0)\|_{L^\infty(B_1)}\le \eps^{\sfrac{\alpha_0}{\alpha}}.
    \end{equation}
    Then, $u_\eta$ is $\eps \eta^{\alpha}$-flat along $e_d$ in $B_{1}$, namely for every $x\in B_1$
    \begin{equation*}
        \label{e:flatness_scala_eta_bordo}
        \left(\omega_0x_d- \eps\eta^{\alpha}\right)^+\le u_\eta(x)\le  \left(\omega_0x_d+ \eps\eta^{\alpha}\right)^+.
    \end{equation*}
\end{lemma}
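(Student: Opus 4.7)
The plan is to argue by contradiction and compactness, in the spirit of \cite{de2011free}, with the linearized problem being the oblique thin obstacle problem \eqref{e:thin_intro}. Suppose the statement fails: then there exist sequences $u_j$, $\tens{A}_j$, $V_j$ and $\eps_j\downarrow 0$ satisfying the hypotheses but with the conclusion violated for every fixed $\eta$. Writing $\omega_{0,j}:=V_j(0)\cdot e_d$ (so that $\omega_{0,j}\ge 2\delta$ by \eqref{nuova4} combined with $\tens{A}_j(0)=\tens{I}$), I would introduce the rescaled functions $\widetilde u_j(x):=(u_j(x)-\omega_{0,j}x_d)/\eps_j$ on $\overline{\Omega_{u_j}}\cap B_1$, which are uniformly bounded by the flatness. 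The first step is a boundary partial Harnack inequality analogous to \cref{lemma:PH_inequality}, whose iteration yields uniform H\"older estimates for $\widetilde u_j$ on compact subsets of $\overline{B_{\sfrac{1}{2}}^+}$. The new difficulty relative to the interior case is that the sliding barrier cannot be performed on the obstacle side because of the one-sided condition \eqref{eq:bondary-intro}; this is precisely where the ad hoc barrier \cref{lemma:useful_non_piace_Giulia} is required.

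By Arzel\`a--Ascoli and the Hausdorff convergence of the graphs $\{(x,\widetilde u_j(x))\}$, a subsequence converges uniformly on $\overline{B_{\sfrac{1}{2}}^+}$ to a limit $v$. Plugging $\nabla u_j=\omega_{0,j}e_d+\eps_j\nabla\widetilde u_j$ into \eqref{e:u_viscosity_sol_riscalato} and using \eqref{e:ipotesi_PH_tensoreA}, \eqref{e:ipotesi_PH_vettoreV} to freeze coefficients at the origin, the interior PDE becomes $\Delta v=0$ in $B_{\sfrac{1}{2}}^+$, while the free boundary condition expands at leading order as
\begin{equation*}
\tens{A}_j\nabla u_j\cdot\nabla u_j-\tens{A}_j\nabla u_j\cdot V_j=\eps_j\,\nabla\widetilde u_j\cdot(2\omega_0 e_d-V(0))+o(\eps_j),
\end{equation*}
the zeroth-order cancellation using $\omega_0=V(0)\cdot e_d$. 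After the harmless sign convention that makes the trace on $B_1'$ non-negative (which follows from $u_j\le\omega_{0,j}x_d+\eps_j$ together with $u_j\equiv 0$ on $\{x_d\le 0\}$), $v$ is identified as a viscosity solution of \eqref{e:thin_intro} with obliqueness $W:=2\omega_0 e_d-V(0)$, satisfying $W\cdot e_d=\omega_0\ge 2\delta>0$. Moreover $v(0)=0$ since $u_j(0)=0$, and the branching property of $u_j$ at the origin is inherited in the limit: $0\in\overline{\{v>0\}\cap B_1'}$, because for every $r>0$ the free boundary $\partial\Omega_{u_j}\cap B_r^+$ is non-empty and collapses onto $B_1'$ as $\eps_j\to 0$.

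The crux of the argument is then the identity $\nabla v(0)=0$. By \cref{thm:oblthin2}, $v\in C^{1,\alpha_0}(\overline{B_{\sfrac{1}{2}}^+})$, and we have the expansion $v(x)=\nabla v(0)\cdot x+O(|x|^{1+\alpha_0})$. The tangential part $\nabla_{x'}v(0)$ vanishes because $v|_{B_1'}\ge 0$ attains its minimum at the origin. For the normal part, the continuity of $\nabla v$ together with the equality $\nabla v\cdot W=0$ on $\{v>0\}\cap B_1'$ passes to the limit at any point of the closure of that positivity set, and hence at $0$; combined with $\nabla_{x'}v(0)=0$ and $W_d>0$, this gives $\partial_d v(0)=0$. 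Therefore $\nabla v(0)=0$ and $|v(x)|\le C|x|^{1+\alpha_0}$ in a neighborhood of the origin.

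To close the contradiction, fix $\eta\in(0,1)$ small enough that $C\eta^{\alpha_0}\le\tfrac14\eta^{\alpha}$, which is possible because $\alpha<\alpha_0$. Then $|v(\eta x)/\eta|\le\tfrac14\eta^{\alpha}$ for $x\in B_1$, and unraveling the definition of $\widetilde u_j$ yields, for $j$ large,
\begin{equation*}
\bigl|u_j(\eta x)-\omega_0\,\eta x_d\bigr|\le\tfrac12\,\eps_j\,\eta^{1+\alpha}\quad\text{for all }x\in B_1,
\end{equation*}
after absorbing the negligible corrections from $|\omega_{0,j}-\omega_0|$ via \eqref{e:ipotesi_PH_tensoreA}, \eqref{e:ipotesi_PH_vettoreV}. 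This contradicts the failure of the conclusion along the sequence. The main technical hurdle will be the boundary partial Harnack: the one-sided condition \eqref{eq:bondary-intro} forces replacing the standard sliding argument by the barrier of \cref{lemma:useful_non_piace_Giulia}; a secondary delicate point is verifying that the limit $v$ is admissible for \eqref{e:thin_intro}, particularly the sign of its trace on $B_1'$.
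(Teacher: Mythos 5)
Your overall architecture coincides with the paper's: compactness via the boundary partial Harnack (with the barrier of \cref{lemma:useful_non_piace_Giulia} replacing the sliding argument on the obstacle side), linearization of \eqref{e:u_viscosity_sol_riscalato} to the oblique thin obstacle problem \eqref{e:thin_intro} with $W=2\omega_0 e_d-V(0)$, and the $C^{1,\alpha_0}$ estimate of \cref{thm:oblthin2} to expand the limit at the origin. The gap is at the crux, namely your claim that $\partial_d v(0)=0$. You deduce it from ``$0\in\overline{\{v>0\}\cap B_1'}$, because for every $r>0$ the free boundary $\partial\Omega_{u_j}\cap B_r^+$ is non-empty and collapses onto $B_1'$''. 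This is a non sequitur: branching of $u_j$ at the origin produces free boundary points at heights $x_d>0$ arbitrarily close to $0$, but those heights may be $o(\eps_j)$, in which case the corresponding graph values $-\widetilde u_j=\omega_0^j x_d/\eps_j$ tend to $0$ and leave no positivity of $v$ on the thin space in the limit. A limit of the form $v(x)=-\beta x_d$ near the origin, with $\beta>0$ and $v\equiv 0$ on $B_1'$, is a perfectly admissible solution of \eqref{e:thin_intro} and is not excluded by the limiting problem alone; excluding it is exactly where the branching hypothesis must be used quantitatively at finite $j$, and your soft argument does not do this (indeed, proving $0\in\overline{\{v>0\}\cap B_1'}$ would itself require a barrier/non-degeneracy argument of the same nature).

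The paper closes this step differently, and only needs a one-sided smallness rather than exact vanishing: from the limit problem one gets $\nabla_{x'}\widetilde u(0)=0$ and $\partial_d\widetilde u(0)\ge 0$, and the Taylor expansion yields, for large $j$ and $x\in B_\eta\cap\overline\Omega_{u_j}$,
\begin{equation*}
\bigl|u_j(x)-\bigl(\omega_0^j+\eps_j\,\partial_d\widetilde u(0)\bigr)x_d\bigr|\le \tfrac12\,\eps_j\,\eta^{1+\alpha}.
\end{equation*}
Then \cref{lemma:useful_non_piace_Giulia}, applied at scale $\eta$, shows that if $\eps_j\,\partial_d\widetilde u(0)>C_0\cdot\tfrac12\eps_j\eta^{1+\alpha}$ then $u_j>0$ in $B_\eta^+$, contradicting that $0$ is a branching point of $u_j$; hence $\partial_d\widetilde u(0)\le C_0\eta^{1+\alpha}/2$, which is small enough to be absorbed into the $\eps\eta^{\alpha}$ flatness. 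You cite \cref{lemma:useful_non_piace_Giulia} only inside the partial Harnack, and miss this second, decisive use of it; you should replace your ``$\nabla v(0)=0$'' step by this argument (or by an honest proof of your positivity claim, which again needs the barrier). As a minor point, your justification of the non-negative trace of $v$ on $B_1'$ is also off: $u_j\le\omega_0^j x_d+\eps_j$ only gives $\widetilde u_j\le 1$; the sign comes from the fact that the graph values of $\widetilde u_j$ at points of $\overline\Omega_{u_j}$ on or collapsing to $\{x_d=0\}$ (thin-space points and free boundary points) are $\le 0$.
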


Similarly as done in \cref{susec:piatto_implica_C2alpha}, we provide the proof of \cref{lemma:IOF_bordo} by showing a partial Harnack inequality in \cref{subsec:partialharnack_bordo} and studying the regularity of the associated linearized problem in \cref{susec:reg_linarized_proble_bordo}. Finally, the proof of \cref{lemma:IOF_bordo} is presented in \cref{subsec:proof-of-boundary-iof}.

In the following, the next two technical lemmas will be useful.

\begin{lemma}
\label{lemma:useful_non_piace_Giulia}
    For all $\delta>0$, there exist constants $C_0>0$ and $\overline\eps>0$ such that the following holds.
    Let $u:B_1\to\R^+$ be a viscosity solution of \eqref{e:u_viscosity_sol_riscalato} and suppose that \eqref{nuova4}, \eqref{e:ipotesi_PH_tensoreA}, \eqref{e:ipotesi_PH_vettoreV} hold true, for some $\eps\in(0,\overline{\eps}]$. Suppose that, for every $x\in B_1$, we have 
    $$u(x)\ge (\beta x_d-\eps)^+\qquad\text{and}\qquad \beta\ge\omega_0+ C_0\eps,$$ where $\omega_0$ is defined in \eqref{e:bound_omega_x}. Then $u>0$ in $ B_1^+$.
\end{lemma}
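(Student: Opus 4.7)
The plan is to construct an $\mathcal{L}$-harmonic barrier $v$ that strictly violates the free-boundary viscosity inequality in the direction opposite to the one allowed for subsolutions, lies below $u$ on $\partial B_1^+$, and is positive in $B_1^+$; a sliding argument from below then forces $u \ge v > 0$. The construction mirrors the one in the proof of \cref{lemma:upper_bound_beta}: fix a smooth cutoff $\chi\in C^\infty([0,1])$ with $\chi\equiv 0$ on $[0,\sfrac{\eps}{\beta}]$ and $\chi\equiv 1$ on $[\sfrac{2\eps}{\beta},1]$, and let $v$ be the $\mathcal{L}$-harmonic extension in $B_1^+$ of the datum $\chi(x_d)(\beta x_d - \eps)$ prescribed on $\partial B_1^+$. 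By construction one has $0 \le v \le (\beta x_d - \eps)^+ \le u$ on $\partial B_1^+$, with equality $v = u = 0$ on $B_1'$.

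The key quantitative input comes from the gradient estimate. By the same Schauder-type argument used to obtain \eqref{eq:stima-schauder2}, one gets $\|\nabla v - \beta e_d\|_{L^\infty(B_1^+)} \le C\eps$. Using $\tens{A}(0)=\tens{I}$, so that $\omega_0 = V(0)\cdot e_d$ by \eqref{e:bound_omega_x}, a direct expansion together with \eqref{e:ipotesi_PH_tensoreA} and \eqref{e:ipotesi_PH_vettoreV} yields, for every $x\in B_1^+$,
$$
\tens{A}(x)\nabla v\cdot \nabla v - \tens{A}(x)\nabla v\cdot V(x) = \beta(\beta - \omega_0) + O(\eps) \ge 2\delta\, C_0\,\eps - C'\eps,
$$
where I used $\beta\ge\omega_0+C_0\eps$ and $\beta\ge\omega_0\ge 2\delta$ coming from \eqref{nuova4}. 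Choosing $C_0 = C_0(\delta)$ sufficiently large (and $\overline\eps$ small enough to absorb the $O(\eps^{\sfrac{\alpha_0}{\alpha}})$ contributions of the coefficient perturbations), this gives the strict inequality
$$
\tens{A}(x)\nabla v(x)\cdot \nabla v(x) > \tens{A}(x)\nabla v(x)\cdot V(x) \qquad \text{for every } x\in\overline{B_1^+}.
$$

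With this strict reverse inequality, I slide: set $v_t := v - t$ and $t_0 := \inf\{t\ge 0 : v_t \le u \ \text{in}\ \overline{B_1^+}\}$. If $t_0>0$, then from $v\le u$ on $\partial B_1^+$ one actually has $v_{t_0}<u$ on $\partial B_1^+$, so any touching point $z$ must lie in $B_1^+$, and three cases are possible as in the proof of \cref{lemma:upper_bound_beta}. Either $z \in \Omega_u \cap B_1^+$, in which case both $u$ and $v_{t_0}$ are $\mathcal{L}$-harmonic near $z$ and the strong maximum principle propagates equality to the whole connected component of $z$; this component cannot meet $\partial B_1^+$ (strict inequality) nor $B_1'$ (where $v_{t_0}=-t_0<0=u$), so its boundary in $B_1^+$ must contain a free boundary point, reducing to the next case. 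Or $z \in \partial\Omega_u \cap B_1^+$, in which case $v_{t_0}$ is a smooth test function touching $u$ from below at $z$ and the viscosity inequality gives $\tens{A}(z)\nabla v_{t_0}\cdot \nabla v_{t_0} \le \tens{A}(z)\nabla v_{t_0}\cdot V(z)$, contradicting the strict reverse inequality above. Or $z \in \mathrm{int}(\{u=0\})\cap B_1^+$, where $v_{t_0}\le u=0$ in a neighborhood with equality at $z$ forces, via the strong maximum principle, $v_{t_0}\equiv 0$ in that neighborhood, contradicting the non-constancy of $v$ (which would then be globally constant by unique continuation, incompatible with its non-constant boundary datum). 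Hence $t_0=0$, so $u \ge v$ in $B_1^+$; since $v\ge 0$ on $\partial B_1^+$ and $v>0$ on $(\partial B_1)^+\cap\{x_d\ge \sfrac{2\eps}{\beta}\}$, the strong maximum principle gives $v>0$ in $B_1^+$, whence $u>0$ in $B_1^+$.

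The main obstacle is the quantitative calibration of $C_0$: it must be chosen depending only on $\delta$ (and on the Schauder constant appearing in the gradient estimate for $v$) so that the gain $\beta(\beta-\omega_0) \ge 2\delta C_0\eps$ strictly dominates both the $O(\eps)$ error from the barrier gradient estimate and the $O(\eps^{\sfrac{\alpha_0}{\alpha}})$ errors produced by the coefficient perturbations \eqref{e:ipotesi_PH_tensoreA} and \eqref{e:ipotesi_PH_vettoreV}; since $\sfrac{\alpha_0}{\alpha}>1$, the latter are $o(\eps)$ and are absorbed by taking $\overline\eps$ small.
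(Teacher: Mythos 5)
Your proof follows essentially the same strategy as the paper's: build an $\mathcal L$-harmonic barrier from a cutoff boundary datum, apply Schauder estimates to control its gradient, verify that the barrier strictly violates the free-boundary inequality, and slide. The paper's proof of \cref{lemma:upper_bound_beta} is indeed the right template.

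There is, however, a quantitative slip in the choice of barrier slope. You use $\beta$ directly, but the lemma gives no upper bound on $\beta$, and your error analysis does not track how the ``$O(\eps)$'' term depends on $\beta$. If one writes $\nabla v = \beta e_d + E$ with $\abs{E}\le C\eps$, the cross term $2\beta\, e_d\cdot E$ in $\abs{\nabla v}^2$ contributes $O(\beta\eps)$, and the coefficient perturbations contribute $O(\beta^2\eps^{\sfrac{\alpha_0}{\alpha}})$, so your lower bound $\beta(\beta-\omega_0)\ge 2\delta C_0\eps$ (which does not scale with $\beta$) can be swamped when $\beta$ is large. The correct lower bound is $\beta(\beta-\omega_0)\ge\beta C_0\eps$, and even then the $\beta^2\eps^{\sfrac{\alpha_0}{\alpha}}$ term requires a bound on $\beta$ to be absorbed with a choice of $C_0=C_0(\delta)$ and $\overline\eps=\overline\eps(\delta)$. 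The paper sidesteps this entirely by observing that $u\ge(\beta x_d-\eps)^+\ge\bigl((\omega_0+C_0\eps)x_d-\eps\bigr)^+$, and then building the barrier with the bounded slope $\omega_0+C_0\eps$; with that choice the main term $\omega_0C_0\eps$ and all error terms are uniformly $O(\eps)$ with constants depending only on $\delta$, and the calibration of $C_0$ is immediate. You should either make this WLOG reduction explicit, or replace $2\delta C_0\eps$ by $\beta C_0\eps$ and add the (mild, but necessary) observation that $\beta$ may be taken bounded. Also note that \eqref{nuova4} together with $\tens{A}(0)=\tens{I}$ gives $\omega_0\ge\delta$, not $\omega_0\ge 2\delta$ as you wrote.
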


\begin{proof}
Let $\chi\in C^\infty([0,1])$ be a cut-off function defined by
 $$\chi(t):=\left\{
\begin{aligned}
 &0&&\text{for } t\in\left[0,\frac{\eps}{(\omega_0+C_0\eps)}\right],\\
    &1&&\text{for } t\in\left[\frac{2\eps}{(\omega_0+C_0\eps)},1\right],\\
\end{aligned}
\right.$$
   and let $v$ be the function given by
    \bea\left\{
\begin{aligned}
 &\mathcal{L} v=0&&\text{in } B_1^+,\\
    &v=\chi(x_d)((\omega_0+C_0\eps) x_d-\eps) &&\text{on } \partial B_1^+.
\end{aligned}
\right.\eea
Similarly as in \cref{lemma:bound_lower-for-beta},
for some $\widetilde C>0$, we have that
$$\|\nabla v-(\omega_0+C_0\eps) e_d\|_{L^\infty(B_{1}^+)}\le \widetilde C\eps,$$ 
and $u\ge v$ on $\partial B_1^+$. 
To prove that $u\ge v$ in $ B_1^+$ it is sufficient to show that $v$ solves, for every $x\in B_1^+$, 
$$ \tens{A}(x)\nabla v(x)\cdot \nabla v(x)> \tens{A}(x)\nabla v(x)\cdot V(x).$$ 
First, we observe that, by \eqref{e:ipotesi_PH_tensoreA} and \eqref{e:ipotesi_PH_vettoreV}, it holds that
\begin{equation}
    \label{e:stima_1_lemma_terribile}
    \tens{A}(x)\nabla v(x)\cdot \nabla v(x)-\tens{A}(x)\nabla v(x)\cdot V(x)\ge\abs{\nabla v(x)}^2-\nabla v(x)\cdot V(0)+o(\eps)
\end{equation}
Moreover, using the definition of $\omega_0$ in \eqref{e:bound_omega_x}, we have 
$$\omega_0^2=\omega_0e_d\cdot V(0),$$ thus, by \eqref{nuova4}, we get
\begin{align}
\nonumber
\abs{\nabla v(x)}^2-\nabla v(x)\cdot V(0)&\ge (\omega_0+C_0\eps)^2e_d\cdot e_d-\widetilde C^2\eps^2-(\omega_0+C_0\eps)e_d\cdot V(0)-\|V\|_{L^\infty(B_1)}\widetilde C\eps\\
\nonumber
&=2\omega_0C_0\eps-C_0\eps e_d\cdot V(0)-\|V\|_{L^\infty(B_1)}\widetilde C\eps+o(\eps)\\
\nonumber
&=C_0\eps V(0)\cdot e_d -\|V\|_{L^\infty(B_1)}\widetilde C\eps+o(\eps)\\
\label{eq:stima34} 
&\ge \left(2C_0\delta -\|V\|_{L^\infty(B_1)}\widetilde C\right)\eps+o(\eps).
\end{align}
Combining \eqref{e:stima_1_lemma_terribile} and \eqref{eq:stima34}, choosing $C_0>0$ large enough and $\eps$ small enough, we get that $u\geq v >0$ in $B_1^+$ which is the thesis.
\end{proof}
\begin{lemma}
\label{lemma:utile}
Let $u:B_1\to\R^+$ be a viscosity solution of \eqref{e:u_viscosity_sol_riscalato}. Suppose that \eqref{e:ipotesi_PH_tensoreA}, \eqref{e:ipotesi_PH_vettoreV} hold true, for some $\eps>0$. Let $\varphi\in C^\infty_c(B_1)$ be a regular function, $\omega_0$ be as in \eqref{e:bound_omega_x} and $q:=\omega_0x_d+\eps \varphi$. 
Then, as $\eps\to0^+$, we have
\begin{equation}
    \label{e:sviluppo_divergenza_eps}
    {\rm div}(\tens{A}(x)\nabla q)=\eps \Delta \varphi+o(\eps)
\end{equation}
and
\begin{equation}
    \label{e:sviluppo_condizione_al_bordo_eps}
    \tens{A}(x)\nabla q\cdot\nabla q-\tens{A}(x)\nabla q\cdot V(x)=\eps\,\nabla \varphi\cdot W(0)+o(\eps),
\end{equation}
where 
\begin{equation}
    \label{e:W(0)_bordo}
    W(0):=2\omega_0 e_d-V(0).
\end{equation}
\end{lemma}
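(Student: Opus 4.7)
The plan is a straightforward Taylor expansion of both sides in the small parameter $\eps$, using $\tens{A}(0)=\tens{I}$, the identity $\omega_0 = V(0)\cdot e_d$ (immediate from \eqref{e:bound_omega_x} with $\tens{A}(0)=\tens{I}$), and the smallness bounds \eqref{e:ipotesi_PH_tensoreA}, \eqref{e:ipotesi_PH_vettoreV}. The crucial quantitative remark is that since $\alpha<\alpha_0$, the exponent $\alpha_0/\alpha>1$, so $\eps^{\alpha_0/\alpha}=o(\eps)$; every error term collected from ``freezing coefficients at the origin'' will be of this order.

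First I would write $\nabla q=\omega_0 e_d+\eps\nabla\varphi$ and split
$$\mathrm{div}(\tens{A}(x)\nabla q)\,=\,\omega_0\,\mathrm{div}(\tens{A}(x) e_d)\,+\,\eps\,\mathrm{div}(\tens{A}(x)\nabla\varphi).$$
For the second summand, adding and subtracting the identity gives $\eps\Delta\varphi+\eps\,\mathrm{div}\big((\tens{A}(x)-\tens{I})\nabla\varphi\big)$; expanding the divergence and using $\|\tens{A}-\tens{I}\|_{L^\infty}\le\eps^{\alpha_0/\alpha}$ together with the fact that the derivatives of $\tens{A}$ inherit the scale $\eps^{\alpha_0/\alpha}$ from the rescaling that produced \eqref{e:ipotesi_PH_tensoreA} (with $\tens{A}(0)=\tens{I}$), this yields $\eps\Delta\varphi+o(\eps)$. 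The first summand $\omega_0\sum_i\partial_i\tens{A}_{id}(x)$ is similarly an $o(\eps)$ term. This proves \eqref{e:sviluppo_divergenza_eps}.

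For \eqref{e:sviluppo_condizione_al_bordo_eps}, I would expand the quadratic form:
$$\tens{A}(x)\nabla q\cdot\nabla q-\tens{A}(x)\nabla q\cdot V(x)=\omega_0\,\tens{A}(x)e_d\cdot(\omega_0 e_d-V(x))+\eps\,\nabla\varphi\cdot\big(2\omega_0\tens{A}(x)^T e_d-\tens{A}(x)^T V(x)\big)+O(\eps^2).$$
The zero-order term in $\eps$ vanishes at $x=0$, because $\tens{A}(0)e_d=e_d$ and $\omega_0 e_d-V(0)$ is orthogonal to $e_d$ by the very definition of $\omega_0$. For $x\ne 0$, replacing $\tens{A}(x)$ and $V(x)$ by their values at the origin costs $O(\eps^{\alpha_0/\alpha})=o(\eps)$ thanks to \eqref{e:ipotesi_PH_tensoreA}--\eqref{e:ipotesi_PH_vettoreV}. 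For the first-order term, symmetry of $\tens{A}(0)=\tens{I}$ gives at the origin $\eps\,\nabla\varphi\cdot(2\omega_0 e_d-V(0))=\eps\,\nabla\varphi\cdot W(0)$ by \eqref{e:W(0)_bordo}, and the error coming from replacing $\tens{A}(x),V(x)$ with $\tens{I},V(0)$ is $O(\eps\cdot\eps^{\alpha_0/\alpha})=o(\eps)$. Collecting these pieces yields \eqref{e:sviluppo_condizione_al_bordo_eps}.

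There is no genuine obstacle here: the proof is entirely computational. The only point requiring a moment's thought is that the error terms of order $\eps^{\alpha_0/\alpha}$ must be strictly smaller than $\eps$ in the limit; this is guaranteed by the choice $\alpha\in(0,\alpha_0)$ and is precisely why the quantitative smallness in \eqref{e:ipotesi_PH_tensoreA}--\eqref{e:ipotesi_PH_vettoreV} was imposed with the exponent $\alpha_0/\alpha$ rather than just $o(1)$.
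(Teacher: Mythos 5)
Your treatment of the boundary expansion \eqref{e:sviluppo_condizione_al_bordo_eps} is correct and essentially the same as the paper's: it is a purely algebraic expansion of the quadratic form in which $\tens{A}(x)$ and $V(x)$ are frozen at the origin at a cost $O(\eps^{\sfrac{\alpha_0}{\alpha}})=o(\eps)$, the zero-order term cancels because $\omega_0=V(0)\cdot e_d$ (i.e.\ $\tens{A}(0)=\tens{I}$ in \eqref{e:bound_omega_x}), and the first-order coefficient is $2\omega_0 e_d-V(0)=W(0)$.

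The gap is in \eqref{e:sviluppo_divergenza_eps}. You expand ${\rm div}(\tens{A}(x)\nabla q)$ pointwise, which produces the term $\omega_0\,{\rm div}(\tens{A}(x)e_d)=\omega_0\sum_i\partial_i\tens{A}_{id}(x)$, and you discard it by asserting that ``the derivatives of $\tens{A}$ inherit the scale $\eps^{\sfrac{\alpha_0}{\alpha}}$ from the rescaling''. No such derivative bound is among the hypotheses of the lemma: \eqref{e:ipotesi_PH_tensoreA} only controls $\|\tens{A}-\tens{I}\|_{L^\infty(B_1)}$, and a smooth matrix field can satisfy this while $|\nabla \tens{A}|$ is arbitrarily large (small-amplitude, rapidly oscillating perturbations), so the pointwise identity would fail under the stated assumptions; your justification imports information from the way the lemma is later applied rather than from its hypotheses. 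The paper avoids differentiating $\tens{A}$ altogether by proving \eqref{e:sviluppo_divergenza_eps} in the distributional sense: testing against $\zeta\in C^\infty_c(B_1)$, one writes $\int_{B_1}\tens{A}\nabla q\cdot\nabla\zeta\,dx=\omega_0\int_{B_1}(\tens{A}-\tens{I})e_d\cdot\nabla\zeta\,dx+\eps\int_{B_1}(\tens{A}-\tens{I})\nabla\varphi\cdot\nabla\zeta\,dx+\eps\int_{B_1}\nabla\varphi\cdot\nabla\zeta\,dx$, where the identity part drops out because $\int_{B_1}e_d\cdot\nabla\zeta\,dx=0$, and the first two integrals are $O(\eps^{\sfrac{\alpha_0}{\alpha}})+O(\eps^{1+\sfrac{\alpha_0}{\alpha}})=o(\eps)$ using only the $L^\infty$ bound; this weak reading is also what is used in the subsequent comparison arguments. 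Your argument can be repaired either by switching to this weak formulation or by adding the derivative smallness as an explicit hypothesis (it does hold for the rescalings in \cref{lemma:proprieta_blowup_boundary}), but as written the proof of the first identity does not follow from the lemma as stated.
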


\begin{proof}
By using the definition of $q$, \eqref{e:ipotesi_PH_tensoreA} and \eqref{e:bound_omega_x}, for every $\zeta\in C^\infty_c(B_1)$, ${\rm div}(\tens{A}(x)\nabla q)$ reads as
    \bea \int_{B_1}\tens{A}(x)\nabla q\cdot \nabla \zeta\,dx&=\omega_0\int_{B_1}\tens{A}(x)e_d\cdot \nabla\zeta\,dx+\eps\int_{B_1}\tens{A}(x)\nabla \varphi\cdot \nabla \zeta\,dx\\&=\omega_0\int_{B_1}\left(\tens{A}(x)-\tens{I}\right)e_d\cdot \nabla\zeta\,dx+\eps\int_{B_1}\left(\tens{A}(x)-\tens{I}\right)\nabla \varphi\cdot \nabla \zeta\,dx+\eps\int_{B_1}\nabla \varphi\cdot\nabla \zeta\,dx\\&=\eps\int_{B_1}\nabla \varphi\cdot\nabla \zeta\,dx+o(\eps),\eea 
    as $\eps\to0^+$, getting exactly \eqref{e:sviluppo_divergenza_eps}.

To get \eqref{e:sviluppo_condizione_al_bordo_eps}, we again use the definition of $q$, \eqref{e:ipotesi_PH_vettoreV} and the definition of $\omega_0$ in \eqref{e:bound_omega_x}. Precisely, we obtain
\begin{align*}
    \tens{A}(x)\nabla q\cdot\nabla q-\tens{A}(x)\nabla q\cdot V(x)&=\omega_0^2\tens{A}(x)e_d\cdot e_d+2\eps\omega_0\tens{A}(x)e_d\cdot \nabla \varphi-\omega_0\tens{A}(x)e_d\cdot V(x)-\eps\tens{A}(x)\nabla\varphi\cdot V(x)+o(\eps)
\\&=\omega_0^2 e_d\cdot e_d+2\eps\omega_0 e_d\cdot \nabla \varphi-\omega_0 e_d\cdot V(0)-\eps \nabla\varphi\cdot V(0)+o(\eps)
\\&=\eps \left(2\omega_0e_d-V(0)\right)\cdot \nabla \varphi+o(\eps),
\end{align*}
as $\eps\to0^+$, which is \eqref{e:sviluppo_condizione_al_bordo_eps} having defined $W(0)$ as in \eqref{e:W(0)_bordo}.
\end{proof}

\subsubsection{Partial Harnack inequality}
\label{subsec:partialharnack_bordo}
The first fundamental ingredient to show \cref{lemma:IOF_bordo} is the partial Harnack inequality which can be stated as follows.

\begin{lemma}[Partial Harnack inequality]
    \label{lemma:PH_inequality_bordo}
  Let $u:B_1\to\R^+$ be a viscosity solution of \eqref{e:u_viscosity_sol_riscalato} and suppose that $y\in\overline \Omega_u\cap B_{\sfrac{1}{2}}$. For all $\delta>0$, there are constants $\overline \e>0$, $\rho>0$ and $\sigma>0$ such that the following holds. Suppose that \eqref{nuova4}, \eqref{e:ipotesi_PH_tensoreA}, \eqref{e:ipotesi_PH_vettoreV} hold true, for some $\e\in (0,\overline\eps]$.
    If there are two real numbers $a\le b$ with $\e=b-a\leq \overline{\e}$ such that for every $x\in B_{\sfrac{1}{2}}(y)$
\begin{equation*}
\label{eq:hypothesis-boundary}
\left(\omega_0x_d+ a\right)^{+} \leq u(x)\leq  \left(\omega_0x_d + b\right)^{+},
\end{equation*}    
    then there exist two reals numbers ${a}'\le {b}'$, with $a \leq {a}' < {b}' \leq b$ and
    $$
    |{b}'-{a}'|\leq (1-\sigma)\abs{b-a},
    $$
     such that for every $x \in B_\rho(y)$ it holds
    \begin{align}
    \label{eq:tesi_PH-boundary}
     \left(\omega_0x_d+  a'\right)^{+} \leq u(x)\leq  \left(\omega_0x_d+  b'\right)^{+}.
    \end{align}
\end{lemma}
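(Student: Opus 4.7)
The plan is to mimic the Caffarelli-style scheme of \cref{lemma:PH_inequality}, with two substantive modifications: we must accommodate the variable coefficient operator $\mathcal{L}=\mathrm{div}(\tens{A}\nabla\cdot)$ and vector $V(x)$, and we must handle the fact that on $\partial\Omega_u\cap B_1'$ only the inequality $\tens{A}\nabla u\cdot\nabla u\ge \tens{A}\nabla u\cdot V$ is available in the viscosity sense. Define the affine profile $P(x):=\omega_0 x_d+a$ and fix a probing point $z$ at controlled distance both from $y$ and from the obstacle (for instance $z=y+\gamma e_d$ with a small fixed $\gamma>0$) such that a ball $B_{r_0}(z)$ lies in $B_{\sfrac12}(y)\cap\{x_d\ge\gamma/2\}$. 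The dichotomy will be driven by whether $u(z)\ge P(z)+\eps/2$ or the reverse inequality holds.

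In either branch, construct the radial barrier $\psi$ centered at $z$, cut off between an inner ball $B_{r_0/4}(z)$ (where $\psi\equiv 1$) and an outer ball $B_R(z)\subset B_1$ (where $\psi\equiv 0$), still lying in $\{x_d>0\}$. By \cref{lemma:utile}, a perturbation of the form $q:=\omega_0 x_d+\eps\varphi$ yields $\mathcal{L}q=\eps\Delta\varphi+o(\eps)$ and $\tens{A}\nabla q\cdot\nabla q-\tens{A}\nabla q\cdot V = \eps\,\nabla\varphi\cdot W(0)+o(\eps)$ with $W(0):=2\omega_0 e_d - V(0)$. Thus, taking $\varphi=\psi$ and choosing the radii so that $\Delta\psi>0$ and $\nabla\psi\cdot W(0)\ge c_1>0$ on the annulus (possible since $W(0)\cdot e_d=\omega_0>0$), the perturbed plane inherits the strict sign of the free boundary condition required to contradict touching on $\partial\Omega_u\cap\{x_d>0\}$, exactly as in the interior proof, after absorbing the $\tens{A}-\tens{I}$ and $V(x)-V(0)$ errors through \eqref{e:ipotesi_PH_tensoreA} and \eqref{e:ipotesi_PH_vettoreV}.

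\emph{Push-from-above case} ($u(z)\le P(z)+\eps/2$). Apply interior Harnack to the nonnegative near-harmonic function $\eps-(u-P)$ in $B_{r_0}(z)$ to get $u-P\le(1-c_{\mathcal{H}}/2)\eps$ on $B_{r_0/2}(z)$. Slide the barrier $v_t:=P+\eps-c_{\mathcal{H}}\eps\psi+c_{\mathcal{H}}\eps(1-t)$ downward until it first touches $u$ from above in $B_R(z)$. The interior touching is ruled out by $\mathcal{L}v_t<0$ and strong maximum principle, the touching on $\partial\Omega_u\cap\{x_d>0\}$ by the strict sign of $\tens{A}\nabla v_t\cdot\nabla v_t<\tens{A}\nabla v_t\cdot V$ and the viscosity inequality of \cref{def:def-sol_sezione_esistenza} at points in $D$; crucially, touching on $\partial\Omega_u\cap B_1'$ is also ruled out here because on the obstacle the viscosity condition $\tens{A}\nabla u\cdot\nabla u\ge \tens{A}\nabla u\cdot V$ is \emph{exactly} the sign needed for a contradiction against the barrier $v_t$ touching from above.

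\emph{Lift-from-below case} ($u(z)\ge P(z)+\eps/2$). This is the genuinely delicate branch: Harnack yields $u-P\ge c_{\mathcal{H}}\eps$ on $B_{r_0/2}(z)$, and one slides the barrier $v_t:=P+c_{\mathcal{H}}\eps\psi-c_{\mathcal{H}}\eps(1-t)$ upward. Interior touching and touching on $\partial\Omega_u\cap\{x_d>0\}$ are eliminated as above, but touching on $\partial\Omega_u\cap B_1'$ escapes the viscosity framework of \cref{def:def-sol_sezione_esistenza} because on the obstacle there is no ``touched from below'' condition. This is precisely where \cref{lemma:useful_non_piace_Giulia} enters: by choosing the barrier parameters so that $v_1$ furnishes a lower bound of the form $u\ge(\beta x_d-\eps)^+$ with $\beta\ge\omega_0+C_0\eps$ on a full half-ball once the slide is completed, \cref{lemma:useful_non_piace_Giulia} forces $u>0$ throughout $B_1^+$, hence $\partial\Omega_u\cap B_1'=\emptyset$, which contradicts the hypothetical obstacle touching. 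The sliding can therefore be carried out to $t=1$, producing $u\ge P+c_d\eps$ on a ball $B_\rho(y)$ and yielding $a'=a+c_d\eps$.

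The main obstacle I expect is the last step: arranging the barrier so that the hypothetical touching on the obstacle is genuinely covered by the quantitative hypothesis $\beta\ge\omega_0+C_0\eps$ of \cref{lemma:useful_non_piace_Giulia}. This forces one to be careful with the choice of $c_{\mathcal{H}}$, with the radii of $\psi$, and with the bookkeeping of the $o(\eps)$ remainders from \cref{lemma:utile}; once this is done, the sliding argument closes and the proof produces the desired $\sigma\in(0,1)$ with $|b'-a'|\le(1-\sigma)\eps$ in $B_\rho(y)$.
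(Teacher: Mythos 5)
Your overall sliding scheme (probing point, interior Harnack, family $v_t$, operator and free-boundary signs computed via \cref{lemma:utile}) is the same as the paper's, and your from-above branch — where a touching on the obstacle is harmless because the viscosity inequality $\tens{A}\nabla u\cdot\nabla u\ge \tens{A}\nabla u\cdot V$ on $\partial\Omega_u\cap B_1'$ has exactly the sign needed against a strict subsolution-type competitor — is correct and matches the paper. The gap is in the from-below branch. Your way of excluding an obstacle touching, namely ``once the slide is completed, $v_1$ gives $u\ge(\beta x_d-\eps)^+$ with $\beta\ge\omega_0+C_0\eps$, so \cref{lemma:useful_non_piace_Giulia} forces $u>0$ in $B_1^+$, hence $\partial\Omega_u\cap B_1'=\emptyset$'', fails for three reasons. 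First, it is circular: the lower bound produced by the completed slide is available only after the touching has been excluded, which is what you are trying to prove. Second, it is quantitatively impossible: your competitor has $x_d$-slope $\omega_0+O(c_{\mathcal H}\eps)$, and outside the support of $\psi$ you only know $u\ge(\omega_0 x_d+a)^+$ with $a\ge -\eps$; such information can never be upgraded to $(\beta x_d-\eps)^+$ on all of $B_1$ with $\beta\ge\omega_0+C_0\eps$, because $C_0$ is the \emph{large} constant fixed in the proof of \cref{lemma:useful_non_piace_Giulia} (it must beat $\|V\|_{L^\infty}\widetilde C$), while the hypothesis only tolerates an excess slope of order $\eps$ with a small constant. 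Third, the contradiction you invoke is not one: if $u>0$ in $B_1^+$ then, since $u\equiv0$ on $\{x_d\le0\}$, every point of $B_1'$ belongs to $\partial\Omega_u$, so ``$\partial\Omega_u\cap B_1'=\emptyset$'' is false precisely in that scenario, and a touching point of the from-below barrier on $B_1'$ (where $u=0$) is not excluded by positivity of $u$ in the open half ball.

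The paper resolves this differently, by splitting on the size of $a$ rather than only on the value of $u$ at the probing point. If $a\le-\eps/2$, the from-below competitor $P+c_{\mathcal H}\eps\psi-c_{\mathcal H}\eps+c_{\mathcal H}\eps t$ is strictly negative on $\{x_d\le0\}$, hence it can never touch $u\equiv0$ there, and the slide closes exactly as in the interior \cref{lemma:PH_inequality} with no condition needed on the obstacle. If instead $a\ge-\eps/2$ (the regime in which an obstacle touching genuinely can occur), no from-below improvement is attempted at all: \cref{lemma:useful_non_piace_Giulia} is used as a dichotomy — either $u>0$ in $B_1^+$, where the interior Harnack inequality gives the improvement, or $u\le\left((\omega_0+C_0\eps)x_d-\eps/2\right)^+$ — and in the latter case, for $\rho$ small (so that $C_0\rho\le 3/4$), one gets the improvement from above with $b'=\eps/4$, $a'=a$, and $b'-a'\le\frac34(b-a)$. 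You should rework your delicate branch along these lines, and also handle $y$ with $y_d>0$ by either the interior rate of convergence or by projecting onto $\{x_d=0\}$, as in Step 2 of the paper's proof.
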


\begin{proof}
 
We can suppose that $\abs{a}\leq 2 \rho$, otherwise the thesis holds true by interior Harnack inequality. 
We divide the proof in some steps.\\
\\
{\it Step 1. $y\in \{x_d=0\}$.} In this case, for simplicity, we take $y=0$.
We consider the following cases
\begin{itemize}
    \item[(i)] $a\le-\sfrac{\eps}{2}$. 
    Let $x_0 := \sfrac{e_d}{5}$ and $R := \sfrac{1}{5} + 3 \rho$. 
Similarly as done in \cref{lemma:PH_inequality}, choosing $\rho>0$ small enough and using \eqref{nuova4}, \eqref{e:ipotesi_PH_tensoreA}, \eqref{e:ipotesi_PH_vettoreV}, we can find a function $\psi: \R^d \to \R$ with the following properties
\begin{equation}
\label{eq:w-boundary}
    \quad \left\{
\begin{aligned}
    &\psi(x) = 1
    &&\text{in } \overline B_{\sfrac{1}{50}}(x_0),\\
    &\psi(x) = 0 &&\text{in } \R^d \setminus B_{R}(x_0),\\
    &\Delta \psi\ge c_0 
   &&\text{in } B_{R}(x_0) \setminus \overline B_{\sfrac{1}{50}}(x_0),\\
   &\nabla \psi\cdot W(0)\ge c_1&&\text{in }B_R(x_0)\setminus \overline B_{\sfrac{1}{50}}(x_0)\cap \{\omega_0x_d\le 2\rho\},
 \end{aligned}
\right.
\end{equation}
where $W(0)$ is defined in \eqref{e:W(0)_bordo} and $c_0,c_1>0$.

We set $P(x) := \omega_0x_d+a$ and let us suppose that $u(x_0) \geq P(x_0) + \frac{\eps}{2}$.
By Harnack inequality in $B_{\sfrac{1}{25}}(x_0)$, we get
\begin{equation}
    \label{eq:Harnack-boundary}
    u(x)-P(x) \geq c_{\mathcal{H}} \eps \qquad \hbox{in } B_{\sfrac{1}{50}}(x_0),
\end{equation}
where $c_{\mathcal{H}}>0$ is the Harnack dimensional constant. 
Taking the family of competitors $\{v_t\}_{t \in [0,1]}$ given by
\begin{equation*}
    \label{eq:v_t_bordo}
    v_t(x) = P(x) + c_{\mathcal{H}} \eps \psi(x) -c_{\mathcal{H}} \eps + c_{\mathcal{H}} \eps t,
\end{equation*}
we want to show that $ v_t(x)\le u(x) $ for all $t \in [0,1]$ and for all $x \in B_1$. 
Using the definition of $v_t$ and \eqref{eq:Harnack-boundary}, the inequality $ v_t(x)\le u(x)$ holds true for all $t \in [0,1]$, for all $x\in B_1 \setminus B_{R}(x_0)$ and for all $x\in \overline{B}_{\sfrac{1}{50}}(x_0)$. 
Moreover, since $a\le-\sfrac{\eps}{2}$, then, for every $t\in[0,1]$ and $x\in B_1\cap\{x_d\le0\}$, we have that 
$$v_t(x)=\omega_0x_d+a+c_{\mathcal{H}}\eps\psi(x)-c_{\mathcal{H}}\eps+c_{\mathcal{H}}\eps t\le-\frac\eps2+c_{\mathcal{H}}\eps\psi(x)<0= u(x).$$ 
Then, by contradiction, let us suppose that there exists $t \in [0,1]$ such that $v_t$ touches $u$ from above in $\overline{x}\in \left(B_R(x_0) \setminus \overline{B}_{\sfrac{1}{50}}(x_0)\right)\cap \{x_d>0\}$. 
Then, necessarily $\overline{x} \in \pa \Omega_u\cap B_1^+$. By \eqref{e:sviluppo_divergenza_eps} and \eqref{eq:w-boundary}, we have
\begin{equation*}
    {\rm div}(\tens{A}(\overline x)\nabla v_t(\overline x))={\rm div}(\tens{A}(\overline x)\nabla (\omega_0\overline x_d+c_{\mathcal{H}}\eps\psi(\overline x))=\eps\Delta \psi(\overline x)+o(\eps)>0.
\end{equation*}
Suppose that $\overline x\in\pa \Omega_u\subset B_R(x_0)\setminus \overline B_{\sfrac{1}{50}}(x_0)\cap \{\omega_0x_d\le 2\rho\}$, again by \eqref{e:sviluppo_condizione_al_bordo_eps} and \eqref{eq:w-boundary}, we get that
$$\tens{A}(\overline x)\nabla v_t(\overline x)\cdot\nabla v_t(\overline x)-\tens{A}(\overline x)\nabla v_t(\overline x)\cdot V(\overline x)=\eps c_{\mathcal{H}}\nabla \psi(\overline x)\cdot W(0)+o(\eps)>0.$$
This is a contradiction, namely there is no $\overline{x}$ touching point from below. Then, for all $x \in B_1$ and taking $t = 1$, we get
$$
u(x) \geq v_1(x) \qquad \Longrightarrow\qquad u(x) \geq P(x) + c_{\mathcal{H}} \eps \psi(x).
$$
Since $\psi$ is strictly positive in $B_{\rho}$, we have
$$
u(x) \geq P(x) + c_d \eps,
$$
which is exactly the left inequality taking ${a}' := a + c_d \eps$ in \eqref{eq:tesi_PH-boundary} and $c_d$ is a dimensional constant. In the case $u(x_0)\le P(x_0)+\frac{\eps}{2}$, applying a similar argument as the one above, we obtain the upper bound in \eqref{eq:tesi_PH-boundary}. We point out that in the proof of  the upper bound, it is not necessarily to require the additional hypothesis $a\le-\sfrac{\eps}{2}$, since competitors can touch even in $\{x_d=0\}$.
    \item[(ii)] $a\ge-\sfrac{\eps}{2}$. If $u>0$ in $B_1^+$, then the thesis follows by the interior Harnack inequality. If this is not the case, by \cref{lemma:useful_non_piace_Giulia}, we get 
    $$u(x)\le \left((\omega_0+C_0\eps)x_d-\frac{\eps}{2}\right)^+,$$
    where $C_0>0$ is the constant in \cref{lemma:useful_non_piace_Giulia}. 
    Indeed, if by contradiction $u(x)> \left((\omega_0+C_0\eps)x_d-\sfrac{\eps}{2}\right)^+$, then $u>0$ in $B_1^+$, since $\omega_0+C_0\eps\ge \omega_0+C_0\sfrac{\eps}{2}$. 
   By choosing $\rho>0$ small enough such that $C_0\rho\le\sfrac{3}{4}$, for every $x\in B_\rho$, we have that 
   $$ u(x)\le \left(\omega_0x_d+C_0\eps|x_d|-\frac{\eps}{2}\right)^+\le\left(\omega_0x_d+\frac{3}{4}\eps-\frac{\eps}{2}\right)^+ =\left(\omega_0x_d+\frac{\eps}{4}\right)^+.$$ 
  Letting $b':=\frac{1}{4}\eps$ and $a':=a$, and using that $a\ge-\sfrac{\eps}{2}$, we have 
  $$b'-a'=\frac{1}{4}\eps-a\le \frac{3}{4}\eps=\frac{3}{4}(b-a),$$ 
  concluding the proof.
\end{itemize}
\medskip
{\it Step 2. $y\in\{x_d>0\}$.} Suppose that $\text{dist}(y,\{x_d=0\})\ge \sfrac{\rho}{2}$, then the function $u_{y,\sfrac{\rho}{2}}$ is a solution of the interior problem \eqref{interior}. Thus, the thesis follows by the rate of convergence stated in \cref{remark:interior}.
On the other hand, if $\text{dist}(y,\{x_d=0\})\le \sfrac{\rho}{2}$, let $z\in \{x_d=0\}$ be the projection of $y$ in $\{x_d=0\}.$ Since $B_{\sfrac{\rho}{2}}(y)\subset B_\rho(z)$, we apply Step 1 in $z\in\{x_d=0\}$ and we  conclude.
\end{proof}

\subsubsection{Proof of the improvement of flatness}\label{subsec:proof-of-boundary-iof}
Now we are in position to prove the main result of this section, namely the improvement of flatness for branching points stated in \cref{lemma:IOF_bordo}.

\begin{proof}[Proof of \cref{lemma:IOF_bordo}]
 We show such a result by compactness.
 For all $j \in \N$, let $u_j$ be a sequence of solutions of \eqref{e:u_viscosity_sol_riscalato} with corresponding matrix $\tens{A}_j$ and vector $V_j$, and $\omega_0^j$ as in \eqref{e:bound_omega_x}, with $0\in \partial \Omega_{u_j}$ be a branching point. Suppose that $\tens{A}_j(x)e_d\cdot V_j(x)\ge \delta$ for all $x\in B_1$, and \eqref{e:ipotesi_PH_tensoreA}, \eqref{e:ipotesi_PH_vettoreV} hold true. We also suppose that $u_j$ are $\eps_j$-flat along the direction $e_d$ in $B_1$, namely for all $x \in B_1$
    $$\left(\omega_0^jx_d - \eps_j\right)^+\le u_j(x)\le  \left(\omega_0^jx_d + \eps_j\right)^+$$
    where $\eps_j\to0^+$.
    We consider the following functions
    \begin{equation}
        \label{eq:u_j_bordo}
      \widetilde{u}_j:
      \left\{
      \begin{aligned}
          \overline\Omega_{u_j}&\cap B_1\to\R\\
          x&\mapsto\widetilde{u}_j(x) = \frac{u_j(x) - \omega_0^jx_d }{\eps_j}.
      \end{aligned}
      \right. 
    \end{equation}
    Iterating the Partial Harnack inequality in \cref{lemma:PH_inequality_bordo} similarly as done \cref{lemma:compattezza}, we have that there exists a H\"older continuous function $\widetilde{u}: \overline {B_{\sfrac{1}{2}}^+} \to \R$ such that, for all $\eta>0$
    $$
    \widetilde{u}_j\to
    \widetilde{u}\qquad\text{uniformly in}\quad B_{\sfrac{1}{2}}\cap \{x_d\geq \eta\},
    $$
    Moreover, the sequence of graphs of $\widetilde u_j $ converges in the Hausdorff distance to $\widetilde u$, namely
    $$
    \mathcal{G}_j = \left\{(x, \widetilde{u}_j(x)): x \in B_{\sfrac{1}{2}} \cap \overline{\Omega}_{u_j} \right\} \qquad \xrightarrow{H} \qquad \mathcal{G}= \left\{(x, \widetilde{u}(x)): x \in \overline{ B_{\sfrac{1}{2}}^+} \right\}.
    $$
We need to show that $-\widetilde u$ solves (in a suitable sense, see \cref{def:def-sol_sezione_esistenza-oblique-thin} below)  
 \begin{equation}
 \label{e:thin_widetileu}
        \left\{
\begin{aligned}
    & \Delta (-\widetilde u) = 0 && \hbox{ in } B_{\sfrac{1}{2}}^+,\\
    & \nabla (-\widetilde u)\cdot W=0 && \hbox{ on } B_{\sfrac{1}{2}}'\cap\Omega_{-\widetilde u},\\
    & \nabla (-\widetilde u)\cdot W\le 0 && \hbox{ on } B_{\sfrac{1}{2}}',\\
    &-\widetilde u\ge0 && \hbox{ on } B_{\sfrac{1}{2}}',\\
\end{aligned}
        \right.
    \end{equation}
which is the oblique thin obstacle problem. 
   Indeed, the fact that $-\widetilde u\ge0$ on $B_{\sfrac{1}{2}}'$ follows by definition of $\widetilde u_j$.
   Moreover, let $\varphi\in C^\infty(\overline{B_{\sfrac{1}{2}}^+})$ be a test function touching $-\widetilde u$ from above at $x_0\in \overline {B_{\sfrac{1}{2}}^+}$. Similarly as done in \cref{lemma:IOF}, there exists a sequence of points $x_j\in \overline\Omega_{u_j}\cap \overline {B_{\sfrac{1}{2}}^+}$ such that $ \varphi$ touches $-\widetilde u_j$ from above at $x_j$.
    If $x_0\in B_{\sfrac{1}{2}}^+$, then $x_j\in \Omega_{u_j}\cap B_{\sfrac{1}{2}}^+$ and so, by \cref{lemma:utile}, we have that
    $$
    0\ge {\rm div}\left(\tens{A}_j(x) \nabla (\omega_0^j(x_j)_d-\eps_j\varphi(x_j))\right)= -\eps_j\Delta \varphi(x_j)+o(\eps_j).$$
    Passing to the limit we get 
    $$\Delta\varphi(x_0)\ge0.$$
    To get the boundary condition satisfied by $-\widetilde u$, suppose that $x_0\in B_{\sfrac{1}{2}}'$ with $-\widetilde u(x_0)>0$. Again, 
    there exists a sequence of points $x_j\in \partial\Omega_{u_j}\cap \overline {B_{\sfrac{1}{2}}^+}$ such that $ \varphi$ touches $-\widetilde u_j$ from above at $x_j$. Moreover, since $-\widetilde u(x_0)>0$, then $x_j\in \partial\Omega_{u_j}\cap B_{\sfrac{1}{2}}^+$.
   Then,
    we have that
    \begin{align*}
        \tens{A}_j(x_j)\nabla \left( \omega_0^j (x_j)_d -\eps_j \varphi(x_j) \right)\cdot \left( \omega_0^j (x_j)_d   -\eps_j \varphi(x_j)\right) &\le 
         \tens{A}_j(x_j)\nabla \left( \omega_0^j (x_j)_d-\eps_j \varphi(x_j)  \right)\cdot V_j(x_j).
        \end{align*}
        By \cref{lemma:utile}, we end up with
        $$-\nabla \varphi(x_j)\cdot W_j(0)+o(\eps_j)\le0,$$
       and passing to the limit, we get
        $$\nabla \varphi(x_0)\cdot W(0)\ge0.$$ 
        On the other hand, if $\varphi\in C^\infty(\overline{B_{\sfrac{1}{2}}^+})$ touches $-\widetilde u$, from below at $x_0\in \overline {B_{\sfrac{1}{2}}^+}$, then, similarly
        $$\Delta \varphi(x_0)\le0\quad\text{if }x_0\in B_{\sfrac{1}{2}}^+\qquad\text{and}\qquad\nabla \varphi(x_0)\cdot W(0)\le0\quad\text{if }x_0\in B_{\sfrac{1}{2}}'.$$
This implies that $-\widetilde{u}$ solves \eqref{e:thin_widetileu},
with
$$\norm{\widetilde{u}}_{L^\infty(B_{\sfrac{1}{2}})} \leq 1\qquad\text{and}\qquad \widetilde u(0)=0.$$
    By \cref{thm:oblthin2}, we can write the Taylor expansion of the solution. Precisely, since $\nabla_{x'}\widetilde u(0)=0$ and $\partial_d\widetilde u(0)\ge0$, we have that for some positive small constant $\eta$ and for all $ x \in\overline {B_{\eta}^+}$
    $$
    \begin{aligned}
        \abs{\widetilde{u}(x) - \partial_d\widetilde u(0) x_d } &\leq  C \abs{x}^{1+\alpha_0}\leq\frac{\eta^{1+\alpha}}{4}.
    \end{aligned}
    $$
    Thus, along the sequences and using the expression of $\widetilde{u}_j$ defined in \eqref{eq:u_j_bordo}, for every $x\in B_\eta\cap\overline\Omega_{u_j}$, we have
    \begin{equation*}
        \label{eq:finale1_bordo}
        \abs{u_j (x) - (\omega_0^j+\eps_j\partial_d\widetilde u(0))x_d  } \leq \eps_j \frac{\eta^{1+\alpha}}{2}.
    \end{equation*}
    By \cref{lemma:useful_non_piace_Giulia}, since $0\in \partial \Omega_{u_j}$ is a branching point for $u_j$, we have that
    $$\eps_j\partial_d\widetilde u(0)\le C_0\eps_j\frac{\eta^{1+\alpha}}{2}, \qquad \Longrightarrow \qquad  \partial_d\widetilde u(0)\le C_0\frac{\eta^{1+\alpha}}{2}.$$ 
Therefore, for every $x\in\overline{ B_{\eta}^+}\cap\overline\Omega_{u_j}$, 
we get
\begin{equation*}
        \abs{u_j (x) - \omega_0^jx_d  } \leq \eps_j \frac{\eta^{1+\alpha}}{2}+\eps_j\partial_d\widetilde u(0) |x_d|\le \eps_j\left(\frac{\eta^{1+\alpha}}{2}+C_0\frac{\eta^{2+\alpha}}{2}\right)\le \eps_j\eta^{1+\alpha},
    \end{equation*}
    getting the thesis.
\end{proof}

\subsection{The free boundary is a \texorpdfstring{$C^{1, \alpha}$}{alpha}-graph near contact points}
\label{subsec:final_bordo}

Iterating the improvement of flatness at branching points \cref{lemma:IOF_bordo}, we get the following rate of convergence.

\begin{lemma}[Uniqueness and rate of convergence to the blow-up limit]
    \label{lemma:proprieta_blowup_boundary}
   Let $u:B_1\to\R^+$ be a viscosity solution of \eqref{e:u_viscosity_sol_riscalato}.
   For all $\delta>0$ and $\alpha\in(0,\alpha_0)$, where $\alpha_0$ is defined in \cref{thm:oblthin2}, there are constants $\e_0>0$ and $C>0$ such that the following holds.
  Suppose that $\tens{A}(x)e_d\cdot V(x)\ge 2\delta$ for all $x\in B_1$, and \eqref{eq:diff-v-v0} holds true.
We also suppose that $u$ is $\eps_0$-flat along the direction $e_d$, namely for every $x\in B_1$
    $$\left(\omega_{0} x_d - \eps_0\right)^+\le u(x)\le  \left(\omega_{0} x_d+ \eps_0\right)^+.
    $$
    Then, for every branching point $x_0\in \partial\Omega_u\cap B'_{\sfrac{1}{2}}$ and for every $r\in (0,\sfrac{1}{2})$,
    \begin{equation*}
        \label{e:tesi_iterazione_bordo}
        \norm{u-\omega_{x_0}x_d}_{L^\infty(B_r(x_0)\cap\overline\Omega_u)}\le C\eps_0 r^{1+\alpha},
    \end{equation*} 
    where $\omega_{x_0}$ is defined in \eqref{e:bound_omega_x}.
\end{lemma}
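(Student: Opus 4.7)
The strategy is to iterate the boundary improvement of flatness (\cref{lemma:IOF_bordo}) at the branching point $x_0$, in direct analogy with the interior iteration carried out in \cref{lemma:proprieta_blowup}. A key conceptual simplification compared with the interior case is that, because the obstacle $\{x_d\le 0\}$ is flat and $x_0$ is a branching point, the blow-up direction remains $e_d$ at every scale and does \emph{not} rotate; consequently the flatness coefficient stays equal to $\omega_{x_0}$ throughout the iteration, and only the matrix $\tens{A}$ and the vector $V$ vary under rescaling.

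The first step is to translate $x_0$ to the origin and apply a linear change of variables close to the identity and preserving $\{x_d=0\}$ (made possible by the $L^\infty$ smallness \eqref{eq:diff-v-v0}, upon further decreasing $\Pi_0$ if needed) that sends $\tens{A}(x_0)$ to $\tens{I}$. After this normalization, $\tens{A}(0)=\tens{I}$ and $\omega_0$ coincides with $\omega_{x_0}$. With the constants $\overline{\e},\eta,C$ from \cref{lemma:IOF_bordo} corresponding to a chosen exponent $\alpha\in(0,\alpha_0)$, one then proves by induction on $k\ge 0$ the flatness bound
\[
\bigl(\omega_{x_0}\,x_d-\e_0\eta^{k\alpha}\bigr)^+\ \le\ u_{\eta^k}(x)\ \le\ \bigl(\omega_{x_0}\,x_d+\e_0\eta^{k\alpha}\bigr)^+\qquad\text{for all }x\in B_1,
\]
where $u_{\eta^k}(x):=\eta^{-k}u(\eta^k x)$; the base case is the standing hypothesis.

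For the inductive step, $u_{\eta^k}$ solves \eqref{e:u_viscosity_sol_riscalato} with $\tens{A}_k(x):=\tens{A}(\eta^k x)$ and $V_k(x):=V(\eta^k x)$; smoothness of $\tens{A},V$ together with $\tens{A}(0)=\tens{I}$ gives
\[
\|\tens{A}_k-\tens{I}\|_{L^\infty(B_1)}\le C_\star\eta^k\quad\text{and}\quad\|V_k-V(0)\|_{L^\infty(B_1)}\le C_\star\eta^k,
\]
where $C_\star$ depends only on the $C^1$ norms of $\tens{A}$ and $V$. The smallness conditions \eqref{e:ipotesi_PH_tensoreA}--\eqref{e:ipotesi_PH_vettoreV} of \cref{lemma:IOF_bordo} at step $k$ with flatness parameter $\e_k=\e_0\eta^{k\alpha}$ amount to $C_\star\eta^{k(1-\alpha_0)}\le\e_0^{\alpha_0/\alpha}$, which, since $\alpha_0<1$ and $\eta\in(0,1)$, is maximised at $k=0$. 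One meets it either by taking $\Pi_0$ (and hence $C_\star$) small enough, or equivalently by starting the iteration at a sufficiently small initial scale $\rho_0$ (which only affects the final constant $C$, via the Lipschitz continuity of $u$ from \cref{rem:lipschitz_tutte_viscose}). \cref{lemma:IOF_bordo} then closes the induction, and interpolation between the dyadic scales $\rho_0\eta^k$ together with the Lipschitz estimate for $u$ yields the claimed $r^{1+\alpha}$ decay for every $r\in(0,1/2)$.

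The main technical obstacle is the initial calibration: simultaneously choosing $\e_0$, $\Pi_0$, the initial scale, and the change of variables so that the smallness assumptions of \cref{lemma:IOF_bordo} are met at every step of the induction. Once this balancing is carried out, the remainder of the argument is a direct transcription of the interior proof, made easier by the fact that the flatness direction is pinned at $e_d$ at all scales.
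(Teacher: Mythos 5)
Your proposal is correct and follows essentially the same route as the paper: iterate the boundary improvement of flatness \cref{lemma:IOF_bordo} along dyadic scales with flatness parameter $\eps_0\eta^{k\alpha}$, verify the smallness hypotheses \eqref{e:ipotesi_PH_tensoreA}--\eqref{e:ipotesi_PH_vettoreV} at each scale via the smoothness of $\tens{A}$ and $V$ (reducing to $\tens{A}(x_0)=\tens{I}$ by a linear change of variables), and conclude by interpolation exactly as in the interior \cref{lemma:proprieta_blowup}. Your observation that the direction stays pinned at $e_d$ with coefficient $\omega_{x_0}$, and your explicit handling of the $k=0$ calibration of the smallness conditions, match the paper's argument (which disposes of the same points in its two steps).
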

\begin{proof}
    The proof is the same of \cref{lemma:proprieta_blowup}, once we know that the hypotheses \eqref{nuova4}, \eqref{e:ipotesi_PH_tensoreA} and \eqref{e:ipotesi_PH_vettoreV} hold true for the sequence of rescalings $u_{x_0,\sfrac{\eta^j}{2}}$ at every branching point $x_0\in\partial\Omega_u\cap B_{\sfrac{1}{2}}'$. Let us fix $x_0\in\partial\Omega_u\cap B_{\sfrac{1}{2}}'$ to be branching point. We divide the proof into two steps.
    \\
    \\
    \textit{Step 1. $\tens{A}(x_0)= \tens{I}$.}
     Suppose that the sequence of rescaling $u_{x_0,r_j}$, with $r_j:=\frac{\eta^j}{2}$, is a $\eps_j:=\eps_0\eta^{j \alpha}$-flat solution of \eqref{e:u_viscosity_sol_riscalato} with matrix $\tens{A}_j(x):=\tens{A}(x_0+ r_jx)$ and vector $V_j(x):=V(x_0+r_jx)$. We observe that \eqref{nuova4} is immediate since $\tens{A}_j(x)e_d\cdot V_j(x)\ge2\delta$. To prove \eqref{e:ipotesi_PH_tensoreA} and \eqref{e:ipotesi_PH_vettoreV}, since $\tens{A}$ and $V$ are regular and $\tens{A}(x_0)=\tens{I}$, we have that
     $$\|\tens{A}_j(x)-\tens{I}\|_{L^\infty(B_1)}=\|\tens{A}(x)-\tens{I}\|_{L^\infty(B_{r_j}(x_0))}\le Cr_j=C\frac{\eta^j}2\le \eps_0\eta^{j\alpha_0}=\eps_j^{\frac{\alpha_0}{\alpha}}$$
     and
     $$\|V_j(x)-V_j(0)\|_{L^\infty(B_1)}=\|V(x)-V(x_0)\|_{L^\infty(B_{r_j}(x_0))}\le Cr_j=C\frac{\eta^j}2\le \eps_0\eta^{j\alpha_0}=\eps_j^{\frac{\alpha_0}{\alpha}},$$
    providing $\eta$ small, which concludes Step 1.
    \\ 
    \\
    \textit{Step 2. $\tens{A}(x_0)\neq \tens{I}$.} 
    We can change the coordinates considering the function $v(y)=u(x_0+\tens{M}_{x_0}^{-1}y)$ with $\tens{M}_{x_0}\in\R^{d\times d}$ such that $\tens{M}_{x_0}\tens{A}(x_0)\tens{M}_{x_0}^T=\tens{I}$. Then the function $v$ solves in 
    \eqref{e:u_viscosity_sol_riscalato} in $B_{\sfrac{1}{2}}$, 
    with matrix $\tens{A}_{x_0}(x):=\tens{M}_{x_0}\tens{A}(x_0+\tens{M}_{x_0}^{-1}x)\tens{M}_{x_0}^T$and vector $V_{x_0}(x):=\tens{M}_{x_0}^{-T}V(x_0+\tens{M}_{x_0}^{-1}x)$. In particular $\tens{A}_{x_0}(0)=\tens{I}$. Thus, $v$ is satisfies the hypothesis of Step 1, yielding the thesis.
\end{proof}

The main result of this section is the following proposition.

\begin{proposition}\label{prop:grafico}
    Let $u:B_1\to \R^+$ be a viscosity solution of \eqref{e:u_viscosity_sol_riscalato} and let $x_0 \in \partial \Omega_u \cap B_1'$ be a contact point. For all $\delta>0$, there is a constant $\rho = \rho(x_0,\delta)>0$ such that if for all $x\in B_1$, we have 
    \be\label{stima-basso-nuova}\tens{A}(x)e_d\cdot V(x)\ge 2\delta,\ee 
    and \eqref{eq:diff-v-v0} holds true, then $\partial \Omega_u\cap B_\rho(x_0)$ is a $C^{1, \alpha}$ graph for every $\alpha\in(0,\alpha_0).$ 
\end{proposition}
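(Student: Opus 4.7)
The plan is to combine the interior $\varepsilon$-regularity theory of Section 3 with the boundary improvement of flatness \cref{lemma:IOF_bordo} and the two rates of convergence \cref{remark:interior} and \cref{lemma:proprieta_blowup_boundary} via a projection argument onto the set of branching points.

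First I would dispose of the easy alternative. By \cref{prop:punti_regolari_al_bordo}, the contact point $x_0$ is either a branching point or an interior contact point. In the latter case, there exists $\rho>0$ such that $u>0$ in $B_\rho^+(x_0)$, which forces $\partial\Omega_u\cap B_\rho(x_0)\subset\{x_d=0\}$ (by the graph property \cref{lemma:rimango_in_parte_positiva} applied to points slightly inside), so the free boundary locally coincides with a piece of the smooth hyperplane and is trivially $C^{1,\alpha}$. Thus we may assume that $x_0$ is a branching point. After the coordinate change of \cref{subsec:flatproblem} and a preliminary rescaling $u_{x_0,r_0}$ with $r_0$ small, the hypotheses of \cref{lemma:IOF_bordo} are met at $x_0$ (flatness follows from $u_{x_0,r}\to \omega_{x_0}x_d^+$, while \eqref{e:ipotesi_PH_tensoreA}--\eqref{e:ipotesi_PH_vettoreV} follow from smoothness of $\tens{A},V$ and $\tens{A}(x_0)=\tens I$ up to a linear change of variables as in Step~2 of \cref{lemma:proprieta_blowup_boundary}). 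Fix any $\alpha\in(0,\alpha_0)$.

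Denote by $\Gamma\subset\partial\Omega_u\cap B_\rho'$ the set of branching points. The core of the argument is the following projection step: for any $y\in\partial\Omega_u\cap B_{\sfrac{\rho}{2}}(x_0)$ which is not itself a branching point, let $y_0\in\Gamma$ be a nearest branching point and set $d:=|y-y_0|$. By \cref{lemma:proprieta_blowup_boundary} applied at $y_0$, for every $s\in(0,\sfrac{1}{2})$
\[
\|u(y_0+\cdot)-\omega_{y_0}\,\cdot_d\|_{L^\infty(B_s(y_0)\cap\overline\Omega_u)}\le C\varepsilon_0\,s^{1+\alpha}.
\]
In particular, choosing $s$ comparable to $d$, the function $u_{y,s}$ is $C\varepsilon_0 s^{\alpha}$-flat in the direction $e_d$ at $y$ and $y$ is an interior free boundary point (because $y\notin\Gamma$ and, by the graph property, the free boundary is a graph over $\{x_d=0\}$, so $y$ must lie in $\{x_d>0\}$; moreover $d\ge c\,\mathrm{dist}(y,\{x_d=0\})$ follows from the previous flatness estimate). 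Now \cref{remark:interior} applies at $y$ and produces a unit normal $\nu_y$ with
\[
\|u(y+\cdot)-\widetilde\omega_{y,\nu_y}\,\cdot\nu_y\|_{L^\infty(B_r\cap\overline\Omega_u)}\le C\varepsilon_0\,r^{1+\alpha}\qquad\text{for all } r\in(0,d/2),
\]
while $|\nu_y-e_d|\le C\varepsilon_0 d^{\alpha}$ by the flatness transferred from $y_0$. Analogously, at every $y_0\in\Gamma$ the normal is $e_d$, with the rate above.

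Putting the two estimates together yields an $\alpha$-Hölder modulus for the unit normal $\nu:\partial\Omega_u\cap B_{\sfrac{\rho}{2}}(x_0)\to\partial B_1$: given two free boundary points $y,z$, one compares both to the nearest branching points $y_0,z_0\in\Gamma$ and uses the interior rate at scales $\sim|y-z|$ together with $|y-y_0|,|z-z_0|\lesssim|y-z|$ (up to choosing $y_0=z_0=x_0$ when $y,z$ are far from $\Gamma$). A standard Campanato-type characterization then upgrades the Hölder continuity of $\nu$ and the uniform flatness estimates into the statement that $\partial\Omega_u\cap B_\rho(x_0)$ is the graph of a $C^{1,\alpha}$ function over the hyperplane $\{x_d=0\}$.

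The delicate step will be the projection onto $\Gamma$ and the bookkeeping of the two different rates: one needs to verify that at every non-branching free boundary point $y$ the distance to $\Gamma$ is indeed comparable to $\mathrm{dist}(y,\{x_d=0\})$, so that the scale provided by \cref{lemma:proprieta_blowup_boundary} at $y_0$ is exactly the scale at which \cref{remark:interior} can be started at $y$. This comparability is precisely what is depicted in \cref{fig:eps_reg_bordo} of the paper and what forces the exponent $\alpha$ in the final statement to be strictly less than $\alpha_0$ (losing a small power to absorb constants in the iteration).
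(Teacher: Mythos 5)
Your overall plan matches the paper's: dispose of the non-branching case, project onto the set $\Gamma$ of branching points, use \cref{lemma:proprieta_blowup_boundary} at the projection $y_0$ and \cref{remark:interior} at $y$, and combine the rates into a H\"older modulus for the normal. This is essentially the same decomposition the paper uses. However, there is a genuine gap in the middle step, and the ``delicate step'' you flag at the end is actually not the right concern.

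The gap is this. To invoke \cref{remark:interior} at $y$ at a scale $r=|y-y_0|$, you need $u$ to solve the \emph{interior} one-phase problem \eqref{interior} in $B_r(y)$, i.e.\ you need the free-boundary \emph{equality} to hold on all of $\partial\Omega_u\cap B_r(y)$. But $B_r(y)$ straddles $\{x_d=0\}$: the paper's estimate in this step is $y_d\le C_\delta r^{1+\alpha}$, so $y_d$ is \emph{much smaller} than $r$, and $B_r'(y)\neq\emptyset$. Your ``$y$ is an interior free boundary point because $y_d>0$'' does not resolve this, and the claimed comparability $d\ge c\,\mathrm{dist}(y,\{x_d=0\})$ is trivially true (since $y_0\in\{x_d=0\}$) and beside the point. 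What must be ruled out is that $B_r'(y)$ contains \emph{interior contact points} (points of $\partial\Omega_u$ lying on $\{x_d=0\}$), because at such points only the inequality \eqref{eq:bondary-intro} holds and the interior theory does not apply. The paper's Step~1 proves that $B_r'(y)$ consists exclusively of \emph{vanishing points}: since $y_0$ is the nearest branching point, $B_r'(y)$ contains no branching points, so by the trichotomy of \cref{rem:caratterizzazione_punti} it is covered by the two disjoint open sets of interior contact points and vanishing points; then, following the backward characteristic $\gamma'(t)=\tens{A}(\gamma(t))V(\gamma(t))$ from $y$ (using \cref{lemma:rimango_in_parte_positiva}, \eqref{stima-basso-nuova}, and the bound $y_d\le C_\delta r^{1+\alpha}$), one lands on a point $z\in B_r'(y)$ with $u\equiv 0$ along the path, hence $z$ is a vanishing point, and by connectedness all of $B_r'(y)$ consists of vanishing points. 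Only after this can one legitimately say that $\partial\Omega_u\cap B_r'(y)=\emptyset$ and that $\widetilde u := u|_{B_r(y)}$ (with $u\equiv 0$ on $\{x_d\le 0\}$) is a viscosity solution of the interior problem, so that \cref{remark:interior} applies. Without this step, the interior $\varepsilon$-regularity cannot be started at $y$ at the needed scale, and your proof does not close.

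A secondary, smaller inaccuracy: the loss $\alpha<\alpha_0$ is not caused by the bookkeeping in this proposition but is already baked into the improvement of flatness \cref{lemma:IOF_bordo} (one gives up a margin so that the hypotheses \eqref{e:ipotesi_PH_tensoreA}--\eqref{e:ipotesi_PH_vettoreV} are preserved under rescaling, cf.\ Step~1 of \cref{lemma:proprieta_blowup_boundary}). Once you have the two rate lemmas at exponent $\alpha$, the final H\"older estimate for the normal (your Step~3) is correct and in line with the paper, via an argument like \cite[Lemma 8.8]{velichkov2023regularity}.
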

\begin{proof}
    Without loss of generality $x_0=0$. We notice that if $0$ is not a branching point, then
    $\partial\Omega_u\cap B_\rho=B_\rho'$, concluding the proof. Thus, we consider $0$ to be a branching point.
    By \ref{1} of \cref{prop:punti_regolari_al_bordo}, $u_r\to \omega_{0}x_d^+ $, then, up to rescale $u$, we have that, for every $x\in B_1$
    $$\left(\omega_{0} x_d - \eps_0\right)^+\le u(x)\le  \left(\omega_{0} x_d+ \eps_0\right)^+,$$ 
    where $\eps_0>0$ is as in \cref{lemma:proprieta_blowup_boundary}.

      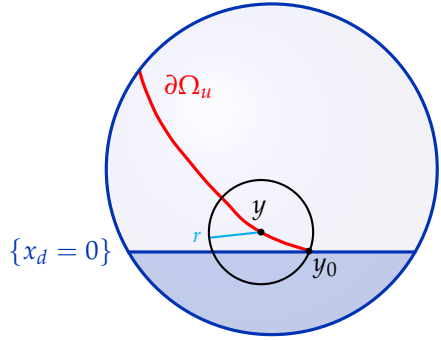
\begin{figure}[htbp]
    	\centering
    	\begin{tikzpicture}[rotate=90, scale= 1.1]
    	\coordinate (O) at (0,0);
    	\shade[ball color = blue, opacity = 0.05] (0,0) circle [radius = 2cm];
    	
    
    	\begin{scope}[transparency group,opacity=0.2]
    	\draw[draw=none, fill=french] (-1,0) -- +(90:1.73cm) arc (94:266:1.73cm);
    	\end{scope}
    	\draw[draw=none,name path = uno] (0,0) circle [radius = 2cm];
    	\draw[draw=none, name path = due] (0,0) circle [radius = 2.62cm];
    	\tikzfillbetween[of=uno and due] {color=white};
    		\draw [very thick, color=red, name = due] plot [smooth, tension=0.9] coordinates {(-0.99,-0.48) (-0.75,0.15)
    		(-0.35,0.6) (0.1,1) (0.5,1.3) (0.9,1.5) (1.2,1.6) };
    	\draw[very thick, color=french] (-1,-1.745) -- (-1,1.745);
    	\draw[very thick, color = french] (O) circle [radius=2cm];
    	
    	\draw[thick, color=cyan] (-0.76,0.13) -- (-0.83,0.77);
    	
        \draw[thick]  (-0.76,0.13)  circle [radius=0.63cm];
        \filldraw (-0.76,0.13) circle (1pt);
   	\filldraw (-0.99,-0.45) circle (1pt);
    	
    	\draw node at (-0.5,0.15) {$y$};
    	\draw node at (-1.2,-0.65) {$y_0$};
    	
    	\draw node [ color=french] at (-1,2.55) {$\{x_d = 0\}$};
    	\draw node [color=red] at (1,1) {$\partial \Omega_u$};
    	\draw node [color=cyan] at (-0.82,0.9) {\footnotesize{$r$}};
    	\end{tikzpicture}
        
    	\caption{Graphical representation of the geometrical construction employed in \cref{prop:grafico} to show the $C^{1,\alpha}$ regularity around branching points.}
    	\label{fig:eps_reg_bordo}
    \end{figure}
    
    Let $\rho>0$, $y\in \partial\Omega_u\cap B_\rho^+$
    and $y_0$ be the projection of $y$ on the set of branching points, namely $r:=|y-y_0|\le |y-z_0|$ for every $z_0$ branching point. In particular, $r \leq \abs{y} \leq \rho$ and in the following, we assume that $\rho$ is small enough (for a geometrical representation see \cref{fig:eps_reg_bordo}).
    Since $u$ satisfies the hypothesis of \cref{lemma:proprieta_blowup_boundary}, then for every $x\in B_{2r}(y_0)\cap\overline\Omega_u$ we have
    $$u(x)\ge \omega_{y_0}x_d -Cr^{1+\alpha}.$$
    In particular, by \eqref{eq:diff-v-v0} and \eqref{stima-basso-nuova}, we have 
    $$\omega_{y_0}\ge \frac{2\delta}{\tens{A}(y_0)e_d\cdot e_d}\ge \delta.$$
    By taking $ C_\delta:=\frac{C}{\delta}$, then for every $x\in B_{2r}(y_0)\cap \overline\Omega_u\cap\left\{x_d>C_\delta r^{1+\alpha}\right\}$, we have $u(x)>0$. Thus $y_d\le C_\delta r^{1+\alpha}$. We divide the rest of the proof into some steps.\\
    \\
    {\it Step 1. In $B_r'(y)$ there are only vanishing points.}
According to the notation introduced in \cref{rem:caratterizzazione_punti} and since $y_0$ is the projection of $y$ on the set of branching points, $B_r'(y)$ contains only interior contact points or vanishing points. 
Since, both the set of interior contact points and the set of vanishing points are open sets in the topology of $\{x_d=0\}$, it follows that $B_r'(y)$ contains either only interior contact points or only vanishing points.
To conclude, we need to show that the first possibility does not occur.
By \cref{lemma:rimango_in_parte_positiva} and $u(y)=0$, we know that there exists a curve $\gamma$, with $\gamma'(t)= \tens{A}(\gamma(t))V(\gamma(t))$ and $\gamma(0)=y$ such that $u(\gamma(t))=0$ for every $t<0$ with $\gamma_d(t):=\gamma(t)\cdot e_d\ge0$. Then, by \eqref{stima-basso-nuova}, $\gamma'_d(t)= \tens{A}(\gamma(t))V(\gamma(t))\cdot e_d>0$, implying that $t\mapsto\gamma_d(t)$ is non decreasing in $t$.
Thus, there exists $\overline t<0$ such that $\gamma_d(\overline t)=0$ and we call $z:=\gamma(\overline t)$.
Hence, using \eqref{eq:diff-v-v0}, we end up with
\bea C_{\delta}r^{1+\alpha}&\ge y_d=\gamma_d(0)-\gamma_d(\overline t)=\int_{\overline t}^0\left(\tens{A}(\gamma(s)) V(\gamma(s))\cdot e_d-V_d(0)\right)\,ds-V_d(0)\overline t\\&\ge \Pi_0 \overline t-V_d(0)\overline t\ge -\frac{V_d(0)}{2}\overline t=\frac{V_d(0)}{2}|\overline t|,\eea
since $\Pi_0>0$ chosen in \eqref{eq:diff-v-v0} is small enough. Then, by \eqref{eq:diff-v-v0} and \eqref{stima-basso-nuova}, there is a constant $C>0$ such that 
$$|z-y|=|\gamma(\overline t)-\gamma(0)|\le C|\overline t|\le Cr^{1+\alpha}.$$ Hence $z\in B_r'(y)$.
Moreover, since $u(\gamma(t))=0$ for every $t\in[\overline t,0]$, then, by \cref{prop:punti_regolari_al_bordo}, $z$ cannot be an interior contact point, yielding that $z$ is a vanishing point.
\\ 
\\
{\it Step 2. Near branching points, all points are regular.} Since $B_r'(y)$ contains only vanishing points, by defining
$$\widetilde u(x):=\left\{
\begin{aligned}
&u(x)&&\text{in }B_r(y)^+,\\
    &0 &&\text{in }B_r(y)\cap\{x_d\le0\},
\end{aligned}
\right.$$
we have that $\widetilde u$ is a viscosity solution of \eqref{e:u_viscosity_sol_riscalato} replacing $B_1$ with $B_r(y)$.
Since $u$ satisfies the hypothesis of \cref{lemma:proprieta_blowup_boundary}, then
$$\|u-\omega_{y_0}x_d\|_{L^\infty\left(B_{2r}(y_0)\cap\overline\Omega_u\right)}\le Cr^{1+\alpha},$$ 
implying that
$$\|u-\omega_{y_0}x_d\|_{L^\infty\left(B_{r}(y)\cap\overline\Omega_u\right)}\le Cr^{1+\alpha}.$$ 
Thus, by rescaling, we have that
\be\label{eq:stimafinale1_boundary_reg}
\|u_{y,r}-\omega_{y_0}x_d\|_{L^\infty\left(B_1\cap\overline\Omega_u\right)}\le Cr^{\alpha}.\ee 
This means that $u_{y,r}$, and thus $\widetilde u_{y,r}$, is $Cr^{\alpha}$-flat along the direction $e_d$ in $B_1$. 
By \cref{remark:interior}, there exist $\widetilde\omega_{y,\nu_y}\in\R$ and $\nu_y\in \pa B_1$ such that 
\be\label{e:stima_finale2_boundary_reg}
\|u_{y,r}-x\cdot \widetilde\omega_{y,\nu_{y}}\nu_{y}\|_{L^\infty\left(B_1\cap\overline\Omega_u\right)}\le Cr^{\alpha},
\ee 
implying that $y$ is a regular point.
\\
\\
\textit{Step 3. $\partial\Omega_u$ is a $C^{1,\alpha}$ graph.}
Combining \eqref{eq:stimafinale1_boundary_reg} and \eqref{e:stima_finale2_boundary_reg}, and arguing as in \cite[Lemma 8.8]{velichkov2023regularity},
we obtain
$$|\widetilde\omega_{y,\nu_{y}}\nu_y-\omega_{y_0}e_d|\le Cr^\alpha.$$
Finally, using the fact that $\tens{A}$ is smooth and the definition of $\omega_{x}$ as in \eqref{e:bound_omega_x}, we have
$$|\widetilde\omega_{y,\nu_{y}}\nu_y-\omega_0e_d|\le|\widetilde\omega_{y,\nu_{y}}\nu_y-\omega_{y_0}e_d|+|\omega_{y_0}-\omega_0|\le Cr^{\alpha}+|y_0|^\alpha\le 2Cr^{\alpha}+|y|^\alpha\le (2C+1)|y|^\alpha,$$
which implies the $C^{1,\alpha}$ regularity of the graph of the free boundary $\partial\Omega_u$ around $0$ and concluding the proof.
\end{proof}
\begin{proof}[Proof of \cref{th:eps_reg_bordo}]
The $C^{1, \alpha}$ regularity of the free boundary $\partial \Omega_u$ is a consequence of the change of variable provided in \cref{sec:change_coordinates} and then by \cref{prop:grafico}. 
\end{proof}

  \section{\texorpdfstring{$C^{1,\alpha}$}{C1alpha} regularity for the oblique thin obstacle problem}
  \label{susec:reg_linarized_proble_bordo}

   In this section we establish the $C^{1,\alpha}$ regularity for viscosity solutions of the oblique thin obstacle problem. Precisely, we show the following theorem.

\begin{theorem}
    \label{lemma:regolaita_prob_lin-thin}
    For all $\delta >0$, there are constants $C>0$ and $\alpha_0 \in (0,1)$ such that the following holds. Suppose that for some unit vector $W\in\partial B_1$, we have $W_d:=W\cdot e_d\ge\delta$.
  Let $v:\overline {B_1^+}\to\R$ be a viscosity solution of the oblique thin obstacle problem
    \begin{equation}\label{linearizzato-bordo}
        \left\{
\begin{aligned}
    & \Delta v = 0 && \hbox{ in } B_{1}^+,\\
    & \nabla v\cdot W=0 && \hbox{ on } B_{1}'\cap\Omega_v,\\
    & \nabla v\cdot W\le 0 && \hbox{ on } B_{1}',\\
    &v\ge0 && \hbox{ on } B_{1}'.\\
\end{aligned}
        \right.
    \end{equation} 
     Then, $v\in C^{1,\alpha_0}(\overline{B_{\sfrac{1}{2}}^+})$, and the following estimate holds true 
     \bea\label{thesis:oblthin}\|v\|_{C^{1,\alpha_0}(\overline{B_{\sfrac{1}{2}}^+})}\le C\|v\|_{L^\infty(B_1^+)}.\eea
\end{theorem}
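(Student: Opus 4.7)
My plan is to adapt Caffarelli's classical argument for the thin obstacle problem \cite{caf79} to the oblique setting, the main novelty being that the natural scalar one should bound is not a normal derivative but the directional derivative $\nabla v \cdot W$, and that the thin space must be approached along an oblique auxiliary direction. By standard scaling it suffices to prove the estimate \eqref{thesis:oblthin} with the $L^\infty(B_1^+)$-norm normalized to $1$.

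\textbf{Step 1 (Lipschitz and tangential semiconvexity).} First I would establish that $v$ is Lipschitz up to $B_{\sfrac12}'$ and semiconvex in every tangential direction, i.e.\ for every $e=(e',0)\in\mathbb S^{d-1}$
\[
\inf_{B_{\sfrac{1}{2}}} \partial_{ee}v \ge -C.
\]
The Lipschitz bound follows from standard viscosity barriers at the thin space, using an oblique half-space comparison function. For the tangential semiconvexity I would use the Bernstein technique on the auxiliary function $(\partial_{ee}v)^{-}$, multiplied by a cutoff: the PDE $\Delta v=0$ in $B_1^+$ gives a good equation for $\partial_{ee}v$, and the oblique boundary condition, together with $v\ge0$ on the contact set, allows one to control the negative part on $B_1'$ exactly as in \cite{fullyfernandez,fj21,cc24}. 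Combining this with $\Delta v=0$ yields $\sup_{B_{\sfrac{1}{2}}^+}\partial_{dd}v\le C$, and a simple linear algebra identity with $\widetilde W:=(-W',W_d)$ then gives $\sup_{B_{\sfrac{1}{2}}^+}\partial_{\widetilde W W}v\le C$; this is where the obliqueness forces one to work with $\widetilde W$ rather than $e_d$.

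\textbf{Step 2 (Well-defined oblique trace of $\nabla v\cdot W$).} Because $\partial_{\widetilde W W}v$ is bounded above, the function $t\mapsto \nabla v(x'+t\widetilde W)\cdot W - Ct$ is nonincreasing for small $t>0$ for every $x'\in B_{\sfrac12}'$, so the limit
\[
\sigma_W(x'):=\lim_{t\to0^+}\nabla v_\ell(x'+t\widetilde W)\cdot W
\]
exists (possibly $-\infty$ a priori, but bounded from below by a penalization). Here $v_\ell$ is an auxiliary penalized family converging to $v$, introduced to avoid technicalities when differentiating on the contact set. A penalization/viscosity argument analogous to \cite{milakissilvestre-signorini} shows $\sigma_W\le 0$ on $B_1'$, with equality wherever $v_\ell>0$ by the oblique Neumann condition. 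Crucially, semiconvexity of $v_\ell$ in the tangential directions implies that, for a.e.\ $x'\in\Lambda(v_\ell):=\{v_\ell=0\}\cap B_1'$,
\[
\sigma_d(x'):=\lim_{t\to0^+}\partial_d v_\ell(x'+t\widetilde W)=\sigma_W(x'),
\]
because tangential derivatives must vanish a.e.\ on the coincidence set, so $\sigma_W$ is effectively the boundary trace of the normal derivative there.

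\textbf{Step 3 (Density lemma and Caffarelli-type barrier).} This is the heart of the proof, and I expect it to be the main obstacle. Given $x_0\in\Omega_{v_\ell}\cap B_1'$ I want to show that for every small $\gamma>0$ there is a ball $B'_{c\gamma}(y_0)\subset B_{2\gamma}'(x_0)$ which is, up to a null set, contained in the sublevel set $S_\gamma:=\{\sigma_W>-\gamma\}$. The density statement follows by a covering argument combined with semiconvexity: points where $\sigma_W$ is very negative must be near the contact set, but on $\Omega_{v_\ell}\cap B_1'$ we have $\sigma_W\equiv 0$ by oblique elliptic regularity, so such points have small density. Once the density lemma is in hand, I would construct a harmonic majorant of $v_\ell-x\cdot(\text{affine part})$ in a half-ball, using boundary data defined only almost everywhere on $B_1'$, as in \cite{teclemma-w}. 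Iterating this barrier at dyadic scales yields a pointwise oscillation decay
\[
\underset{B_r(x_0)\cap B_1'}{\mathrm{osc}}\,\sigma_W \le C r^{\alpha_0},
\]
for some $\alpha_0\in(0,1)$ independent of $\ell$.

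\textbf{Step 4 (Conclusion via oblique Schauder).} The $C^{0,\alpha_0}$ estimate on $\sigma_W$ is precisely the Hölder regularity of the oblique boundary datum $\nabla v_\ell\cdot W$ on $B_{\sfrac12}'$. Applying the oblique Schauder estimate up to the boundary for harmonic functions \cite{lieberman2013oblique} (valid because $W\cdot e_d\ge\delta$), I obtain a uniform $C^{1,\alpha_0}(\overline{B_{\sfrac{1}{2}}^+})$ bound for $v_\ell$, and then for $v$ by letting $\ell\to 0^+$. The interplay between the a.e.\ identification $\sigma_d=\sigma_W$ on $\Lambda(v_\ell)$ and the only-a.e.\ inclusion in $S_\gamma$ in Step 3 is the delicate point: ordinary Caffarelli barriers require pointwise containment, so one has to upgrade them to handle boundary data defined only a.e., which is where I anticipate most of the technical effort.
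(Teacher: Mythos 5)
Your proposal follows essentially the same route as the paper: Lipschitz and tangential semiconvexity via Bernstein, the penalized family $v_\ell$ to define and control the oblique trace $\sigma_W$, the a.e.\ identification $\sigma_d=\sigma_W$ on the contact set, the density lemma giving a ball contained a.e.\ in $S_\gamma$, the upgraded Caffarelli barrier with a.e.\ boundary data, and the final oblique Schauder step. The one place your description departs from the paper's mechanism is the density lemma, which in the paper comes not from a covering argument but from comparison with an explicit harmonic polynomial $h_{x_0}(x)=|\tens{P}_{W^\perp}(x-x_0)|^2-(d-1)x_d^2$ (\cref{lemmab6}) followed by propagating the resulting lower bound along tangential and $\widetilde W$ directions via the semiconvex and semiconcave estimates; this is a genuinely local barrier argument rather than a measure-theoretic covering, but the rest of your architecture and the key delicacy you flag (pointwise versus a.e.\ containment) match the paper's proof.
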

We remark that \cref{thm:oblthin2} is an immediate consequence of \cref{lemma:regolaita_prob_lin-thin}.
The strategy to prove \cref{lemma:regolaita_prob_lin-thin} is similar to the original proof of Caffarelli in \cite{caf79}, where the Author proved the $C^{1,\alpha}$ regularity for the classical thin obstacle problem, namely $W = e_d$. The proof of \cref{lemma:regolaita_prob_lin-thin} is divided into four steps split into four subsections:
\begin{itemize}
    \item in \cref{subsec:preliminare}, we recall some definitions of viscosity solutions of \eqref{linearizzato-bordo} and we prove a maximum principle;
    \item in \cref{lipandsemiconvest}, we show that the function $v$ is Lipschitz and semiconvex in the tangential directions;
    \item in \cref{estimatefinal0}, we define the function $\sigma_W$, which is the derivative along the oblique direction $W$ on the hyperplane $\{x_d =0\}$;
    \item in \cref{estimatefinal}, we deduce the $C^{0,\alpha}$ estimates for the function $\sigma_{W}$, which implies the $C^{1,\alpha}$ regularity for $v$ up to the boundary $\{x_d=0\}$.
\end{itemize}

\subsection{Preliminary results}
\label{subsec:preliminare}

First we recall the definition of viscosity solutions for the oblique thin obstacle problem.
   \begin{definition}[Viscosity solutions of \eqref{linearizzato-bordo}]
\label{def:def-sol_sezione_esistenza-oblique-thin}
Let $v:\overline {B_1^+}\to\R$ be a continuous function. We say that {\em $v$ is a viscosity solution} of the oblique thin obstacle problem \eqref{linearizzato-bordo} if $v\ge0$ on $B_1'$ and for every $x_0 \in B_1^+\cup B_1'$ and $\varphi \in C^{\infty}(\overline{B_1^+})$, we have
\begin{itemize}
    \item if $x_0 \in B_1^+$ and $\varphi$ touches $v$ from below at $x_0$, then $\Delta \varphi(x_0) \leq 0$;
    \item if $x_0\in B_1^+$ and $\varphi$ touches $v$ from above at $x_0$, then $\Delta \varphi(x_0) \geq 0$;
    \item if $x_0\in B_1'$ and $\varphi$ touches $v$ from below at $x_0$, then $\nabla \varphi(x_0)\cdot W\le0 $;
    \item if $x_0\in B_1'\cap\Omega_v$ and $\varphi$ touches $v$ from above at $x_0$, then $\nabla \varphi(x_0)\cdot W\ge0 $.
\end{itemize}
Without loss of generality, we suppose that a solution of \eqref{linearizzato-bordo} is defined in the entire $B_1$. 
Indeed, one can extend it in an even way in the $e_d$ direction by setting $$v(x',x_d)=v(x',-x_d).$$
According to the classical notations for the thin obstacle problem, we define the contact set of $v$ as
$$\Lambda(v):=\{v=0\}\cap B_{1}'.$$
\end{definition}

\noindent We show the following maximum principle.

\begin{lemma}[Maximum principle]\label{lemma:maximum-principle-thin}
    Let $W\in\partial B_1$ be a unit vector such that $W_d >0$.
    Let $A\subset B_1'$ be an open set and let $v_1,v_2:\overline {B_1^+}\to \R$ be viscosity solutions of  \begin{equation*}
        \left\{
\begin{aligned}
    & \Delta v_1 = 0 && \hbox{ in } B_{1}^+,\\
    & \nabla v_1\cdot W\ge0 && \hbox{ on } A,\\
\end{aligned}
        \right.\qquad \text{and}\qquad\left\{
\begin{aligned}
    & \Delta v_2 = 0 && \hbox{ in } B_{1}^+,\\
    & \nabla v_2\cdot W\le0 && \hbox{ on } A.\\
\end{aligned}
        \right.
    \end{equation*} 
    Suppose that $v_1\le v_2$ on $\partial (B_1^+ \cup A)$, 
    then $v_1\le v_2$ in $\overline{B_1^+}$. In particular, the maximum of $v_1$ in $B_1^+\cup A$ is attained on the boundary $\partial( B_1^+\cup A)$.
\end{lemma}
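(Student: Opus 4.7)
The plan is to set $w := v_1 - v_2$ and rule out $\sup_{\overline{B_1^+}} w > 0$ by a Hopf-type barrier argument exploiting $W_d \ge \delta > 0$. Since viscosity and classical notions of harmonicity coincide, both $v_1$ and $v_2$ are classically harmonic in $B_1^+$ and continuous on $\overline{B_1^+}$; thus $w$ is harmonic in $B_1^+$, continuous up to the boundary, and by hypothesis $w \le 0$ on the spherical cap $\partial B_1 \cap \overline{\{x_d \ge 0\}}$ as well as on $B_1' \setminus A$.

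Suppose for contradiction $M := \sup_{\overline{B_1^+}} w > 0$. The strong maximum principle for harmonic functions rules out attainment of $M$ inside $B_1^+$ (otherwise $w \equiv M$, contradicting the sign on the spherical cap). Since $\partial A \cap B_1' \subset B_1' \setminus A$ is also part of the hypothesized boundary, the maximum must be attained at some $x_0$ lying interior to $A$ in the relative topology of $B_1'$, and $w < M$ strictly throughout $B_1^+$.

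The core step is a Hopf barrier at $x_0$ aligned with the oblique direction $W$. Choose $r>0$ small enough that $B_r(y_0) \subset B_1^+$ with $y_0 := x_0 + r e_d$, and consider
\[
h(x) := e^{-\alpha |x - y_0|^2} - e^{-\alpha r^2}
\]
on the annulus $\mathcal{A}_r := B_r(y_0) \setminus \overline{B_{r/2}(y_0)}$, with $\alpha$ large so that $h$ is subharmonic there. Let $\eta > 0$ satisfy $w \le M - \eta$ on $\partial B_{r/2}(y_0)$ (available by strict interior negativity of $w - M$) and set $c := \min_{\partial B_{r/2}(y_0)} h > 0$. Then $\psi := M - (\eta/c)\, h$ is superharmonic on $\mathcal{A}_r$, dominates $w$ on $\partial \mathcal{A}_r$, and hence dominates $w$ throughout $\mathcal{A}_r$ with equality at $x_0$. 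Using $x_0 - y_0 = -r e_d$ and $W_d \ge \delta > 0$, one computes
\[
\nabla \psi(x_0) \cdot W \;=\; -\frac{\eta}{c}\, 2\alpha r\, e^{-\alpha r^2}\, W_d \;<\; 0.
\]

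The main obstacle is converting the one-sided touching $w \le \psi$ into a genuine viscosity test of $v_1$ or $v_2$ individually, since the definition of viscosity solution for \eqref{linearizzato-bordo} requires a single smooth test function in $C^{\infty}(\overline{B_1^+})$, and $v_2$ is not a priori $C^1$ up to $B_1'$. I plan to handle this by a doubling-of-variables argument à la Ishii: maximize $v_1(x) - v_2(y) - |x-y|^2/(2\varepsilon) - \mu(\psi(x) + \psi(y))$ over $\overline{B_1^+} \times \overline{B_1^+}$, with a small auxiliary weight $\mu>0$ that forces the approximate maximizers $(x_\varepsilon, y_\varepsilon)$ to cluster at $(x_0, x_0)$ and remain interior to $A$. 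At such a pair the slope $p_\varepsilon := (x_\varepsilon - y_\varepsilon)/\varepsilon$ plus the gradient of $\psi$ gives admissible smooth test gradients for $v_1$ from above and for $v_2$ from below, so the viscosity inequalities yield both $\nabla v_1 \cdot W \ge 0$ and $\nabla v_2 \cdot W \le 0$ evaluated at these slopes. Passing to the limit $\varepsilon \to 0$ and then $\mu \to 0$, the contributions of $p_\varepsilon$ cancel while the surviving Hopf slope $\nabla \psi(x_0) \cdot W < 0$ produces the contradiction. This proves $v_1 \le v_2$ in $\overline{B_1^+}$; the last assertion of the lemma follows by applying this comparison with the constant competitor $v_2 \equiv \max_{\partial(B_1^+ \cup A)} v_1$, which trivially satisfies $\Delta v_2 = 0$ and $\nabla v_2 \cdot W = 0 \le 0$.
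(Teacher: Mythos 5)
Your proposal takes a genuinely different route from the paper. You work directly with $w:=v_1-v_2$, locate a hypothetical positive maximum at $x_0\in A$, build a Hopf barrier $\psi$ with $\nabla\psi(x_0)\cdot W<0$, and then plan to convert the one-sided touching $w\le\psi$ into admissible viscosity tests for $v_1$ and $v_2$ separately by doubling variables. The paper instead introduces the intermediary functions $\chi_i$ solving the \emph{linear} oblique boundary value problem $\Delta\chi_i=0$ in $B_1^+$, $\nabla\chi_i\cdot W=0$ on $A$, $\chi_i=v_i$ on $\partial(B_1^+\cup A)$; these are smooth up to $A$ by oblique Schauder theory, so the perturbation $\chi_1^{\eps}:=\chi_1-\eps x_d-\eps x_d^2+2\eps$ is a legitimate classical test function satisfying $\Delta\chi_1^{\eps}=-2\eps<0$ and $\nabla\chi_1^{\eps}\cdot W=-\eps W_d<0$ on $\{x_d=0\}$, and a simple sliding argument gives $v_1\le\chi_1^{\eps}$, hence $v_1\le\chi_1\le\chi_2\le v_2$. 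The paper's detour through $\chi_i$ avoids the doubling machinery entirely, which is exactly where your proposal runs into trouble.

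The gap is that the doubling step, which is the heart of your argument, is asserted rather than carried out, and as sketched it has unresolved problems. First, $\psi$ is defined only on the annulus $\mathcal{A}_r$, so the proposed maximization of $v_1(x)-v_2(y)-|x-y|^2/(2\eps)-\mu(\psi(x)+\psi(y))$ over $\overline{B_1^+}\times\overline{B_1^+}$ is not well-posed as written; you would need to extend $\psi$ or localize the maximization and control what happens at the cut. Second, you do not show that the approximate maximizers $(x_\eps,y_\eps)$ actually land in $A$: if either point falls in the open half-ball $B_1^+$, the viscosity harmonicity of $v_1$ only yields $\Delta\varphi_1(x_\eps)=d/\eps+\mu\Delta\psi(x_\eps)\ge 0$, which holds automatically for small $\mu$ and gives no contradiction, so this case must be ruled out; similarly the case of maximizers on the spherical cap, where there is no equation, needs to be excluded. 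Third, the precise form of the two viscosity inequalities at $(x_\eps,y_\eps)$ and the claimed cancellation of the $p_\eps$-terms are not written out, so it is not verified that the surviving contribution is really $\nabla\psi(x_0)\cdot W$ with the right sign after passing $\eps\to 0^+$ and then $\mu\to 0^+$. The Ishii--Lions approach can in principle be made to work for oblique-derivative viscosity problems, but until these steps are supplied the argument is incomplete, and the paper's route via the smooth comparison functions $\chi_i^{\eps}$ is substantially simpler.
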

\begin{proof}
    Let $\chi_i\in C^\infty(B_1^+\cup A)$, with $i=1,2$, the functions defined as \begin{equation*}
        \left\{
\begin{aligned}
    & \Delta \chi_i = 0 && \hbox{ in } B_{1}^+,\\
    & \nabla \chi_i\cdot W=0 && \hbox{ on } A,\\
    & \chi_i=v_i && \hbox{ on } \partial (B_1^+ \cup A),\\
\end{aligned}
        \right.
    \end{equation*} We prove that $v_1\le \chi_1\le\chi_2\le v_2$ in $\overline {B_1^+}$. Since proofs are similar, we only show that $v_1\le \chi_1$. 
    For every $\eps >0$, let us define
    $$\chi^\eps_1:=\chi_1-\eps x_d-\eps x_d^2+2\eps,$$
    then $v_1\le \chi^\eps_1$ on $\partial (B_1^+ \cup A)$ since $\chi_1=v_1$ on $\partial (B_1^+ \cup A)$.
    Let us consider the function $\chi^\eps_1+t$ for $t>0$ large, and let us decrease $t\ge0$ until a contact point $x_0$ occurs in $\overline {B_1^+}$. 
    Since $\Delta \chi^\eps_1=-2\eps<0$ and $\nabla \chi^\eps_1\cdot W=-\eps W_d<0$, then $x_0\not \in B_1^+\cup A$. Thus, we can choose $t=0$ getting that $v_1\le\chi^\eps_1$. 
    By sending $\eps\to0^+$, we get the thesis.

    Finally, we observe that to show that the maximum of $v_1$ in $B_1^+\cup A$ is attained on the boundary $\partial( B_1^+\cup A)$, it is enough to choose $v_2:=\max_{\partial( B_1^+\cup A)} v_1$.
\end{proof}

\subsection{Lipschitz and semiconvex estimates}\label{lipandsemiconvest}

The next step to prove \cref{lemma:regolaita_prob_lin-thin} is to show that a solution of the oblique thin obstacle problem is locally Lipschitz and semiconvex in the tangential directions. We split these two results into the following two lemmas.

\begin{lemma}
    [Lipschitz continuity]\label{lemma:semiconv0}
    Let $v:{B_1}\to\R$ be a viscosity solution of \eqref{linearizzato-bordo}. Then, there is a constant $ C= C(\delta)>0$ such that
    $v$ is locally Lipschitz, namely
    $$\|\nabla v\|_{L^\infty(B_{\sfrac{1}{2}})}\le  C\|v\|_{L^\infty(B_1)}.$$
\end{lemma}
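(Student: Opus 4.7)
The plan is to establish the Lipschitz estimate by splitting $B_{1/2}$ into regimes according to the position relative to the thin space $B_1'$ and the contact set $\Lambda(v)$, combining interior harmonic estimates with boundary regularity on $B_1'$. Writing $M:=\|v\|_{L^\infty(B_1)}$, the aim is $|\nabla v(x)|\le CM$ for all $x\in B_{1/2}$.

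For $x\in B_{1/2}$ with $\mathrm{dist}(x,B_1')\ge c_0$, classical interior gradient estimates for the harmonic function $v$ immediately yield the bound. For $x$ whose orthogonal projection $x_0$ onto $B_1'$ lies in the non-contact set $\Omega_v$, the function $v$ solves on a small half-ball around $x_0$ the linear uniformly oblique BVP $\Delta v=0$, $\nabla v\cdot W=0$, and Lieberman's boundary Schauder estimates (already invoked in \cref{lemma:regolaita_prob_lin}) give $|\nabla v|\le CM$ with a constant depending only on $\delta$, since $W\cdot e_d\ge\delta$.

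The main obstacle is the remaining case $x_0\in\Lambda(v)$, where $v(x_0)=0$ and $v\ge 0$ on $B_1'$. The target is a linear growth bound $|v(x)|\le CM|x-x_0|$ on a half-ball $B_{r_0}^+(x_0)$; classical interior harmonic estimates on $B_{x_d/2}(x)\subset B_{r_0}^+(x_0)$ then yield $|\nabla v(x)|\le CM$. The growth bound follows from a barrier construction: I would produce a harmonic function $\Psi$ on $B_{r_0}^+(x_0)$ satisfying $\nabla\Psi\cdot W\le 0$ on $\Omega_v\cap B_{r_0}'(x_0)$, $\Psi\ge 0$ on $\Lambda(v)\cap B_{r_0}'(x_0)$, $\Psi\ge M$ on the spherical part of $\partial B_{r_0}^+(x_0)$, and $\Psi(x_0)\le CMr_0$. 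Applying \cref{lemma:maximum-principle-thin} to $v$ and $\Psi$ on the open set $B_{r_0}^+(x_0)\cup(\Omega_v\cap B_{r_0}'(x_0))$ — where $v$ satisfies $\nabla v\cdot W=0$ and thereby plays the role of $v_1$ — gives $v\le\Psi$ and hence $v(x)\le CM|x-x_0|$ in a neighborhood of $x_0$; a symmetric construction provides the matching lower bound using $v\ge 0$ on $B_1'$.

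The key technical difficulty is producing such a $\Psi$ explicitly. Hopf's lemma rules out $\Psi$ having a strict interior minimum at $x_0$ while simultaneously satisfying the oblique sign condition up to $x_0$, so one must exploit the fact that the oblique condition is imposed only on the open set $\Omega_v\cap B_{r_0}'(x_0)$, whose closure may but need not contain $x_0$ densely. A natural strategy is to reduce by a linear change of variables (bringing $W$ close to $e_d$, legitimate since $W\cdot e_d\ge\delta$) and to construct $\Psi$ as a combination of $(x-x_0)\cdot e_d$ with a harmonic quadratic $(d-1)x_d^2-|x'-x_0'|^2$ and an additional harmonic correction, the coefficients being tuned so that the oblique trace is non-positive on $\Omega_v\cap B_{r_0}'(x_0)$ while $\Psi$ stays positive in $B_{r_0}^+(x_0)$. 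A standard covering argument then promotes these pointwise bounds to the claimed global Lipschitz estimate.
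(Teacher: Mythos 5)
Your three-regime decomposition is the right skeleton, and the way you invoke \cref{lemma:maximum-principle-thin} with $A=\Omega_v\cap B_{r_0}'(x_0)$ is structurally sound; but the step you defer as ``the key technical difficulty'' --- producing the barrier $\Psi$ --- is not merely technical: no such universal barrier exists in the generality required. At a contact point $x_0$ nothing whatsoever is known about $\Lambda(v)$, so you must allow $\Omega_v\cap B_{r_0}'(x_0)$ to be dense in $B_{r_0}'(x_0)$ (the contact set can have empty interior, as is typical for Signorini-type problems). For any candidate $\Psi$ that is $C^1$ up to the thin space --- in particular for your ansatz built from $(x-x_0)\cdot e_d$, the harmonic quadratic and a smooth correction --- the condition $\nabla\Psi\cdot W\le0$ on the dense set $\Omega_v\cap B_{r_0}'$ forces $\nabla\Psi\cdot W\le0$ on all of $B_{r_0}'(x_0)$. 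Then \cref{lemma:maximum-principle-thin}, applied with $A=B_{r_0}'(x_0)$, $v_2=\Psi$ and $v_1\equiv M$ (which satisfies $\nabla v_1\cdot W=0\ge0$ and $v_1\le\Psi$ on the spherical part of the boundary), yields $\Psi\ge M$ in the whole half-ball; equivalently, since $W\cdot e_d\ge\delta>0$ points into $\{x_d>0\}$, Hopf's lemma forbids the minimum of $\Psi$ from being attained on the flat part. This is incompatible with the smallness you need near $x_0$: to conclude $v(x)\le CM|x-x_0|$ you need $\Psi$ itself to vanish linearly at $x_0$ (a bound $\Psi(x_0)\le CMr_0$ at one fixed scale is not sufficient anyway), and $\Psi\ge M$ everywhere rules this out. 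So the escape you propose --- exploiting that the oblique condition is only imposed on $\Omega_v\cap B_{r_0}'$ --- cannot be realized by any barrier that is independent of the (unknown) contact set and smooth up to $\{x_d=0\}$.

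The paper's proof of \cref{lemma:semiconv0} circumvents exactly this obstruction by replacing the explicit barrier with a comparison function built from $v$'s own data. At each scale $r$ one first gets the easy lower bound $v\ge-Cr$ on $B_r(x_0)$ (comparison with the harmonic function vanishing on $B_1'$ and equal to $-\|v\|_{L^\infty(B_1)}$ on $\partial B_1$ --- this also settles your ``matching lower bound''), sets $q:=v+Cr\ge0$, and lets $\overline q$ solve the obstacle-free oblique problem in $B_r(x_0)$ with boundary datum $q$ on $\partial B_r(x_0)$. Two applications of \cref{lemma:maximum-principle-thin} give $0\le\overline q\le q$ and $q\le\overline q+Cr$, hence in particular $\overline q(x_0)\le q(x_0)=Cr$; the regularity of the pure oblique problem (Lipschitz/Harnack up to the thin space) then propagates this smallness to $B_{\sfrac r2}(x_0)$, giving $\sup_{B_{\sfrac r2}(x_0)}|v|\le C_0 r$ at every scale, after which a covering argument in the spirit of \cref{lemma:classical-result-pde} combines this growth estimate with interior and oblique boundary gradient bounds. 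The crucial difference from your plan is that the smallness of the comparison function at $x_0$ is \emph{deduced} from $\overline q\le q$ rather than imposed by construction, which is precisely what defuses the Hopf-type obstruction identified above.
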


\begin{lemma}
    [Tangential semiconvexity]\label{lemma:semiconv1}
    Let $v:{B_1}\to\R$ be a viscosity solution of \eqref{linearizzato-bordo}. Then, there is a constant $ C=C(\delta)>0$ such that
    \begin{itemize}
        \item[i)] $v$ is semiconvex in the tangential directions, precisely, for every $e=(e',0)\in \mathbb{S}^{d-1}$, we have
        $$\inf_{B_{\sfrac{1}{2}}}\partial_{ee}v \ge - C\|v\|_{L^\infty(B_1)};$$
        \item[ii)] $v$ is semiconcave in the normal direction
        $$\sup_{B_{\sfrac{1}{2}}^+}\partial_{dd}v \le  C\|v\|_{L^\infty(B_1)}$$ and, if we denote by $\widetilde W:=(-W',W_d)$ the adjoint direction of $W$, then $$\sup_{B_{\sfrac{1}{2}}^+}\partial_{\widetilde WW}v\le C\|v\|_{L^\infty(B_1)}.$$ 
    \end{itemize}
\end{lemma}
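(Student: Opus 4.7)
The plan is to reduce (ii) to (i) and treat (i) as the main work. Given (i), the identity $\Delta v = 0$ in $B_1^+$ gives
\[
\partial_{dd}v = -\sum_{i=1}^{d-1}\partial_{ii}v \;\le\; \sum_{i=1}^{d-1}(\partial_{ii}v)^- \;\le\; C\|v\|_{L^\infty(B_1)}
\]
by (i) applied to $e=e_i$. For the second estimate in (ii), a direct expansion (in which the mixed terms $W_i\partial_{id}v$ with $i<d$ appear with opposite sign in $\widetilde W_i W_j\partial_{ij}v$ and cancel by symmetry of second derivatives) gives
\[
\partial_{\widetilde W W}v = -\partial_{W'W'}v + W_d^2\,\partial_{dd}v,\qquad W':=(W_1,\dots,W_{d-1},0).
\]
Applying (i) to $W'/|W'|$ yields $-\partial_{W'W'}v \le C|W'|^2\|v\|_{L^\infty(B_1)}$, and the previous step handles the $W_d^2\partial_{dd}v$ term, so $\partial_{\widetilde W W}v \le C\|v\|_{L^\infty(B_1)}$ (the case $W'=0$ is trivial).

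For (i), fix a tangential unit vector $e=(e',0)$ and set $w:=\partial_{ee}v$. Since \eqref{linearizzato-bordo} is invariant under translations in the direction $e$, differentiating the relations in the viscosity sense (or passing to the second-order difference quotient in $e$) yields that $w$ is harmonic in $B_1^+$, satisfies $\nabla w \cdot W = 0$ on $\Omega_v\cap B_1'$, and vanishes on the interior of the contact set $\Lambda(v)$. The task is thereby reduced to proving the one-sided bound $w \ge -C\|v\|_{L^\infty(B_1)}$ in $B_{\sfrac{1}{2}}$.

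This bound is obtained by combining viscosity barriers with the Bernstein technique of \cite{cdv20}, as implemented in \cite{fullyfernandez,fj21,cc24}. The viscosity component produces local barriers at interior candidate minimum points of $w$, using the oblique maximum principle of Lemma~\ref{lemma:maximum-principle-thin} together with the Lipschitz estimate of Lemma~\ref{lemma:semiconv0}. The Bernstein component consists in studying the auxiliary function
\[
H(x):=\eta(x)^2\,\bigl(w^-(x)\bigr)^2,
\]
where $\eta\in C^\infty_c(B_1)$ is a cutoff equal to $1$ on $B_{\sfrac{1}{2}}$: since $\Delta w=0$ in $B_1^+$, a direct computation gives $\Delta((w^-)^2)=2|\nabla w|^2\,\chi_{\{w<0\}}$, whence $\Delta H \ge -C(\eta)\,(w^-)^2$ in $B_1^+$; an absorption/iteration argument (Moser or Stampacchia) then reduces the pointwise bound on $w^-$ in $B_{\sfrac{1}{2}}$ to the boundary values of $H$.

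The main obstacle is the boundary contribution on $B_1'$: on $\Omega_v\cap B_1'$ the oblique condition $\nabla w\cdot W=0$ provides a favourable sign, but on the free boundary $\partial\Lambda(v)\cap B_1'$ the function $w$ has no a priori regularity, so the boundary integrals arising in the Bernstein integration-by-parts are not controlled directly. I would circumvent this by introducing a smooth penalization of the thin obstacle, replacing $v$ with a regularization $v_\ell$ whose oblique derivative depends smoothly on $v_\ell$; for $v_\ell$ the argument above can be carried out rigorously with constants uniform in $\ell$, and then one passes to the limit $\ell\to 0^+$ to obtain $\sup_{B_{\sfrac{1}{2}}}w^- \le C\|v\|_{L^\infty(B_1)}$, completing (i).
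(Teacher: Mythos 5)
Your reduction of (ii) to (i) and the identity $\partial_{\widetilde W W}v = -\partial_{W'W'}v + W_d^2\,\partial_{dd}v$ match the paper exactly, and you correctly identify the overall strategy (Bernstein technique plus a regularization to handle the free boundary of the contact set). However, there is a real gap in the Bernstein step. Your auxiliary function $H := \eta^2\,(w^-)^2$ only satisfies $\Delta H \ge -C(\eta)\,(w^-)^2$, which is not a maximum-principle inequality; the Moser/Stampacchia iteration you invoke to ``absorb'' this term would produce a pointwise bound for $w^-$ in terms of an a priori integral bound of $(w^-)^2$, which is precisely what you do not yet have. The paper closes this by working with
\[
\psi_\ell := \eta^2\bigl((\partial_{ee}v_\ell)^-\bigr)^2 + \mu\,(\partial_e v_\ell)^2,
\]
for $\mu$ large: the extra term contributes $2\mu|\nabla\partial_e v_\ell|^2 \ge 2\mu\bigl((\partial_{ee}v_\ell)^-\bigr)^2$, which absorbs the bad cutoff contribution and makes $\psi_\ell$ genuinely \emph{subharmonic}. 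Since $\nabla\psi_\ell\cdot W\ge 0$ on $B_1'\cap\Omega_{v_\ell}$ (using $\nabla\eta\cdot W=0$ there), $(\partial_{ee}v_\ell)^-\equiv 0$ on $\Lambda(v_\ell)$, and $\eta=0$ on the outer sphere, the oblique maximum principle of \cref{lemma:maximum-principle-thin} yields the pointwise bound directly from the Lipschitz estimate of \cref{lemma:semiconv0} applied to $\partial_e v_\ell$ — no iteration needed. This extra term $\mu(\partial_e v_\ell)^2$ is the central idea you are missing.

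A secondary concern is your choice of regularization. You propose a smooth penalization of the oblique condition ($\nabla v_\ell\cdot W=\beta_\ell(v_\ell)$). Differentiating that condition twice in $e$ produces terms $\beta_\ell''(v_\ell)(\partial_e v_\ell)^2 + \beta_\ell'(v_\ell)\,\partial_{ee}v_\ell$ on $B_1'$; as the penalization sharpens, $\beta_\ell'$ blows up, and the sign of $\beta_\ell'\,\partial_{ee}v_\ell$ is unfavourable exactly at the points where $\partial_{ee}v_\ell<0$ which you are trying to bound. The paper instead regularizes by solving the \emph{same} oblique thin obstacle problem with boundary datum $v+\ell$ on $\partial B_{\sfrac{7}{8}}$; this keeps the boundary condition unchanged (so $(\partial_{ee}v_\ell)^-$ has the clean oblique sign on $\Omega_{v_\ell}\cap B_1'$ and vanishes on $\Lambda(v_\ell)$, established rigorously via second difference quotients), while ensuring $v_\ell>0$ near $\partial B_{\sfrac{7}{8}}$, hence smoothness and a uniform bound on the difference quotients near the sphere. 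You also gloss over the justification that $w=\partial_{ee}v$ is a viscosity subsolution of the oblique condition off the contact set: for a merely Lipschitz $v$ this requires the difference-quotient argument, not a formal differentiation.
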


    \begin{proof}[Proof of \cref{lemma:semiconv0}]     
    Without loss of generality, we suppose that $\|v\|_{L^\infty(B_1)}\le 1$.  
    Consider the function $h:B_1\to\R$ defined as
\begin{equation*}
        \left\{
\begin{aligned}
    & \Delta h = 0 && \hbox{ in } B_{{1}}^+\cup B_1^-,\\
    & h=0 && \hbox{ on } B_{{1}}',\\
    & h=-\|v\|_{L^\infty(B_1)} && \hbox{ on } \partial B_1.
\end{aligned}
        \right.
    \end{equation*}
Then, by boundary estimates for harmonic functions, $h$ is Lipschitz continuous in $B_{\sfrac{3}{4}}$. Thus, for every $x_0\in B_{\sfrac{1}{2}}'$ and $r\in(0,\sfrac{1}{4})$, we have
$$\sup_{B_r(x_0)}|h(x)|\le Cr,$$
for some $C>0$. 

Let $x_0\in B_{\sfrac{1}{2}}'\cap \Lambda(v)$ and $r\in (0,\sfrac{1}{4})$. By the maximum principle for harmonic functions, $v\ge h$ on $B_{\sfrac{3}{4}}$, then $$v\ge h\ge -Cr\quad\text{in }B_r(x_0).$$ Take the function $q:=v+Cr\ge 0$ in $B_r(x_0)$ and we define $\overline q$ such that 
\begin{equation}\label{e:qbar}
        \left\{
\begin{aligned}
    & \Delta \overline q = 0 && \hbox{ in } B_{{r}}^+(x_0)\cup B_{{r}}^-(x_0),\\
    & \nabla \overline q\cdot W = 0 && \hbox{ in } B_{{r}}'(x_0),\\
    & \overline q=q && \hbox{ on } \partial B_{{r}}(x_0).\\
\end{aligned}
        \right.
    \end{equation}
        Then, by maximum principle \cref{lemma:maximum-principle-thin}, we have that $0\le \overline q\le q$ in $B_r(x_0)$ since $q$ is a supersolution of \eqref{e:qbar}.
        It follows that $q\le \overline q+Cr$ on $\partial B_r(x_0)$ and $q\le \overline q+Cr$ on $B_r'(x_0)\cap\Lambda(v)$. 
        Since $\Delta q=0$ in $B_r^+(x_0)$ and $\nabla q\cdot W=0$ on $B_r'(x_0)\cap\Omega_v$, then by maximum principle \cref{lemma:maximum-principle-thin}, we have that 
        $$q\le \overline q+Cr\quad\text{in }B_r(x_0).$$
        Since $\overline q$ is Lipschitz continuous in $B_{\sfrac{r}{2}}(x_0)$ and $q(x_0)=Cr$, for every $x\in B_{\sfrac{r}{2}}(x_0)$, we obtain 
        $$\overline q(x)\le |\overline q(x)-\overline q(x_0)|+\overline q(x_0)\le L\|q\|_{L^\infty(B_r(x_0))}r+q(x_0)\le L\|v\|_{L^\infty(B_1)}r+LCr+Cr \leq \left(L+ LC + C\right)r.$$ 
        Therefore, 
        for every $x\in B_{\sfrac{r}{2}}(x_0)$ we have that
        $$v(x)\le q(x)\le \overline q(x)+Cr\le C_0 r.$$
        Thus, we have proved that for every $x_0\in B_{\sfrac{1}{2}}'\cap \Lambda(v)$ and $r\in (0,\sfrac{1}{8})$, we obtain
        $$\sup_{B_{r}(x_0)}|v|\le 2C_0 r.$$ 
        The conclusion follows using a similar argument as the one in \cref{lemma:classical-result-pde} where we need to combine the above estimate on the contact set $\Lambda(v)$ with the boundary estimates for harmonic functions with oblique boundary conditions \cite{lieberman2013oblique}.
        \end{proof}
        
        \begin{proof}[Proof of \cref{lemma:semiconv1}] 
        First of all, we observe that $ii)$ is an immediate consequence of $i)$.
        Indeed, if $i)$ holds, then since $\Delta v=0$ in $B_1^+$, for every $x\in B_{\sfrac{1}{2}}^+$, we have that
        $$\partial_{dd}v=-\sum_{i=1}^{d-1}\partial_{ii}v\le C\|v\|_{L^\infty(B_1)}$$ 
        and, if we denote by $\widehat W':=(\sfrac{W'}{|W'|},0)$, then 
        $$ \partial_{\widetilde WW}v=-|W'|^2\partial_{\widehat W'\widehat W'}v+W_d^2\partial_{dd}v\le C \|v\|_{L^\infty(B_1)}.$$

    Thus, it is sufficient to prove $i)$.
    For every $\ell\in(0,1)$, we consider the solution of the oblique thin obstacle problem in $B_{\sfrac{7}{8}}$ with boundary datum $v+\ell$, namely \begin{equation}\label{vell}
     \left\{
\begin{aligned}
    &\Delta v_\ell= 0  &&  \mbox{in }B_{\sfrac{7}{8}}^+,\\
     &\nabla v_\ell \cdot W =0 && \mbox{on } B_{\sfrac{7}{8}}'\cap\Omega_{ v_\ell},\\
        &\nabla v_\ell \cdot W \le0 && \mbox{on } B_{\sfrac{7}{8}}',\\
		 &v_\ell \ge 0 &&  \mbox{on } B_{\sfrac{7}{8}}',\\
		 &v_\ell= v+\ell &&\mbox{on } \partial B_{\sfrac{7}{8}},\\
         &v_\ell(x',x_d)=v_\ell(x',-x_d)&&  \mbox{in }B_{\sfrac{7}{8}}^+
\end{aligned}
    \right.
\end{equation}
Notice that $v_{\ell}\to v$ uniformly in $B_{\sfrac{7}{8}}$ as $\ell\to 0^+$ (as a consequence of Lipschitz regularity in \cref{lemma:semiconv0}). 
We point out that a solution of \eqref{vell} can be constructed by the classical Perron's method, similarly as done in \cref{t:existence}, as the minimum supersolution of
\begin{equation*}
     \left\{
\begin{aligned}
    &\Delta v_\ell\le 0  &&  \mbox{in }B_{\sfrac{7}{8}}^+,\\
     &\nabla v_\ell \cdot W \le0 && \mbox{on } B_{\sfrac{7}{8}}',\\
\end{aligned}
    \right.
\end{equation*}
satisfying $v_\ell\ge0$ on $B_{\sfrac{7}{8}}'$ and $v_\ell\ge v+\ell$ on $\partial B_{\sfrac{7}{8}}$.
Since $v_\ell >0$ in $B_{\sfrac{7}{8}}$, we observe that for some $\beta=\beta(\ell)>0$, the function $v_{\ell}$ is harmonic in $(B_{\sfrac{7}{8}}\setminus B_{\sfrac{7}{8}-2\beta})^+$, with an oblique boundary condition in $(B_{\sfrac{7}{8}}\setminus B_{\sfrac{7}{8}-2\beta})'$. 
We divide the rest of the proof into two steps.\\
\\
{\it Step 1.} We claim that, for every $\ell>0$,
\begin{equation}\label{claim:semiconvexity}
     \left\{
\begin{aligned}
    &\Delta(\partial_{ee}v_{\ell})^{-}\ge 0&& \text{in } B_{\sfrac{2}{3}}^+,\\ &\nabla(\partial_{ee}v_{\ell})^{-}\cdot W\ge  0&& \text{on } B_{\sfrac{2}{3}}'\setminus\Lambda(v_\ell),
    \end{aligned}
    \right.\qquad\text{and}\qquad  (\partial_{ee}v_{\ell})^{-}\equiv0\quad\text{on }\Lambda(v_\ell)\cap B_{\sfrac{2}{3}}.
\end{equation}
To define $(\partial_{ee}v_{\ell})^{-}$, first, for every $\eps>0$, we set $f^\ell_{\eps,e,h}:=\max\{-\delta^2_{e,h} v_\ell,\eps\}$, where
$$\delta^2_{e,h} w(x)=\frac{ w(x+he)+ w(x-he)-2 w(x)}{h^2},\quad\mbox{for every } x\in B_{\sfrac{7}{8}},$$
and for any $w:B_1\to\R$, $e=(e',0)\in \mathbb{S}^{d-1}$, $h\in(0,\sfrac{1}{8})$. To define $(\partial_{ee}v_{\ell})^{-}$ and get \eqref{claim:semiconvexity}, we want to send $\eps \to 0^+$ and then $h \to 0^+$ in $f^\ell_{\eps,e,h}$.
We notice that, since $v_\ell= 0$ on $\Lambda(v_\ell)$ and $v_\ell\ge0$ on $B_{\sfrac{7}{8}}'$, then
\be\label{up}\delta^2_{e,h} v_\ell\ge 0\quad\text{on }\Lambda(v_\ell).
\ee
Moreover, we have that $\Delta(\delta^2_{e,h}v_\ell)=0$ in $B_{\sfrac{7}{8}}^+$. 
Since the maximum of harmonic functions is subharmonic (similarly as in \cref{lemma:min2soprasol}), then
\be\label{eq:syst1}\Delta f^\ell_{\eps,e,h}\ge 0\quad\text{on }B_{\sfrac{7}{8}}^+.\ee
Regarding the boundary condition, since $\nabla v_\ell\cdot W=0$ on $B_{\sfrac{7}{8}}'\setminus\Lambda(v_\ell)$ and $\nabla v_\ell\cdot W\le0$ on $B_{\sfrac{7}{8}}'$, then $\nabla (\delta^2_{e,h} v_\ell)\cdot W\le0$ in $B_{\sfrac{7}{8}}'\setminus\Lambda(v_\ell)$.
By \eqref{up}, we have that
\be\label{neigh}f^\ell_{\eps,e,h}\equiv \eps\quad\text{in a neighborhood of }\Lambda(v_\ell),\ee then, arguing as above we get \be\label{eq:syst2}\nabla f^\ell_{\eps,e,h}\cdot W\ge0\quad\text{on }B_{\sfrac{7}{8}}'.\ee
To conclude the first step, namely proving \eqref{claim:semiconvexity}, we would like to pass to the limit as $\eps\to0^+$ and then $h\to0^+$ in \eqref{eq:syst1}, \eqref{neigh}, \eqref{eq:syst2}.
To pass to the limit in \eqref{eq:syst1}, it is sufficient to show that
\be\label{claim:seminconvexity2}
\|f_{\eps,e,h}^\ell\|_{L^\infty(B_{\sfrac{3}{4}})}\le C(\ell),
\ee for some $C(\ell)>0$ which does not depend on $\eps$ and $h$. 
Indeed, if \eqref{claim:seminconvexity2} holds, then we have a family of uniformly bounded subharmonic functions $\{f^\ell_{\eps,e,h}\}_{\eps,h}$ in $B_{\sfrac{2}{3}}^+$, which pointwise converges to $(\partial_{ee}v_\ell)^-$. Then $(\partial_{ee}v_\ell)^-$ is subharmonic in $B_{\sfrac{2}{3}}^+$. By passing to the limit in the mean value inequality for subharmonic functions, we get the first equation in \eqref{claim:semiconvexity}.
Moreover, we can pass to the limit also in \eqref{neigh}, getting that $(\partial_{ee}v_\ell)^-\equiv 0$ on $\Lambda(v_\ell)\cap B_{\sfrac{2}{3}}'$.
Finally, since the function $v_\ell$ is smooth up to $B_{\sfrac{2}{3}}'\cap\Omega_{v_\ell}$, by estimates up to the boundary for harmonic functions with oblique boundary conditions, see \cite{lieberman2013oblique}, then we get $\nabla (\partial_{ee}v_\ell)\cdot W=0$ on $B_{\sfrac{2}{3}}'\cap\Omega_{v_\ell}$. Then, the final inequality $\nabla (\partial_{ee}v_\ell)^-\cdot W\ge0$ on $B_{\sfrac{2}{3}}'\cap\Omega_{v_\ell}$ follows by the fact that maximum of supersolutions is a supersolution, concluding \eqref{claim:semiconvexity}.  

It remains to provide \eqref{claim:seminconvexity2}. We observe that since $f^\ell_{\eps,e,h}$ satisfies \eqref{eq:syst1}, \eqref{eq:syst2}, then by maximum principle \cref{lemma:maximum-principle-thin}, the maximum of $f^\ell_{\eps,e,h}$ in a ball $B_r$ with $r\le \sfrac{7}{8}$, is attained on the boundary of such a ball $\partial B_r$.
Since $v_\ell$ is harmonic in $(B_{\sfrac{7}{8}}\setminus B_{\sfrac{7}{8}-2\beta})^+$ with an oblique boundary condition in $(B_{\sfrac{7}{8}}\setminus B_{\sfrac{7}{8}-2\beta})'$, using the corresponding $C^{2,\alpha}$ estimates in \cite{lieberman2013oblique}, we have 
$$\sup_{B_{\sfrac{3}{4}}}|f^\ell_{\eps,e,h}|=\sup_{B_{\sfrac{3}{4}}}f^\ell_{\eps,e,h}\le \sup_{B_{\sfrac{7}{8}-\beta}}f^\ell_{\eps,e,h}\le\sup_{\partial B_{\sfrac{7}{8}-\beta}}f^\ell_{\eps,e,h} \le \sup_{\partial B_{\sfrac{7}{8}-\beta}}|\delta^2_{e,h}v_\ell|+\eps\le C(\ell),$$
getting \eqref{claim:seminconvexity2}.\\
\\
{\it Step 2.}
We now proceed by Bernstein's technique. Let $\eta'\in C^\infty_c(B_{\sfrac{6}{11}}')$ be a cutoff function such that $0\le \eta'\le 1$ and $\eta'\equiv1$ in $B_{\sfrac{7}{11}}'$. Let 
$$\eta(x):=\eta'\left(x'+\frac{W'}{W_d}x_d\right)$$
be defined in the strip $|x_d|\le \sigma$ for some $\sigma >0$ small enough.
We can extend $\eta$ in $\R^d$ in the following way
$$
\eta\in C^\infty_c(B_{\sfrac{2}{3}}), \qquad 0\le \eta\le 1, \qquad \eta\equiv1 \hbox{ in } B_{\sfrac{1}{2}}, \qquad \nabla \eta\cdot W=0 \hbox{ on } \{x_d=0\}.
$$ 
Introducing the auxiliary function 
$$\psi_\ell:=\eta^2((\partial_{ee}v_\ell)^-)^2+\mu(\partial_ev_\ell)^2,$$
and providing that $\mu>0$ large enough (independently on $\ell$), using \eqref{claim:semiconvexity}, we prove that $\Delta\psi_{\ell}\ge 0$ in $B_{\sfrac{2}{3}}^+$. Indeed, 
\bea \Delta \psi_\ell&=((\partial_{ee}v_\ell)^-)^2\Delta \eta^2 +\eta^2\Delta ((\partial_{ee}v_\ell)^-)^2+2\nabla \eta^2\cdot\nabla ((\partial_{ee}v_\ell)^-)^2+2\mu |\nabla \partial_ev_\ell|^2\\
&\ge ((\partial_{ee}v_\ell)^-)^2\Delta \eta^2+2\eta^2 (\partial_{ee}v_\ell)^-\Delta (\partial_{ee}v_\ell)^-+2|\nabla (\partial_{ee}v_\ell)^-|^2\eta^2-8|\eta||\nabla\eta||(\partial_{ee}v_\ell)^-||\nabla (\partial_{ee}v_\ell)^-|+2\mu |\nabla \partial_ev_\ell|^2\\
&\ge ((\partial_{ee}v_\ell)^-)^2\Delta \eta^2+2|\nabla (\partial_{ee}v_\ell)^-|^2\eta^2-8|\eta||\nabla\eta||(\partial_{ee}v_\ell)^-||\nabla (\partial_{ee}v_\ell)^-|+2\mu |\nabla \partial_ev_\ell|^2\\
&\ge ((\partial_{ee}v_\ell)^-)^2\Delta \eta^2 -8|\nabla \eta|^2|(\partial_{ee} v_\ell)^-|^2+2\mu |\nabla \partial_ev_\ell|^2\\&\ge ((\partial_{ee}v_\ell)^-)^2(2\mu-C_\eta)\\
&\ge0,
\eea 
having chosen $C_{\eta}>0$ such that $-\Delta\eta^2+8|\nabla \eta|^2\le C_{\eta}.$
Moreover, by \eqref{claim:semiconvexity} and the regularity of $v_\ell$ in $\Omega_{v_\ell}$, we have that $\nabla \eta\cdot W=0$, $\nabla (\partial_ev_\ell)\cdot W=0$ and $\nabla (\partial_{ee}v_\ell)^-\cdot W\ge0$ on $B_{\sfrac{2}{3}}'\cap\Omega_{v_\ell}$, then $$\nabla\psi_{\ell}\cdot W\ge 0 \quad\text{in }B_{\sfrac{2}{3}}'\cap \Omega_{v_\ell}.$$
By maximum principle \cref{lemma:maximum-principle-thin}, the maximum in $B_{\sfrac{2}{3}}\setminus\Lambda(v_{\ell})$ of $\psi_\ell$ is attained on the boundary. 
Since $\eta\equiv 1$ in $B_{\sfrac{1}{2}}$, $\eta\equiv0$ on $\partial B_{\sfrac{2}{3}}$ and by \eqref{claim:semiconvexity} $(\partial_{ee}v_\ell)^-\equiv0$ on $\Lambda(v_{\ell})\cap B_{\sfrac{2}{3}}$, we obtain
\bea \|\psi_\ell\|_{L^\infty(B_{\sfrac{2}{3}})}\le\|\psi_\ell\|_{L^\infty(\partial B_{\sfrac{2}{3}})}+\|\psi_\ell\|_{L^\infty(\Lambda(v_{\ell})\cap B_{\sfrac{2}{3}})}\le 2\mu\lVert \partial_ev_\ell\rVert_{L^{\infty}(B_{\sfrac{2}{3}})}^2\le C\|v_\ell\|_{L^\infty(B_1)}^2\le C\|v\|_{L^\infty(B_1)}^2.
\eea 
having used the Lipschitz regularity in \cref{lemma:semiconv0}. 
Therefore,
$$\|(\partial_{ee}v_\ell)^-\|_{L^\infty(B_{\sfrac{1}{2}})}\le\|\psi_\ell\|_{L^\infty(B_{\sfrac{1}{2}})}^{\sfrac{1}{2}}\le C\|v\|_{L^\infty(B_1)},$$ 
implying that the family of function $v_\ell$ is uniformly semiconvex. Letting $\ell\to0^+$ we conclude the proof.
    \end{proof}

\subsection{The function \texorpdfstring{$\sigma_{W}$}{sigmaW}}
\label{estimatefinal0} 

In the following, we show that the oblique derivative
is well-defined on the thin space. 
For technical reason, we introduce the modified functions $v_\ell$ for $\ell \in (0,1)$, as in the proof of \cref{lemma:semiconv1}.
Precisely, for every $\ell\in(0,1)$, we consider the solution of the following problem
\begin{equation}\label{vell2}
     \left\{
\begin{aligned}
    &\Delta v_\ell= 0  &&  \mbox{in }B_{1}^+,\\
     &\nabla v_\ell \cdot W =0 && \mbox{on } B_{1}'\cap\Omega_{ v_\ell},\\
        &\nabla v_\ell \cdot W \le0 && \mbox{on } B_{1}',\\
		 &v_\ell \ge 0 &&  \mbox{on } B_{1}',\\
		 &v_\ell= v+\ell &&\mbox{on } \partial B_{1},\\
         &v_\ell(x',x_d)=v_\ell(x',-x_d)&&  \mbox{in }B_{1}^+.
\end{aligned}
    \right.
\end{equation}
We point out that such a solution can be constructed by Perron's method, as already observed in the proof of \cref{lemma:semiconv1}.
In the rest of the proof of \cref{lemma:regolaita_prob_lin-thin}, we fix $\ell \in (0,1)$ and we establish $C^{1, \alpha}$ estimates for $v_\ell$ that are uniform in $\ell$, implying that the same $C^{1, \alpha}$ estimates hold true for $v$.
    
\begin{remark}\label{re:fundamental-rem} 
    Let $v:{B_1}\to\R$ be a viscosity solution of \eqref{linearizzato-bordo} and $v_\ell$ be as in \eqref{vell2}.
    By \cref{lemma:semiconv1}, for every $(x',0)\in B_1'$ the following quantity 
    \be\label{sigmadef}\sigma_W(x'):=\lim_{t\to0^+}\nabla v_\ell(x'+t\widetilde W)\cdot W\ee 
    is well-defined, where 
    $\widetilde W:=(-W',W_d)$ is the adjoint direction. 
    Indeed, for every $x=(x',0)\in B_1'$, there is $\rho>0$ such that $B_{\rho}'(x)\subset B_1'$, and, by \cref{lemma:semiconv1},
    there is a constant $C=C(\rho)>0$ such that 
    $$\partial_{\widetilde WW}v_\ell(x)\le C.$$
    Therefore, $\partial_{W}v_\ell(x'+t\widetilde W)-Ct$ is monotone non increasing in $t$, and thus \eqref{sigmadef} is well-defined. Moreover, by the elliptic regularity for the oblique boundary problems \cite{lieberman2013oblique}, we have 
    \be\label{sigma=0}\sigma_{W}(x')\equiv0\quad\text{for every }(x',0)\in\Omega_{v_\ell}.\ee
    \end{remark}
    
    We notice that the viscosity condition $\nabla v\cdot W\le0$ on $B_1'$ suggests that $\sigma_{W}(x')\le0$ on $B_1'$. This claim is proved in the next lemma by a penalization argument.
    
    \begin{lemma}\label{lemma:penalized}
        Let $v:{B_1}\to\R$ be a viscosity solution of \eqref{linearizzato-bordo} and $v_{\ell}$ be as in \eqref{vell2}.
        If $\sigma_{W}$ is defined as in \eqref{sigmadef}, then $$\sigma_{W}(x')\le 0\quad\text{for every}\quad (x',0)\in B_1'.$$
    \end{lemma}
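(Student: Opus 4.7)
The proof is via a penalization argument. For each $\eps \in (0, \ell/2)$, I would introduce a smooth non-decreasing penalty function $\beta_\eps \in C^\infty(\R)$ with $\beta_\eps \ge 0$, $\beta_\eps \equiv 0$ on $[0, +\infty)$, and $\beta_\eps(s) \to +\infty$ as $\eps \to 0^+$ for every $s < 0$, and consider the associated penalized solution $v_\ell^\eps$ of the smooth oblique problem
\[
\Delta v_\ell^\eps = 0 \ \text{in } B_1^+, \qquad \nabla v_\ell^\eps \cdot W = -\beta_\eps(v_\ell^\eps) \ \text{on } B_1', \qquad v_\ell^\eps = v + \ell \ \text{on } (\partial B_1)^+.
\]
By the classical theory for quasilinear oblique boundary problems with monotone nonlinearity \cite{lieberman2013oblique}, this problem admits a unique smooth solution. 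A Hopf-type argument rules out non-positive minima on $B_1'$, yielding $v_\ell^\eps > 0$ in $\overline{B_1^+}$, so that $v_\ell^\eps$ is a supersolution of the oblique thin obstacle problem \eqref{vell2}; consequently $v_\ell \le v_\ell^\eps$, and monotonicity in $\eps$ together with uniqueness of solutions to \eqref{vell2} give $v_\ell^\eps \to v_\ell$ uniformly on $\overline{B_1^+}$ as $\eps \to 0^+$.

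The crucial observation is that, by the form of the penalized boundary condition,
\[
\partial_W v_\ell^\eps(x', 0) = -\beta_\eps(v_\ell^\eps(x', 0)) \le 0 \qquad \text{for every } (x', 0) \in B_1'.
\]
To transfer this non-positivity through the limit $\eps \to 0^+$ to the oblique trace $\sigma_W$, I would next establish the uniform-in-$\eps$ semiconcavity bound $\partial_{\widetilde W W} v_\ell^\eps \le C$ in $B_{\sfrac{3}{4}}^+$. This is proved by adapting the Bernstein-type argument of \cref{lemma:semiconv1} to the penalized problem: working with the auxiliary function $\psi_\eps := \eta^2((\partial_{ee}v_\ell^\eps)^-)^2 + \mu(\partial_e v_\ell^\eps)^2$ for tangential directions $e = (e', 0)$, the key point is that tangentially differentiating the penalized boundary condition yields $\partial_e(\nabla v_\ell^\eps \cdot W) = -\beta_\eps'(v_\ell^\eps)\,\partial_e v_\ell^\eps$ on $B_1'$, and the monotonicity $\beta_\eps' \ge 0$ ensures that the resulting boundary contribution in the maximum principle for $\psi_\eps$ has the favorable sign needed to absorb it. Semiconvexity in tangential directions combined with the harmonicity $\Delta v_\ell^\eps = 0$ then yields the uniform estimate on $\partial_{\widetilde W W} v_\ell^\eps$ as in part (ii) of \cref{lemma:semiconv1}.

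With the uniform semiconcavity in hand, fix $(x_0', 0) \in B_{\sfrac{1}{2}}'$ and $t > 0$ small. Integrating along the ray $x_0' + s\widetilde W$ for $s \in [0, t]$ and using the non-positive boundary bound yields
\[
\partial_W v_\ell^\eps(x_0' + t\widetilde W) = \partial_W v_\ell^\eps(x_0', 0) + \int_0^t \partial_{\widetilde W W} v_\ell^\eps(x_0' + s\widetilde W)\, ds \le Ct.
\]
Since $v_\ell^\eps \to v_\ell$ uniformly on $\overline{B_1^+}$ and both functions are harmonic in $B_1^+$, interior elliptic regularity gives $\nabla v_\ell^\eps \to \nabla v_\ell$ locally uniformly in $B_1^+$; passing $\eps \to 0^+$ at the fixed interior point $x_0' + t\widetilde W$ yields $\partial_W v_\ell(x_0' + t\widetilde W) \le Ct$, and letting $t \to 0^+$ in \eqref{sigmadef} gives $\sigma_W(x_0') \le 0$, as desired. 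The hard part is the uniform semiconcavity step: the factor $\beta_\eps'(v_\ell^\eps)$ appearing after tangential differentiation on the thin boundary is not bounded as $\eps \to 0^+$, and the argument must exploit its favorable sign carefully, in analogy with how the analysis on the contact set $\Lambda(v_\ell)$ is handled in \cref{lemma:semiconv1}.
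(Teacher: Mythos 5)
Your proposal shares the paper's starting point (a penalization with an oblique boundary condition $\nabla v_\ell^\eps \cdot W = -\beta_\eps(v_\ell^\eps)$ forcing $\partial_W v_\ell^\eps \le 0$ on $B_1'$), but the mechanism for passing this sign information to $\sigma_W$ is genuinely different: you invoke a uniform-in-$\eps$ tangential semiconvexity/semiconcavity bound for $v_\ell^\eps$ and integrate $\partial_{\widetilde W W}v_\ell^\eps$ along $\widetilde W$. The paper avoids the need for any uniform second-order bound on the approximating sequence: it observes that $q_k := \partial_W v_\ell^k$ is harmonic in $B_1^+$, satisfies $q_k \le 0$ on $B_1'$ and is bounded above by a constant $M$ near $\partial B_1$ (there $v_\ell>0$, so oblique Schauder applies uniformly), and then compares $q_k$ with an explicit harmonic barrier $\Psi\ge 0$ on $B_{1-\sfrac{\beta}{2}}^+$ with $\Psi\equiv 0$ on $B_{1-\beta}'$ and $\Psi=M$ on $\partial B_{1-\sfrac{\beta}{2}}$; the limit $\partial_W v_\ell\le\Psi$ gives $\sigma_W\le 0$ immediately. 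This is simpler because only first-order uniform bounds on $v_\ell^k$ are needed.

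Two concrete issues in your write-up. First, the claim that ``a Hopf-type argument rules out non-positive minima on $B_1'$, yielding $v_\ell^\eps > 0$ in $\overline{B_1^+}$'' is false. At a negative minimum of $v_\ell^\eps$ on $B_1'$, Hopf gives $\partial_W v_\ell^\eps < 0$, while the boundary condition gives $\partial_W v_\ell^\eps = -\beta_\eps(v_\ell^\eps) < 0$; these are \emph{consistent}, not contradictory. Indeed, the penalized solutions generically dip slightly below zero on the thin space (if $v_\ell^\eps\ge 0$ on all of $B_1'$ the penalty vanishes identically and $v_\ell^\eps$ solves the unconstrained oblique Neumann problem, which in general violates the sign constraint); the whole point of the penalization is that the negative part shrinks as $\eps\to 0^+$. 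Consequently your claim that $v_\ell^\eps$ is a supersolution of \eqref{vell2}, and the monotonicity-plus-uniqueness route to $v_\ell^\eps\to v_\ell$, need to be replaced (e.g.\ by the compactness argument from \cite{milakissilvestre-signorini} that the paper invokes). Second, your $\beta_\eps$ cannot be non-decreasing: a function that is non-negative, vanishes on $[0,+\infty)$, and blows up for $s<0$ as $\eps\to 0^+$ must be non-increasing. This matters because in the Bernstein computation the sign of $\beta_\eps'$ (and of $\beta_\eps''$) is precisely what you must exploit, so one has to be careful: with $\beta_\eps$ non-increasing and convex, differentiating $\partial_W v_\ell^\eps=-\beta_\eps(v_\ell^\eps)$ twice tangentially gives $\nabla(\partial_{ee}v_\ell^\eps)\cdot W = -\beta_\eps''(v_\ell^\eps)(\partial_e v_\ell^\eps)^2 - \beta_\eps'(v_\ell^\eps)\partial_{ee}v_\ell^\eps$, and on $\{\partial_{ee}v_\ell^\eps<0\}$ the second term is $\le 0$ while the first is $\le 0$ as well, which is the favorable direction for $\nabla(\partial_{ee}v_\ell^\eps)^-\cdot W\ge 0$. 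So the semiconvexity step is plausible, but you have the sign of the monotonicity backwards as written, and you do not actually carry out the Bernstein argument; this is the essential analytic step of your route, which the barrier approach sidesteps entirely.
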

    \begin{proof}
        The proof is based on a penalization argument, similarly as in \cite{milakissilvestre-signorini}. The function $v_\ell$ can be constructed as limit of the following problem
        \begin{equation*}
     \left\{
\begin{aligned}
    &\Delta v_\ell^k=0&& \text{in } B_{1}^+,\\ 
    &\nabla v_\ell^k\cdot W=  0&& \text{on } B_{1}'\cap\{v_\ell^k>0\},\\
    &\nabla v_\ell^k\cdot W=kv_\ell^k&& \text{on } B_{1}'\cap\{v_\ell^k\le0\},\\
    &v_\ell^k=v_\ell&& \text{on }\partial B_1,\\
    &v_\ell^k(x',x_d)=v_\ell^k(x',-x_d),&&
    \end{aligned}
    \right.
\end{equation*}
as $k\to +\infty$.
The existence of a solution can be obtained by Perron’s method. By maximum principle, regularity for oblique elliptic PDE \cite{lieberman2013oblique} and the fact that $v_\ell>0$ in $\partial B_1$, we can get the following conditions for $v_\ell^k$ exactly as in \cite[Section 2]{milakissilvestre-signorini}:
\begin{itemize}
    \item[(a)] $\|v_\ell^k\|_{L^\infty(B_1)}\le \|v_\ell\|_{L^\infty(B_1)}$;
    \item[(b)] $\nabla v^k_\ell\cdot W$ is uniformly bounded in $B_{1-\eta}'$, by a constant $C=C(\ell,\eta)>0$ independently on $k$, for every $\eta>0$;
    \item[(c)] $v_\ell^k\to v_\ell$ uniformly in $B_1$; 
    \item[(d)] $\nabla v_\ell^k\to \nabla v_\ell$ uniformly in $B_{1-\eta}\cap\{x_d>\eta\}$, for every $\eta>0$;
    \item[(e)] $v_\ell^k\in C^{1,\alpha}(\overline B_{1-\eta})$ for every $\eta>0$, but (a priori) the $C^{1,\alpha}$ norm possibly depending on $k$.
\end{itemize}
Since $v_\ell>0$ in $\partial B_1$, there is $\beta=\beta(\ell)>0$ such that $v_{\ell}>0$ and $(B_{1}\setminus B_{1-\beta})'$, thus it is harmonic in $(B_{1}\setminus B_{1-\beta})^+$, with an oblique boundary condition in $(B_{1}\setminus B_{1-\beta})'$. Moreover, combining $(b)$ and the oblique elliptic regularity in \cite{lieberman2013oblique}, there exists $M=M(\beta)>0$ such that 
$\nabla v_\ell^k\cdot W\le M$ in $B_{1-\sfrac{\beta}{3}}\setminus B_{1-\sfrac{2\beta}{3}}$.
Let $\Psi$ be the solution of the following elliptic problem
$$
\left\{
\begin{aligned}
    &\Delta \Psi = 0&&\hbox{in }B_{1- \sfrac{\beta}{2}}^+,\\
    &\Psi = 0&&\hbox{on }B_{1- \beta}',\\
     &\Psi= M&&\hbox{on }\partial B_{1- \sfrac{\beta}{2}},\\
     &\Psi=  \frac{2M\left(\abs{x}-1 + \beta\right) }{\beta}&&\hbox{on } B'_{1- \sfrac{\beta}{2}}\setminus B_{1- {\beta}}'.\\
\end{aligned}
\right.
$$
By comparison $\nabla v_\ell^k\cdot W \leq \Psi$ in $B_{1- \sfrac{\beta}{2}}^+$. Letting $k\to+\infty$ and using $(d)$, we have that $\nabla v_\ell\cdot W \leq \Psi$ in $B_{1- \sfrac{\beta}{2}}^+$.
Thus, for every $(x',0)\in B'_{1- \sfrac{\beta}{2}}$, 
$$\sigma_{W}(x')=\lim_{t\to0^+}\nabla v_\ell(x'+t\widetilde W)\cdot W \leq \Psi(x',0)=0.$$
     On the other hand, by \eqref{sigma=0}, $\sigma_W\equiv0$ in $B_1'\setminus B_{1-\beta}'$, concluding the proof.
    \end{proof}
    
    In the following lemma we show that we can link almost everywhere on the contact set the normal derivative with the one along the oblique obstacle $W$, on the hyperplane $\{x_d=0\}$.
    
   \begin{lemma}\label{rem:q.o.}
      Let $v:{B_1}\to\R$ be a viscosity solution of \eqref{linearizzato-bordo} and $v_{\ell}$ be as in \eqref{vell2}. Then there exists a set $\mathcal{G}\subset B_1'$ such that, for every $(x',0)\in \Lambda(v_\ell)\cap \mathcal{G}$ 
$$\sigma_d(x'):=\lim_{t\to0^+}\partial_{d}v_\ell(x'+t\widetilde W)=\sigma_{W}(x')$$ and $$\mathcal{H}^{d-1}(B_1'\setminus\mathcal{G})=0.$$ 
   \end{lemma}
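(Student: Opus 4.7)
The plan is to combine Rademacher's theorem applied to the Lipschitz trace of $v_\ell$ on the thin space with a Fatou-type non-tangential limit argument for the bounded harmonic functions $\partial_i v_\ell$ in $B_1^+$. The existence of $\sigma_d$ at a.e.\ point is not in itself the issue (it follows from the semiconcavity bound $\partial_{dd}v_\ell\le C$ of \cref{lemma:semiconv1} along any non-tangential ray, in the same way that the existence of $\sigma_W$ was justified in \cref{re:fundamental-rem}); the real content is the equality with $\sigma_W$ at a.e.\ contact point.

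Step 1. By \cref{lemma:semiconv0}, $v_\ell$ is locally Lipschitz in $B_1$, so its restriction $v_\ell\!\mid_{B_1'}$ is a non-negative Lipschitz function on the thin space. By Rademacher's theorem, it is classically differentiable at $\mathcal H^{d-1}$-a.e.\ $x'\in B_1'$; call this full-measure set $\mathcal G_1$. At any $(x',0)\in\Lambda(v_\ell)\cap\mathcal G_1$, since $v_\ell\!\mid_{B_1'}\ge0$ attains its minimum $0$ at $x'$, differentiability forces the tangential gradient of the trace to vanish: $\partial_i v_\ell\!\mid_{B_1'}(x')=0$ for every $i<d$.

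Step 2. Each $\partial_i v_\ell$ ($i=1,\dots,d$) is a bounded harmonic function in $B_1^+$ (harmonicity follows from $\Delta v_\ell=0$ and interior smoothness, and the $L^\infty$ bound from \cref{lemma:semiconv0}). The classical Fatou theorem for bounded harmonic functions in a half-space guarantees that their non-tangential boundary limits exist at $\mathcal H^{d-1}$-a.e.\ $x'\in B_1'$, and these limits coincide $\mathcal H^{d-1}$-a.e.\ with the distributional trace on $B_1'$, which for a Lipschitz function equals the classical tangential derivative of $v_\ell\!\mid_{B_1'}$ in the indices $i<d$. Let $\mathcal G_2$ be the full-measure set where both the limits along the fixed direction $\widetilde W$ (which is non-tangential since $\widetilde W\cdot e_d=W_d\ge\delta>0$) exist and agree with the tangential trace gradient for each index $i<d$, and where additionally $\sigma_d$ is well defined by the semiconcavity argument of \cref{re:fundamental-rem}. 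Put $\mathcal G:=\mathcal G_1\cap\mathcal G_2$, so $\mathcal H^{d-1}(B_1'\setminus\mathcal G)=0$.

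Step 3. Fix $(x',0)\in\Lambda(v_\ell)\cap\mathcal G$. By Step 1 and Step 2, $\lim_{t\to 0^+}\partial_i v_\ell(x'+t\widetilde W)=0$ for $i=1,\dots,d-1$, while $\lim_{t\to 0^+}\partial_d v_\ell(x'+t\widetilde W)=\sigma_d(x')$. Expanding the definition of $\sigma_W$ gives
\[
\sigma_W(x')=\lim_{t\to 0^+}\nabla v_\ell(x'+t\widetilde W)\cdot W=\sum_{i<d}W_i\cdot 0+W_d\,\sigma_d(x'),
\]
which is the asserted identification of $\sigma_d$ with $\sigma_W$ (up to the normalising factor $W_d$ that is harmless in the subsequent applications, since $W_d\ge\delta$).

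The delicate point is Step 2: one must ensure that the Fatou non-tangential limits of the harmonic functions $\partial_i v_\ell$ genuinely coincide with the pointwise value of the gradient of the Lipschitz trace at a.e.\ point. This is a standard but not trivial ingredient, relying on the fact that for a bounded harmonic function in $B_1^+$ whose distributional boundary trace belongs to $L^\infty(B_1')$, the non-tangential limit equals the trace at every Lebesgue point; combining this with the Lebesgue differentiation theorem applied to the bounded tangential gradient of $v_\ell\!\mid_{B_1'}$ completes the argument.
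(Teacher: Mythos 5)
Your proposal is correct, but it reaches the conclusion by a genuinely different route than the paper. The paper's proof keeps everything elementary: it takes $\mathcal G$ to be the set where the trace $v_\ell|_{B_1'}$ is differentiable along $\widehat W':=(\sfrac{W'}{|W'|},0)$ (Rademacher, as in your Step 1), observes that this derivative vanishes at contact points, and then transfers this information to the interior limit along the ray $x'+t\widetilde W$ by a two-sided difference-quotient comparison based on the tangential semiconvexity bound $\partial_{ee}v_\ell\ge -C$ of \cref{lemma:semiconv1} together with continuity of $v_\ell$ up to $\{x_d=0\}$; no boundary-limit theory for harmonic functions is invoked. You instead perform the transfer via the local Fatou theorem for the bounded harmonic functions $\partial_i v_\ell$ (boundedness from \cref{lemma:semiconv0}) and the identification of their non-tangential limits with the a.e.\ tangential derivative of the Lipschitz trace; this identification, which you correctly single out as the delicate point, does go through (e.g.\ by dominated convergence on horizontal slices combined with integration by parts on the thin ball, both limits being the same $L^\infty$ function a.e.). What your route buys is a stronger statement — the full gradient $\nabla v_\ell$ has limits along any non-tangential direction at $\mathcal H^{d-1}$-a.e.\ point of $B_1'$, not only at contact points and not only along $\widetilde W$ — at the price of importing classical Fatou/trace machinery that the paper deliberately avoids. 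Two minor remarks: your parenthetical claim that the existence of $\sigma_d$ follows from $\partial_{dd}v_\ell\le C$ "along any non-tangential ray" is not accurate (that bound only controls the vertical approach), but this is harmless since your Fatou step already gives existence a.e.; and the relation you actually obtain, $\sigma_W(x')=W_d\,\sigma_d(x')$, is exactly what the paper's own argument yields implicitly (the tangential contribution vanishes), and the factor $W_d\in[\delta,1]$ is indeed immaterial in the only place the lemma is used, the one-sided estimate in \cref{lemmab7}, since $\sigma_W\le0$ by \cref{lemma:penalized}.
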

    \begin{proof}
       In the rest of the proof, without loss of generality, we suppose that $\|v\|_{L^\infty(B_1)}\le 1$.
        We observe that, by the Lipschitz regularity of $v_\ell$ proved in \cref{lemma:semiconv0}, $v_\ell(x',0)$ is differentiable almost everywhere along the direction $\widehat W':=(\sfrac{W'}{|W'|},0)$.
        Namely, there exists a set $\mathcal{G}\subset B_1'$ such that for every $x_0\in\mathcal{G}$, the tangential derivative
        $$\partial_{\widehat W'}v_\ell(x_0)=\lim_{h\to0}\frac{v_\ell(x_0+h\widehat W')-v_\ell(x_0)}{h}$$
     is well-defined, and $\mathcal{H}^{d-1}(B_1'\setminus \mathcal{G})=0$. 
     In particular, for every $x_0\in\Lambda(v_\ell)\cap \mathcal{G}$, since $v_\ell(x_0)=0$ and $v_\ell\ge0$ on $B_1'$, we have that 
     \be\label{eq:richiama}\partial_{\widehat W'}v_\ell(x_0)=0.\ee
     Moreover, since $v_\ell$ is smooth in $B_1^+$, for every $x\in B_1^+$ and $h$ small enough, by \cref{lemma:semiconv1}, we have
     $$v_\ell(x+h \widehat W')=v_\ell(x)+\partial_{\widehat W'}v_\ell(x) h+\iint_{[x_0,x_0+h\widehat W']}\partial_{\widehat W'\widehat W'}v_\ell\ge v_\ell(x)+\partial_{\widehat W'}v_\ell(x) h-Ch^2.$$
     Here we used the notation
     \be\label{notation-doubleintegral}\iint_{[a,b]}w:=\int_{0}^{|b-a|}\int_0^s w\left(a+\frac{b-a}{|b-a|}t\right)\,dt\,ds\ee 
     for the double integral over the segment between points $a$ and $b$. It follows that, for $h>0$,
     $$\partial_{\widehat W'}v_\ell(x)\le \frac{v_\ell(x+h\widehat W')-v_\ell(x)}{h}+Ch,$$
     thus for $x = x'+t\widetilde W$, we get
    $$\limsup_{t\to0^+}\partial_{\widehat W'}v_\ell(x'+t\widetilde W)\le \frac{v_\ell((x',0)+h\widehat W')-v_\ell(x',0)}{h}+Ch.$$ 
    In particular, if $(x',0)\in\Lambda(v_\ell)\cap \mathcal{G}$, by \eqref{eq:richiama},
    we obtain 
    $$\limsup_{t\to0^+}\partial_{\widehat W'}v_\ell(x'+t\widetilde W)\le 0.$$
    On the other hand, arguing similarly in the case $h<0$, we get the reverse inequality for the liminf. Then, for $(x',0)\in\Lambda(v_\ell)\cap\mathcal{G}$, we have 
    $$\lim_{t\to0^+}\partial_{\widehat W'}v_\ell(x'+t\widetilde W)= 0,$$
    which concludes the proof.
     \end{proof}

\subsection{\texorpdfstring{$C^{1,\alpha}$}{C1alpha} estimates}
\label{estimatefinal} 
In this section we conclude the proof of \cref{lemma:regolaita_prob_lin-thin}.
In the following, we use the notation 
$$\tens{P}_{W^\perp}x:=x-(x\cdot W)W,$$ 
to denote the projection of $x\in\R^d$ onto the hyperplane orthogonal to $W$ and passing through the origin.

\begin{lemma}\label{lemmab6}
    Let $v:B_1\to\R$ be a viscosity solution of \eqref{linearizzato-bordo} and $v_\ell$ be as in \eqref{vell2}. For every $x_0\in B_1'\cap\Omega_{v_\ell}$, let $h_{x_0}$ be the function defined as 
    $$h_{x_0}(x):=|\tens{P}_{W^\perp}(x-x_0)|^2-(d-1)x_d^2.$$ 
    For every neighborhood $U_{x_0}\subset B_1$ of $x_0$, there exists $y\in\partial U_{x_0}\cap\{x_d>0\}$ such that 
    $$v_\ell(y)\ge h_{x_0}(y).$$ 
\end{lemma}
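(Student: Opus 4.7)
The plan is to argue by contradiction by studying the auxiliary function $w := v_\ell - h_{x_0}$ on $\overline{U_{x_0}^+}$. The barrier $h_{x_0}$ is tailored so that any violation of the conclusion would force a maximum-principle contradiction at an interior thin-space maximum of $w$.

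First I would record the elementary properties of $h_{x_0}$. Using the identity $|\tens{P}_{W^\perp}(x-x_0)|^2 = |x-x_0|^2 - ((x-x_0)\cdot W)^2$, a direct computation yields $\Delta h_{x_0}\equiv 0$, $h_{x_0}(x_0)=0$, $h_{x_0}\ge 0$ on $\{x_d=0\}$, and $\nabla h_{x_0}\cdot W=-2(d-1)x_d W_d$, which vanishes on $\{x_d=0\}$. Consequently $w$ is harmonic in $U_{x_0}^+$; on $U_{x_0}'\cap\Omega_{v_\ell}$ the oblique derivative $\nabla v_\ell\cdot W$ vanishes classically by the up-to-the-boundary regularity for oblique problems of \cite{lieberman2013oblique}, so $\nabla w\cdot W=0$ there, while on $U_{x_0}'\cap\Lambda(v_\ell)$ one has $w=-h_{x_0}\le 0$. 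The decisive point is that $w(x_0) = v_\ell(x_0) > 0$, since $x_0\in\Omega_{v_\ell}$.

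Now suppose, for contradiction, that $v_\ell(y) < h_{x_0}(y)$ for every $y\in \partial U_{x_0}\cap\{x_d>0\}$, and set $M:=\sup_{\overline{U_{x_0}^+}}w\ge w(x_0)>0$. The strong maximum principle implies that $w$ is not constant and that $M$ is attained at some boundary point $z$. The assumption excludes $z\in\partial U_{x_0}\cap\{x_d>0\}$, and the inequality $w\le 0$ on the contact set excludes $z\in U_{x_0}'\cap\Lambda(v_\ell)$. Hence either $z\in U_{x_0}'\cap\Omega_{v_\ell}$ or $z\in \partial U_{x_0}\cap\{x_d=0\}$. In the first case, the tangential gradient $\nabla'w(z)$ vanishes at this interior maximum on the flat smooth thin boundary, and the Hopf lemma forces $\partial_d w(z)<0$; combining these yields $\nabla w(z)\cdot W = W_d\,\partial_d w(z)<0$, contradicting $\nabla w(z)\cdot W = 0$. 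In the second case, continuity of $w$ produces a neighborhood of $z$ on which $w>0$; since $z$ belongs to the closure of $\partial U_{x_0}\cap\{x_d>0\}$ (automatic for the ball-like neighborhoods to which the lemma will be applied in the next step), this neighborhood contains a point of $\partial U_{x_0}\cap\{x_d>0\}$, again contradicting the standing hypothesis.

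The delicate step is the Hopf argument: it requires $\nabla v_\ell\cdot W = 0$ to hold \emph{classically} on $\Omega_{v_\ell}\cap U_{x_0}'$, which rests on the up-to-the-boundary regularity theory for oblique-derivative problems \cite{lieberman2013oblique} rather than merely on the viscosity formulation of \eqref{linearizzato-bordo}.
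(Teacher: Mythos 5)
Your argument is correct and is essentially the paper's proof repackaged as a contradiction: where the paper invokes its boundary maximum principle (\cref{lemma:maximum-principle-thin}) directly on $p=v_\ell-h_{x_0}$ in $U_{x_0}\setminus\Lambda(v_\ell)$, you re-derive the thin-boundary exclusion via Hopf's lemma at a would-be interior maximum of $w$ on $U_{x_0}'\cap\Omega_{v_\ell}$, which is the same mechanism (with the small advantage of staying in $\overline{U_{x_0}^+}$ without the slit-domain bookkeeping). Two tiny points worth fixing in the write-up: the strong maximum principle does not by itself \emph{imply} that $w$ is non-constant --- one should instead note that a constant $w\equiv M>0$ already contradicts the standing hypothesis on $\partial U_{x_0}\cap\{x_d>0\}$, so only the non-constant case remains; and the geometric caveat you flag about points of $\partial U_{x_0}\cap\{x_d=0\}$ being approximable from $\partial U_{x_0}\cap\{x_d>0\}$ is tacitly present in the paper's proof as well (``choose $y\in\{x_d>0\}$ close to $z$'') and is indeed harmless for the oblique cylinders used in \cref{lemmab7}.
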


\begin{proof}
    Let $x_0\in B_1'\cap\Omega_{v_\ell}$. We observe that, for all $x \in \R^d$
    $$|\tens{P}_{W^\perp}(x-x_0)|^2=(x-x_0)^2-((x-x_0)\cdot W)^2.$$ 
    By simple computations $\Delta h_{x_0}=0$ in $B_1$ and $\nabla h_{x_0}\cdot W=0$ on $B_1'$.
    Therefore $p:=v_\ell-h_{x_0}$ is harmonic in $B_1^+$ and satisfies $\nabla p\cdot W=0$ in $B_1'\setminus \Lambda(v_\ell)$.
    Then, by maximum principle \cref{lemma:maximum-principle-thin}, in $U_{x_0}\setminus\Lambda(v_\ell)$, the function $p$ must achieve its maximum on the boundary $\partial(U_{x_0}\setminus\Lambda(v_\ell))$. Since $p(x_0)=v_\ell(x_0)>0$, then 
    $$\sup_{\partial (U_{x_0}\setminus\Lambda(v_\ell))\cap\{x_d\ge0\}}(v_\ell-h_{x_0})\ge p(x_0)>0.$$ 
    On the other hand, on $\Lambda(v_\ell)$, we have $v_\ell-h_{x_0}=-h_{x_0}<0$. Then 
    $$\sup_{\partial U_{x_0}\cap\{x_d\geq0\}}(v_\ell-h_{x_0})=\sup_{\partial (U_{x_0}\setminus\Lambda(v_\ell))\cap\{x_d\ge0\}}(v_\ell-h_{x_0})>0.$$ 
    To conclude the proof, if the supremum is attained in $z \in \{x_d >0\}$, then we choose $y = z$, otherwise if $z \in \{x_d = 0\}$, then we choose $y \in \{x_d >0\}$ close to $z$.
\end{proof}

Next we show that, given a point $x_0\in\Omega_{v_\ell}\cap B_1'$ and $\gamma>0$ small, we can find a ball of radius comparable to $\gamma$ which is almost everywhere contained in $\{\sigma_W(x')>-\gamma\}$.

\begin{lemma}\label{lemmab7}
    There are constants $C_1>0$, $C_2>0$, and $\gamma_0>0$ depending on $\delta$ and $\|v\|_{L^\infty(B_1)}$ such that the following holds.
    Let $v:B_1\to\R$ be a viscosity solution of \eqref{linearizzato-bordo}, $v_\ell$ be as in \eqref{vell2} and $\mathcal{G}$ be the set defined in \cref{rem:q.o.}.
    Let $x_0\in B_{\sfrac{3}{4}}'\cap\Omega_{v_\ell}$ and 
    $$S_\gamma:=\{(x',0)\in B_1':\sigma_{W}(x')>-\gamma\},$$
    for $\gamma\in(0,\gamma_0)$. 
    Then, there is $\overline x\in B_1'$ such that 
    $$B'_{C_1\gamma}(\overline x)\cap\mathcal{G}\subset B'_{C_2\gamma}(x_0)\cap S_\gamma.$$
\end{lemma}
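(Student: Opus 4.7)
The plan is to use Lemma~\ref{lemmab6} to extract a point $y\in B_1^+$ where $v_\ell$ is quantitatively positive, and then to transfer this positivity into a lower bound on $\sigma_W$ on a ball of the thin space through the one-sided second-derivative estimate from Lemma~\ref{lemma:semiconv1}.

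Set $r:=C_2\gamma$ with $C_2$ to be chosen large. The first step is to apply Lemma~\ref{lemmab6} at $x_0$ to a neighborhood $U_{x_0}$ designed so that the barrier $h_{x_0}$ is already quantitatively positive everywhere on $\partial U_{x_0}\cap\{x_d>0\}$: the goal is to produce $y\in\partial U_{x_0}\cap\{x_d>0\}$ for which
\begin{equation*}
v_\ell(y)\ \geq\ h_{x_0}(y)\ \geq\ c_0\,r^2, \qquad 0<y_d\leq C\,r, \qquad |y-x_0|\leq C\,r.
\end{equation*}
A natural candidate is the thin slab $U_{x_0}=B_r(x_0)\cap\{x_d<\eta r\}$ with $\eta=\eta(d,\delta)$ small. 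On the lateral part $\partial B_r(x_0)\cap\{0<x_d<\eta r\}$, writing $y-x_0=\alpha W+\beta\xi$ with $\xi\perp W$ and $\alpha^2+\beta^2=r^2$, the constraints $y_d=\alpha W_d+\beta\xi_d\in(0,\eta r)$ and $W_d\geq\delta$ force $|\beta|\geq c\,r$ for some $c=c(\delta)>0$, so that $|\tens{P}_{W^\perp}(y-x_0)|^2=\beta^2\geq c^2 r^2$ and $h_{x_0}(y)\geq c^2r^2/2$; on the top portion $B_r(x_0)\cap\{x_d=\eta r\}$ a further refinement of the domain (for instance, excising the locus $\{h_{x_0}<c_0 r^2\}$) is needed to guarantee the analogous bound.

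The second step is to push the interior positivity $v_\ell(y)\geq c_0 r^2$ down to $\sigma_W$ on a ball of the thin space of radius comparable to $\gamma$. Introduce the projection $\overline x:=y'+(y_d/W_d)W'\in B_1'$, so that the ray from $(\overline x,0)$ in direction $\widetilde W$ hits $y$ at parameter $t_0=y_d/W_d$. By Lemma~\ref{lemma:semiconv1}(ii), $\partial_{\widetilde W W}v_\ell\leq C$ in $B_{1/2}^+$, hence $t\mapsto\partial_W v_\ell(x'+t\widetilde W)-Ct$ is non-increasing on each such ray, and letting $t\to0^+$ gives
\begin{equation*}
\sigma_W(x')\ \geq\ \partial_W v_\ell(x'+t_0\widetilde W)-C\,t_0 \qquad\text{for every }(x',0)\in B_1'.
\end{equation*}
For $x'\in B_{C_1\gamma}'(\overline x)$ the point $x'+t_0\widetilde W$ lies in a small cylinder around $y$ on the slice $\{x_d=y_d\}$. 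The tangential semiconvexity of Lemma~\ref{lemma:semiconv1}(i) together with the Lipschitz bound of Lemma~\ref{lemma:semiconv0} propagates the inequality $v_\ell\geq c_0 r^2/2$ to this cylinder; from there, a fundamental-theorem-of-calculus argument along $W$ starting from thin-space points in $\Lambda(v_\ell)$ (where $v_\ell$ vanishes) produces a lower bound $\partial_W v_\ell\geq c_1\,r$ at some point of the cylinder, from which $\sigma_W(x')>-\gamma$ follows once $C_2$ is large enough. The restriction to $\mathcal{G}$ in the conclusion originates from Lemma~\ref{rem:q.o.}: on $\Lambda(v_\ell)\cap\mathcal{G}$ we can identify $\sigma_W$ with $\sigma_d$, which is the object on which some of the intermediate estimates are most naturally phrased.

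The main obstacle is the first step: turning the purely existential statement of Lemma~\ref{lemmab6} into a quantitative lower bound of order $r^2$ on $v_\ell(y)$. The careful design of $U_{x_0}$ and the verification that $h_{x_0}$ is uniformly coercive on $\partial U_{x_0}\cap\{x_d>0\}$ crucially uses the hypothesis $W_d\geq\delta$ to exclude the degenerate directions $y-x_0=\pm rW$ where $h_{x_0}$ could vanish. Once such a $y$ is produced, the remaining passage from interior positivity to a bound on $\sigma_W$ is a scale-invariant consequence of the one-sided second-derivative estimates of Lemma~\ref{lemma:semiconv1} and of the monotonicity they entail along the oblique direction $\widetilde W$.
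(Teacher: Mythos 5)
Your overall strategy is the same as the paper's (apply \cref{lemmab6} on a thin oblique slab around $x_0$, then convert the resulting positivity of $v_\ell$ at a boundary point into a lower bound on $\sigma_W$ via the one-sided second-derivative estimates of \cref{lemma:semiconv1} and the a.e.\ identification $\sigma_d=\sigma_W$ of \cref{rem:q.o.}), but your Step 1 contains a gap that cannot be repaired in the way you suggest. You want to choose $U_{x_0}$ so that $h_{x_0}\ge c_0 r^2$ on \emph{all} of $\partial U_{x_0}\cap\{x_d>0\}$, and you propose to excise the locus $\{h_{x_0}<c_0r^2\}$. This is impossible: along the ray $t\mapsto x_0+tW$, $t>0$, one has $\tens{P}_{W^\perp}(x-x_0)=0$ and hence $h_{x_0}=-(d-1)x_d^2<0$, and since $W_d\ge\delta>0$ this ray leaves any neighborhood $U_{x_0}$ of $x_0$ through a boundary point with $x_d>0$; moreover $h_{x_0}(x_0)=0<c_0r^2$, so excising $\{h_{x_0}<c_0r^2\}$ destroys the neighborhood property that the proof of \cref{lemmab6} requires (it uses $v_\ell(x_0)-h_{x_0}(x_0)>0$ at an interior point), while excising it only near the top face merely relocates the bad boundary portion onto a lower shelf where $h_{x_0}$ is again negative. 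The paper's proof accepts this: on the top face of the cylinder of height $\eps_1\gamma$ it only gets $v_\ell(y)\ge-(d-1)\eps_1^2\gamma^2$, and it closes that case by a second, finer comparison in which the contradiction hypothesis $\sigma_W(z')\le-\gamma$ forces $v_\ell(z',y_d)\le-\gamma y_d+Cy_d^2\approx-\eps_1\gamma^2$, which is incompatible with $-C\eps_1^2\gamma^2$ for $\eps_1$ small. Your plan, which insists on a uniform bound $v_\ell(y)\ge c_0r^2$, has no way to treat the case where $y$ lands near the $W$-axis on the top face, and this case is unavoidable.

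There is a second gap in your transfer step: you claim that the Lipschitz bound and the tangential semiconvexity ``propagate'' $v_\ell\ge c_0r^2/2$ to a full tangential ball of radius $C_1\gamma$ on the slice $\{x_d=y_d\}$. The expansion is $v_\ell(z',y_d)\ge v_\ell(y',y_d)+\nabla'v_\ell(y)\cdot(z'-y')-C|z'-y'|^2$, and the linear term can be negative of size $\sim\gamma$, which dominates $\gamma^2$; the Lipschitz bound gives the same obstruction. This is exactly why the paper restricts to the half-ball $\{\nabla'v_\ell(y)\cdot(z'-y')\ge0\}$ and then chooses $\overline x$ (and $C_1$) so that the final ball sits inside that half-ball; with your choice $\overline x=y'+(y_d/W_d)W'$ and a full ball the estimate fails. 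Finally, your concluding ``fundamental theorem of calculus along $W$'' is not available with the stated estimates, since $\partial_{WW}v_\ell$ is not bounded above; the workable route (which you partially anticipate by invoking \cref{rem:q.o.}) is the vertical expansion $v_\ell(z',y_d)\le\sigma_W(z')y_d+Cy_d^2$ valid for $(z',0)\in\Lambda(v_\ell)\cap\mathcal{G}$, combined with the contradiction argument above, and this bound must be obtained for every admissible $z'$ in the ball, not merely ``at some point of the cylinder.''
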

\begin{proof}
    In the rest of the proof, without loss of generality, we suppose that $\|v\|_{L^\infty(B_1)}\le 1$.
    Let $U_{x_0}$ be the oblique cylinder defined as follows
    $$U_{x_0}:=\{(x',x_d)\in \R^d:\ |\tens{P}_{W^\perp}(x-x_0)|\le 2\gamma ,\ |x_d|\le \eps_1\gamma\},$$ for a constant $\eps_1>0$ to be chosen later.
    By \cref{lemmab6}, there exists $y \in \partial U_{x_0} \cap \{x_d >0\}$
    such that $ v_{\ell}(y)\ge h_{x_0}(y)$.
    We split two cases.\\
    \\
    \textit{Step 1.} Suppose that $y=(y',y_d)$ belongs to the lateral face of the oblique cylinder $U_{x_0},$ i.e.~$|\tens{P}_{W^\perp}(y-x_0)|= 2\gamma$ and $0< y_d\le \eps_1\gamma$. If $\eps_1$ is small, then 
    $$v_\ell(y)\ge h_{x_0}(y)=|\tens{P}_{W^\perp}(y-x_0)|^2-(d-1)y_d^2\ge 4\gamma^2-(d-1)\eps_1^2\gamma^2\ge \gamma^2.$$
         We claim that we can find $\eps_2>0$ such that 
         \be\label{claim-z'} 
        \hbox{if }\, (z',0)\in\mathcal{D}:=\left\{ (z',0)\in B'_{\eps_2\gamma}((y',0)):\ \nabla'v_\ell(y)\cdot ( z'- y')\ge0\right\}\cap\mathcal{G}\implies (z',0)\in S_\gamma,\ee
        where $\nabla'v_{\ell}(y):=(\partial_{1}v_\ell(y),\ldots,\partial_{d-1}v_\ell(y))$.
         If \eqref{claim-z'} holds true, then we get the conclusion choosing $\overline x$ and $C_1>0$, $C_2>0$ such that $B'_{C_1\gamma}(\overline x)\cap\mathcal{G}\subset \mathcal{D}$ and $C_2$ depending on $\delta$, $\eps_1$ and $\eps_2$.
        It is left to ensure that \eqref{claim-z'} yields.
        Take $(z',0)\in\mathcal{D}$ and let $e=(e',0)\in\mathbb{S}^{d-1},$ with $e':=\frac{z'- y'}{| z'- y'|}$. Since $v_\ell$ is smooth in $B_1^+$, by \cref{lemma:semiconv1}, we have that $$v_\ell(z',y_d)=v_\ell(y',y_d)+\nabla'v_\ell(y)\cdot (z'-y')+\iint_{[(y',y_d),(z',y_d)]}\partial _{ee}v_\ell\ge \gamma^2- C|z'-y'|^2\ge (1- C\eps_2^2)\gamma^2>0$$
        having chosen $\eps_2$ small enough. Here we used the notation \eqref{notation-doubleintegral} for the double integral.  
        Suppose by contradiction that $ (z',0)\not\in S_\gamma$, then $ (z',0)\in\Lambda(v_\ell)$, since $\sigma_W\equiv0$ in $B_1'\cap \Omega_{v_\ell}$ by \eqref{sigma=0}. 
        Then, using the semiconcavity \cref{lemma:semiconv1} together with \cref{rem:q.o.}, we get
        $$
        \begin{aligned}
            v_\ell(z',y_d)&=v_\ell(z',0)+\sigma_{d}(z')y_d+\iint_{[(z',0),(z',y_d)]}\partial_{dd}v_\ell=
        \sigma_{W}(z')y_d+\iint_{[(z',0),(z',y_d)]}\partial_{dd}v_\ell\\
        &\le 
        -\gamma y_d+Cy_d^2\le y_d\gamma (C\eps_1-1)<0,
        \end{aligned}$$
        where we have chosen $\eps_1$ small enough and we get the contradiction. We point out that the above expansion of $v_\ell(z',y_d)$ is obtained for points like $x'+t\widetilde W$ and then sending $t\to0^+$.
        \\
        \\
        \textit{Step 2.} Now, we suppose that $y=(y',y_d)$ belongs on the upper base of the oblique cylinder $U_{x_0},$ i.e.~$|\tens{P}_{W^\perp}(y-x_0)|\le 2\gamma$ and $y_d=\eps_1\gamma$. 
        Then 
        $$v_\ell(y)\ge h_{x_0}(y)=|\tens{P}_{W^\perp}(y-x_0)|^2-(d-1)y_d^2\ge -\eps_1^2(d-1)\gamma^2.$$ As above, if $(z',0)\in \mathcal{D}$, similarly as in the previous step, we have 
        $$v_\ell(z',y_d)=v_\ell(y',y_d)+\nabla'v_\ell(y)\cdot (z'-y')+\iint_{[(y'y_d),(z',y_d))]}\partial_{ee}v_\ell\ge -(d-1)\eps_1^2\gamma^2-C|z'-y'|^2\ge-\gamma^2((d-1)\eps_1^2+C\eps_2^2).$$ 
        If by contradiction $(z',0)\not\in S_\gamma$, then, since $y_d=\eps_1\gamma$, we have
        $$v_\ell(z',y_d)=v_\ell(z',0)+\sigma_{d}(z')y_d+\iint_{[(z',0),(z',y_d)]}\partial_{dd}v_\ell= 
        \sigma_{W}(z')y_d+\iint_{[(z',0),(z',y_d)]}\partial_{dd}v_\ell\le 
        -\gamma y_d+Cy_d^2=\gamma^2\eps_1(C\eps_1-1).$$ 
        Combining the two estimates, we get the contradiction by choosing $\eps_2=\eps_1$ small enough.
        \end{proof}

        In the following, we will use the cylinders
            $$\mathcal{C}(x_0,r,\gamma_1,\gamma_2):=\left\{x=(x',x_d)\in \R^d:\ (x',0)\in B'_{r}(x_0),\ \gamma_1\le x_d\le \gamma_2\right\},$$
            and 
            $$\mathcal{C}(x_0,r):=\left\{x=(x',x_d)\in \R^d:\ (x',0)\in B'_{r}(x_0),\ 0< x_d\le r\right\}.$$
            
        To conclude the proof of \cref{lemma:regolaita_prob_lin-thin}, it will be fundamental the following technical lemma.

        \begin{lemma}\label{teclemma-w} 
        Let $q:\mathcal{C}(0,1)\to\R$ be a bounded non-negative continuous function such that $\Delta q=0$ in $\mathcal{C}(0,1)$. 
        Assume that, given $\mathcal{G}\subset B_1'$ such that $\mathcal{H}^{d-1}(B_1'\setminus\mathcal{G})=0$ and $\widetilde W\in\partial B_1$ with $\widetilde W_d\ge\delta$, we have
            \begin{equation}
                \label{e:ipotesi_tecnica}
                \lim_{t\to0^+} q(x'+t\widetilde W)\ge 1\quad\text{for every}\quad (x',0)\in B_{\rho}'(\overline x)\cap\mathcal{G},
            \end{equation}
            for some ball $B_{\rho}'(\overline x)\subset B_1'$ (we do not assume continuity up to $\overline{\mathcal{C}(0,1)}$).
            For every $c>0$, there is a constant $\eps=\eps(\delta,\rho,c)>0$ such that
            $$q\ge \eps\quad\text{in }\mathcal{C}\left(0,\frac{1}{2},c,\frac{3}{4}\right).$$
        \end{lemma}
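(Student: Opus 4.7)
The plan is to obtain $q \geq c_1 > 0$ at a single interior point above $\overline x$ by a Poisson-kernel comparison, and then propagate this bound to $\mathcal{C}(0, \tfrac{1}{2}, c, \tfrac{3}{4})$ via a finite Harnack chain in $\mathcal{C}(0,1)$. I would first upgrade the pointwise hypothesis to a uniform one via an Egorov-type argument: the closed sets
$$
K_n := \left\{ x' \in B_\rho'(\overline x) : q(x' + t \widetilde W) \geq \tfrac{1}{2} \text{ for every } t \in (0, 1/n] \right\}
$$
are nondecreasing and their union covers the full-measure subset $B_\rho'(\overline x) \cap \mathcal{G}$; choosing $n_0$ with $|K_{n_0}| \geq \tfrac{1}{2}|B_\rho'(\overline x)|$ and setting $K := K_{n_0}$, $s_0 := 1/n_0$, I obtain a closed set $K$ of definite measure on which $q(x' + t\widetilde W) \geq \tfrac{1}{2}$ uniformly for $t \in (0, s_0]$.

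Next, I would work in the sub-cylinder $D := B'_{\rho}(\overline x) \times (0, \tfrac{3}{4})$ (shrinking $\rho$ slightly if needed so that $\overline{B'_\rho(\overline x)} \subset B_1'(0)$) and, for each $s \in (0, s_0]$, in the shifted slab $D_s := B'_\rho(\overline x) \times (s\widetilde W_d, \tfrac{3}{4})$, where $q$ is continuous up to $\partial D_s$. Define the barrier $\Phi_s$ as the Poisson extension in $D_s$ of the bounded measurable boundary datum equal to $\tfrac{1}{2}\,\chi_{K_s}$ on the bottom face $B_\rho'(\overline x) \times \{s\widetilde W_d\}$ and $0$ on the lateral and top portions of $\partial D_s$, where $K_s := (K + s\widetilde W') \cap B_\rho'(\overline x)$. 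Since $q$ is continuous on $\overline{D_s}$, it equals the Poisson integral of its own boundary trace there; together with $q \geq \Phi_s$ a.e.\ on $\partial D_s$ (trivially on sides and top; on $K_s$ by the uniform bound from Step 1; and $q \geq 0 = \Phi_s$ on the remaining bottom points), the monotonicity of the Poisson integral yields $q \geq \Phi_s$ throughout $D_s$.

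Evaluating at $x_0 := (\overline x, \tfrac{3}{8})$, for $s$ sufficiently small the Poisson kernel $P_{D_s}(x_0, \cdot)$ is uniformly bounded below on $B'_{9\rho/10}(\overline x) \times \{s\widetilde W_d\}$ by a positive constant depending only on $\rho$ and the dimension. Since $|K_s \cap B'_{9\rho/10}(\overline x)| \geq c(\rho) > 0$ for $s$ small (using $|s\widetilde W'| \leq s$), this gives $q(x_0) \geq c_1(\rho) > 0$. Finally, $\{x_0\} \cup \mathcal{C}(0, \tfrac{1}{2}, c, \tfrac{3}{4})$ can be covered by a finite chain of balls compactly contained in $\mathcal{C}(0,1)$ whose number depends only on $c, \rho$, and $d$, so the standard Harnack inequality for the non-negative harmonic function $q$ produces the desired bound $q \geq \eps$ on $\mathcal{C}(0, \tfrac{1}{2}, c, \tfrac{3}{4})$ with $\eps = \eps(\delta, \rho, c)$.

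The main technical point is the Poisson comparison $q \geq \Phi_s$, which is not a classical pointwise maximum principle because $\Phi_s$ is built from merely $L^\infty$ (discontinuous) boundary data. This reduction works precisely because the shifted slab $D_s$ avoids the singular bottom face $\{x_d = 0\}$ of the original cylinder, so $q$ is continuous up to $\partial D_s$ and both $q$ and $\Phi_s$ are genuine Poisson integrals of their boundary traces, for which the monotonicity transfers directly from a.e.\ boundary ordering to pointwise interior ordering.
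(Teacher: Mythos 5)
Your argument is correct, but it takes a genuinely different route from the paper. The paper represents the bounded harmonic function $q$ in a smooth approximating domain $\mathcal T\subset\mathcal{C}(0,1)$ as the Poisson integral of an $L^\infty$ boundary datum (Fatou-type theorems from Stein), identifies the a.e.\ nontangential boundary values with the prescribed limits along $\widetilde W$ (this is where $\widetilde W_d\ge\delta$ enters, as a nontangential aperture), modifies the datum on a null set so that it is $\ge 1$ on all of $B_\rho'(\overline x)$, and then quotes a standard barrier lemma. You instead avoid the Fatou machinery altogether: an Egorov-type exhaustion turns the pointwise hypothesis into a uniform bound $q\ge \tfrac12$ on $K_s\times\{s\widetilde W_d\}$ for a set of definite measure, the comparison $q\ge\Phi_s$ is run in the shifted slab $D_s$ where $q$ \emph{is} continuous up to the boundary (so only the classical harmonic-measure representation for continuous data is needed, and the a.e.\ ordering of boundary data transfers pointwise inside), and the conclusion follows from a harmonic-measure lower bound on the bottom face plus a Harnack chain. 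What your route buys is self-containedness (no nontangential maximal function theory; in fact you only use $\widetilde W_d>0$, not the quantitative $\delta$); what it costs is the kernel lower bound for the cylinder, which is classical but not completely free: domain monotonicity of harmonic measure compares the wrong way with the half-space, so one should either build an explicit barrier for the finite cylinder or invoke the standard surface-ball lower bound, uniformly in $s\in(0,s_0]$ (easy, e.g.\ by a vertical affine change of variables).

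Two quantitative points should be tightened. First, the claim $|K_s\cap B_{9\rho/10}'(\overline x)|\ge c(\rho)$ does not follow from $|K_{n_0}|\ge\tfrac12|B_\rho'(\overline x)|$ alone: when $1-(9/10)^{d-1}>\tfrac12$ (i.e.\ $d\ge 8$) the half-measure set could be entirely contained in the annulus $B_\rho'(\overline x)\setminus B_{9\rho/10}'(\overline x)$, near the edge of the cylinder where the kernel degenerates. This is fixed inside your own scheme: since $|K_n|\uparrow|B_\rho'(\overline x)|$, choose $n_0$ so that $|B_\rho'(\overline x)\setminus K_{n_0}|\le\tfrac14|B_{\rho/2}'(\overline x)|$, and then bound from below the harmonic measure of $K_s\cap B_{\rho/2}'(\overline x)$, which has measure $\ge\tfrac12|B_{\rho/2}'(\overline x)|$ for $s$ small. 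Second, in the definition of $K_n$ one should restrict to $t\in\bigl(0,\min\{1/n,t_*\}\bigr]$ with $t_*$ small (depending on $\rho$) so that $x'+t\widetilde W$ stays in $\mathcal{C}(0,1)$; this changes nothing in the limit argument. With these adjustments your proof is complete and yields $\eps$ depending only on $\rho$, $c$ and $d$, which is within the statement's allowance.
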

         
        \begin{proof}
           Let $\mathcal{T}\subset \mathcal{C}(0,1)$ be a smooth approximation of $\mathcal{C}(0,1)$ such that $B_\rho'(\overline x)\subset\partial\mathcal{T}\cap B_1'$. For every $g\in L^\infty(\partial \mathcal{T})$, we denote by
           $$\mathcal{S}_\mathcal{T}(g):=\int_{\partial \mathcal{T}} K(x,y) g(y) \, d\mathcal{H}^{d-1}(y)
           $$ the harmonic function with ``boundary datum'' $g$,
           where $K$ is the Green's function of the smooth domain $\mathcal{T}$. By \cite[Theorem 1]{stein-book}, since $q$ is a bounded harmonic function, there exists a function $f \in L^\infty(\partial \mathcal{T})$, such that 
           $$
           \mathcal{S}_\mathcal{T}(f) =q.
           $$
           Moreover, by \cite[Theorem 4]{stein-book} and \eqref{e:ipotesi_tecnica}, we have that $f\ge 1$ on $B'_\rho(\overline x)\cap\mathcal{G}$. Thus there exists $\widetilde f\in L^\infty(\partial \mathcal{T})$ such that $\widetilde f\ge 1$ on $B'_\rho(\overline x)$ and $\widetilde f=f$ almost everywhere. Then, in $\mathcal{T}$, we have 
           $$\widetilde q:=\mathcal{S}_{\mathcal{T}}(\widetilde f)=\mathcal{S}_{\mathcal{T}}(f)=q.$$
           Applying a standard barrier argument (see e.g. \cite[Lemma 3.5]{milakissilvestre-signorini}) to the function $\widetilde q$ and using that $\widetilde q=q$ in $\mathcal{T}$, we conclude the proof.
        \end{proof}

        The final lemma establishes the $C^{0,\alpha}$ regularity of $\sigma_{W}$ around points in $\Omega_{v_\ell}$, and it is the key step in concluding the proof of \cref{lemma:regolaita_prob_lin-thin}.
        \begin{lemma}\label{lemma:final-c}
            There are constants $C>0$ and $\alpha_0\in(0,1)$ depending on $\delta$ such that the following holds. 
            Let $v:B_1\to\R$ be a viscosity solution of \eqref{linearizzato-bordo}, $v_{\ell}$ be as in \eqref{vell2} and let $x_0\in B_{\sfrac{1}{2}}'\cap\Omega_{v_\ell}$. Then, for every $x'\in B_{\sfrac{1}{2}}'$, we have that 
            \be\label{eq:stimac0alpha-persigma}\sigma_{W}(x')\ge -C\|v\|_{L^\infty(B_1)}|x'-x_0'|^{\alpha_0}.\ee
        \end{lemma}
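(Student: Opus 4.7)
I will prove \eqref{eq:stimac0alpha-persigma} by an iterative oscillation-decay scheme in the spirit of Caffarelli's $C^{1,\alpha}$ proof for the classical thin obstacle problem. Throughout, I assume $\|v\|_{L^\infty(B_1)} \le 1$ by rescaling, so that all constants are universal, and the statement extends by linearity.

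The key ingredient is a one-step decay estimate. For $r \in (0, 1/4]$, set
$$
M(r) := -\inf_{x' \in B'_r(x_0)} \sigma_W(x').
$$
Note that $M(r) \ge 0$ because $\sigma_W \le 0$ by \cref{lemma:penalized} and $\sigma_W(x_0) = 0$ by \eqref{sigma=0}, while $M(r)$ is universally bounded by the Lipschitz estimate of \cref{lemma:semiconv0}. I claim that there exist universal constants $\lambda, \mu \in (0,1)$ such that, whenever $M(r) < \gamma_0$ (with $\gamma_0$ from \cref{lemmab7}),
$$
M(\lambda r) \le (1-\mu)\,M(r).
$$
Iterating this over the geometric sequence $r_k = \lambda^k r_0$ together with the initial bound $M(r_0) \lesssim 1$ yields $M(r) \le C r^{\alpha_0}$ with $\alpha_0 = -\log(1-\mu)/\log(1/\lambda)$, which, since $\sigma_W(x_0) = 0$, is exactly \eqref{eq:stimac0alpha-persigma}.

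To prove the one-step decay, set $M := M(r)$ and apply \cref{lemmab7} at $x_0$ with $\gamma = M/2$: there is $\overline x \in B'_{C_2 M/2}(x_0)$ with $\sigma_W(y') > -M/2$ for a.e.~$y' \in B'_{C_1 M/2}(\overline x)$. Consider the auxiliary harmonic function $q := M + \partial_W v_\ell$ in $B_1^+$. By \cref{lemma:semiconv1}, $\partial_W v_\ell(y' + t\widetilde W) - Ct$ is non-increasing in $t$, so
$$
\lim_{t \to 0^+} q(y' + t\widetilde W) \;=\; M + \sigma_W(y') \;\ge\; M/2 \qquad \text{for a.e.~} y' \in B'_{C_1 M/2}(\overline x).
$$
Rescaling the cylinder over $B'_{C_1 M/2}(\overline x)$ to $\mathcal{C}(0,1)$ and applying \cref{teclemma-w} to $2q/M$ produces universal constants $\tau, \eps \in (0,1)$ such that $q \ge \eps M/2$ throughout the sub-cylinder $\mathcal{C}(\overline x, C_1 M/4, \tau M, 3 C_1 M/8)$, i.e.~$-\partial_W v_\ell \le (1 - \eps/2)M$ there. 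Using once more the monotonicity from \cref{lemma:semiconv1} to pass back to $t \to 0^+$, for every $y' \in B'_{C_1 M/4}(\overline x)$,
$$
-\sigma_W(y') \;\le\; -\partial_W v_\ell(y' + \tau M\,\widetilde W) + C\tau M \;\le\; (1 - \eps/2 + C\tau)\,M,
$$
and choosing $\tau$ small enough so that $C\tau < \eps/4$ gives $-\sigma_W \le (1 - \eps/4)\,M$ on the whole ball $B'_{C_1 M/4}(\overline x)$.

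The main obstacle I anticipate lies in the final scale-matching: the improved bound just obtained is centered at $\overline x$, while the iteration requires an improvement centered at $x_0$. Since only $|\overline x - x_0| \le C_2 M/2$ is guaranteed, the ball $B'_{C_1 M/4}(\overline x)$ need not contain a fixed fraction of a ball around $x_0$. To close the argument I would combine the improved bound on $B'_{C_1 M/4}(\overline x)$ with the trivial bound $-\sigma_W \le M$ on the complement and run a short secondary iteration, repeating the one-step decay at finitely many nearby centers so as to cover an entire ball around $x_0$ of radius proportional to $M$; the freedom to choose $\lambda$ proportional to $M$ rather than a fixed fraction of $r$ is what makes the scheme consistent, and it yields the desired geometric decay, possibly at the cost of shrinking $\mu$ and $\alpha_0$.
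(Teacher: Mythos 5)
There is a genuine gap, and it sits exactly where your scheme departs from the paper's. Your one-step decay applies \cref{teclemma-w} to $q:=M+\partial_W v_\ell$ on a cylinder of size $\sim M$ over $B'_{C_1M/2}(\overline x)$, but \cref{teclemma-w} requires $q\ge 0$ \emph{in the whole solid cylinder}, and your quantity $M(r)=-\inf_{B'_r(x_0)}\sigma_W$ only controls the thin trace: on the lateral and top boundary of that small cylinder, $\partial_W v_\ell$ is bounded below only by the global Lipschitz constant (order $1$), not by $-M$, so $q\ge0$ is not available and the barrier lemma cannot be invoked. This is precisely why the paper's induction quantity is a \emph{solid-cylinder} bound, $\nabla v_\ell\cdot W\ge-\theta^k$ on $\mathcal{C}(x_0,\eta^k)$, rather than a bound on the trace $\sigma_W$: the inductive hypothesis itself furnishes the non-negativity of the function $q=(\nabla v_\ell\cdot W+\theta^k)/(\theta^k-C_2^{-1}\eta^k)$ fed into \cref{teclemma-w}. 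If you tried to restore non-negativity by working on the unit-scale cylinder instead, the good ball from \cref{lemmab7} would have relative radius $\sim M$, and the constant $\eps=\eps(\delta,\rho,c)$ of \cref{teclemma-w} would degenerate with it, so the decay factor would no longer be universal.

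The centering problem you flag at the end is real, but your proposed fix (a secondary iteration over finitely many nearby centers, with spatial steps proportional to $M$) does not close it: \cref{lemmab7} can only be applied at centers lying in $\Omega_{v_\ell}$, while the points near $x_0$ you would need may lie in the contact set, and the scheme degenerates when $M$ is small or zero. The paper resolves both issues at once by decoupling the two small parameters: it applies \cref{lemmab7} with $\gamma=C_2^{-1}\eta^k$ \emph{proportional to the spatial scale}, so the good ball $B'_{C_1C_2^{-1}\eta^k}(\overline x)$ has a fixed relative radius $\rho=C_1C_2^{-1}$ inside the $x_0$-centered base $B'_{\eta^k}(x_0)$; since the conclusion of \cref{teclemma-w} is on the \emph{central} sub-cylinder $\mathcal{C}(0,\sfrac12,c,\sfrac34)$ regardless of where the good ball sits in the base, the improvement is automatically centered at $x_0$. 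The price is that the oscillation only improves from $\theta^k$ to $(1-\sfrac{\eps}{4})\theta^k$ while the scale shrinks by $\eta\ll\theta$, and one then transfers the improved bound down to $\{x_d=0\}$ along $\widetilde W$ by the semiconcavity of \cref{lemma:semiconv1} (your last displayed step is the correct analogue of this). So the overall architecture of your argument is salvageable only after replacing $M(r)$ by the solid-cylinder bound and tying $\gamma$ to the scale rather than to the oscillation — which is, in substance, the paper's proof.
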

        \begin{proof}
            In the rest of the proof, without loss of generality, we suppose that $\|v\|_{L^\infty(B_1)} \le 1$.
            To prove \eqref{eq:stimac0alpha-persigma}, it is sufficient to show that there are constants $\theta\in(0,1)$ and $\eta\in(0,1)$ such that for every $k\in\N$
            \be\label{indhyp-k}\nabla v_\ell(x)\cdot W\ge -\theta^k\quad\text{for every}\quad x\in \mathcal{C}(x_0,\eta^k).\ee
            By induction, we suppose that \eqref{indhyp-k} holds for $k \in \N$, we prove that \eqref{indhyp-k} holds for $k+1$. 
            By \cref{lemmab7} with $\gamma:=C_2^{-1}\eta^k$, there exists $\overline x\in B_1'$ such that 
            $$B'_{C_1C_2^{-1}\eta^k}(\overline x)\cap\mathcal{G}\subset B'_{\eta^k}(x_0)\cap S_{C_2^{-1}\eta^k},$$ 
            where $\mathcal{G}$ is the set defined in \cref{rem:q.o.}.
            For a constant $c>0$ to be chosen later
            and $\rho:=C_1C_2^{-1}$, we can apply \cref{teclemma-w} to the function $q:B_{\eta^k}^+(x_0)\to \R$ defined as
            $$q:=\frac{\nabla v_\ell(x)\cdot W+\theta^k}{\theta^k-C_2^{-1}\eta^k}$$
            to get that
            $$\nabla v_\ell(x)\cdot W\ge -\theta^k+\eps(\theta^k-C_2^{-1}\eta^k)\ge -\theta^k+\frac{1}{2}\eps\theta^k
            \quad\text{in }
            \mathcal{C}\left(x_0,\frac{\eta^k}2,c\eta^k,\frac{3\eta^k}{4}\right),$$ choosing $\eta<< \theta$, where $\eps>0$ depends only on $d$, $\delta$ and $c$. 
            Precisely, we choose the constant $c= c(\delta)$ such that if
            $$y\in \mathcal{C}\left(x_0,\frac{\eta^k}{4},0,c\eta^k\right),\quad y_d>0,$$ 
            there is $t>0$ such that along the adjoin direction $\widetilde W$ we can find a point $\overline{y}$ such that
            $$\overline y=y+t\widetilde W\in\mathcal{C}\left(x_0,\frac{\eta^k}2,c\eta^k,\frac{3\eta^k}{4}\right)\quad\text{and}\quad \overline y_d=c\eta^k.$$
            In particular 
            $$t=\frac{c\eta^k-y_d}{W_d}.$$
            By the semiconcavity in \cref{lemma:semiconv1} and the fundamental theorem of calculus, we have
            $$\partial_W v_\ell(y)=-t\int_{0}^{1}\partial_{\widetilde WW}v_\ell(sy+(1-s)\overline y)\,ds+\partial_{W}v_\ell\left(\overline y\right)\ge -C\left(\frac{c\eta^k-y_d}{{W_d}}\right)-\theta^k+\frac{1}{2}\eps\theta^k\ge -C \frac{c}{\delta}\eta^k-\theta^k+\frac{1}{2}\eps\theta^k.$$
            Combining the above inequalities, we get  that
            $$\nabla v_\ell(y)\cdot W\ge -C \frac{c}{\delta}\eta^k-\theta^k+\frac{1}{2} \eps\theta^k\quad\text{for every}\quad y\in\mathcal{C}\left(x_0,\frac{\eta^k}4\right). $$
            To conclude, it is sufficient to choose $\theta$ such that
            $$\theta^{k+1}\ge C \frac{c}{\delta}\eta^k+\theta^k-\frac{1}{2}\eps\theta^k.$$ 
            This is possible, since if we choose
            $\eta<<\theta$ and $\theta>1-\frac{1}{4}\eps$, we end up with
            $$
            C \frac{c}{\delta}\eta^k+\theta^k-\frac{1}{2}\eps\theta^k \leq\left(1- \frac{1}{4} \eps\right)\theta^{k}\le \theta^{k+1}, 
            $$
            concluding the proof.
        \end{proof}
        Now we are ready to prove \cref{lemma:regolaita_prob_lin-thin}, which is a consequence of \cref{lemma:final-c}.
        \begin{proof}[Proof of \cref{lemma:regolaita_prob_lin-thin}]
        Notice that $\sigma_W \equiv 0$ in $\Omega_{v_\ell}$ by \eqref{sigma=0}, and that $\sigma_W$ is $C^{0,\alpha}$ regular in the interior of the contact set $\Lambda(v_\ell)$. By \cref{lemma:final-c}, and since $\sigma_W \le 0$ by \cref{lemma:penalized}, we deduce that $\sigma_W$ is $C^{0,\alpha}$ also up to points on the free boundary $\partial \Omega_{v_\ell} \cap B_{\sfrac{1}{2}}'$. Therefore, $\sigma_W \in C^{0,\alpha}(B_{\sfrac{1}{2}}')$.
            
            Then (see \cite{milakissilvestre-signorini,fullyfernandez} for more details) the $C^{1,\alpha}$ regularity up to the boundary for $v_{\ell}$ follows by classical estimates for oblique boundary problem \cite{lieberman2013oblique}.
            Since the $C^{1,\alpha}$ estimates for $v_{\ell}$ do not depend on $\ell$, the same conclusion holds for $v$, namely $v\in C^{1,\alpha}(\overline{B_{\sfrac{1}{2}}^+})$ with uniform estimates.
        \end{proof}

\begin{appendix}
    \section{Change of coordinates around contact points}
\label{sec:change_coordinates}

This appendix provides a detailed description for the change of coordinates done in \cref{subsec:flatproblem} to make the boundary of $K$ flat. Without loss of generality, let $x_0=0 \in \partial \Omega_u \cap K\cap \mathcal{B}$ be a contact point. 
For all $\eta_0 >0$, we can find a radius $\sigma >0$ and $g \in C^\infty(B_\sigma \cap \{x_d = 0\})$, such that
\begin{equation}
\label{e:ipotesi_boundary_reg_wlog_g}
\partial K \cap B_\sigma = \left\{x_d = g(x')\right\}\qquad \hbox{and} \qquad \norm{g}_{L^\infty(B_\sigma \cap \{x_d = 0\})}\leq \eta_0.
\end{equation}
Moreover, let $\delta>0$ such that $V\cdot \nu_K(x_0)\ge 4\delta$  and  suppose that
\begin{equation}
\label{e:ipotesi_boundary_reg_wlog}
    V\cdot\nu_K(x) \geq 3\delta >0 \quad \hbox{ for all } x \in B_\sigma.
\end{equation}
We consider the diffeomorphism 
$$
\Theta:\left\{\begin{aligned}
B_\sigma &\to \Theta(B_\sigma)\\
(x', x_d)&\mapsto \Theta(x', x_d):= (x', x_d - g(x')),
\end{aligned}
\right.
$$
and we define the function $v:\Theta(B_\sigma)\to \R^+$ such that 
\begin{equation*}
v(y):=u(\Theta^{-1}(y)).
\end{equation*}
Up to rescaling $v$, 
we get that $v:B_1\to\R$. Moreover, the function $v$ is a viscosity solution of
\begin{equation*}
\left\{\begin{aligned}
&{\rm div}\left(\tens{A}(y) \nabla v\right) =0 &&\Omega_v \cap B_1 \cap\{y_d >0\},\\
&\tens{A}(y) \nabla v \cdot \nabla v = \tens{A}(y) \nabla v \cdot \widetilde{V}(y) &&\partial \Omega_v \cap B_1 \cap\{y_d >0\},\\
&\tens{A}(y) \nabla v \cdot \nabla v \geq \tens{A}(y) \nabla v \cdot \widetilde{V}(y) &&\partial \Omega_v \cap B_1 \cap\{y_d =0\},\\
&v = 0 &&\{y_d \leq 0\},
\end{aligned}
\right.
\end{equation*}
where, since $D\Theta^{-1}$ is a triangular matrix,
\begin{equation*}	
	\tens{A}(y) :=\left(D\Theta^{-1}(y)\right)^{-T}
	\left(D\Theta^{-1}(y)\right)^{-1}
	 \abs{
		{\rm det}\left(D \Theta^{-1}(y)\right)}=\left(D\Theta^{-1}(y)\right)^{-T}
	\left(D\Theta^{-1}(y)\right)^{-1}
\end{equation*}
 is a smooth uniform elliptic symmetric matrix with $\tens{A}(0)=\tens{I}$ and for some smooth vector
\begin{equation*}
    \widetilde V(y):=D\Theta^{-1}(y)V.
\end{equation*} 
We observe that the function $u$ is locally Lipschitz continuous in $\mathcal{B}$, by \cref{rem:lipschitz_tutte_viscose}, thus also $v$ is locally Lipschitz continuous in $B_1$.
Given $\Pi_0=\Pi_0(\delta)>0$ small enough, we can take $\eta_0=\eta_0(\Pi_0,\delta)>0$ in \eqref{e:ipotesi_boundary_reg_wlog_g} small enough such that
\be\label{e:widetildeV-diff}
\| \tens{A}(y)-\tens{I}\|_{L^\infty(B_1)}\le \Pi_0\qquad\text{and}\qquad
\| \tens{A}(y)\widetilde V(y) - V\|_{L^\infty(B_1)} \leq \Pi_0.
\ee
Moreover, by \eqref{e:ipotesi_boundary_reg_wlog} and \eqref{e:widetildeV-diff}, choosing the constant $\eta_0$ in \eqref{e:ipotesi_boundary_reg_wlog_g} small enough, for every $y\in B_1$, we have that  
$$\tens{A}(y)\widetilde V(y) \cdot e_d \ge 2\delta.$$ 
In addition, by \cref{lemma:graph}, we have that if $x_0\in\Omega_u\cap D$, then $0<u(x_0+tV)=v(\Theta(x_0+tV))$, for every $t>0$ such that $x_0+tV\in D$, where
 \be\label{eq:oss-utile}
    \gamma(t):=\Theta(x_0+tv):\R\to\R^d \quad \hbox{ such that } \quad
    \left\{
\begin{aligned}
    \gamma'(t) &= \tens{A}(\gamma(t))\widetilde V(\gamma(t)),\\
    \gamma'(0) &= x_0.
 \end{aligned}
 \right.\ee
Finally, we remark that the rate of convergence in \cref{lemma:proprieta_blowup} is still valid for $v = u\circ \Theta^{-1}$, where the statement and the notations must by modified according to the change of variable $\Theta$, as done in \cref{remark:interior}. 
\end{appendix}

\section*{Acknowledgements}
The authors are deeply grateful to Bozhidar Velichkov for the numerous and insightful discussions that greatly contributed to the development of this work. His support and useful advice were very helpful in the preparation of this paper.

The authors are supported by the European Research Council (ERC), under the European Union's Horizon 2020 research and innovation program, through the project ERC VAREG - {\em Variational approach to the regularity of the free boundaries} (grant agreement No. 853404). They also acknowledge the MIUR Excellence Department Project awarded to the Department of Mathematics, University of Pisa, CUP I57G22000700001.
Moreover, they are supported by Gruppo Nazionale per l'Analisi Matematica, la Probabilit\'a e le loro Applicazioni (GNAMPA) of Istituto Nazionale per l'Alta Matematica (INdAM). GB is partially supported through the INdAM-GNAMPA project 2025 CUP E5324001950001.

\printbibliography
\end{document}